\documentclass[a4paper,10.5pt,leqno]{amsart}
\usepackage{latexsym}
\usepackage[all]{xy}

\usepackage{amssymb} 
\usepackage{amsmath} 
\usepackage{color}
\usepackage{comment}
\usepackage{mathtools}

\usepackage{graphicx}
\usepackage{stmaryrd}
\usepackage{bigdelim, multirow}
\usepackage[all]{xy}

\usepackage{multicol}

\definecolor{gray}{gray}{0.7}
\definecolor{Gray}{gray}{0.3}

\textwidth=16cm
\textheight=23cm
\topmargin=0.5cm
\oddsidemargin=0.0cm
\evensidemargin=0.0cm

\usepackage{amscd}

\numberwithin{equation}{section}

\theoremstyle{break}
 \newtheorem{theorem}{Theorem}[section]
 \newtheorem{proposition}[theorem]{Proposition}
 \newtheorem{corollary}[theorem]{Corollary}
 \newtheorem{lemma}[theorem]{Lemma}

 \theoremstyle{definition}
 \newtheorem{definition}[theorem]{Definition}
 \newtheorem{remark}[theorem]{Remark}
 \newtheorem{example}[theorem]{Example}

\allowdisplaybreaks[3]

\def\C{\mathbb C}
\def\R{\mathbb R}
\def\Q{\mathbb Q}
\def\Z{\mathbb Z}
\def\A{\mathcal{A}}
\def\CR{\mathcal{R}}
\def\Quadraticmap{q}

\DeclareMathOperator{\diag}{diag}
\DeclareMathOperator{\M}{M}
\DeclareMathOperator{\GL}{GL}
\DeclareMathOperator{\Der}{Der}

\DeclareMathOperator{\Hess}{Hess}
\DeclareMathOperator{\Sym}{Sym}

\DeclareMathOperator{\height}{ht}

\def\e{e}


\begin{document}
  
\title[Uniform bases for ideal arrangements]{Uniform bases for ideal arrangements}
\author [M. Enokizono]{Makoto Enokizono}
\address{Graduate School of Mathematical Sciences, University of Tokyo, 3-8-1 Komaba, Meguro-ku, Tokyo 153-8914, Japan}
\email{enokizono@g.ecc.u-tokyo.ac.jp}

\author [T. Horiguchi]{Tatsuya Horiguchi}
\address{National Institute of Technology, Akashi College, 679-3, Nishioka, Uozumi-cho, Akashi, Hyogo 674-8501, Japan}
\email{tatsuya.horiguchi0103@gmail.com}

\author [T. Nagaoka]{Takahiro Nagaoka}
\address{Research Institute for Mathematical Sciences, Kyoto University, Kyoto 606-8502, Japan}
\email{takahiro.nagaoka3617@gmail.com}

\author [A. Tsuchiya]{Akiyoshi Tsuchiya}
\address{Department of Information Science, Faculty of Science, Toho University, Miyama, Funabashi-shi, Chiba, 274-8510, Japan}
\email{akiyoshi@is.sci.toho-u.ac.jp}

\subjclass[2020]{Primary 32S22, 17B22, 14M15}

\keywords{ideal arrangements, logarithmic derivation modules, Hessenberg varieties.} 

\begin{abstract}
In this paper we introduce and study uniform bases for the ideal arrangements in all Lie types.
Explicit uniform bases are given by Abe--Horiguchi--Masuda--Murai--Sato for types $A,B,C,G$ and we provide them for other types.
Combining the explicit uniform bases with the work of Abe--Horiguchi--Masuda--Murai--Sato, we also obtain explicit presentations of the cohomology rings of regular nilpotent Hessenberg varieties in all Lie types.
\end{abstract}

\maketitle

\setcounter{tocdepth}{1}

\tableofcontents

\section{Introduction}
\label{sec:introduction}

In this paper we study bases of the logarithmic derivation modules of the ideal arrangements. 
Ideal arrangements are subarrangements of the Weyl arrangement which are free arrangements from the work of Abe, Barakat, Cuntz, Hoge, and Terao (\cite{ABCHT}). 
Here, a (central) hyperplane arrangement $\A$ is free if its logarithmic derivation module $D(\A)$ (geometrically a polynomial vector fields tangent to $\A$) is a free module. 
To prove freeness for the ideal arrangements, they first provided the multiple addition theorem (MAT) and applied MAT to the ideal arrangements. 
In the proof of MAT they gave a method to construct a basis of the logarithmic derivation module $D(\A)$ from a basis of $D(\A')$ for suitable arrangements $\A \supset \A'$. 

For a construction of explicit bases of the logarithmic derivation modules of the ideal arrangements for each Lie type, Barakat, Cuntz, and Hoge provided ones for types $E$ and $F$ by computer when the work of \cite{ABCHT} was in progress. 
Also, Terao and Abe worked for types $A$ and $B$, respectively. 
In \cite{AHMMS} explicit and uniform bases were constructed for types $A$, $B$, $C$, $G$. 
Motivated by this, we introduce the notion of uniform bases for the ideal arrangements.
We then prove the existence of uniform bases by using the method to construct bases in MAT of \cite{ABCHT}. 

We now describe our uniform bases for the ideal arrangements.
Let $\mathfrak{t}$ be a real Euclidean space and $\Phi$ an irreducible root system of rank $n$ on $\mathfrak{t}^*$, the dual space of $\mathfrak{t}$. 
The set of positive roots is denoted by $\Phi^+$. 
To $\alpha \in \Phi^+$ we assign the hyperplane $H_\alpha:=\ker \alpha$ in $\mathfrak{t}$.
The set of hyperplanes $H_\alpha$ $(\alpha \in \Phi^+)$ is called the \textbf{Weyl arrangement}. 
Let $I \subset \Phi^+$ be a lower ideal and the set of hyperplanes $H_\alpha$ $(\alpha \in I)$ is called the \textbf{ideal arrangement}, denoted by $\A_I$. 
Let $\CR=\mbox{Sym}(\mathfrak{t}^*)$ be the symmetric algebra of the dual space $\mathfrak{t}^*$ and we consider the set of $\R$-derivations of $\CR$, denoted by $\Der \CR$. 
The \textbf{logarithmic derivation module} of an ideal arrangement $\A_I$ is defined as 
$$
D(\A_I)=\{ \theta\in \Der\CR \mid \theta(\alpha)\in \CR\alpha \ ({}^\forall \alpha\in I)\}.
$$
By the work of \cite{ABCHT} the logarithmic derivation module $D(\A_I)$ is a free $\CR$-module, namely $\A_I$ is a free arrangement.
To explain our uniform bases for the ideal arrangement, we need a ``good'' decomposition of positive roots $\Phi^+=\coprod_{i=1}^n \ \Phi^+_i$ (see Section~\ref{section:uniformbases} for the details).
We fix such a decomposition and define the \textbf{Hessenberg function $h_I: \{1,\ldots,n\} \to \Z_{\geq 0}$ associated with a lower ideal $I$}. 
Note that the Hessenberg function $h_I$ suitably converts a lower ideal $I$ into numerical values. 
A set of derivations $\{\psi_{i,j} \in \Der \CR \mid 1 \leq i \leq n, i \leq j \leq i+|\Phi_i^+| \}$ forms \textbf{uniform bases for the ideal arrangements} if derivations $\{\psi_{i,h_I(i)} \mid 1 \leq i \leq n \}$ form an $\CR$-basis of $D(\A_I)$ for any lower ideal $I$.  
Our main theorem states that there exist uniform bases for the ideal arrangements (Theorem~\ref{theorem:main1}). 
Furthermore, our uniform bases are inductively constructed by invertible matrices $P_m$ for $0 \leq m \leq \height(\Phi^+)$. 
More precisely, the initial data $\psi_{1,1},\ldots,\psi_{n,n}$ are the dual basis of the simple roots $\alpha_1, \ldots, \alpha_n$ up to a non-zero scalar multiplication. (Note that $P_0$ is the diagonal matrix whose diagonal entries are the non-zero scalars.)
For the next step $\psi_{1,2},\ldots,\psi_{n,n+1}$, each $\psi_{i,i+1}$ is defined by a linear combination of $\alpha_1\psi_{1,1},\ldots,\alpha_n\psi_{n,n}$ as follows: 
\begin{align*}
[\psi_{i,i+1}]_{1 \leq i \leq n}&=P_1[\alpha_{j}\psi_{j,j}]_{1 \leq j \leq n} 
\end{align*}
for some invertible matrix $P_1$. 
Proceeding inductively, $\psi_{i,i+m}$ is defined by a linear combination of the products $\alpha_{j,j+m}\psi_{j,j+m-1}$ where $P_m$ denotes the array of the coefficients in the linear combinations.
Here, the derivations $\psi_{j,j+m-1}$ are defined in the previous step, and the notation $\alpha_{j,j+m}$ means a positive root in $\Phi^+_j$ with height $m$. 
Hence, our uniform bases are determined by the invertible matrices $P_0,P_1,\ldots,P_{\height(\Phi^+)}$ in order.
We also prove that when we determined $P_0,P_1,\ldots,P_{m-1}$, the invertible matrix $P_m$ is uniquely determined up to an equivalence defined by the special elementary row operations (Theorem~\ref{theorem:main1uniqueness}).
By this construction of the uniform bases, it is natural to compute the invertible matrices $P_m \ (0 \leq m \leq \height(\Phi^+))$ for each Lie type.
In \cite{AHMMS}, bases of the logarithmic derivation modules $D(\A_I)$ are explicitly given for types $A,B,C,G$. 
Also, it is straightforward to see the associated invertible matrices for types $A,B,C,G$.
We construct explicit uniform bases and the invertible matrices for type $D$ in Section~\ref{section:typeD}.
In order to construct uniform bases for types $E$ and $F$, one computes invertible matrices for types $E$ and $F$ by using Maple in Appendix.
To summarize, we obtain uniform bases for all Lie types by our work together with \cite{AHMMS}.

It is geometrically important to describe uniform bases for the ideal arrangements.
In fact, the logarithmic derivation module of an ideal arrangement determines the cohomology ring of the regular nilpotent Hessenberg variety from the work of \cite{AHMMS}. 
Hessenberg varieties are subvarieties of the full flag variety which was introduced by
De Mari, Procesi, and Shayman around 1990 (\cite{dMPS, dMS}).
This subject is relatively new, and it has been found that geometry, combinatorics, and representation theory interact nicely on Hessenberg varieties (e.g. see the survey article \cite{AH}).
The family of regular nilpotent Hessenberg varieties can be regarded as a (discrete) family of subvarieties of the flag variety connecting the Peterson variety and the flag variety itself, where the Peterson variety is related with the quantum cohomology of the flag variety (\cite{Ko, R}).
An explicit presentation of the cohomology ring of the Peterson variety is given by \cite{FHM} in type $A$, and soon after is given in \cite{HHM} for all Lie types. 
Then, \cite{AHHM} gave explicit presentations of the cohomology rings of regular nilpotent Hessenberg varieties in type $A$. 
Note that \cite{AHHM, FHM, HHM} used localization techniques in equivariant cohomology. 
Soon later, \cite{AHMMS} established a connection between the cohomology rings of regular nilpotent Hessenberg varieties and the logarithmic derivation modules of the ideal arrangements. 
In particular, they gave explicit presentations of the cohomology rings of regular nilpotent Hessenberg varieties for types $A,B,C,G$ by using the connection.
One can obtain them for other types from uniform bases which we constructed.
To summarize, we obtain explicit presentations of the cohomology rings of regular nilpotent Hessenberg varieties for all Lie types (Corollary~\ref{corollary:cohomologyHess}).
Here, we note that their presentations generalize the result of \cite{HHM}.

The paper is organized as follows. 
After briefly reviewing some background and terminology on ideal arrangements in Section~\ref{section:ideal arrangements}, we introduce the notion of uniform bases and state a key proposition for uniform bases in Section~\ref{section:uniformbases}. 
The proofs of the main theorems (Theorems~\ref{theorem:main1} and \ref{theorem:main1uniqueness}) for the existence and ``uniqueness'' of uniform bases are achieved in Section~\ref{section:maintheorem}. 
We construct explicit uniform bases for type $D$ in Section~\ref{section:typeD} and discuss them for other types in Appendix.
In Section~\ref{section:Hessenberg varieties}, we explain the connection between ideal arrangements and Hessenberg varieties given in \cite{AHMMS} and give explicit presentations of the cohomology rings of regular nilpotent Hessenberg varieties in all Lie types (Corollary~\ref{corollary:cohomologyHess}).

\bigskip
\noindent \textbf{Acknowledgements.}  
We are grateful to Takuro Abe for his historical and valuable comments on bases of the logarithmic derivation modules of the ideal arrangements.
The first author was partially supported by JSPS Grant-in-Aid for Research Activity Start-up: 19K23407. 
The second author was partially supported by JSPS Grant-in-Aid for JSPS Research Fellow: 17J04330. 
He was also partially supported by JSPS Grant-in-Aid for Young Scientists: 19K14508. 
The third author was partially supported by JSPS Grant-in-Aid for JSPS Research Fellow: 19J11207.
The fourth author was partially supported by JSPS Grant-in-Aid for JSPS Research Fellow: 19J00312. 
He was also partially supported by JSPS Grant-in-Aid for Young Scientists: 19K14505.

\section{Ideal arrangements}
\label{section:ideal arrangements}

In this section we first refer some terminologies of hyperplane arrangements (for a general reference, see \cite{OT}). 
Then we explain ideal-free theorem proved by \cite{ABCHT}.
More specifically, we explain the method to construct bases of the logarithmic derivation modules of the ideal arrangements.

Let $V$ be a finite dimensional real vector space. 
A \textbf{hyperplane arrangement} $\A$ in $V$ is a finite set of linear hyperplanes in $V$.
Let $\CR=\mbox{Sym}(V^*)$ be the symmetric algebra of $V^*$, where $V^*$ is the dual space of $V$. 
A map $\theta: \CR \to \CR$ is an \textbf{$\R$-derivation} if it satisfies 
\begin{enumerate}  
\item $\theta$ is $\R$-linear, 
\item $\theta(f \cdot g)=\theta(f) \cdot g+f \cdot \theta(g)$ for all $f,g \in \CR$.
\end{enumerate}
We denote the set of $\R$-derivations by $\Der \CR$.
Note that $\Der \CR$ is an $\CR$-module.
We can naturally regard an element of $V$ as an $\R$-derivation, so we have the following  identification:
\[
\Der \CR=\CR \otimes V. 
\]
If we take a basis $x_1,\ldots,x_n$ of $V^*$, then the $\CR$-module $\Der \CR$ can be expressed as $\bigoplus_{i=1}^n \CR \, \frac{\partial}{\partial{x_i}}$ where $\frac{\partial}{\partial{x_i}}$ denotes the partial derivative with respect to $x_i$. 

A non-zero element $\theta \in \Der \CR$ is \textbf{homogeneous of (polynomial) degree} $d$ if $\theta=\sum_{k=1}^{\ell}f_k\otimes v_k$ $(f_k\in \CR,\ v_k\in V)$ and all non-zero 
$f_k$'s are homogeneous of degree $d$. In this case, we denote $\deg(\theta)=d$ as usual.

For each hyperplane $H \in \A$, we denote the defining linear form of $H$ by $\alpha_H \in V^*$. The \textbf{logarithmic derivation module} 
$D(\A)$ of a hyperplane arrangement $\A$ is defined as 
\[
D(\A):=\{\theta \in \Der \CR \mid 
\theta(\alpha_H) \in \CR \alpha_H \ (^\forall H \in \A)\}.
\]
Note that $D(\A)$ is a graded $\CR$-submodule, but not a free $\CR$-module in general.
We say that a hyperplane arrangement $\A$ is \textbf{free with the exponents} $\exp(\A)=
(d_1,\ldots,d_n)$ if $D(\A)$ is a free $\CR$-module with homogeneous 
basis $\theta_1,\ldots,\theta_n$ of degree $d_1,\ldots,d_n$.

Let $\alpha_1,\ldots,\alpha_n$ be a basis of $V^*$. 
Given derivations $\theta_1,\ldots,\theta_n$, we define a matrix $\textrm{M}(\theta_1,\ldots,\theta_n)$ by
$$
\textrm{M}(\theta_1,\ldots,\theta_n)=(\theta_i(\alpha_j))_{1 \leq i,j \leq n}.
$$
It is convenient to write $f \ \dot{=} \ g$ for $f,g \in \CR$ if $f=cg$ for some $c \in \R\setminus \{0\}$.
The following criterion for bases of the logarithmic derivation modules is known.

\begin{theorem}[Saito's criterion, \cite{S2}, see also \cite{OT}] \label{theorem:Saito's criterion}
Let $\A$ be a hyperplane arrangement in an $n$-dimensional real vector space $V$. 
Let $\theta_1,\ldots,\theta_n \in D(\A)$ be homogeneous derivations. 
Then the following arguments are equivalent$:$
\begin{enumerate}
\item $\theta_1,\ldots,\theta_n$ form an $\CR$-basis for $D(\A)$$;$ 
\item $\theta_1,\ldots,\theta_n$ are linearly independent over $\CR$ and $\sum_{i=1}^n \deg \theta_i=|\A|$$;$ 
\item $\det {\rm M}(\theta_1,\ldots,\theta_n) \ \dot{=} \ \big(\prod_{H \in \A} \alpha_H \big)$.
\end{enumerate}
\end{theorem}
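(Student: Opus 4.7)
The plan is to establish the equivalences (1) $\Leftrightarrow$ (3) and (2) $\Leftrightarrow$ (3), all built on a single divisibility lemma.

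\emph{Divisibility lemma.} For any $\theta_1, \ldots, \theta_n \in D(\A)$, the product $\prod_{H \in \A} \alpha_H$ divides $\det \M(\theta_1, \ldots, \theta_n)$. To prove it I would fix $H \in \A$ and change the basis $\alpha_1,\ldots,\alpha_n$ of $V^*$ to a basis $y_1,\ldots,y_n$ with $y_1 = \alpha_H$; this changes $\det \M$ only by a nonzero scalar. In the new matrix every entry of the first column is $\theta_i(\alpha_H) \in \CR\alpha_H$, so $\alpha_H$ divides $\det \M$. Since defining forms of distinct hyperplanes are non-proportional linear polynomials, they are pairwise coprime irreducibles in the UFD $\CR$, and the product divides.

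For (3) $\Rightarrow$ (1), suppose $\det \M \ \dot{=} \ \prod_H \alpha_H$. Linear independence is immediate. To show generation, take $\theta \in D(\A)$ and solve $\theta = \sum_i g_i \theta_i$ over the fraction field of $\CR$ by Cramer's rule: $g_i = \det \M_i^{(\theta)} / \det \M$, where $\M_i^{(\theta)}$ is obtained from $\M$ by replacing the $i$-th row with $(\theta(\alpha_1),\ldots,\theta(\alpha_n))$. All rows of $\M_i^{(\theta)}$ come from derivations in $D(\A)$, so by the divisibility lemma $\prod_H \alpha_H \mid \det \M_i^{(\theta)}$, whence $g_i \in \CR$. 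For the equivalence (2) $\Leftrightarrow$ (3) I would use a degree count: $\det \M$ is homogeneous of degree $\sum d_i$ since row $i$ has entries of degree $d_i = \deg \theta_i$, $\prod_H \alpha_H$ is homogeneous of degree $|\A|$, and linear independence of the $\theta_i$ over $\CR$ is equivalent to $\det \M \neq 0$. So under (2) the divisibility lemma yields $\det \M \ \dot{=} \ \prod_H \alpha_H$, while conversely (3) gives $\det \M \neq 0$ and the degree identity $\sum d_i = |\A|$.

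The main obstacle is (1) $\Rightarrow$ (3). I would first show that $\det \M$ divides $(\prod_H \alpha_H)^n$ in $\CR$: for each coordinate $x_j$, the derivation $(\prod_H \alpha_H)\, \partial/\partial x_j$ lies in $D(\A)$, so it has an expansion in the basis $\theta_1,\ldots,\theta_n$, and assembling these expansions into a matrix $G$ yields $G^T \M = (\prod_H \alpha_H) I_n$, hence $\det G \cdot \det \M = (\prod_H \alpha_H)^n$. Consequently the irreducible factors of $\det \M$ are among the $\alpha_H$. To pin down the multiplicity of each $\alpha_H$, I would localize at the prime $(\alpha_H) \subset \CR$: in the localization the other defining forms are units, so $D(\A)_{(\alpha_H)} = D(\{H\})_{(\alpha_H)}$, and a direct computation (with a basis $\alpha_H,y_2,\ldots,y_n$ of $V^*$, producing the explicit basis $\alpha_H\,\partial/\partial \alpha_H,\partial/\partial y_2,\ldots,\partial/\partial y_n$ of $D(\{H\})$) shows that any basis of $D(\{H\})_{(\alpha_H)}$ has determinant matrix of $\alpha_H$-valuation exactly one. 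Since the localization of a basis of $D(\A)$ is a basis of $D(\A)_{(\alpha_H)}$, the $\alpha_H$-valuation of $\det \M(\theta_1,\ldots,\theta_n)$ is exactly one for each $H$, forcing $\det \M \ \dot{=} \ \prod_H \alpha_H$. The delicate step is the identification $D(\A)_{(\alpha_H)} = D(\{H\})_{(\alpha_H)}$ together with the valuation computation for the single-hyperplane arrangement.
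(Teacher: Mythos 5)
The paper does not prove this theorem; it is quoted from Saito \cite{S2} and Orlik--Terao \cite{OT}, so there is no internal proof to compare against. Judged on its own, your argument is correct and is essentially the standard proof: the divisibility lemma is \cite[Prop.~4.18]{OT}, the Cramer's-rule argument for $(3)\Rightarrow(1)$ and the degree count for $(2)\Leftrightarrow(3)$ are the usual ones, and your treatment of $(1)\Rightarrow(3)$ is sound. For that last step, the identity should read $G\,\M = \big(\prod_{H}\alpha_H\big)A$ with $A$ the constant invertible matrix relating the coordinates $x_j$ to the basis $\alpha_1,\ldots,\alpha_n$ of $V^*$ (it is the identity only if these coincide), but this does not affect the conclusion $\det\M \mid \big(\prod_H\alpha_H\big)^n$ up to scalar. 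The localization at the height-one prime $(\alpha_H)$ does what you claim: $D(\A)_{(\alpha_H)}=D(\{H\})_{(\alpha_H)}$ because $\prod_{K\neq H}\alpha_K$ becomes a unit and multiplies $D(\{H\})$ into $D(\A)$, and since two bases of a free module over $\CR_{(\alpha_H)}$ differ by a matrix of unit determinant, the $\alpha_H$-valuation of $\det\M$ equals that of the standard basis $\alpha_H\partial_{y_1},\partial_{y_2},\ldots,\partial_{y_n}$ of $D(\{H\})$, namely $1$. The classical alternative avoids localization by exhibiting the explicit tuple $Q\,\partial_{y_1},\ (\prod_{K\neq H}\alpha_K)\partial_{y_2},\ldots,(\prod_{K\neq H}\alpha_K)\partial_{y_n}$ in $D(\A)$, whose determinant has $\alpha_H$-valuation $1$ and is divisible by $\det\M$; both routes are fine.
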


We now explain ideal arrangements which are the main objects of the paper. 
Let $\mathfrak{t}$ be a (real) Euclidean space. 
Let $\Phi \subset \mathfrak{t}^*$ be an irreducible root system of rank $n$.
We denote the set of positive roots by $\Phi^+$.
We fix simple roots $\alpha_1,\dots,\alpha_n$ and define a partial order $\preceq$ on $\Phi^+$; $\alpha \preceq \beta$  if and only if $\beta-\alpha \in \sum_{i=1}^n \Z_{\ge 0} \alpha_i$.
A \textbf{lower ideal} $I \subset \Phi^+$ is a collection of positive roots such that if $\alpha \in \Phi^+$ and $\beta \in I$ with $\alpha \preceq \beta$, then $\alpha \in I$. 
The \textbf{ideal arrangement} $\A_I$ associated with a lower ideal $I$ is defined as  
\[\A_I:=\{\ker\alpha \mid \alpha \in I\}.\]
If we take $I=\Phi^+$, then $\A_{\Phi^+}$ is called the \textbf{Weyl arrangement}.
Recall that $\CR=\mbox{Sym}(\mathfrak{t}^*)$ and the logarithmic derivation module of the ideal arrangement $\A_I$ is 
\begin{equation*} 
D(\A_I)=\{ \theta\in \Der\CR=\CR\otimes \mathfrak{t} \mid \theta(\alpha)\in \CR\alpha \ ({}^\forall \alpha\in I)\}.
\end{equation*}

The height of a root $\alpha=\sum_{i=1}^n k_i \alpha_i$ is defined by $\height(\alpha)=\sum_{i=1}^n k_i$.
The \textbf{height distribution} in $I$ is a sequence $(i_1,i_2,\dots,i_m)$, where $i_j$ is the number of positive roots of height $j$ in $I$, and $m$ is the maximum of the height of positive roots in $I$. 
Also, the \textbf{dual partition} of the height distribution $(i_1, i_2,\dots,i_m)$ in $I$ is the sequence of $n$ elements given by $((0)^{i_0-i_1}, (1)^{i_1-i_2}, \dots, (m-1)^{i_{m-1}-i_m}, (m)^{i_m})$, where $i_0 = n$ and $(i)^j$ denotes the $j$-copies of $i$.
We denote the dual partition of the height distribution in $I$ by $\mathcal{DP}(I)$.

\begin{theorem}[Ideal-free theorem, {\cite[Theorem 1.1]{ABCHT}}]\label{theorem:ABCHT}
Any ideal arrangement $\A_I$ is free with the exponents $\mathcal{DP}(I)$.
\end{theorem}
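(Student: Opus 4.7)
The plan is to prove the theorem by induction on the maximal height of roots appearing in $I$, applying the Multiple Addition Theorem (MAT) of \cite{ABCHT}. At each inductive step we enlarge a smaller ideal by adjoining the \emph{entire} set of roots of the next height, and MAT produces both freeness and the correct exponents for the enlarged arrangement from those of the smaller one.

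First I would set up a natural filtration. For each $k \geq 0$, let
\[
I_k := \{\alpha \in I : \height(\alpha) \leq k\}.
\]
Since $\alpha \preceq \beta$ forces $\height(\alpha) \leq \height(\beta)$, each $I_k$ is itself a lower ideal, and one obtains a chain $\emptyset = I_0 \subset I_1 \subset \cdots \subset I_m = I$ in which $I_k \setminus I_{k-1}$ consists of the $i_k$ positive roots of height exactly $k$ in $I$. The base case is immediate: $D(\A_\emptyset) = \Der \CR$ is free with the standard partial-derivative basis, all of degree $0$, matching $\mathcal{DP}(\emptyset) = (0,\ldots,0)$. For the inductive step, assuming $\A_{I_{k-1}}$ is free with exponents $\mathcal{DP}(I_{k-1})$, a direct comparison of dual partitions shows that $\mathcal{DP}(I_k)$ is obtained from $\mathcal{DP}(I_{k-1})$ by replacing exactly $i_k$ of the entries equal to $k-1$ by $k$. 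This is precisely the shape of change MAT effects when $q = i_k$ hyperplanes are adjoined, and the requisite multiplicity $i_{k-1} \geq i_k$ of the top exponent $k-1$ follows from the fact that every root of height $k$ in $I$ dominates at least one root of height $k-1$ in $I$ via the standard chain construction for root systems.

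The main obstacle is verifying the geometric hypotheses of MAT on the new hyperplanes $\{H_\alpha : \alpha \in I_k \setminus I_{k-1}\}$. One must show that the restricted arrangement $\A_{I_{k-1}}|_{H_\alpha}$ is free with exponents matching the MAT requirement, and that distinct new hyperplanes sit in sufficiently generic position modulo $\A_{I_{k-1}}$. Both conditions reduce to a closure property of lower ideals: for any $\alpha, \beta \in I$ of height $k$ and any root $\gamma$ of smaller height lying in the subsystem spanned by $\alpha$ and $\beta$, one has $\gamma \in I_{k-1}$. Once MAT's hypotheses are certified in this way, Saito's criterion (which is used internally in the proof of MAT) produces a basis of $D(\A_{I_k})$ with the prescribed degrees, closing the induction at $k = m$ and yielding the claim.
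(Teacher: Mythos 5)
Your skeleton is the same one the paper recalls in Section~\ref{section:ideal arrangements} (and the one used in \cite{ABCHT}, from which the theorem is quoted): filter $I$ by height, $\emptyset=I_0\subset I_1\subset\cdots\subset I_m=I$ with $I_k=\{\alpha\in I\mid \height(\alpha)\le k\}$, check that $\mathcal{DP}(I_k)$ is obtained from $\mathcal{DP}(I_{k-1})$ by raising $i_k$ copies of the top exponent $k-1$ to $k$, and invoke MAT at each step. So the strategy is correct. The problem is that everything you defer to ``verifying the geometric hypotheses of MAT'' is the actual content of the theorem, and your proposal does not supply it. Concretely, two inputs are needed and neither follows from the closure property you invoke. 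First, for each new hyperplane $H_\beta$ with $\height(\beta)=k$ one must show $\deg(b_{\nu})=k-1$, equivalently $|\A''_{\beta}|=|\A_{I_{k-1}}|-(k-1)$ (Proposition~\ref{proposition:ABCHTdegree}, i.e.\ \cite[Prop.~4.2]{ABCHT}); this is a genuinely delicate count of how the hyperplanes of $\A_{I_{k-1}}$ collide on $H_\beta$, carried out over the whole root system, and it is not a formal consequence of ``any smaller root in the subsystem spanned by $\alpha,\beta$ lies in $I_{k-1}$.'' Second, one must show that the $p\times q$ matrix $C=(c_{ij}^{(\nu_j)})$ of leading coefficients has full rank $q$ (Proposition~\ref{proposition:ABCHTrank}); this is the key lemma in the proof of MAT and is again not a genericity statement that can be certified by a combinatorial closure property of $I$.

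A smaller but real error: you claim $i_{k-1}\ge i_k$ because every root of height $k$ in $I$ covers some root of height $k-1$ in $I$. That only gives existence of a smaller root below each one, not an injection from height-$k$ roots to height-$(k-1)$ roots, so it does not prove the inequality. In the actual argument this inequality is not assumed in advance; it falls out of Proposition~\ref{proposition:ABCHTrank}, since a $p\times q$ matrix of rank $q$ forces $p\ge q$, i.e.\ the multiplicity of the top exponent of $\A_{I_{k-1}}$ is at least the number of hyperplanes being added. Without proofs of the two propositions above, the induction does not close.
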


To prove Theorem~\ref{theorem:ABCHT}, Abe, Barakat, Cuntz, Hoge, and Terao provided the multiple addition theorem (MAT) (\cite[Theorem~3.1]{ABCHT}).
In the proof of MAT they gave a method to construct an $\CR$-basis of $D(\A)$ from that of $D(\A')$ for suitable arrangements $\A \supset \A'$. 
For the rest of this section, we briefly explain the method to construct an $\CR$-basis under the circumstances of ideal arrangements. 

For a lower ideal $I$ we define the height of $I$ by $\height(I)=\mbox{max}\{\height(\alpha) \mid \alpha \in I \}$.
Suppose that $\height(I)=m+1$ with $m \geq 0$.
Let $I'$ be a lower ideal defined by 
$$
I':=\{\alpha \in I \mid \height(\alpha) \leq m \}
$$
and we set $I \setminus I' = \{\beta_1,\ldots,\beta_q \}$.
For each $j=1,\ldots,q$, we define a hyperplane arrangement $\A''_j$ in $H_j$ by $\A''_j=\{H \cap H_j \mid H \in \A_{I'} \}$ where $H_j$ is a hyperplane defined by the linear function $\beta_j$. 
For each $j=1,\ldots,q$, we fix a map 
$$
\nu_j: \A''_j \to \A_{I'}
$$
such that $\nu_j(X) \cap H_j =X$, and define a homogeneous polynomial 
$$
b_{\nu_j}:= \frac{\prod_{H \in \A_{I'}} \alpha_H}{\prod_{X \in \A''_j} \alpha_{\nu_j(X)}}= \frac{\prod_{\alpha \in I'} \alpha}{\prod_{X \in \A''_j} \alpha_{\nu_j(X)}}.
$$

\begin{proposition}[{\cite{T}, see also \cite[p.114, Proposition~4.41]{OT}}] \label{proposition:idealbnu}
For any $\theta \in D(\A_{I'})$, 
$$
\theta(\beta_j) \in \CR(\beta_j, b_{\nu_j}).
$$
\end{proposition}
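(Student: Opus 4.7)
The plan is to reduce the claim to a divisibility modulo $\beta_j$ and prove it factor-by-factor in the quotient $\bar\CR := \CR/(\beta_j)$, which is a polynomial ring and hence a UFD. The containment $\theta(\beta_j) \in (\beta_j, b_{\nu_j})$ is equivalent to $\bar b_{\nu_j}$ dividing $\overline{\theta(\beta_j)}$ in $\bar\CR$. For each $X \in \A''_j$, the defining forms $\alpha_H$ with $H \cap H_j = X$ all restrict to non-zero scalar multiples of a single linear form $\ell_X$ on $H_j$, and the $\ell_X$ for distinct $X$ are pairwise coprime in $\bar\CR$. Setting $m(X) := \#\{H \in \A_{I'} \mid H \cap H_j = X\}$, the definition of $b_{\nu_j}$ gives $\bar b_{\nu_j} \ \dot{=} \ \prod_{X \in \A''_j} \ell_X^{m(X)-1}$, so it suffices to fix $X \in \A''_j$ and prove that $\ell_X^{m(X)-1}$ divides $\overline{\theta(\beta_j)}$.

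For this local step, I would choose coordinates on $\mathfrak{t}$ adapted to $X$: set $x_1 = \beta_j$, $x_2 = \alpha_{\nu_j(X)}$, and let $x_3, \ldots, x_n$ be forms vanishing on $X$. Every $H \in \A_{I'}$ with $H \cap H_j = X$ then has defining form, up to a non-zero scalar, of the shape $a_H x_1 + x_2$ for $m(X)$ pairwise distinct scalars $a_H$ (with $a_{\nu_j(X)} = 0$). Writing $\theta = \sum_i g_i\, \partial/\partial x_i$ with $g_i \in \CR$, the conditions $\theta(\alpha_H) \in \CR \alpha_H$ become $a_H g_1 + g_2 \in \CR (a_H x_1 + x_2)$ for each such $H$. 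Reducing modulo $x_1$ gives $m(X)$ relations $a_H \bar g_1 + \bar g_2 \equiv 0 \pmod{x_2}$ in $\bar\CR$ for distinct values $a_H$, and since a degree-one polynomial in $a_H$ vanishing at two or more points has all coefficients zero, we conclude $x_2 \mid \bar g_1$ and $x_2 \mid \bar g_2$.

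To upgrade this to $x_2^{m(X)-1} \mid \bar g_1$, I would iterate by expanding the relation $a_H g_1 + g_2 = f_H (a_H x_1 + x_2)$ to successive orders in $x_1$. Comparing coefficients of $x_1^k$ gives a recursion expressing $f_H^{(k)}$ as a quotient by $x_2$, and the divisibility requirement at the $k$-th step produces a polynomial identity in $a_H$ of degree $k$ whose coefficients involve further peeled quotients of $\bar g_1,\bar g_2$ by powers of $x_2$. Having $m(X)$ distinct values of $a_H$ forces all these coefficients to vanish modulo $x_2$ as long as $k < m(X)$, peeling off one more factor of $x_2$ from $\bar g_1$ at each stage. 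After $m(X)-1$ iterations one obtains $x_2^{m(X)-1} \mid \bar g_1 = \overline{\theta(\beta_j)}$, which is exactly the required divisibility by $\ell_X^{m(X)-1}$.

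The main obstacle will be the bookkeeping in this induction: at each stage the peeled quotients of $\bar g_1$ and $\bar g_2$ must be lifted back to $\CR$, the logarithmic relation re-expanded one order further in $x_1$, and the resulting polynomial identity in $a_H$ shown to have the promised degree, with the lower-order coefficients correctly interpretable as data peeled off in the previous steps. A cleaner alternative, which I would consider for a final write-up, is to recognize the statement as saying that $\theta$ restricts to a logarithmic derivation of the Ziegler multi-arrangement on $H_j$ with multiplicity $m(X)$ at each $X \in \A''_j$; in that framework the divisibility is a standard fact.
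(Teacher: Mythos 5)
The paper offers no proof of this proposition: it is quoted from Terao and from Orlik--Terao \cite[Proposition~4.41]{OT}, so your argument is to be measured against the standard proof in the literature. Your proof is essentially that standard localization argument, and it is correct: the reduction to divisibility of $\overline{\theta(\beta_j)}$ by $\overline{b_{\nu_j}}$ in the polynomial ring $\CR/(\beta_j)$, the factorization $\overline{b_{\nu_j}} \ \dot{=}\ \prod_{X}\ell_X^{m(X)-1}$ into powers of pairwise coprime primes, and the passage to a single $X\in\A''_j$ are all sound, and the local divisibility $\ell_X^{m(X)-1}\mid \overline{\theta(\beta_j)}$ is true. The bookkeeping you worry about in the iterative peeling can, however, be avoided in one stroke: with $\alpha_H\ \dot{=}\ a_Hx_1+x_2$ for $m=m(X)$ distinct reals $a_H$, the condition $\theta(\alpha_H)\in\CR\alpha_H$ says that $a_Hg_1+g_2$ vanishes identically under the substitution $x_2\mapsto -a_Hx_1$ (where $g_1=\theta(x_1)$, $g_2=\theta(x_2)$). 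Writing $g_i=\sum_{j,k}c^{(i)}_{jk}x_1^jx_2^k$ with $c^{(i)}_{jk}\in\R[x_3,\dots,x_n]$ and extracting the coefficient of $x_1^d$ yields, for each $H$, the vanishing of a polynomial in $a_H$ of degree at most $d+1$ whose top coefficient is $\pm c^{(1)}_{0,d}$; vanishing at $m$ distinct points forces $c^{(1)}_{0,d}=0$ for all $d\le m-2$, which is exactly $x_2^{m-1}\mid \bar g_1$. Two small slips, neither fatal: the forms $x_3,\dots,x_n$ should complete $\beta_j,\alpha_{\nu_j(X)}$ to a basis of $\mathfrak{t}^*$ (so they restrict to coordinates on $X$; they cannot vanish on $X$), and the Ziegler multirestriction statement you mention at the end concerns the restricted derivation on $H_j$ and its values on the forms $\ell_X$, which is a close cousin of, but not literally, the assertion about $\theta(\beta_j)$ being proved here.
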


\begin{proposition}[{\cite[Proposition~4.2]{ABCHT}}] \label{proposition:ABCHTdegree}
One has $\deg(b_{\nu_j})=m$.
\end{proposition}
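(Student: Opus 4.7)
The plan is to convert the degree statement into a computation of $|\A''_j|$ and then apply Terao's addition--deletion theorem, using the freeness of two neighboring ideal arrangements supplied by Theorem~\ref{theorem:ABCHT}.

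Since $b_{\nu_j}$ is the quotient of a product of $|I'|$ linear forms by a product of $|\A''_j|$ linear forms, one immediately reads off
\[
\deg(b_{\nu_j}) \;=\; |I'| - |\A''_j|,
\]
so it suffices to prove $|\A''_j| = |I'| - m$.

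I would next enlarge $I'$ to $J := I' \cup \{\beta_j\}$ and check that $J$ is again a lower ideal, using that any positive root strictly below $\beta_j$ has height at most $m$ and belongs to $I$, hence to $I'$. Then $\A_J = \A_{I'} \cup \{H_{\beta_j}\}$ is an ideal arrangement, and Theorem~\ref{theorem:ABCHT} gives freeness of both $\A_{I'}$ and $\A_J$ with exponent multisets $\mathcal{DP}(I')$ and $\mathcal{DP}(J)$ respectively. A direct comparison of the two dual partitions shows that $\mathcal{DP}(J)$ is obtained from $\mathcal{DP}(I')$ by replacing a single entry equal to $m$ by $m+1$: the height distribution of $J$ extends that of $I'$ by a single new entry $1$ in slot $m+1$, so the dual-partition formula simultaneously decreases the multiplicity of $m$ by one and inserts one copy of $m+1$. (One uses $i_m \geq 1$ here, which is automatic since $\beta_j$, of height $m+1$, dominates some positive root of height $m$.)

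Since $\A_J^{H_{\beta_j}} = \A''_j$, Terao's addition--deletion theorem applied to the triple $(\A_J, \A_{I'}, \A''_j)$ then forces $\A''_j$ to be free with exponent multiset $\mathcal{DP}(I') \setminus \{m\}$. Summing exponents yields $|\A''_j| = |I'| - m$, and therefore $\deg(b_{\nu_j}) = m$. The main obstacle is the dual-partition bookkeeping together with verifying that the exponents of $\A_J$ exceed those of $\A_{I'}$ in exactly one coordinate; once that comparison is in place, the application of addition--deletion is immediate.
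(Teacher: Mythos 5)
Your argument is mathematically correct, but note first that the paper offers no proof of this statement at all: it is quoted verbatim from \cite{ABCHT} (their Proposition~4.2), so there is nothing internal to compare against. Your chain of reasoning checks out: $\nu_j$ is injective, so $\deg(b_{\nu_j})=|I'|-|\A''_j|$; the set $J=I'\cup\{\beta_j\}$ is a lower ideal; for $m\geq 1$ the root $\beta_j$ of height $m+1$ covers a root of height $m$ lying in $I'$, so the multiplicity of $m$ in $\mathcal{DP}(I')$ is positive and $\mathcal{DP}(J)$ is obtained from $\mathcal{DP}(I')$ by replacing one copy of $m$ with $m+1$ (the case $m=0$ is trivial since then $I'=\emptyset$ and $b_{\nu_j}=1$); and the addition--deletion theorem applied to the triple $(\A_J,\A_{I'},\A''_j)$ then gives $\A''_j$ free with exponents $\mathcal{DP}(I')\setminus\{m\}$, whence $|\A''_j|=|I'|-m$ by Saito's criterion. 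The one caveat worth stating explicitly is logical order rather than correctness: in \cite{ABCHT} this proposition is an \emph{ingredient} in the proof of the ideal-free theorem (it verifies a hypothesis of MAT), so it is established there by a direct root-theoretic count that does not presuppose freeness of ideal arrangements; your derivation instead takes the ideal-free theorem as given and would be circular inside the original development. Within the present paper, where both Theorem~\ref{theorem:ABCHT} and the proposition are imported as known results, your argument is a clean and legitimate way to see why the degree must be $m$, and it has the side benefit of identifying the exponents of the restricted arrangement $\A''_j$, not just its cardinality.
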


Let $\theta_1, \ldots, \theta_n$ be an $\CR$-basis of $D(\A_{I'})$ with $d_i:=\deg(\theta_i)$ such that $d_1 \leq \ldots \leq d_{n-p} < d_{n-p+1}=\cdots=d_n=m$ for some $p$. 
Then, the degree of $\theta_1,\ldots,\theta_{n-p}$ is strictly less than $m$, so we have $\theta_1,\ldots,\theta_{n-p} \in D(\A_I)$ from Propositions~\ref{proposition:idealbnu} and \ref{proposition:ABCHTdegree}.
We put $\varphi_i:=\theta_{n-i+1}$ for $1 \leq i \leq p$.
By Proposition~\ref{proposition:idealbnu}, for $1 \leq i \leq p$ and $1 \leq j \leq q$ we can write 
$$
\varphi_i(\beta_j) \equiv c_{ij}^{(\nu_j)} b_{\nu_j} \ \ \ \ \ \mbox{mod} \ \beta_j
$$
for some rational number $c_{ij}^{(\nu_j)}$ ($1 \leq i \leq p$, $1 \leq j \leq q$).
The following is a key of the proof of MAT (\cite[Theorem~3.1]{ABCHT}). 

\begin{proposition}[\cite{ABCHT}] \label{proposition:ABCHTrank}
The $(p \times q)$-matrix $C:=(c_{ij}^{(\nu_j)})_{\substack{1 \leq i \leq p \\ 1 \leq j \leq q}}$ has  rank $q$. 
\end{proposition}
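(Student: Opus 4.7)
My plan is to recast the rank-$q$ statement as the surjectivity of a naturally defined $\R$-linear map, and then prove that surjectivity by combining induction with Saito's criterion.

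First, let $V_p \subset D(\A_{I'})$ be the $\R$-linear span of $\varphi_1, \ldots, \varphi_p$, and define an $\R$-linear map
\[
\eta \colon V_p \longrightarrow \R^q, \qquad \eta(\varphi) = \bigl(c^{(\nu_1)}(\varphi), \ldots, c^{(\nu_q)}(\varphi)\bigr),
\]
where the scalar $c^{(\nu_j)}(\varphi)$ is uniquely determined by the decomposition $\varphi(\beta_j) = f_j \beta_j + c^{(\nu_j)}(\varphi) \, b_{\nu_j}$ for some $f_j \in \CR$. Existence of the decomposition follows from Proposition~\ref{proposition:idealbnu}, while the uniqueness of $c^{(\nu_j)}(\varphi) \in \R$ comes from the degree match $\deg \varphi(\beta_j) = m = \deg b_{\nu_j}$ supplied by Proposition~\ref{proposition:ABCHTdegree}. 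A direct computation shows that the matrix of $\eta$ with respect to the basis $\varphi_1, \ldots, \varphi_p$ is (the transpose of) $C$, so the proposition is equivalent to $\eta$ being surjective.

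The kernel of $\eta$ is exactly $V_p \cap D(\A_I)$, since for $\varphi = \sum_i a_i \varphi_i \in V_p$, the vanishing of every $c^{(\nu_j)}(\varphi)$ is equivalent to $\varphi(\beta_j) \in \CR\beta_j$ for every $j$, hence to $\varphi \in D(\A_I)$. By rank-nullity, the proposition reduces to the sharp inequality $\dim_{\R}(V_p \cap D(\A_I)) \leq p - q$. To establish this, I would induct on $|I|$, using the inductive hypothesis of freeness of every strictly smaller ideal arrangement with the expected exponents given by $\mathcal{DP}$. The plan is to build a candidate free basis of $D(\A_I)$ in the MAT style: after invertible row operations on $\varphi_1, \ldots, \varphi_p$ one arranges for $p - q$ linear combinations to land in $V_p \cap D(\A_I)$, while the remaining $q$ combinations, multiplied by the corresponding $\beta_j$'s, become degree-$(m+1)$ elements of $D(\A_I)$; together with $\theta_1, \ldots, \theta_{n-p}$ these would form an $\CR$-generating family whose Saito determinant equals $\prod_{\alpha \in I} \alpha$ up to a nonzero scalar, confirming via Theorem~\ref{theorem:Saito's criterion} that it is a basis and forcing $\dim_{\R}(V_p \cap D(\A_I)) = p - q$.

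The main obstacle is the apparent circularity of this last step: producing the candidate basis and performing the row reduction seems to already require knowing that some $q \times q$ submatrix of $C$ is invertible. I would try to break this by inducting along a maximal chain of lower ideals in which each step adjoins a single positive root (so effectively $q = 1$ at every inductive step), reducing the rank claim to nonvanishing of a single scalar residue $c^{(\nu_1)}(\varphi_{i_0})$; this should then be verifiable directly from the explicit factorization of $b_{\nu_1}$ as a product of linear forms indexed by roots of $I'$, combined with the structure of the restricted arrangement $\A''_1$ on $\ker\beta_1$.
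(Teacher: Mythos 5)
The paper does not prove this proposition; it is imported from \cite{ABCHT}, where it is the key lemma behind the multiple addition theorem, so there is no internal argument to compare against. Your reformulation is correct as far as it goes: $c^{(\nu_j)}(\varphi)$ is a well-defined scalar (one should also note that $\beta_j$ does not divide $b_{\nu_j}$, since $\beta_j\notin I'$, so the residue is unique and not merely unique mod $\beta_j$), the kernel of $\eta$ is $V_p\cap D(\A_I)$, and the claim is equivalent to $\dim_{\R}\bigl(V_p\cap D(\A_I)\bigr)\le p-q$.

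The genuine gap is that this inequality is never established. The circularity you identify in the MAT-style construction is real, and the proposed escape via a chain of single-root additions does not close it, for two reasons. First, even if each single-root step were verified, that only shows the residue functional at each $\beta_k$ is nonzero on $V_p$ (equivalently, each column of $C$ is nonzero); to conclude rank $q$ you need the dimension of $V_p\cap D(\A_{I'\cup\{\beta_1,\dots,\beta_k\}})$ to drop by exactly one at \emph{every} step, i.e.\ the residue at $\beta_k$ must be nonzero on the successively smaller subspace $V_p\cap D(\A_{I'\cup\{\beta_1,\dots,\beta_{k-1}\}})$, not merely on $V_p$. Nonvanishing of $q$ individual columns does not imply they are linearly independent. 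Second, the base case ($q=1$: some $\varphi_{i_0}$ has nonzero residue at $\beta_1$) is only asserted to be ``verifiable directly''; no computation is given, and this nonvanishing is precisely the nontrivial content of the proposition --- it is where the equality $\deg b_{\nu_j}=m$ of Proposition~\ref{proposition:ABCHTdegree} must enter in an essential way. If you are willing to use Theorem~\ref{theorem:ABCHT} as a black box (as this paper does), there is a clean way to finish: $\A_{I'}$ and $\A_I$ are free with exponents $\mathcal{DP}(I')$ and $\mathcal{DP}(I)$, which differ by replacing $q$ copies of $m$ with $q$ copies of $m+1$, so $\dim_{\R}D(\A_{I'})_m-\dim_{\R}D(\A_I)_m=q$; the residue map $D(\A_{I'})_m\to\R^q$ has kernel exactly $D(\A_I)_m$ and vanishes on all elements $f\theta_i$ with $\deg\theta_i<m$ for degree reasons, so its rank equals the rank of $C$, forcing $\mathrm{rank}\,C=q$. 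Be aware, though, that this inverts the logical order of \cite{ABCHT}, where the proposition is an ingredient in the proof of the ideal-free theorem rather than a consequence of it.
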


Hence by Proposition~\ref{proposition:ABCHTrank}, there exists $P=(p_{ik})_{1 \leq i,k \leq p} \in \GL(p,\Q)$ such that 
$$
PC=\begin{bmatrix}
E_q \\
O
\end{bmatrix}.
$$
We put 
$$
\psi_i:=\sum_{k=1}^p p_{ik} \varphi_k
$$
for $1 \leq i \leq p$.
One can see that $\beta_1\psi_1,\ldots,\beta_q\psi_q,\psi_{q+1},\ldots,\psi_{p} \in D(\A_I)$.
It is clear that 
\begin{align*}
\theta_{1},\ldots,\theta_{n-p},\beta_1\psi_1,\ldots,\beta_q\psi_q,\psi_{q+1},\ldots,\psi_{p}
\end{align*}
are linearly independent over $\CR$ and the sum of their degree is equal to $|\A_I|$, so they form an $\CR$-basis of $D(\A_I)$ from Theorem~\ref{theorem:Saito's criterion}.

\begin{theorem}[{\cite{ABCHT}}] \label{theorem:ABCHTbasis}
The derivations $\theta_{1},\ldots,\theta_{n-p}, \beta_1\psi_1,\ldots,\beta_q\psi_q, \psi_{q+1},\ldots,\psi_{p}$ form an $\CR$-basis of $D(\A_I)$.
\end{theorem}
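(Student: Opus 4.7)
The plan is to verify the three conditions of Saito's criterion (Theorem~\ref{theorem:Saito's criterion}(2)) for the listed $n$ derivations: each lies in $D(\A_I)$, they are $\CR$-linearly independent, and their polynomial degrees sum to $|\A_I|$.

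The main step is membership in $D(\A_I)$. I would begin by translating the relation $PC = \begin{bmatrix} E_q \\ O \end{bmatrix}$ into properties of the new derivations: from $\psi_i = \sum_{k=1}^p p_{ik}\varphi_k$ and the defining congruences $\varphi_k(\beta_j) \equiv c_{kj}^{(\nu_j)} b_{\nu_j} \pmod{\beta_j}$, one obtains $\psi_i(\beta_j) \equiv (PC)_{ij}\, b_{\nu_j} \pmod{\beta_j}$, so
\begin{equation*}
\psi_i(\beta_j) \equiv \delta_{ij}\, b_{\nu_j} \pmod{\beta_j}\ (1 \leq i \leq q), \qquad \psi_i(\beta_j) \equiv 0 \pmod{\beta_j}\ (q+1 \leq i \leq p).
\end{equation*}
For $q+1 \leq i \leq p$, combined with $\psi_i \in D(\A_{I'})$ this immediately gives $\psi_i \in D(\A_I)$. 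For $1 \leq i \leq q$ and $j \neq i$, the same yields $\psi_i(\beta_j) \in \CR\beta_j$, while the multiplier $\beta_i$ absorbs the residual obstruction at $j = i$ (since $\beta_i\psi_i(\beta_i)\in\beta_i\CR$), so $\beta_i\psi_i \in D(\A_I)$. For each $\theta_i$ with $i \leq n-p$, Proposition~\ref{proposition:idealbnu} writes $\theta_i(\beta_j) = f\beta_j + g\, b_{\nu_j}$; taking the degree-$d_i$ homogeneous component and noting $d_i < m = \deg b_{\nu_j}$ (Proposition~\ref{proposition:ABCHTdegree}) forces the $g$-component to vanish, whence $\theta_i(\beta_j) \in \CR\beta_j$ and $\theta_i \in D(\A_I)$.

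For linear independence, since $P \in \GL(p,\Q) \subset \GL(p,\CR)$, the family $\{\psi_1,\ldots,\psi_p\}$ generates the same $\CR$-submodule as $\{\varphi_1,\ldots,\varphi_p\} = \{\theta_n,\ldots,\theta_{n-p+1}\}$; therefore $\theta_1,\ldots,\theta_{n-p},\psi_1,\ldots,\psi_p$ are $\CR$-linearly independent, and multiplying some of the $\psi_i$ by the nonzero linear forms $\beta_i$ preserves this in the integral domain $\CR$. For the degree tally, each $\psi_i$ is homogeneous of degree $m$, so
\begin{equation*}
\sum_{i=1}^{n-p} d_i + \sum_{j=1}^{q}(1+m) + \sum_{i=q+1}^{p} m = \bigl(|\A_{I'}| - pm\bigr) + pm + q = |\A_{I'}| + q = |\A_I|,
\end{equation*}
using $\sum_{i=1}^n d_i = |\A_{I'}|$ (Saito's criterion applied to the original basis) and $|I \setminus I'| = q$. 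Saito's criterion then yields the theorem.

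The only nontrivial content is the choice of $P$ aligning the reductions modulo $\beta_j$ in block form; this is precisely what Proposition~\ref{proposition:ABCHTrank} supplies, and once that is in hand the three Saito conditions reduce to routine bookkeeping.
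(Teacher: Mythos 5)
Your proposal is correct and follows essentially the same route as the paper: it is exactly the construction the paper lays out before the theorem (use Proposition~\ref{proposition:ABCHTrank} to get $P$ with $PC=\left[\begin{smallmatrix}E_q\\O\end{smallmatrix}\right]$, deduce the congruences $\psi_i(\beta_j)\equiv\delta_{ij}b_{\nu_j}$, check membership, linear independence, and the degree count, then invoke Saito's criterion), with the routine verifications that the paper leaves as ``one can see'' filled in correctly.
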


Hence by Theorem~\ref{theorem:ABCHTbasis} we can construct an $\CR$-basis of $D(\A_I)$ for any lower ideal $I$ from that of $D(\A_{I'})$ for the smaller lower ideal $I' \subset I$.
Our motivation is to construct these bases uniformly.
In next section we introduce the notion of \textit{uniform bases}.
Then, we construct uniform bases inductively by using the method of Abe, Barakat, Cuntz, Hoge, and Terao explained above.

\section{Uniform bases}
\label{section:uniformbases}

In this section we first introduce the notion of Hessenberg functions $h_I$ associated with lower ideals $I$ for all Lie types. 
Hessenberg functions convert the lower ideals into numerical values. 
Then we define uniform bases and state a key proposition for uniform bases. 
In what follows, we frequently use the symbol
$$
[n]:=\{1,2,\ldots,n\}.
$$

Let $\e_1,\e_2,\ldots,\e_n$ be the exponents of the Weyl group $W$. (For the list of exponents $e_1,\ldots,e_n$, see for example \cite[p.59 Table~1 and p.81 Theorem~3.19]{Hum1990}.)
We define a decomposition of all positive roots $\Phi^+$ as follows. 
Let $\Phi^+_i$ be a set of positive roots $\alpha_{i,i+1}, \alpha_{i,i+2}, \ldots, \alpha_{i,i+\e_i}$ for $1 \leq i \leq n$ such that 
\begin{align}
&\alpha_{i,i+1}=\alpha_i \ \ \ {\rm the \ simple \ root} \label{eq:decomposition1} \\
&\alpha_{i,j} \lessdot \alpha_{i,j+1} \ \ {\rm for \ any} \ j \ {\rm with} \ i<j<i+\e_i. \label{eq:decomposition2}
\end{align}
We will see this decomposition for each Lie type in the later sections.
Here, we denote the covering relation by the symbol $\lessdot$, namely, $\alpha_{i,j} \prec \alpha_{i,j+1}$ and there is no element $\beta \in \Phi^+$ such that $\alpha_{i,j} \prec \beta \prec \alpha_{i,j+1}$.
The sets $\Phi^+_i \ (1 \leq i \leq n)$ give disjoint $n$ maximal chains.
In particular, we have a decomposition of the positive roots $\Phi^+=\coprod_{i=1}^n \ \Phi^+_i$.
Note that such a decomposition is not unique. 
We fix such a decomposition $\Phi^+=\coprod_{i=1}^n \ \Phi^+_i$ and define the \textbf{Hessenberg function $h_I: [n] \to \Z_{\geq 0}$ associated with a lower ideal $I$} by the following formula:
\begin{equation} \label{eq:Hessft}
h_I(i):=\begin{cases}
\mbox{max}\{j \mid \alpha_{i,j} \in I \cap \Phi^+_i \} \ \ \ &{\rm if} \ I \cap \Phi^+_i \neq \emptyset, \\
i \ \ \ &{\rm if} \ I \cap \Phi^+_i = \emptyset
\end{cases}
\end{equation}
for $1 \leq i \leq n$.

\begin{example} \label{example:An-1Hessenbergfunction}
In type $A_{n-1}$ we set the exponents $e_i=n-i$ for $i=1,\ldots,n-1$ and positive roots $\Phi_{A_{n-1}}^+=\{x_i-x_j \mid 1 \leq i < j \leq n \}$. 
Let $\alpha_{i,j}=x_i-x_j$ for $1 \leq i < j \leq n$. 
Then, $\Phi_i^+=\{\alpha_{i,i+1},\ldots,\alpha_{i,n} \}$ satisfies the conditions \eqref{eq:decomposition1} and \eqref{eq:decomposition2}.
In this setting, for example, the Hessenberg function\footnote{A Hessenberg function $h$ for type $A_{n-1}$ is usually defined on the set $[n]$. 
More specifically, $h:[n] \to [n]$ is a Hessenberg function for type $A_{n-1}$ if $h$ is weakly increasing and $h(i) \geq i$ for $i \in [n]$. We have $h(n)=n$ by the definition, so we may omit the $n$-th value of $h$.} $h_I:[n-1] \to \Z_{\geq 0}$ associated with the lower ideal $I=\{\alpha_1,\ldots,\alpha_{n-1}\}$ is given by $h_I(i)=i+1$ for $i \in [n-1]$.
\end{example}

\begin{definition}
We fix a decomposition $\Phi^+=\coprod_{i=1}^n \ \Phi^+_i$ satisfying \eqref{eq:decomposition1} and \eqref{eq:decomposition2}.
A set of derivations $\{\psi_{i,j} \in \Der \CR \mid 1 \leq i \leq n, i \leq j \leq i+e_i \}$ forms \textbf{uniform bases for the ideal arrangements} (or simply \textbf{uniform bases}) if derivations $\{\psi_{i,h_I(i)} \mid 1 \leq i \leq n \}$ form an $\CR$-basis of $D(\A_I)$ for any lower ideal $I$.  
\end{definition}

Noting that $\height(\alpha_{i,j})=j-i$, a set $\{h_I(i)-i \mid 1 \leq i \leq n \}$ is the dual partition of the height distribution in $I$.
From this together with Theorem~\ref{theorem:ABCHT} we have
\begin{equation} \label{eq:exponentsHessenbergfunction}
\exp(\A_I)=\{h_I(i)-i \mid 1 \leq i \leq n \}.
\end{equation}
In particular, we have
$$
\height(I)=\mbox{max} \{h_I(i)-i \mid 1 \leq i \leq n \}.
$$
For each lower ideal $I$ we define a subset $\Lambda_I$ of $[n]$ as follows:
$$
\Lambda_I:=\{i \in [n] \mid h_I(i)-i = \height(I) \}.
$$
Let us denote the cardinality of $\Lambda_I$ by $\lambda_I=|\Lambda_I|$.

\begin{example}
We consider the setting of Example~\ref{example:An-1Hessenbergfunction}. 
Let $n=5$ and we take 
$$
I=\{x_1-x_2, x_1-x_3, x_2-x_3, x_3-x_4, x_3-x_5, x_4-x_5 \}.
$$
Then, the associated Hessenberg function $h_I$ is given by $h_I(1)=3, h_I(2)=3, h_I(3)=5, h_I(4)=5$ and we have $\Lambda_I=\{1,3 \}$.
\end{example}

For each integer $m$ with $0 \leq m \leq \height(\Phi^+)$, we define the lower ideal $I_m$ by
\begin{equation} \label{eq:Im}
I_m:=\{\alpha \in \Phi^+ \mid \height(\alpha) \leq m \}.
\end{equation}
We write the Hessenberg function associated with the lower ideal $I_m$ by $h_m$.
For simplicity, we denote $\Lambda_{I_m}$ and $\lambda_{I_m}$ by $\Lambda_m$ and $\lambda_m$, respectively. Namely,  
\begin{equation} \label{eq:Lambdam}
\Lambda_m=\{i \in [n] \mid h_m(i)-i = m \}.
\end{equation}
We note that 
$$
\{(i,j) \mid 1 \leq i \leq n \ {\rm and} \ i \leq j \leq i+\e_i \}=\{(i,i+m) \mid 0 \leq m \leq \height(\Phi^+) \ {\rm and} \ i \in \Lambda_m \}.
$$
Let $R$ be a commutative ring.
For two subsets $S, T \subset [n]$ we denote by $\M(S,T;R)$ the set of matrices $[a_{s,t}]_{s \in S \atop t \in T}$ with entries $a_{s,t} \in R$.
We also denote by $\GL(S;R)$ the set of invertible matrices $[a_{s,t}]_{s,t \in S}$ with entries $a_{s,t} \in R$. That is, 
\begin{align*}
\M(S,T;R)=&\{(a_{s,t})_{s \in S \atop t \in T} \ \mbox{matrices} \mid a_{s,t} \in R \}, \\
\GL(S;R)=&\{(a_{s,t})_{s,t \in S} \ \mbox{invertible matrices} \mid a_{s,t} \in R \}. 
\end{align*}
The following is a key proposition.

\begin{proposition} \label{proposition:key}
Assume that derivations $\{\psi_{i,i+m} \in \Der(\CR) \mid 0 \leq m \leq \height(\Phi^+) \ {\rm and} \  i \in \Lambda_m \}$ satisfy the following four conditions$:$
\begin{enumerate}
\item For any $i\in[n]$, $\psi_{i,i}=p_i \, \alpha_i^*$ for some non-zero rational number $p_i$ where $\alpha_1^*,\ldots,\alpha_n^*$ is the dual basis of the simple roots $\alpha_1,\ldots,\alpha_n$$;$
\item For any $m \geq 1$ there exists $P_m \in \GL(\Lambda_m;\Q)$ such that 
$$
[\psi_{i,i+m}]_{i \in \Lambda_m}=P_m[\alpha_{i,i+m}\psi_{i,i+m-1}]_{i \in \Lambda_m};
$$
\item For any $m \geq 0$, $\{\psi_{i,h_m(i)} \mid i \in [n] \} \subset D(\A_{I_m})$$;$
\item For any $m \geq 0$ and any $(i,j) \in \Lambda_{m} \times \Lambda_{m+1}$ with $i \neq j$, $$
\psi_{i,i+m}(\alpha_{j,j+m+1}) \in \CR\alpha_{j,j+m+1}.
$$ 
\end{enumerate}
Then, derivations $\{\psi_{i,i+m} \in \Der(\CR) \mid 0 \leq m \leq \height(\Phi^+) \ {\rm and} \  i \in \Lambda_m \}$ form uniform bases.
\end{proposition}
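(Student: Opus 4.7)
The plan is to verify Saito's criterion (Theorem~\ref{theorem:Saito's criterion}) for every lower ideal $I$. The degree check is immediate: an induction on $m$ using conditions~(1) and~(2) gives $\deg\psi_{i,i+m}=m$, so $\sum_i\deg\psi_{i,h_I(i)}=\sum_i(h_I(i)-i)=|\A_I|$. What remains is to show membership $\psi_{i,h_I(i)}\in D(\A_I)$ and nonvanishing of the Saito determinant; I would split this into two stages.

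\emph{Stage~1 (principal ideals $I_m$).} Induct on $m$. For $m=0$, condition~(1) gives $\psi_{i,i}=p_i\alpha_i^*$, forming the standard basis of $\Der\CR=D(\A_\emptyset)$. For the inductive step, the rows of $M_m:=\M(\psi_{1,h_m(1)},\ldots,\psi_{n,h_m(n)})$ indexed by $i\notin\Lambda_m$ coincide with those of $M_{m-1}$ (since $h_m(i)=h_{m-1}(i)$ there), while condition~(2) identifies the $\Lambda_m$-rows of $M_m$ as $P_m\cdot\diag(\alpha_{k,k+m})_{k\in\Lambda_m}$ applied to the corresponding rows of $M_{m-1}$. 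Taking determinants yields the recursion
\[
\det M_m \;=\; \det(P_m)\cdot \prod_{k\in\Lambda_m}\alpha_{k,k+m}\cdot\det M_{m-1},
\]
which iterates to $\det M_m\,\dot{=}\,\prod_{\alpha\in I_m}\alpha$. Combined with condition~(3), Saito's criterion delivers that $\{\psi_{i,h_m(i)}\}$ is an $\CR$-basis of $D(\A_{I_m})$.

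\emph{Stage~2 (arbitrary $I$).} Fix $i\in[n]$, $\alpha_{j,j+s}\in I$, and set $r:=h_I(i)-i$; I would verify $\psi_{i,i+r}(\alpha_{j,j+s})\in\CR\alpha_{j,j+s}$ by cases on $s$. When $s\le r$, condition~(3) yields $\psi_{i,i+r}=\psi_{i,h_r(i)}\in D(\A_{I_r})$ and $\alpha_{j,j+s}\in I_r$, so the claim holds. When $s=r+1$, the lower-ideal property forces $i\ne j$ (otherwise $\alpha_{i,i+r+1}\in I$ contradicts $h_I(i)=i+r$), and condition~(4) applies at $(i,j)\in\Lambda_r\times\Lambda_{r+1}$. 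The case $s\ge r+2$ I would attack by exploiting that $\alpha_{j,j+s}\in I$ forces $h_I(j)-j\ge s$: combining this structural constraint with iterated applications of condition~(2) (expressing $\psi_{i,i+r}$ as a polynomial combination of the $\psi_{k,k}=p_k\alpha_k^*$) and with conditions~(3) and~(4) at intermediate heights, one propagates the divisibility from heights $\le r+1$ up to $\alpha_{j,j+s}$. Once membership is established, linear independence follows from a determinant argument parallel to Stage~1, yielding $\det M_I\,\dot{=}\,\prod_{\alpha\in I}\alpha$, and Saito's criterion concludes.

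The main obstacle is the case $s\ge r+2$: neither condition~(3) nor condition~(4) applies directly, and any naive recursion through condition~(2) introduces a ``diagonal'' term $\psi_{j,j+r}(\alpha_{j,j+s})$ that is not individually controlled by conditions~(1)--(4). A successful argument must combine terms so that such uncontrolled contributions cancel, relying essentially on the lower-ideal structure of $I$ and on the coprimality of distinct positive roots as polynomials in $\CR$.
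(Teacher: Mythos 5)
Your proposal stalls exactly where you say it does, and the missing ingredient is not a cancellation trick inside conditions (1)--(4) but an external input that the paper imports from \cite{ABCHT}: Propositions~\ref{proposition:idealbnu} and \ref{proposition:ABCHTdegree}. The paper proves membership by induction on $\height(I)$. Writing $m=\height(I)$ and $I'=I\cap I_{m-1}$, the only roots to be checked beyond the inductive hypothesis are the $\alpha_{j,j+m}$ with $j\in\Lambda_I$, i.e.\ roots of height exactly one more than $\height(I')$ --- so the troublesome configuration is not ``$s\ge r+2$ for a fixed derivation'' but ``a derivation of low degree against a root of height $m$''. For $\theta\in D(\A_{I'})$ one has $\theta(\alpha_{j,j+m})\in\CR(\alpha_{j,j+m},b_{\nu_j})$ with $\deg b_{\nu_j}=m-1$; hence if $\deg\psi_{i,h_{I'}(i)}=h_{I'}(i)-i<m-1$ the divisibility by $\alpha_{j,j+m}$ is forced by degree reasons alone, and the only remaining case $\deg=m-1$ (i.e.\ $i\in\Lambda_{I'}$) is precisely what condition~(4) covers. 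Without this degree argument there is no way to control $\psi_{i,i+r}(\alpha_{j,j+s})$ for $s\ge r+2$ from conditions (1)--(4), as you correctly observe; so Stage~2 of your proposal has a genuine gap, not merely an unfinished computation.

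A second, smaller gap: your claim that linear independence for a general $I$ ``follows from a determinant argument parallel to Stage 1'' does not go through, because for $i\in\Lambda_I$ condition~(2) expresses $\psi_{i,h_I(i)}$ as a combination of $\alpha_{j,j+m}\psi_{j,j+m-1}$ over \emph{all} $j\in\Lambda_m$, not just $j\in\Lambda_I$; the transition matrix from level $m-1$ to level $m$ is rectangular and there is no clean determinant recursion. The paper handles this by strengthening the inductive statement (its Claim~2\!\'{}): it proves linear independence of $\{\xi_i^F\}\cup\{\psi_{i,h_I(i)}\}_{i\notin\Lambda_I}$ for an arbitrary matrix $F\in\M(\Lambda_I,\Lambda_m;\CR)$ with $\CR$-independent rows, and descends one height at a time by absorbing $P_m\diag(\alpha_{i,i+m})$ into $F$. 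Your Stage~1 (the chain $I_0\subset I_1\subset\cdots$), by contrast, is correct and matches the paper's construction in spirit, since there $\Lambda_{I_m}=\Lambda_m$ and the transition matrices are square.
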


\begin{proof}
One can see that $\deg(\psi_{i,j})=j-i$ from the conditions $(1)$ and $(2)$.
By \eqref{eq:exponentsHessenbergfunction} we have that $\sum_{i=1}^{n} \deg(\psi_{i,h_I(i)})=|\A_I|$.
From this together with Theorem~\ref{theorem:Saito's criterion} it is enough to prove the following two claims:\\
\textit{Claim~1} For any lower ideal $I$, $\psi_{i,h_I(i)} \in D(\A_I)$ for each $i \in [n]$.\\
\textit{Claim~2} For any lower ideal $I$, the derivations $\{\psi_{i,h_I(i)} \mid i \in [n] \}$ are linearly independent over $\CR$.

\smallskip

\noindent
\textit{Proof of Claim~1.}
We prove this by induction on $\height(I)$.
The base case $\height(I)=0$ is clear since $I=\emptyset$ and $D(\A_I)=\Der(\CR)$. 
Now we assume that $m>0$ and Claim~1 holds for any lower ideal $I'$ with $\height(I')=m-1$.
For a lower ideal $I$ with $\height(I)=m$, we define a lower ideal $I'$ by $I':=I \cap I_{m-1}$.
Namely, $I'=\{\alpha \in I \mid \height(\alpha) \leq m-1 \}$ by the definition \eqref{eq:Im}.
One can see that 
\begin{equation*}
h_{I'}(i)=\begin{cases}
h_I(i)-1 \ \ \ &{\rm if} \ i \in \Lambda_I, \\
h_I(i) \ \ \ &{\rm if} \ i \notin \Lambda_I.
\end{cases}
\end{equation*}
\noindent
\textit{Case~1} Suppose that $i \notin \Lambda_I$. 
In this case we have $h_I(i)=h_{I'}(i)$, so we prove that $\psi_{i,h_{I'}(i)} \in D(\A_{I})$.
Since $\height(I')=m-1$, we have $\psi_{i,h_{I'}(i)} \in D(\A_{I'})$ by inductive assumption. 
Noting that $I=I' \cup \{\alpha_{j,j+m} \mid j \in \Lambda_I \}$, it is enough to prove that $\psi_{i,h_{I'}(i)}(\alpha_{j,j+m}) \in \CR \alpha_{j,j+m}$ for any $j \in \Lambda_I$.
Let $H_{i,j}$ be the hyperplane defined by a positive root $\alpha_{i,j}$.
For each $j \in \Lambda_{I}$ we define a hyperplane arrangement $\A''_j$ in $H_{j,j+m}$ by $\A''_j=\{H \cap H_{j,j+m} \mid H \in \A_{I'} \}$ and we take a map 
$$
\nu_j: \A''_j \to \A_{I'}
$$
such that $\nu_j(X) \cap H_{j,j+m} =X$. 
The homogeneous polynomial 
$$
b_{\nu_j}:= \frac{\prod_{H \in \A_{I'}} \alpha_H}{\prod_{X \in \A''_j} \alpha_{\nu_j(X)}}= \frac{\prod_{\alpha \in I'} \alpha}{\prod_{X \in \A''_j} \alpha_{\nu_j(X)}}
$$
has degree $m-1$ by Proposition~\ref{proposition:ABCHTdegree}, and we obtain
\begin{equation} \label{eq:ProofProp1}
\psi_{i,h_{I'}(i)}(\alpha_{j,j+m}) \in \CR(\alpha_{j,j+m}, b_{\nu_j})
\end{equation}
from Proposition~\ref{proposition:idealbnu}.
If $i \notin \Lambda_{I'}$, then we have $\deg(\psi_{i,h_{I'}(i)})=h_{I'}(i)-i<\height(I')=m-1=\deg(b_{\nu_j})$.
Hence by \eqref{eq:ProofProp1} we obtain $\psi_{i,h_{I'}(i)}(\alpha_{j,j+m}) \in \CR(\alpha_{j,j+m})$ for $i \notin \Lambda_{I'}$.
If $i \in \Lambda_{I'}$, then we have $h_{I'}(i)=i+m-1$. 
Note that $i \in \Lambda_{I'} \subset \Lambda_{m-1}$ and $j \in \Lambda_I \subset \Lambda_m$.
We also have $i \neq j$ because $i \notin \Lambda_I$ and $j \in \Lambda_I$.
Thus, it follows from the condition~$(4)$ that $\psi_{i,h_{I'}(i)}(\alpha_{j,j+m})=\psi_{i,i+m-1}(\alpha_{j,j+m}) \in \CR(\alpha_{j,j+m})$.

\smallskip

\noindent
\textit{Case~2} Suppose that $i \in \Lambda_I$. Then, we have $h_I(i)=i+m=h_m(i)$.
From the condition~$(3)$ and $\A_I \subset \A_{I_m}$, we obtain $\psi_{i,h_I(i)}=\psi_{i,h_m(i)} \in D(\A_{I_m}) \subset D(\A_I)$.

Case~1 and Case~2 show Claim~1.

\smallskip

\noindent
\textit{Proof of Claim~2.}
In order to prove Claim~2, we prove Claim~2\!\'{} as follows:

\smallskip

\noindent
\textit{Claim~2\!\'{}} For any lower ideal $I$ with $\height(I)=m$ and any matrix $F=[f_{ij}]_{i \in \Lambda_I \atop j \in \Lambda_m} \in \M(\Lambda_I, \Lambda_m; \CR)$ such that row vectors $\textbf{f}_i=[f_{ij}]_{j \in \Lambda_m} \ (i \in \Lambda_I)$ are linearly independent over $\CR$, we put $[\xi_i^F]_{i \in \Lambda_I}:=F[\psi_{i,i+m}]_{i \in \Lambda_m}$.
Then, the derivations $\{\xi_i^F \mid i \in \Lambda_I\} \cup \{\psi_{i,h_I(i)} \mid i \notin \Lambda_I \}$ are linearly independent over $\CR$.\\

\smallskip

We prove Claim~2\!\'{} by induction on $\height(I)$.
The base case $\height(I)=0$ is clear since $I=\emptyset$ and the derivations $\{\psi_{i,i}=p_i \, \alpha_i^* \mid i \in [n] \}$ are $\CR$-basis of $D(\A_I)=\Der(\CR)$.

Now we assume that $m>0$ and Claim~2\!\'{} holds for any lower ideal $I'$ with $\height(I')=m-1$ and any matrix $F'\in \M(\Lambda_{I'}, \Lambda_{m-1}; \CR)$ such that row vectors in $F'$ are linearly independent over $\CR$.
For a lower ideal $I$ with $\height(I)=m$ and any matrix $F\in \M(\Lambda_{I}, \Lambda_{m}; \CR)$ such that row vectors in $F$ are linearly independent over $\CR$, we define the lower ideal $I':=I \cap I_{m-1}$.
From the condition~$(2)$ we have
\begin{align*}
\begin{bmatrix}
[\xi_i^F]_{i \in \Lambda_I} \\
[\psi_{i,h_I(i)}]_{i \notin \Lambda_I}
\end{bmatrix}
=&
\begin{bmatrix}
F[\psi_{i,i+m}]_{i \in \Lambda_m} \\
[\psi_{i,h_I(i)}]_{i \notin \Lambda_I}
\end{bmatrix}
=
\begin{bmatrix}
FP_m[\alpha_{i,i+m}\psi_{i,i+m-1}]_{i \in \Lambda_m} \\
[\psi_{i,h_I(i)}]_{i \notin \Lambda_I}
\end{bmatrix}\\
=&
\begin{bmatrix}
FP_m\diag(\alpha_{i,i+m})[\psi_{i,i+m-1}]_{i \in \Lambda_m} \\
[\psi_{i,h_I(i)}]_{i \notin \Lambda_I}
\end{bmatrix}\\
=&
\begin{bmatrix}
FP_m\diag(\alpha_{i,i+m})[\psi_{i,i+m-1}]_{i \in \Lambda_m} \\
[\psi_{i,i+m-1}]_{i \in \Lambda_{I'}\setminus\Lambda_I} \\
[\psi_{i,h_I(i)}]_{i \notin \Lambda_{I'}} \\
\end{bmatrix}\\
=&
\begin{bmatrix}
F'[\psi_{i,i+m-1}]_{i \in \Lambda_{m-1}} \\
[\psi_{i,h_I(i)}]_{i \notin \Lambda_{I'}}
\end{bmatrix}
=
\begin{bmatrix}
[\xi_i^{F'}]_{i \in \Lambda_{I'}} \\
[\psi_{i,h_I(i)}]_{i \notin \Lambda_{I'}}
\end{bmatrix}
\end{align*}
where $[\xi_i^{F'}]_{i \in \Lambda_{I'}}:=F'[\psi_{i,i+m-1}]_{i \in \Lambda_{m-1}}$ and $F'=[f'_{ij}]_{i \in \Lambda_{I'} \atop j \in \Lambda_{m-1}}$ is the matrix given by

\begin{equation*}
F'=\begin{array}{rcclll}
\ldelim[{2}{0.5ex}[] &\multicolumn{1}{c|}{FP_m\diag(\alpha_{i,i+m})}&O& \rdelim]{2}{0.5ex}[]&\hspace{-8pt} \rdelim\}{1}{0.2ex}[{\scriptsize $\Lambda_I$}]&\hspace{-5pt} \rdelim\}{2}{0.5ex}[{\scriptsize $\Lambda_{I'}$}] \\
\cline{2-3}
&\multicolumn{1}{c|}{O}& \delta_{ij}&&&\\
&\raisebox{2ex}[1ex][0ex]{$\underbrace{\hspace{20ex}}_{\Lambda_m}$}&&&&\\
&\multicolumn{2}{c}{\raisebox{2ex}[1ex][0ex]{$\underbrace{\hspace{25ex}}_{\Lambda_{m-1}}$}}&&&
\end{array}
\end{equation*}

More precisely, 
\begin{equation*}
f'_{ij}=\begin{cases}
(i,j)\mbox{-entry of} \ [FP_m\diag(\alpha_{i,i+m})] & {\rm if} \ (i,j)\in\Lambda_I \times \Lambda_m, \\
\delta_{ij} & {\rm if} \ (i,j)\in(\Lambda_{I'}\setminus\Lambda_I) \times (\Lambda_{m-1}\setminus\Lambda_m), \\
0 & {\rm otherwise}.  
\end{cases}
\end{equation*}
It is clear that row vectors in $F'$ are linearly independent over $\CR$.
Hence by the inductive assumption, $\{\xi_i^{F'} \mid i \in \Lambda_{I'}\} \cup \{\psi_{i,h_I(i)} \mid i \notin \Lambda_{I'} \}$ are linearly independent over $\CR$, so is $\{\xi_i^F \mid i \in \Lambda_I\} \cup \{\psi_{i,h_I(i)} \mid i \notin \Lambda_I \}$. 
Applying Claim~2\!\'{} to $F=[\delta_{ij}]_{i \in \Lambda_I \atop j \in \Lambda_m}$, we obtain Claim~2.

Therefore, we proved Claim~1 and Claim~2. This completes the proof.
\end{proof}

\begin{remark} \label{remark:key}
Assume that the derivations $\{\psi_{i,j} \in \Der(\CR) \mid 1 \leq i \leq n \ {\rm and} \ i \leq j \leq i+\e_i \}$ satisfy the following conditions: 
\begin{enumerate}
\item For any $i\in[n]$ we can write $\psi_{i,i}=p_i \, \alpha_i^*$ for some non-zero rational number $p_i$; 
\item For any $m \geq 1$ we can write 
$$
[\psi_{i,i+m}]_{i \in \Lambda_m}=P_m[\alpha_{i,i+m}\psi_{i,i+m-1}]_{i \in \Lambda_m}
$$
for some invertible matrix $P_m \in \GL(\Lambda_m;\Q)$;
\item For any lower ideal $I \subset \Phi^+$, $\{\psi_{i,h_I(i)} \mid i \in [n] \} \subset D(\A_I)$.
\end{enumerate}
Then the derivations $\{\psi_{i,j} \in \Der(\CR) \mid 1 \leq i \leq n \ {\rm and} \ i \leq j \leq i+\e_i \}$ form uniform bases by Proposition~\ref{proposition:key}.
In fact, if we consider the case $I=I_m$ in (3) above, then the derivations $\{\psi_{i,j} \}_{i,j}$ satisfy the condition~$(3)$ in Proposition~\ref{proposition:key}.
If we fix $i$ and take $I=I_m\cup\{\alpha_{j,j+m+1}\}_{j \in \Lambda_{m+1} \atop j \neq i}$ in (3) above, then the condition~$(4)$ in Proposition~\ref{proposition:key} holds.
\end{remark}

\section{Main theorem}
\label{section:maintheorem}

In this section we prove the existence of uniform bases which are inductively constructed by using invertible matrices (Theorem~\ref{theorem:main1}).
Moreover, we prove that the invertible matrices associated with our uniform bases are unique in some sense (Theorem~\ref{theorem:main1uniqueness}).

\begin{theorem}\label{theorem:main1}
For arbitrary decomposition $\Phi^+=\coprod_{i=1}^n \ \Phi^+_i$ satisfying \eqref{eq:decomposition1} and \eqref{eq:decomposition2}, there exist uniform bases $\{\psi_{i,i+m} \in \Der(\CR) \mid 0 \leq m \leq \height(\Phi^+) \ {\rm and} \  i \in \Lambda_m \}$ of the following form.
The initial data are of the form
\begin{equation*} 
\psi_{i,i}=p_i \, \alpha_i^* \ \ \ \ \ (i \in [n])
\end{equation*}
where $p_i$ is arbitrary non-zero rational number and $\alpha_1^*, \ldots, \alpha_n^*$ is the dual basis of the simple roots $\alpha_1, \ldots, \alpha_n$.
For any $m$ with $1 \leq m \leq \height(\Phi^+)$, 
\begin{equation*} 
\psi_{i,i+m}=\sum_{j \in \Lambda_m}p_{ij}^{(m)} \, \alpha_{j,j+m}\psi_{j,j+m-1} \ \ \ \ \ (i \in \Lambda_m)
\end{equation*}
for some rational numbers $p_{ij}^{(m)}$. 
\end{theorem}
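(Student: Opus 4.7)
The plan is to verify the hypotheses of Proposition~\ref{proposition:key} by induction on $m$, simultaneously constructing the derivations $\psi_{i,i+m}$ ($i\in\Lambda_m$) and the rational matrix $P_m$. For the base case $m=0$ we set $\psi_{i,i}=p_i\,\alpha_i^*$; the only non-trivial verification is condition~(4) of Proposition~\ref{proposition:key} at level $0$, i.e.\ $\psi_{i,i}(\alpha_{j,j+1})\in\CR\alpha_{j,j+1}$ for $(i,j)\in\Lambda_0\times\Lambda_1$ with $i\neq j$, and this holds since $\alpha_{j,j+1}=\alpha_j$ by \eqref{eq:decomposition1} and $\alpha_i^*(\alpha_j)=\delta_{ij}$.

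The key observation driving the inductive step is that condition~(4) at level $m-1$, available by the inductive hypothesis, forces the MAT matrix $C=(c_{j,j_0})_{j\in\Lambda_{m-1},\,j_0\in\Lambda_m}$ determined by $\psi_{j,j+m-1}(\alpha_{j_0,j_0+m})\equiv c_{j,j_0}\,b_{\nu_{j_0}}\pmod{\alpha_{j_0,j_0+m}}$ to satisfy $c_{j,j_0}=0$ whenever $j\neq j_0$: the left-hand side lies in $\CR\alpha_{j_0,j_0+m}$ by hypothesis, while $b_{\nu_{j_0}}$ is not divisible by $\alpha_{j_0,j_0+m}$ because its numerator ranges over roots of $I_{m-1}$, which does not contain the height-$m$ root $\alpha_{j_0,j_0+m}$. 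The rank-maximality of $C$ from Proposition~\ref{proposition:ABCHTrank} then forces every diagonal entry $c_{j,j}$ ($j\in\Lambda_m$) to be nonzero. Applying condition~(4) at level $m-1$ once more, one checks directly that each $\alpha_{k,k+m}\psi_{k,k+m-1}$ ($k\in\Lambda_m$) lies in $D(\A_{I_m})$, and that together with $\{\psi_{i,i+m-1}\}_{i\in\Lambda_{m-1}\setminus\Lambda_m}$ and $\{\psi_{i,h_{m-1}(i)}\}_{i\notin\Lambda_{m-1}}$ these form an $\CR$-basis of $D(\A_{I_m})$ by Saito's criterion (Theorem~\ref{theorem:Saito's criterion}); the degree sum equals $|\A_{I_m}|$ by \eqref{eq:exponentsHessenbergfunction}, and linear independence over $\CR$ follows from the inductive basis property of $D(\A_{I_{m-1}})$ by reading off coefficients. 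Consequently, for \emph{any} invertible $P_m\in\GL(\Lambda_m;\Q)$ the derivations $\psi_{i,i+m}:=\sum_{j\in\Lambda_m}p_{ij}^{(m)}\alpha_{j,j+m}\psi_{j,j+m-1}$ still give an $\CR$-basis of $D(\A_{I_m})$, and condition~(3) at level $m$ is automatic.

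It remains to choose $P_m$ so that condition~(4) at level $m$ also holds. Reducing $\psi_{i,i+m}(\alpha_{j_0,j_0+m+1})$ modulo $\alpha_{j_0,j_0+m+1}$ using Proposition~\ref{proposition:idealbnu} applied to $I_m$, and writing $\alpha_{k,k+m}\psi_{k,k+m-1}(\alpha_{j_0,j_0+m+1})\equiv \tilde c'_{k,j_0}\,\tilde b_{\tilde\nu_{j_0}}\pmod{\alpha_{j_0,j_0+m+1}}$ with $\tilde c'_{k,j_0}\in\Q$, condition~(4) at level $m$ turns into the linear equation $\sum_{k\in\Lambda_m}p_{ik}^{(m)}\tilde c'_{k,j_0}=0$ for each $j_0\in\Lambda_{m+1}\setminus\{i\}$. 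Applying Proposition~\ref{proposition:ABCHTrank} at level $m+1$ to the basis of $D(\A_{I_m})$ just built shows that $\tilde C'=(\tilde c'_{k,j_0})$ has rank $|\Lambda_{m+1}|$, so the $\Q$-linear map $T\colon\Q^{\Lambda_m}\to\Q^{\Lambda_{m+1}}$ given by $q\mapsto q\tilde C'$ is surjective. I then define the rows of $P_m$ as follows: for $i\in\Lambda_{m+1}$, choose $p_{i,\cdot}^{(m)}\in T^{-1}(e_i)$ (non-empty by surjectivity); for $i\in\Lambda_m\setminus\Lambda_{m+1}$, choose the $p_{i,\cdot}^{(m)}$ to form a $\Q$-basis of $\ker T$ (whose dimension $|\Lambda_m|-|\Lambda_{m+1}|$ matches $|\Lambda_m\setminus\Lambda_{m+1}|$). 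Applying $T$ to any relation $\sum a_i\,p_{i,\cdot}^{(m)}=0$ immediately forces $a_i=0$ first for $i\in\Lambda_{m+1}$ and then for $i\in\Lambda_m\setminus\Lambda_{m+1}$, so $P_m$ is invertible, and the rows satisfy the required equations by construction.

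The main obstacle is the linear-algebra step in the third paragraph: the constraints of condition~(4) at level $m$ act row-by-row on $P_m$, yet we must still produce a single invertible matrix. This is handled by recasting (4) through the surjective map $T$, whose rank comes from a \emph{second} application of Proposition~\ref{proposition:ABCHTrank}, one level above. Iterating the induction establishes all four hypotheses of Proposition~\ref{proposition:key}, which then delivers the uniform bases of the prescribed form.
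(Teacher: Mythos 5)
Your argument is correct and is essentially the paper's own proof: both induct along the chain $I_1\subset I_2\subset\cdots$, set $\theta_{k,k+m}=\alpha_{k,k+m}\psi_{k,k+m-1}$, apply Propositions~\ref{proposition:idealbnu}, \ref{proposition:ABCHTdegree} and \ref{proposition:ABCHTrank} to the resulting full-column-rank matrix (your $\tilde C'$ is the paper's $C_m$), and choose $P_m$ with $P_m C_m=(\delta_{ij})$ so that condition~(4) of Proposition~\ref{proposition:key} holds at level $m$. The only differences are presentational: you construct such a $P_m$ explicitly from a section of the surjection $T$ together with a basis of $\ker T$ where the paper merely asserts its existence from the rank condition, you reprove the basis statement for $D(\A_{I_m})$ via Saito's criterion instead of citing Theorem~\ref{theorem:ABCHTbasis}, and your observation that the backward matrix is diagonal with nonzero diagonal entries is true but not actually needed.
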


\begin{proof}
It suffices to construct derivations $\{\psi_{i,i+m} \in \Der(\CR) \mid 0 \leq m \leq \height(\Phi^+) \ {\rm and} \  i \in \Lambda_m \}$ satisfying the four conditions in Proposition~\ref{proposition:key}.
For the construction, we apply the method of Abe, Barakat, Cuntz, Hoge, and Terao explained in Section~\ref{section:ideal arrangements} to the sequence of the lower ideals in \eqref{eq:Im}:  
$$
I_1 \subset I_2 \subset \cdots \subset I_{\height(\Phi^+)}.
$$

For $m$ with $0 \leq m < \height(\Phi^+)$, we construct inductively derivations $\{\psi_{i,i+m}\}_{i \in \Lambda_{m}}$ and $\{\theta_{i,i+m+1}\}_{i \in \Lambda_{m+1}}$ with $\deg(\psi_{i,j})=\deg(\theta_{i,j})=j-i$ so that the derivations $\{\theta_{i,i+m+1} \mid i \in \Lambda_{m+1}\} \cup \{\psi_{i,i+m} \mid i \in \Lambda_{m} \setminus \Lambda_{m+1}\} \cup \cdots \cup \{\psi_{i,i+1} \mid i \in \Lambda_{1} \setminus \Lambda_{2}\}$ form an $\CR$-basis for $D(\A_{I_{m+1}})$ as follows.
As the base case, when $m=0$, for any non-zero rational number $p_i$ we define 
\begin{equation} \label{eq:ProofConstructionPsiBaseCase}
\psi_{i,i}=p_i \, \alpha_i^* \ \ \ \ \ {\rm for} \ i \in \Lambda_0=[n],
\end{equation}
and 
$$
\theta_{i,i+1}=\alpha_{i,i+1} \psi_{i,i} \ \ \ \ \ {\rm for} \ i \in \Lambda_1=[n].
$$
For general $m$ with $0 < m < \height(\Phi^+)$ we proceed inductively as follows. 
Let $H_{i,j}$ be the hyperplane defined by a positive root $\alpha_{i,j}$. Then we have
$$
\A_{I_{m+1}}=\A_{I_{m}} \cup \{H_{j,j+m+1} \mid j \in \Lambda_{m+1} \}.
$$
For each $j \in \Lambda_{m+1}$ we define a hyperplane arrangement $\A''_{j}$ in $H_{j,j+m+1}$ by $\A''_{j}=\{H \cap H_{j,j+m+1} \mid H \in \A_{I_{m}} \}$ and we take a map 
$$
\nu_j: \A''_{j} \to \A_{I_{m}}
$$
such that $\nu_j(X) \cap H_{j,j+m+1} =X$. 
The homogeneous polynomial 
$$
b_{\nu_j}= \frac{\prod_{H \in \A_{I_{m}}} \alpha_H}{\prod_{X \in \A''_j} \alpha_{\nu_j(X)}} = \frac{\prod_{\alpha \in I_{m}} \alpha}{\prod_{X \in \A''_j} \alpha_{\nu_j(X)}}
$$
has degree $m$ by Proposition~\ref{proposition:ABCHTdegree}, and we obtain 
\begin{equation*} 
\theta_{i,i+m}(\alpha_{j,j+m+1}) \in \CR(\alpha_{j,j+m+1}, b_{\nu_j})
\end{equation*}
for $i \in \Lambda_{m}$ and $j \in \Lambda_{m+1}$ from Proposition~\ref{proposition:idealbnu}.
Hence, we can write
\begin{equation} \label{eq:ProofMod}
\theta_{i,i+m}(\alpha_{j,j+m+1}) \equiv c_{ij}^{(\nu_j)} b_{\nu_j} \ \ \ \ \ \mbox{mod} \ \alpha_{j,j+m+1}
\end{equation}
for some rational numbers $c_{ij}^{(\nu_j)}$ ($i \in \Lambda_{m}$, $j \in \Lambda_{m+1}$).
By Proposition~\ref{proposition:ABCHTrank}, the matrix $C_{m}:=(c_{ij}^{(\nu_j)})_{i \in \Lambda_{m} \atop j \in \Lambda_{m+1}}$ has full rank $\lambda_{m+1}$. 
Thus, there exists $P_{m}=(p_{ij}^{(m)})_{i,j \in \Lambda_{m}} \in \GL(\Lambda_{m},\Q)$ such that 
\begin{equation} \label{eq:ProofPmQm}
P_{m}C_{m}=(\delta_{ij})_{i \in \Lambda_{m} \atop j \in \Lambda_{m+1}}.
\end{equation}
We define
\begin{equation} \label{eq:ProofConstructionPsi}
\psi_{i,i+m}:=\sum_{j \in \Lambda_{m}} p_{ij}^{(m)} \theta_{j,j+m}
\end{equation}
for $i \in \Lambda_{m}$ and 
\begin{equation} \label{eq:ProofConstructionTheta}
\theta_{i,i+m+1}:= \alpha_{i,i+m+1} \psi_{i,i+m}
\end{equation}
for $i \in \Lambda_{m+1}$.
From the inductive assumption together with \eqref{eq:ProofConstructionPsi} we see that
\begin{equation} \label{eq:ProofCondition(3)}
\psi_{i,h_{m}(i)} \in D(\A_{I_{m}}) \ \ \ \ \ {\rm for} \ i \in [n].
\end{equation}
By the inductive argument, it follows from Theorem~\ref{theorem:ABCHTbasis} that the derivations $\{\theta_{i,i+m+1} \mid i \in \Lambda_{m+1} \} \cup \{\psi_{i,i+m} \mid i \in \Lambda_{m}\setminus\Lambda_{m+1} \} \cup\cdots \cup \{\psi_{i,i+1} \mid i \in \Lambda_1 \setminus \Lambda_2 \}$ form an $\CR$-basis for $D(\A_{I_{m+1}})$.

Finally, when $m=\height(\Phi^+)$, we define 
$$
\psi_{i,i+m}=\theta_{i,i+m}
$$ 
for $i \in \Lambda_{m}$.
Note that $|\Lambda_m|=1$ whenever $m=\height(\Phi^+)$ because the root with the highest height is uniquely determined. 
Hence, if $m=\height(\Phi^+)$, then we have $[\psi_{i,i+m}]_{i \in \Lambda_m}=P_m [\theta_{i,i+m}]_{i \in \Lambda_m}$ where $P_{m}=(1) \in \GL(\Lambda_{m},\Q)$.

Now we check that the derivations $\{\psi_{i,i+m} \in \Der(\CR) \mid 0 \leq m \leq \height(\Phi^+) \ {\rm and} \  i \in \Lambda_m \}$ satisfy the conditions~$(1), (2), (3), (4)$ in Proposition~\ref{proposition:key}.
The condition~$(1)$ is exactly the definition in \eqref{eq:ProofConstructionPsiBaseCase}.
The condition~$(2)$ follows from \eqref{eq:ProofConstructionPsi} and \eqref{eq:ProofConstructionTheta}. 
The condition~$(3)$ is nothing but \eqref{eq:ProofCondition(3)}.
We check the condition~$(4)$.
If $m=0$, then $\psi_{i,i}(\alpha_{j,j+1})=p_i\alpha_i^*(\alpha_j)=0$ for $i \in \Lambda_0=[n]$ and $j \in \Lambda_{1}=[n]$ with $i \neq j$. 
If $m \geq 1$, then we have for $i \in \Lambda_m$ and $j \in \Lambda_{m+1}$ with $i \neq j$
\begin{align*}
\psi_{i,i+m}(\alpha_{j,j+m+1})&=\sum_{k \in \Lambda_{m}} p_{ik}^{(m)} \theta_{k,k+m}(\alpha_{j,j+m+1}) \ \ \ \ \ \ ({\rm from} \ \eqref{eq:ProofConstructionPsi})\\
&\equiv \sum_{k \in \Lambda_{m}} p_{ik}^{(m)} c_{kj}^{(\nu_j)} b_{\nu_j} \ \ \ \ \ \mbox{mod} \ \alpha_{j,j+m+1} \ \ \ \ \ \ ({\rm by} \ \eqref{eq:ProofMod})\\
&= \delta_{ij} b_{\nu_j} \ \ \ \ \ \ ({\rm from} \ \eqref{eq:ProofPmQm})\\
&=0 \ \ \ \ \ \ ({\rm because} \ i \neq j).
\end{align*}
Therefore, we conclude that the derivations $\{\psi_{i,i+m} \in \Der(\CR) \mid 0 \leq m \leq \height(\Phi^+) \ {\rm and} \  i \in \Lambda_m \}$ form uniform bases from Proposition~\ref{proposition:key}. 
\end{proof}

We obtain from Theorem~\ref{theorem:main1} uniform bases $\{\psi_{i,i+m} \in \Der(\CR) \mid 0 \leq m \leq \height(\Phi^+) \ {\rm and} \  i \in \Lambda_m \}$ by the recursive description.
For any non-zero rational numbers $p_1, \ldots, p_n$, we fix the initial data 
\begin{align*}  
\notag \psi_{i,i}=p_i \, \alpha_i^* \ \ \ \ \ \ \ \ \ \ \ \ \ \ \ \ \ \ \ \ \ \ \ \ \ \ \ \ \ \ &(i \in [n]). 
\end{align*}
Proceeding inductively,  
\begin{align} \label{eq:main1-2} 
\psi_{i,i+m}=\sum_{j \in \Lambda_m}p_{ij}^{(m)} \, \alpha_{j,j+m}\psi_{j,j+m-1} \ \ \ \ \ &(1 \leq m \leq \height(\Phi^+), \ i \in \Lambda_m) 
\end{align}
for some rational numbers $p_{ij}^{(m)}$.
Then we define matrices $P_m$ ($0 \leq m \leq \height(\Phi^+)$) by
\begin{align} \label{eq:matrixP}
P_0&:=\diag(p_1,\ldots,p_n), \notag \\
P_m&:=(p_{ij}^{(m)})_{i,j \in \Lambda_m} \ \ \ {\rm for} \ m>0.
\end{align}
As seen in the proof of Theorem~\ref{theorem:main1}, $P_m$ is invertible for all $m$.
We call the matrices $\{P_m \mid 0 \leq m \leq \height(\Phi^+) \}$ \textbf{the invertible matrices associated with uniform bases} $\{\psi_{i,i+m} \in \Der(\CR) \mid 0 \leq m \leq \height(\Phi^+) \ {\rm and} \  i \in \Lambda_m \}$. 
These invertible matrices are not unique.
In fact, we can multiply $\psi_{i,i+m}$ by a non-zero scalar. 
Also, $\psi_{j,j+m}$ is an element of $D(\A_{\Phi^+})$ for any $j \in \Lambda_m \setminus \Lambda_{m+1}$, so we can replace $\psi_{i,i+m}$ with the $\psi_{i,i+m}$ plus a constant times $\psi_{j,j+m}$ for arbitrary $j \in \Lambda_m \setminus \Lambda_{m+1}$ with $j \neq i$.
These correspond to the following two types of elementary row operations for matrices in $\GL(\Lambda_m;\Q)$:
\begin{enumerate}
\item multiply one row of the matrix by a non-zero scalar constant$;$
\item replace an $i$-th row with the $i$-th row plus a constant times $j$-th row for $j \in \Lambda_m \setminus \Lambda_{m+1}$ with $j \neq i$.
\end{enumerate}
For $P_m, P'_m \in \GL(\Lambda_m;\Q)$, $P_m$ is \textbf{equivalent} to $P'_m$ if $P'_m$ is obtained from $P_m$ by finitely many elementary row operations $(1)$ and $(2)$ above.
The following theorem states that $P_m$ is uniquely determined up to the equivalence when we determined the invertible matrices $P_0,\ldots,P_{m-1}$.

\begin{theorem}\label{theorem:main1uniqueness}
Let $\Phi^+=\coprod_{i=1}^n \ \Phi^+_i$ be a decomposition satisfying \eqref{eq:decomposition1} and \eqref{eq:decomposition2}.
Let $\{P_m \mid 0 \leq m \leq \height(\Phi^+) \}$ and $\{P'_m \mid 0 \leq m \leq \height(\Phi^+) \}$ be two sets of the invertible matrices associated with uniform bases $\{\psi_{i,i+m} \in \Der(\CR) \mid 0 \leq m \leq \height(\Phi^+), i \in \Lambda_m \}$ and $\{\psi'_{i,i+m} \in \Der(\CR) \mid 0 \leq m \leq \height(\Phi^+), i \in \Lambda_m \}$ respectively.
Assume that $P_r=P'_r$ for any $r=0,1.\ldots,m-1$. Then $P_m$ is equivalent to $P'_m$.
\end{theorem}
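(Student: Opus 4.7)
The plan is to unwind the matrix identity $P'_m = (P'_m P_m^{-1}) P_m$, exploit that the uniform-basis conditions force a rigid block structure on $Q := P'_m P_m^{-1}$, and then row-reduce $Q$ to the identity using only the permitted elementary operations.

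First, from the shared initial data $P_0 = P'_0 = \diag(p_1,\ldots,p_n)$ and the recursion \eqref{eq:main1-2}, an easy induction on $r$ gives $\psi_{i,i+r} = \psi'_{i,i+r}$ for every $0 \le r < m$ and $i \in \Lambda_r$. Applying \eqref{eq:main1-2} at step $m$ to both families and cancelling the common inputs $\alpha_{j,j+m}\psi_{j,j+m-1}$ yields
\[
[\psi'_{i,i+m}]_{i \in \Lambda_m} = Q \cdot [\psi_{i,i+m}]_{i \in \Lambda_m}, \qquad Q := P'_m P_m^{-1} \in \GL(\Lambda_m;\Q).
\]

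The heart of the argument is a structural claim: after ordering $\Lambda_m = \Lambda_{m+1} \sqcup (\Lambda_m \setminus \Lambda_{m+1})$ with $\Lambda_{m+1}$ first, $Q$ has the block form $\left(\begin{smallmatrix} D & B \\ 0 & A \end{smallmatrix}\right)$ with $D$ diagonal (with nonzero diagonal entries) and $A$ invertible. To establish this I fix for each $j \in \Lambda_{m+1}$ a map $\nu_j\colon \A''_j \to \A_{I_m}$ as in Section~\ref{section:ideal arrangements}. By Proposition~\ref{proposition:idealbnu} combined with the degree count $\deg \psi_{i,i+m} = m = \deg b_{\nu_j}$, there exist scalars $\tilde c_{ij},\tilde c'_{ij} \in \Q$ with
\[
\psi_{i,i+m}(\alpha_{j,j+m+1}) \equiv \tilde c_{ij}\,b_{\nu_j}, \quad \psi'_{i,i+m}(\alpha_{j,j+m+1}) \equiv \tilde c'_{ij}\,b_{\nu_j} \pmod{\alpha_{j,j+m+1}}.
\]
Since both $\psi$ and $\psi'$ are uniform bases, Remark~\ref{remark:key} shows that condition~$(4)$ of Proposition~\ref{proposition:key} holds for each; together with the fact that $b_{\nu_j}$ is a product of linear forms of height $\le m$, hence not divisible by $\alpha_{j,j+m+1}$, this forces $\tilde c_{ij} = \tilde c'_{ij} = 0$ whenever $i \neq j$. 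Meanwhile Proposition~\ref{proposition:ABCHTrank}, applied to the degree-$m$ basis elements $\{\psi_{i,i+m}\}_{i \in \Lambda_m}$ of $D(\A_{I_m})$, forces the matrix $(\tilde c_{ij})_{i \in \Lambda_m,\, j \in \Lambda_{m+1}}$ to have column rank $|\Lambda_{m+1}|$, so combined with its diagonal shape we obtain $\tilde c_{jj} \neq 0$ for every $j \in \Lambda_{m+1}$. Substituting $\psi'_{i,i+m} = \sum_k q_{ik}\psi_{k,k+m}$ into the congruence and reading off the coefficient of $b_{\nu_j}$ then yields $\tilde c'_{ij} = q_{ij}\tilde c_{jj}$, whence $q_{ij} = 0$ for all $(i,j) \in \Lambda_m \times \Lambda_{m+1}$ with $i \neq j$, exactly the block form claimed.

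To finish I row-reduce $Q$ to the identity using only type~$(1)$ and type~$(2)$ operations: first scale each row $i \in \Lambda_{m+1}$ by $D_{ii}^{-1}$ to convert $D$ into $I$; next clear the resulting upper-right block by adding suitable rational combinations of rows indexed by $\Lambda_m \setminus \Lambda_{m+1}$ to each row of $\Lambda_{m+1}$, which is legal under type~$(2)$ (the source indices lie in $\Lambda_m \setminus \Lambda_{m+1}$) and feasible because $A$ is invertible; finally reduce the residual $A$-block to the identity using row operations internal to $\Lambda_m \setminus \Lambda_{m+1}$, all of which are of type~$(1)$ or~$(2)$. Inverting this sequence expresses $Q$ itself as a product of type~$(1)$ and~$(2)$ elementary matrices, and then $P'_m = QP_m$ shows $P'_m$ is equivalent to $P_m$. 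I expect the main obstacle to be invoking Proposition~\ref{proposition:ABCHTrank} for a basis other than the particular one from the MAT construction: one must check that the column rank of $(\tilde c_{ij})$ is a basis-independent invariant of the degree-$m$ subspace, after which the diagonal structure makes non-vanishing of each diagonal entry automatic.
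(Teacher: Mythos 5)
Your proposal is correct and follows essentially the same route as the paper: both evaluate the degree-$m$ derivations on the roots $\alpha_{j,j+m+1}$ modulo $\alpha_{j,j+m+1}$ via the polynomials $b_{\nu_j}$, use Propositions~\ref{proposition:idealbnu}, \ref{proposition:ABCHTdegree}, \ref{proposition:ABCHTrank} together with the uniform-basis property at $I_m$ and $I_m\cup\{\alpha_{j,j+m+1}\}$, and your block-triangular form for $Q=P'_mP_m^{-1}$ is just a repackaging of the paper's statement that the normalized rows of $P_m$ and $P'_m$ differ by elements of $\ker C_m$, spanned by the rows of $P_m$ indexed by $\Lambda_m\setminus\Lambda_{m+1}$. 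Your closing worry is unnecessary, since Proposition~\ref{proposition:ABCHTrank} is stated for an arbitrary homogeneous $\CR$-basis of $D(\A_{I'})$, so it applies directly to $\{\psi_{i,h_m(i)}\}$.
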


\begin{proof}
Let $\{P_m \mid 0 \leq m \leq \height(\Phi^+) \}$ be the invertible matrices associated with uniform bases $\Psi:=\{\psi_{i,i+m} \in \Der(\CR) \mid 0 \leq m \leq \height(\Phi^+), i \in \Lambda_m \}$.
Then, we can write from \eqref{eq:main1-2}
\begin{equation} \label{eq:proof2psi}
\psi_{i,i+m}=\sum_{k \in \Lambda_m} p_{ik}^{(m)} \alpha_{k,k+m}\psi_{k,k+m-1}=\sum_{k \in \Lambda_m} p_{ik}^{(m)} \theta_{k,k+m}
\end{equation}
where $P_m=(p_{ij}^{(m)})_{i,j \in \Lambda_m}$ and $\theta_{k,k+m}:=\alpha_{k,k+m}\psi_{k,k+m-1}$ for $k \in \Lambda_m$.
For $k \in \Lambda_m$, we define a lower ideal $I_m^{(k)}$ by $I_m^{(k)}=I_m \setminus \{\alpha_{k,k+m} \}$.
Since the set of derivations $\Psi$ forms uniform bases, we have $\psi_{k,k+m-1} \in D(\A_{I_m^{(k)}})$ which implies that $\theta_{k,k+m}=\alpha_{k,k+m}\psi_{k,k+m-1} \in D(\A_{I_m})$.

We write $\A''_{j}=\{H \cap H_{j,j+m+1} \mid H \in \A_{I_{m}} \}$ where $H_{j,j+m+1}$ is the hyperplane defined by the positive root $\alpha_{j,j+m+1}$.
We fix a map $\nu_j: \A''_{j} \to \A_{I_{m}}$ such that $\nu_j(X) \cap H_{j,j+m+1} =X$, and define the homogeneous polynomial $b_{\nu_j}= (\prod_{H \in \A_{I_{m}}} \alpha_H) / (\prod_{X \in \A''_j} \alpha_{\nu_j(X)})$ which has degree $m$ from Proposition~\ref{proposition:ABCHTdegree}.
It follows from Proposition~\ref{proposition:idealbnu} that for $k \in \Lambda_{m}$ and $j \in \Lambda_{m+1}$ we can write
\begin{equation*} 
\theta_{k,k+m}(\alpha_{j,j+m+1}) \equiv c_{kj}^{(\nu_j)} b_{\nu_j} \ \ \ \ \ \mbox{mod} \ \alpha_{j,j+m+1}
\end{equation*}
for some rational numbers $c_{kj}^{(\nu_j)}$.
From this together with \eqref{eq:proof2psi} we have
\begin{equation} \label{eq:proof2psi2}
\psi_{i,i+m}(\alpha_{j,j+m+1}) \equiv b_{\nu_j} \big(\sum_{k \in \Lambda_m} p_{ik}^{(m)} c_{kj}^{(\nu_j)} \big) \ \ \ \ \ \mbox{mod} \ \alpha_{j,j+m+1}.
\end{equation}
We consider a lower ideal $I:=I_m\cup\{\alpha_{j,j+m+1}\}$, and one see that $\psi_{i,i+m} \in D(\A_I)$ for $i \in \Lambda_m$ and $j \in \Lambda_{m+1}$ with $i \neq j$ since $\Psi$ form uniform bases. 
In particular, $\psi_{i,i+m}(\alpha_{j,j+m+1}) \in \CR \alpha_{j,j+m+1}$.
From this together with \eqref{eq:proof2psi2}, we obtain
$$
\sum_{k \in \Lambda_m} p_{ik}^{(m)} c_{kj}^{(\nu_j)}=0 \ \ \ \ \ \ {\rm for} \ i \in \Lambda_m, \ j \in \Lambda_{m+1} \ {\rm with} \ i \neq j.
$$
Let $C_{m}:=(c_{ij}^{(\nu_j)})_{i \in \Lambda_{m} \atop j \in \Lambda_{m+1}}$. 
The matrix $C_m$ has full rank $\lambda_{m+1}$ by Proposition~\ref{proposition:ABCHTrank}.
Therefore, we obtain 
\begin{equation} \label{eq:proof2PmCm}
P_mC_m=(q_j\delta_{ij})_{i \in \Lambda_{m} \atop j \in \Lambda_{m+1}}
\end{equation}
for some non-zero rational numbers $q_j$ ($j \in \Lambda_{m+1}$).

We take another invertible matrices $\{P'_m \mid 0 \leq m \leq \height(\Phi^+) \}$ associated with uniform bases $\{\psi'_{i,i+m} \in \Der(\CR) \mid 0 \leq m \leq \height(\Phi^+), i \in \Lambda_m \}$.
From the assumption $P_r=P'_r$ for $r=0,\ldots,m-1$, we have $\psi_{s,s+r}=\psi'_{s,s+r}$ for $0 \leq r \leq m-1$ and $s \in \Lambda_r$.
Noting that $\theta'_{k,k+m}:=\alpha_{k,k+m}\psi'_{k,k+m-1}=\alpha_{k,k+m}\psi_{k,k+m-1}=\theta_{k,k+m}$ for $k \in \Lambda_m$, by similar argument we have
\begin{equation} \label{eq:proof2P'mCm}
P'_mC_m=(q'_j\delta_{ij})_{i \in \Lambda_{m} \atop j \in \Lambda_{m+1}}
\end{equation}
for some non-zero rational numbers $q'_j$ ($j \in \Lambda_{m+1}$).
From \eqref{eq:proof2PmCm} and \eqref{eq:proof2P'mCm}, we have 
\begin{equation} \label{eq:AmA'm}
A_m C_m= (\delta_{ij})_{i \in \Lambda_{m} \atop j \in \Lambda_{m+1}} =A'_m C_m 
\end{equation}
where $A_m=(q_j^{-1}\delta_{ij})_{i \in \Lambda_{m} \atop j \in \Lambda_{m+1}} \cdot P_m$ and $A'_m=({q'}_j^{-1}\delta_{ij})_{i \in \Lambda_{m} \atop j \in \Lambda_{m+1}} \cdot P'_m$.
Let $\mathbf{a}_i$ and $\mathbf{a}'_i$ be $i$-th row vectors of $A_m$ and $A'_m$ respectively for $i\in \Lambda_m$.
Then, the difference $\mathbf{a}'_i-\mathbf{a}_i$ belongs to the kernel of $C_m$ by \eqref{eq:AmA'm}.
For any $j \in \Lambda_m \setminus \Lambda_{m+1}$ the $j$-th row vector $\mathbf{p}_j$ of $P_m$ is an element of $\ker C_m$ by \eqref{eq:proof2PmCm}, and these row vectors $\mathbf{p}_j$ are linearly independent because $P_m$ is invertible.
Since the matrix $C_m$ has rank $\lambda_{m+1}$, the row vectors $\mathbf{p}_j$ $(j \in \Lambda_m \setminus \Lambda_{m+1})$ form a basis of $\ker C_m$. 
Hence, the difference $\mathbf{a}'_i-\mathbf{a}_i$ can be written as a linear combination of $\mathbf{p}_j$ $(j \in \Lambda_m \setminus \Lambda_{m+1})$, which means that $P_m$ is equivalent to $P'_m$, as desired.
\end{proof}

Uniform bases for the ideal arrangements are explicitly given in \cite{AHMMS} for types $A, B, C, G$. 
Note that one can find the associated invertible matrices $P_m \ (0 \leq m \leq \height(\Phi^+))$ from their explicit presentations for uniform bases in types $A,B,C,G$. 
We will explicitly describe uniform bases and invertible matrices for type $D$ in next section.
In order to find uniform bases for types $E$ and $F$, we will compute invertible matrices $P_{0}, P_{1}, \ldots, P_{\height(\Phi^+)}$ in order by using Maple. 
We will explain the details in Appendix.

\section{Uniform bases and invertible matrices for type $D$}
\label{section:typeD}

Let $\mathfrak{t}$ be an $n$-dimensional real Euclidean space and $\mathfrak{t}^*$ denotes the dual space of $\mathfrak{t}$.
Let $x_1,\ldots,x_n$ be an orthonormal basis of $\mathfrak{t}^*$.
Then we have 
\[
\CR = \Sym(\mathfrak{t}^*)=\R[x_1,\ldots,x_n].
\]
We take the set of positive roots of type $D_n$ as
\[
\Phi^{+}_{D_n} = \{x_i \pm x_j \in \mathfrak{t}^* \mid 1 \leq i < j \leq n\}
\]
and set the exponents $\e_1,\ldots,\e_n$ as 
\begin{align*}
e_i &= 2(n-i)-1 \ \ \ \ \ \ \mbox{for} \ 1 \leq i \leq n-1, \\
e_n &= n-1.  
\end{align*}
We arrange all positive roots of $D_n$ as shown in Figure~\ref{picture:PositiveRootTypeD}.

\begin{figure}[h]
\begin{center}
\begin{picture}(400,100)

\put(295,85){$\cdots$}
\put(160,80){\dashbox(40,20){\tiny $x_1-x_n$}} 
\put(200,80){\dashbox(40,20){\tiny $x_1+x_n$}} 
\put(360,80){\framebox(40,20){\tiny $x_1+x_2$}} 
\put(95,85){$\cdots$}
\put(0,80){\framebox(40,20){\tiny $x_1-x_2$}} 
\put(55,65){$\ddots$}
\put(178,65){$\vdots$}
\put(218,65){$\vdots$}
\put(335,65){$\cdot$} 
\put(340,67.5){$\cdot$}
\put(345,70){$\cdot$}

\put(160,40){\dashbox(40,20){\tiny $x_i-x_n$}}
\put(255,45){$\cdots$}
\put(200,40){\dashbox(40,20){\tiny $x_i+x_n$}}
\put(135,45){$\cdots$}
\put(280,40){\framebox(40,20){\tiny $x_i+x_{i+1}$}}
\put(80,40){\framebox(40,20){\tiny $x_i-x_{i+1}$}}
\put(135,25){$\ddots$}
\put(178,25){$\vdots$}
\put(218,25){$\vdots$}
\put(255,25){$\cdot$} 
\put(260,27.5){$\cdot$}
\put(265,30){$\cdot$}
\put(200,0){\dashbox(40,20){\tiny $x_{n-1}+x_n$}}
\put(160,0){\dashbox(40,20){\tiny $x_{n-1}-x_n$}}

\put(160,80){\line(1,0){80}}
\put(160,100){\line(1,0){80}}
\put(160,80){\line(0,1){20}}
\put(240,80){\line(0,1){20}}

\put(160,40){\line(1,0){80}}
\put(160,60){\line(1,0){80}}
\put(160,40){\line(0,1){20}}
\put(240,40){\line(0,1){20}}

\put(160,0){\line(1,0){80}}
\put(160,20){\line(1,0){80}}
\put(160,0){\line(0,1){20}}
\put(240,0){\line(0,1){20}}

\end{picture}
\end{center}
\vspace{-10pt}
\caption{The arrangement of all positive roots for type $D_n$.}
\label{picture:PositiveRootTypeD}
\end{figure} 

In Figure~\ref{picture:PositiveRootTypeD} the partial order $\preceq$ on $\Phi^+_{D_n}$ is defined as follows:
\begin{enumerate}
\item if a root $\alpha$ is left-adjacent to a root $\beta$, then $\alpha \lessdot \beta$, except for $(\alpha,\beta)=(x_i-x_n,x_i+x_n)$ $(1 \leq i \leq n-1)$ which are pictorially divided by a dotted line;
\item $x_i-x_{n-1} \lessdot x_i+x_n$ and $x_i-x_n \lessdot x_i+x_{n+1}$ for $1 \leq i \leq n-1$;
\item if a root $\alpha$ is lower-adjacent to a root $\beta$, then $\alpha \lessdot \beta$.
\end{enumerate}
For two positive roots $\alpha,\beta$, we define $\alpha \preceq \beta$ if there exist positive roots $\gamma_0,\ldots,\gamma_N$ such that $\alpha=\gamma_0 \lessdot \gamma_1 \lessdot \cdots \lessdot \gamma_N=\beta$.
We denote positive roots in $\Phi^{+}_{D_n}$ by 
\begin{equation*} 
\alpha_{i,j}=
\begin{cases}
x_i-x_j  \ \ \ \ \ &{\rm if} \ i+1 \leq j \leq n, \\ 
x_i+x_{2n-j}  \ \ \ \ \ &{\rm if} \ n+1 \leq j \leq 2n-i-1 
\end{cases}
\end{equation*}
for each $i=1,\ldots,n-1$, and 
\begin{equation*} 
\alpha_{n,j}=x_{2n-j}+x_n  \ \ \ \ \ {\rm if} \ n+1 \leq j \leq 2n-1. 
\end{equation*}

Note that $\Phi^+_i$ is the set of all positive roots in the $i$-th row except for the root $x_i+x_n$ for $i=1,\ldots,n-1$, and $\Phi^+_n$ is the set of all positive roots in the $(n+1)$-th column in Figure~\ref{picture:PositiveRootTypeD}.
Motivated by this, we define the coordinate in type $D_n$ as shown in Figure~\ref{picture:CoordinateTypeD}.

\begin{figure}[h]
\begin{center}
\begin{picture}(470,120)
\put(380,105){$\cdots$}
\put(155,100){\framebox(50,20){\tiny $(1,n-1)$}} 
\put(245,100){\framebox(40,20){\tiny $(n,2n-1)$}} 
\put(430,100){\framebox(40,20){\tiny $(1,2n-2)$}} 
\put(90,105){$\cdots$}
\put(25,100){\framebox(25,20){\tiny $(1,2)$}} 
\put(50,85){$\ddots$}
\put(180,85){$\vdots$}
\put(263,85){$\vdots$}
\put(412,85){$\cdot$} 
\put(417,87.5){$\cdot$}
\put(422,90){$\cdot$}

\put(155,60){\framebox(50,20){\tiny $(i,n-1)$}}
\put(340,65){$\cdots$}
\put(245,60){\framebox(40,20){\tiny $(n,2n-i)$}}
\put(128,65){$\cdots$}
\put(360,60){\framebox(50,20){\tiny $(i,2n-1-i)$}}
\put(80,60){\framebox(35,20){\tiny $(i,i+1)$}}
\put(125,45){$\ddots$}
\put(180,45){$\vdots$}
\put(263,45){$\vdots$}
\put(340,45){$\cdot$} 
\put(345,47.5){$\cdot$}
\put(350,50){$\cdot$}
\put(245,20){\framebox(40,20){\tiny $(n,n+2)$}}
\put(155,20){\framebox(50,20){\tiny $(n-2,n-1)$}}
\put(155,0){\framebox(50,20){\tiny $(n-1,n-1)$}}

\put(205,100){\framebox(40,20){\tiny $(1,n)$}} 
\put(223,85){$\vdots$}
\put(205,60){\framebox(40,20){\tiny $(i,n)$}}
\put(223,45){$\vdots$}
\put(205,20){\framebox(40,20){\tiny $(n-2,n)$}}
\put(205,0){\framebox(40,20){\tiny $(n-1,n)$}}

\put(285,100){\framebox(50,20){\tiny $(1,n+1)$}} 
\put(310,85){$\vdots$} 
\put(285,60){\framebox(50,20){\tiny $(i,n+1)$}} 
\put(310,45){$\vdots$} 
\put(285,20){\framebox(50,20){\tiny $(n-2,n+1)$}} 

\put(0,100){\tiny \framebox(25,20){$(1,1)$}} 
\put(55,60){\tiny \framebox(25,20){$(i,i)$}} 
\put(105,20){\tiny \framebox(50,20){$(n-2,n-2)$}} 
\put(245,0){\tiny \framebox(40,20){$(n,n+1)$}} 
\put(245,-20){\tiny \framebox(40,20){$(n,n)$}} 

\put(20,85){$\ddots$}
\put(80,45){$\ddots$}
\end{picture}
\end{center}
\vspace{10pt}
\caption{The coordinate in type $D_n$.}
\label{picture:CoordinateTypeD}
\end{figure} 

We define a \textbf{Hessenberg function for type $D_n$} as a function $h: [n] \to [2n-1]$ satisfying the following conditions
\begin{enumerate}
\item $i \leq h(i) \leq 2n-1-i$ for $i=1,\ldots, n-1$, 
\item $n \leq h(n) \leq 2n-1$, 
\item if $h(i) \neq 2n-1-i$, then $h(i) \leq h(i+1)$ for $i=1,\ldots, n-2$, 
\item if $h(i) = 2n-1-i$, then $h(i+1) = 2n-1-(i+1)$ for $i=1,\ldots, n-2$,
\item if $h(i) \geq n+1$, then $h(n) \geq 2n-i$ for $i=1,\ldots, n-2$,\footnote{The condition $(5)$ is true for $i=n-1$ because $h(n-1) = n-1$ or $h(n-1)=n$.}
\item if $h(n) \geq 2n-i$, then $h(i) \geq n-1$ for $i=1,\ldots, n-2$.
\end{enumerate}
Under the decomposition $\Phi^+_{D_n}=\coprod_{i=1}^{n} \ \Phi^+_i$, one can see that the set of lower ideals $I \subset \Phi^+_{D_n}$ and the set of Hessenberg functions $h$ for type $D_n$ are in one-to-one correspondence which sends $I$ to $h_I$ in \eqref{eq:Hessft}.
We write a Hessenberg function by listing its values in sequence, namely, $h = (h(1), h(2), \ldots , h(n))$.

It is useful to express a Hessenberg function $h$ pictorially by drawing a configuration of boxes on a square grid with the coordinate in Figure~\ref{picture:CoordinateTypeD} whose shaded boxes correspond to the roots of the lower ideal $I$ associated with $h$ and $(i,i)$-boxes (see for example \cite{AHHM, AHMMS} for type $A$). 
For example the picture of the Hessenberg function $h=(3,5,4,7)$ is shown in Figure~\ref{picture:The picture for h=(3,5,4,7)}.

\begin{figure}[h]
\begin{center}
\begin{picture}(210,50)
\put(0,48){\colorbox{gray}}
\put(5,48){\colorbox{gray}}
\put(10,48){\colorbox{gray}}
\put(15,48){\colorbox{gray}}
\put(20,48){\colorbox{gray}}
\put(24,48){\colorbox{gray}}
\put(0,53){\colorbox{gray}}
\put(5,53){\colorbox{gray}}
\put(10,53){\colorbox{gray}}
\put(15,53){\colorbox{gray}}
\put(20,53){\colorbox{gray}}
\put(24,53){\colorbox{gray}}
\put(0,57){\colorbox{gray}}
\put(5,57){\colorbox{gray}}
\put(10,57){\colorbox{gray}}
\put(15,57){\colorbox{gray}}
\put(20,57){\colorbox{gray}}
\put(24,57){\colorbox{gray}}

\put(30,48){\colorbox{gray}}
\put(35,48){\colorbox{gray}}
\put(40,48){\colorbox{gray}}
\put(45,48){\colorbox{gray}}
\put(50,48){\colorbox{gray}}
\put(54,48){\colorbox{gray}}
\put(30,53){\colorbox{gray}}
\put(35,53){\colorbox{gray}}
\put(40,53){\colorbox{gray}}
\put(45,53){\colorbox{gray}}
\put(50,53){\colorbox{gray}}
\put(54,53){\colorbox{gray}}
\put(30,57){\colorbox{gray}}
\put(35,57){\colorbox{gray}}
\put(40,57){\colorbox{gray}}
\put(45,57){\colorbox{gray}}
\put(50,57){\colorbox{gray}}
\put(54,57){\colorbox{gray}}

\put(60,48){\colorbox{gray}}
\put(65,48){\colorbox{gray}}
\put(70,48){\colorbox{gray}}
\put(75,48){\colorbox{gray}}
\put(80,48){\colorbox{gray}}
\put(84,48){\colorbox{gray}}
\put(60,53){\colorbox{gray}}
\put(65,53){\colorbox{gray}}
\put(70,53){\colorbox{gray}}
\put(75,53){\colorbox{gray}}
\put(80,53){\colorbox{gray}}
\put(84,53){\colorbox{gray}}
\put(60,57){\colorbox{gray}}
\put(65,57){\colorbox{gray}}
\put(70,57){\colorbox{gray}}
\put(75,57){\colorbox{gray}}
\put(80,57){\colorbox{gray}}
\put(84,57){\colorbox{gray}}

\put(120,48){\colorbox{gray}}
\put(125,48){\colorbox{gray}}
\put(130,48){\colorbox{gray}}
\put(135,48){\colorbox{gray}}
\put(140,48){\colorbox{gray}}
\put(144,48){\colorbox{gray}}
\put(120,53){\colorbox{gray}}
\put(125,53){\colorbox{gray}}
\put(130,53){\colorbox{gray}}
\put(135,53){\colorbox{gray}}
\put(140,53){\colorbox{gray}}
\put(144,53){\colorbox{gray}}
\put(120,57){\colorbox{gray}}
\put(125,57){\colorbox{gray}}
\put(130,57){\colorbox{gray}}
\put(135,57){\colorbox{gray}}
\put(140,57){\colorbox{gray}}
\put(144,57){\colorbox{gray}}

\put(30,33){\colorbox{gray}}
\put(35,33){\colorbox{gray}}
\put(40,33){\colorbox{gray}}
\put(45,33){\colorbox{gray}}
\put(50,33){\colorbox{gray}}
\put(54,33){\colorbox{gray}}
\put(30,38){\colorbox{gray}}
\put(35,38){\colorbox{gray}}
\put(40,38){\colorbox{gray}}
\put(45,38){\colorbox{gray}}
\put(50,38){\colorbox{gray}}
\put(54,38){\colorbox{gray}}
\put(30,42){\colorbox{gray}}
\put(35,42){\colorbox{gray}}
\put(40,42){\colorbox{gray}}
\put(45,42){\colorbox{gray}}
\put(50,42){\colorbox{gray}}
\put(54,42){\colorbox{gray}}

\put(60,33){\colorbox{gray}}
\put(65,33){\colorbox{gray}}
\put(70,33){\colorbox{gray}}
\put(75,33){\colorbox{gray}}
\put(80,33){\colorbox{gray}}
\put(84,33){\colorbox{gray}}
\put(60,38){\colorbox{gray}}
\put(65,38){\colorbox{gray}}
\put(70,38){\colorbox{gray}}
\put(75,38){\colorbox{gray}}
\put(80,38){\colorbox{gray}}
\put(84,38){\colorbox{gray}}
\put(60,42){\colorbox{gray}}
\put(65,42){\colorbox{gray}}
\put(70,42){\colorbox{gray}}
\put(75,42){\colorbox{gray}}
\put(80,42){\colorbox{gray}}
\put(84,42){\colorbox{gray}}

\put(90,33){\colorbox{gray}}
\put(95,33){\colorbox{gray}}
\put(100,33){\colorbox{gray}}
\put(105,33){\colorbox{gray}}
\put(110,33){\colorbox{gray}}
\put(114,33){\colorbox{gray}}
\put(90,38){\colorbox{gray}}
\put(95,38){\colorbox{gray}}
\put(100,38){\colorbox{gray}}
\put(105,38){\colorbox{gray}}
\put(110,38){\colorbox{gray}}
\put(114,38){\colorbox{gray}}
\put(90,42){\colorbox{gray}}
\put(95,42){\colorbox{gray}}
\put(100,42){\colorbox{gray}}
\put(105,42){\colorbox{gray}}
\put(110,42){\colorbox{gray}}
\put(114,42){\colorbox{gray}}

\put(120,33){\colorbox{gray}}
\put(125,33){\colorbox{gray}}
\put(130,33){\colorbox{gray}}
\put(135,33){\colorbox{gray}}
\put(140,33){\colorbox{gray}}
\put(144,33){\colorbox{gray}}
\put(120,38){\colorbox{gray}}
\put(125,38){\colorbox{gray}}
\put(130,38){\colorbox{gray}}
\put(135,38){\colorbox{gray}}
\put(140,38){\colorbox{gray}}
\put(144,38){\colorbox{gray}}
\put(120,42){\colorbox{gray}}
\put(125,42){\colorbox{gray}}
\put(130,42){\colorbox{gray}}
\put(135,42){\colorbox{gray}}
\put(140,42){\colorbox{gray}}
\put(144,42){\colorbox{gray}}

\put(150,33){\colorbox{gray}}
\put(155,33){\colorbox{gray}}
\put(160,33){\colorbox{gray}}
\put(165,33){\colorbox{gray}}
\put(170,33){\colorbox{gray}}
\put(174,33){\colorbox{gray}}
\put(150,38){\colorbox{gray}}
\put(155,38){\colorbox{gray}}
\put(160,38){\colorbox{gray}}
\put(165,38){\colorbox{gray}}
\put(170,38){\colorbox{gray}}
\put(174,38){\colorbox{gray}}
\put(150,42){\colorbox{gray}}
\put(155,42){\colorbox{gray}}
\put(160,42){\colorbox{gray}}
\put(165,42){\colorbox{gray}}
\put(170,42){\colorbox{gray}}
\put(174,42){\colorbox{gray}}

\put(60,18){\colorbox{gray}}
\put(65,18){\colorbox{gray}}
\put(70,18){\colorbox{gray}}
\put(75,18){\colorbox{gray}}
\put(80,18){\colorbox{gray}}
\put(84,18){\colorbox{gray}}
\put(60,23){\colorbox{gray}}
\put(65,23){\colorbox{gray}}
\put(70,23){\colorbox{gray}}
\put(75,23){\colorbox{gray}}
\put(80,23){\colorbox{gray}}
\put(84,23){\colorbox{gray}}
\put(60,27){\colorbox{gray}}
\put(65,27){\colorbox{gray}}
\put(70,27){\colorbox{gray}}
\put(75,27){\colorbox{gray}}
\put(80,27){\colorbox{gray}}
\put(84,27){\colorbox{gray}}

\put(90,18){\colorbox{gray}}
\put(95,18){\colorbox{gray}}
\put(100,18){\colorbox{gray}}
\put(105,18){\colorbox{gray}}
\put(110,18){\colorbox{gray}}
\put(114,18){\colorbox{gray}}
\put(90,23){\colorbox{gray}}
\put(95,23){\colorbox{gray}}
\put(100,23){\colorbox{gray}}
\put(105,23){\colorbox{gray}}
\put(110,23){\colorbox{gray}}
\put(114,23){\colorbox{gray}}
\put(90,27){\colorbox{gray}}
\put(95,27){\colorbox{gray}}
\put(100,27){\colorbox{gray}}
\put(105,27){\colorbox{gray}}
\put(110,27){\colorbox{gray}}
\put(114,27){\colorbox{gray}}

\put(120,18){\colorbox{gray}}
\put(125,18){\colorbox{gray}}
\put(130,18){\colorbox{gray}}
\put(135,18){\colorbox{gray}}
\put(140,18){\colorbox{gray}}
\put(144,18){\colorbox{gray}}
\put(120,23){\colorbox{gray}}
\put(125,23){\colorbox{gray}}
\put(130,23){\colorbox{gray}}
\put(135,23){\colorbox{gray}}
\put(140,23){\colorbox{gray}}
\put(144,23){\colorbox{gray}}
\put(120,27){\colorbox{gray}}
\put(125,27){\colorbox{gray}}
\put(130,27){\colorbox{gray}}
\put(135,27){\colorbox{gray}}
\put(140,27){\colorbox{gray}}
\put(144,27){\colorbox{gray}}

\put(120,3){\colorbox{gray}}
\put(125,3){\colorbox{gray}}
\put(130,3){\colorbox{gray}}
\put(135,3){\colorbox{gray}}
\put(140,3){\colorbox{gray}}
\put(144,3){\colorbox{gray}}
\put(120,8){\colorbox{gray}}
\put(125,8){\colorbox{gray}}
\put(130,8){\colorbox{gray}}
\put(135,8){\colorbox{gray}}
\put(140,8){\colorbox{gray}}
\put(144,8){\colorbox{gray}}
\put(120,12){\colorbox{gray}}
\put(125,12){\colorbox{gray}}
\put(130,12){\colorbox{gray}}
\put(135,12){\colorbox{gray}}
\put(140,12){\colorbox{gray}}
\put(144,12){\colorbox{gray}}

\put(0,00){\framebox(30,15)}
\put(0,15){\framebox(30,15)} 
\put(0,30){\framebox(30,15)}
\put(0,45){\framebox(30,15){\tiny $(1,1)$}}

\put(30,00){\framebox(30,15)}
\put(30,15){\framebox(30,15)}
\put(30,30){\framebox(30,15){\tiny $(2,2)$}}
\put(30,45){\framebox(30,15){\tiny $(1,2)$}} 

\put(60,00){\framebox(30,15)} 
\put(60,15){\framebox(30,15){\tiny $(3,3)$}} 
\put(60,30){\framebox(30,15){\tiny $(2,3)$}} 
\put(60,45){\framebox(30,15){\tiny $(1,3)$}}

\put(90,00){\framebox(30,15)} 
\put(90,15){\framebox(30,15){\tiny $(3,4)$}} 
\put(90,30){\framebox(30,15){\tiny $(2,4)$}} 
\put(90,45){\framebox(30,15){\tiny $(1,4)$}}

\put(120,00){\framebox(30,15){\tiny $(4,4)$}} 
\put(120,15){\framebox(30,15){\tiny $(4,5)$}} 
\put(120,30){\framebox(30,15){\tiny $(4,6)$}} 
\put(120,45){\framebox(30,15){\tiny $(4,7)$}}

\put(150,00){\framebox(30,15)} 
\put(150,15){\framebox(30,15)} 
\put(150,30){\framebox(30,15){\tiny $(2,5)$}} 
\put(150,45){\framebox(30,15){\tiny $(1,5)$}}

\put(180,0){\framebox(30,15)} 
\put(180,15){\framebox(30,15)} 
\put(180,30){\framebox(30,15)} 
\put(180,45){\framebox(30,15){\tiny $(1,6)$}}
\end{picture}
\end{center}
\caption{The picture for the Hessenberg function $h=(3,5,4,7)$.}
\label{picture:The picture for h=(3,5,4,7)}
\end{figure} 

Now we define the derivations $\{\psi^{D_n}_{i,j} \mid 1 \leq i \leq n-1, i \leq j \leq 2n-i-1\} \cup \{\psi^{D_n}_{n,j} \mid n \leq j \leq 2n-1 \}$ by the following recursive formula.
We begin with the case when $j=i$. 
In this case we make the following definition
\begin{align*}
\psi^{D_n}_{i,i}&= \partial_1+\cdots+\partial_i, \ \ \ {\rm for} \ 1 \leq i \leq n-2,\\
\psi^{D_n}_{n-1,n-1}&= \partial_1+\cdots+\partial_{n-1}-\partial_n, \\
\psi^{D_n}_{n,n}&= \partial_1+\cdots+\partial_{n-1}+\partial_n,
\end{align*}
where $\partial_i=\frac{\partial}{\partial_{x_i}}$ denotes the partial derivatives for all $i=1,\ldots,n$.
Now we proceed inductively for the rest of the $\psi^{D_n}_{i,j}$ as follows: 
\begin{align}
\psi^{D_n}_{i,j}&= \psi^{D_n}_{i-1,j-1} + \alpha_{i,j}\psi^{D_n}_{i,j-1}+\delta_{j,n-1}\xi^{D_n}_{i} \ \ {\rm for} \ 1 \leq i \leq n-1 {\rm,} \ i+1 \leq j \leq 2n-1-i, \label{eq:psiDi}\\
\psi^{D_n}_{n,j}&= \alpha_{n,j}\psi^{D_n}_{n,j-1}+(-1)^{j-n}\psi^{D_n}_{2n-j,n} \ \ \ \ \ \ \ {\rm for} \ n+1 \leq j \leq 2n-1, \label{eq:psiDn}
\end{align}
where $\psi^{D_n}_{0,j}$ is defined to be the following
\begin{equation} \label{eq:psi0D}
\psi^{D_n}_{0,j}=
\begin{cases}
0 \ \ \ {\rm for} \ 1 \leq j \leq n-2, \\
(-1)^n \left(\sum_{k=1}^{n} x_1 \cdots \widehat x_k \cdots x_n \partial_k \right) \ \ \ {\rm for} \ j=n-1, \\
-x_{2n-j}x_{2n-j+1}\cdots x_{n}\psi^{D_n}_{0,n-1} \ \ \ {\rm for} \ n \leq j \leq 2n-3,
\end{cases}
\end{equation}
and $\xi^{D_n}_{i}$ is defined as 
\begin{align} \label{eq:xiD}
\xi^{D_n}_{i}=&\sum_{k=1}^i \left( (x_k-x_{i+1})\cdots(x_k-x_{n-1})x_n + (-1)^{n-i} x_{i+1} \cdots x_{n} \right) x_k^{-1} \partial_k \\
&+ (-1)^{n-i} \sum_{k=i+1}^{n} x_{i+1} \cdots \widehat{x_k} \cdots x_n \partial_k, \notag
\end{align}
where the caret sign \ $\widehat{}$ \ over $x_j$ means that the entry $x_j$ is to be omitted.
Note that $\big( (x_j-x_{i+1})\cdots(x_j-x_{n-1})x_n + (-1)^{n-i} x_{i+1} \cdots x_{n-1}x_{n} \big)$ is divisible by $x_j$.
Thus, $\xi^{D_n}_{i}$ is an element of $\Der \CR=\CR \otimes \mathfrak{t}$, so is $\psi^{D_n}_{i,j}$.
Also, we define $\xi^{D_n}_{i}$ for $i=0$ as 
$$
\xi^{D_n}_{0}:=\psi^{D_n}_{0,n-1}.
$$
Note that we need the derivations $\xi^{D_n}_{i}$ only for $0 \leq i \leq n-2$.
In fact, if we take $i=n-1$ in \eqref{eq:psiDi}, then one can see that $j=n$ and $\xi^{D_n}_{n-1}$ does not appear in the right hand side of \eqref{eq:psiDi}.

\begin{example}
In type $D_4$, the positive roots $\alpha_{i,j}$ are given by
\begin{align*}
\alpha_{1,2}&=x_1-x_2, \ \alpha_{1,3}=x_1-x_3, \ \alpha_{1,4}=x_1-x_4, \ \alpha_{1,5}=x_1+x_3, \  \alpha_{1,6}=x_1+x_2, \ \alpha_{2,3}=x_2-x_3, \\ 
\alpha_{2,4}&=x_2-x_4, \ \alpha_{2,5}=x_2+x_3, \ \alpha_{3,4}=x_3-x_4, \ \alpha_{4,5}=x_3+x_4, \ \alpha_{4,6}=x_2+x_4, \ \alpha_{4,7}=x_1+x_4. 
\end{align*}
We arrange the derivations $\psi^{D_4}_{i,j}$ as shown in Figure~\ref{picture:The arrangement of derivations for type D4}.
{\tiny
\begin{figure}[h]
\begin{center}
\begin{picture}(210,50)
\put(0,00){\framebox(30,15)}
\put(0,15){\framebox(30,15)} 
\put(0,30){\framebox(30,15)}
\put(0,45){\framebox(30,15){$\psi^{D_4}_{1,1}$}}
\put(30,00){\framebox(30,15)}
\put(30,15){\framebox(30,15)}
\put(30,30){\framebox(30,15){$\psi^{D_4}_{2,2}$}}
\put(30,45){\framebox(30,15){$\psi^{D_4}_{1,2}$}} 
\put(60,00){\framebox(30,15)} 
\put(60,15){\framebox(30,15){$\psi^{D_4}_{3,3}$}} 
\put(60,30){\framebox(30,15){$\psi^{D_4}_{2,3}$}} 
\put(60,45){\framebox(30,15){$\psi^{D_4}_{1,3}$}}
\put(90,00){\framebox(30,15)} 
\put(90,15){\framebox(30,15){$\psi^{D_4}_{3,4}$}} 
\put(90,30){\framebox(30,15){$\psi^{D_4}_{2,4}$}} 
\put(90,45){\framebox(30,15){$\psi^{D_4}_{1,4}$}}
\put(120,00){\framebox(30,15){$\psi^{D_4}_{4,4}$}} 
\put(120,15){\framebox(30,15){$\psi^{D_4}_{4,5}$}} 
\put(120,30){\framebox(30,15){$\psi^{D_4}_{4,6}$}} 
\put(120,45){\framebox(30,15){$\psi^{D_4}_{4,7}$}}
\put(150,00){\framebox(30,15)} 
\put(150,15){\framebox(30,15)} 
\put(150,30){\framebox(30,15){$\psi^{D_4}_{2,5}$}} 
\put(150,45){\framebox(30,15){$\psi^{D_4}_{1,5}$}}
\put(180,0){\framebox(30,15)} 
\put(180,15){\framebox(30,15)} 
\put(180,30){\framebox(30,15)} 
\put(180,45){\framebox(30,15){$\psi^{D_4}_{1,6}$}}
\end{picture}
\end{center}
\caption{The arrangement of derivations for type $D_4$.}
\label{picture:The arrangement of derivations for type D4}
\end{figure} }

The derivations $\psi^{D_4}_{i,j} \in \Der \CR=\CR \otimes \mathfrak{t}$ are explicitly described as follows:
{\tiny
\begin{align*}
\psi^{D_4}_{1,1} =&\partial_{1},\ \  
\psi^{D_4}_{1,2} = (x_1-x_2)\partial_{1},\ \ 
\psi^{D_4}_{1,3} = \frac{\big((x_1-x_2)(x_1-x_3)(x_1+x_4)-x_2x_3x_4\big)}{x_1}\partial_1-x_3x_4\partial_2-x_2x_4\partial_3-x_2x_3\partial_4,\\
\psi^{D_4}_{1,4} =& \frac{\big((x_1-x_2)(x_1-x_3)(x_1-x_4)(x_1+x_4)+x_2x_3x_4^2\big)}{x_1}\partial_1+x_3x_4^2\partial_2+x_2x_4^2\partial_3+x_2x_3x_4\partial_4,\\
\psi^{D_4}_{1,5} =& \frac{\big((x_1-x_2)(x_1-x_3)(x_1-x_4)(x_1+x_4)(x_1+x_3)+x_2x_3^2x_4^2\big)}{x_1}\partial_1 +x_3^2x_4^2\partial_2+x_2x_3x_4^2\partial_3+x_2x_3^2x_4\partial_4,\\
\psi^{D_4}_{1,6} =& \frac{\big((x_1-x_2)(x_1-x_3)(x_1-x_4)(x_1+x_4)(x_1+x_3)(x_1+x_2)+x_2^2x_3^2x_4^2\big)}{x_1}\partial_1 +x_2x_3^2x_4^2\partial_2+x_2^2x_3x_4^2\partial_3+x_2^2x_3^2x_4\partial_4,\\
\psi^{D_4}_{2,2} =& \partial_{1}+\partial_{2},\ \ 
\psi^{D_4}_{2,3} = \frac{\big((x_1-x_3)(x_1+x_4)+x_3x_4\big)}{x_1}\partial_1+\frac{\big((x_2-x_3)(x_2+x_4)+x_3x_4\big)}{x_2}\partial_2+x_4\partial_3+x_3\partial_4,\\
\psi^{D_4}_{2,4} =& \frac{\big((x_1-x_3)(x_1-x_4)(x_1+x_4)-x_3x_4^2\big)}{x_1}\partial_1+\frac{\big((x_2-x_3)(x_2-x_4)(x_2+x_4)-x_3x_4^2\big)}{x_2}\partial_2 -x_4^2\partial_3-x_3x_4\partial_4,\\
\psi^{D_4}_{2,5} =& \frac{\big((x_1-x_3)(x_1-x_4)(x_1+x_4)(x_1+x_3)-x_3^2x_4^2\big)}{x_1}\partial_1 +\frac{\big((x_2-x_3)(x_2-x_4)(x_2+x_4)(x_2+x_3)-x_3^2x_4^2\big)}{x_2}\partial_2 -x_3x_4^2\partial_3-x_3^3x_4\partial_4,\\
\psi^{D_4}_{3,3} =& \partial_{1}+\partial_{2}+\partial_{3}-\partial_{4},\ \ 
\psi^{D_4}_{3,4} = x_1\partial_1 + x_2\partial_2+x_3\partial_3+x_4\partial_4, \ \ 
\psi^{D_4}_{4,4} = \partial_{1}+\partial_{2}+\partial_{3}+\partial_{4},\\
\psi^{D_4}_{4,5} =& -\frac{\big((x_1-x_3)(x_1-x_4)-x_3x_4 \big)}{x_1}\partial_{1}-\frac{\big((x_2-x_3)(x_2-x_4)-x_3x_4 \big)}{x_2}\partial_{2}+x_4\partial_{3}+x_3\partial_{4},\\
\psi^{D_4}_{4,6} =& \frac{\big((x_1-x_2)(x_1-x_3)(x_1-x_4)+x_2x_3x_4\big)}{x_1}\partial_1+x_3x_4\partial_2+x_2x_4\partial_3+x_2x_3\partial_4,\\
\psi^{D_4}_{4,7} =& x_2x_3x_4\partial_1+x_1x_3x_4\partial_2+x_1x_2x_4\partial_3+x_1x_2x_3\partial_4.
\end{align*}}
\end{example}

It is straightforward from the definition of $\psi^{D_n}_{i,j}$ to see the following explicit formula for $\psi^{D_n}_{i,j}$.
For $1 \leq i \leq n-1$ we obtain
\begin{align}
\psi^{D_n}_{i,j}=&\sum_{k=1}^{i} (x_k-x_{i+1})\cdots(x_k-x_j) \partial_k \ \ \ \ \ \ {\rm for} \ i \leq j \leq n-2 \ (i \neq n-1), \label{eq:psi1} \\
\psi^{D_n}_{i,n-1}=&\sum_{k=1}^{i} \left((x_k-x_{i+1})\cdots(x_k-x_{n-1})(x_k+x_n) +(-1)^{n-i} x_{i+1}\cdots x_n \right) x_k^{-1}\partial_k \label{eq:psi2} \\
&+(-1)^{n-i} \sum_{k=i+1}^n x_{i+1} \cdots \widehat{x_k} \cdots x_n \partial_k, \notag \\
\psi^{D_n}_{i,n+j}=&\sum_{k=1}^{i} \big((x_k-x_{i+1})\cdots(x_k-x_{n})(x_k+x_n)\cdots(x_k+x_{n-j}) \label{eq:psi3} \\
&+(-1)^{n-i+1} x_{i+1}\cdots x_{n-1-j} x_{n-j}^2\cdots x_n^2 \big) x_k^{-1}\partial_k \notag \\
&+(-1)^{n-i+1} x_{n-j} \cdots x_n \sum_{k=i+1}^n x_{i+1} \cdots \widehat{x_k} \cdots x_n \partial_k \ \ \ \ \ \ {\rm for} \ 0 \leq j \leq n-1-i, \notag 
\end{align}
and
\begin{align}
\psi^{D_n}_{n,2n-1-r}=&\sum_{k=1}^{r} \left((-1)^{n-r+1} (x_k-x_{r+1})\cdots(x_k-x_{n-1})(x_k-x_n) + x_{r+1}\cdots x_n \right) x_k^{-1}\partial_k \label{eq:psi4} \\
&+ \sum_{k=r+1}^n x_{r+1} \cdots \widehat{x_k} \cdots x_n \partial_k \ \ \ \ \ \ {\rm for} \ 0 \leq r \leq n-1 \notag.
\end{align}

\begin{proposition} \label{proposition:psiD}
Let $I$ be a lower ideal in $\Phi^{+}_{D_n}$ and $h_I$ the associated Hessenberg function in \eqref{eq:Hessft}.
Then, $\psi^{D_n}_{i,h_I(i)}$ is an element of $D(\A_I)$ for $i=1,\ldots,n$.
\end{proposition}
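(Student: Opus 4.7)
The plan is to verify the three conditions of Remark~\ref{remark:key}; Proposition~\ref{proposition:psiD} is precisely the third and most substantive of these. Conditions (1) and (2) are routine consequences of the recursive definition \eqref{eq:psiDi}--\eqref{eq:psiDn}: condition (1) reads $\psi^{D_n}_{i,i}=\alpha_i^*$ off the base case (with $p_i = 1$ for $i \neq n-1$ and $p_{n-1} = -1$ in the reflected basis direction), and condition (2) comes from unrolling the recursion by one step and checking that the resulting rational coefficient matrix $P_m$ is triangular with nonzero diagonal, hence invertible. I concentrate on condition (3), which is the content of the proposition.

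Writing $\psi^{D_n}_{i,j}=\sum_k f^{(i,j)}_k\partial_k$, the statement $\psi^{D_n}_{i,h_I(i)}(\alpha)\in\CR\alpha$ for every $\alpha\in I$ unpacks into polynomial divisibilities: for $\alpha=x_a-x_b\in I$ we need $f^{(i,h_I(i))}_a-f^{(i,h_I(i))}_b\in(x_a-x_b)\CR$, and analogously for $\alpha=x_a+x_b\in I$. I will establish each such divisibility directly from the closed forms \eqref{eq:psi1}--\eqref{eq:psi4}, organized into four cases according to where $(k,\ell)$ sits relative to $(i,h_I(i))$ in the diagram of Figure~\ref{picture:PositiveRootTypeD}.

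The four cases are: (i) for $j\le n-2$ the coefficient $f^{(i,j)}_k$ is the type-$A$ style product $(x_k-x_{i+1})\cdots(x_k-x_j)$, so divisibility by $x_a-x_b$ follows from the classical identity $\prod_\ell(x_a-x_\ell)-\prod_\ell(x_b-x_\ell)\in(x_a-x_b)\CR$; (ii) for $j=n-1$ the correction term $\xi^{D_n}_i$ in \eqref{eq:psi2} is engineered so that the specialization $x_n\to -x_a$ (respectively $x_n\to x_a$) of the main product cancels against the additive tail $(-1)^{n-i}x_{i+1}\cdots x_n$, yielding the required divisibility by $x_a+x_n$ (respectively by the remaining short roots); (iii) for $j\ge n$ the expressions \eqref{eq:psi3} extend the earlier product by the factor $(x_k-x_n)(x_k+x_n)\cdots(x_k+x_{n-j'})$, and the analogous symmetrized cancellation identities handle the new long roots $x_a+x_b$; (iv) the derivations $\psi^{D_n}_{n,2n-1-r}$ in \eqref{eq:psi4} are treated similarly, but here the decisive input is the Hessenberg conditions (5) and (6), which pair $h_I(n)\ge 2n-k$ with $h_I(k)\ge n-1$ so that the compatibility between the $i$-th row and the $n$-th row is always synchronized with the lower-ideal structure of $I$.

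The technical heart of the argument --- and the main obstacle --- is case (ii), together with the coupling it forces between indices $i\le n-1$ and $n$: the roots $x_a-x_n$ and $x_a+x_n$ have the same height $n-a$ but are incomparable in the partial order on $\Phi^+_{D_n}$, so they must be controlled simultaneously by $\psi^{D_n}_{i,n-1}$ and its ``twin'' $\psi^{D_n}_{n,n+1}$. The correction $\xi^{D_n}_i$ in \eqref{eq:xiD} and the summand $(-1)^{j-n}\psi^{D_n}_{2n-j,n}$ in \eqref{eq:psiDn} are precisely the mechanism by which both specializations $x_n\to x_a$ and $x_n\to -x_a$ collapse to zero. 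Carrying out this cancellation bookkeeping explicitly, with careful tracking of the alternating signs $(-1)^{n-i}$ in \eqref{eq:psi0D} and \eqref{eq:xiD}, is the laborious step; once settled, cases (iii) and (iv) follow essentially by multiplying through by the additional linear factors and invoking the Hessenberg conditions to confirm that the roots being tested against indeed belong to~$I$.
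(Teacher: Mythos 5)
Your approach coincides with the paper's: both verify the divisibility $\psi^{D_n}_{i,j}(\alpha)\in\CR\alpha$ root by root from the closed formulas \eqref{eq:psi1}--\eqref{eq:psi4}, split into the same four cases ($j\le n-2$, $j=n-1$, $n\le j\le 2n-1-i$, and $i=n$), with the cancellation between the product term and the tail $(-1)^{n-i}x_{i+1}\cdots x_n$ under the specialization $x_n\mapsto\pm x_k$ doing the work in the middle cases. The only organizational difference is that the paper first reduces to the maximal lower ideal $I^{(i,j)}$ not containing $\alpha_{i,j+1}$, which pins down exactly which roots $\alpha$ must be tested, whereas you invoke the Hessenberg-function axioms for the same purpose; your side remarks about conditions $(1)$--$(2)$ of Remark~\ref{remark:key} (the value $p_{n-1}=-1$ and triangularity of $P_m$) are inaccurate for type $D$, but they are irrelevant to this proposition.
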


\begin{proof}
We fix $i$ and put $j=h_I(i)$.
We consider a Hessenberg function $h$ such that $h(i)=j$, and choose the maximal Hessenberg function $h^{(i,j)}$ from such Hessenberg functions.
The lower ideal associated with $h^{(i,j)}$ is denoted by $I^{(i,j)}$. 
Note that the lower ideal $I^{(i,j)}$ is the maximal lower ideal among lower ideals $I$ such that $\alpha_{i,j+1} \notin I$ with respect to inclusion.
It is enough to prove that $\psi^{D_n}_{i,j}$ belong to $D(\A_{I^{(i,j)}})$ because $D(\A_{I^{(i,j)}}) \subset D(\A_I)$. \\ \ 
\textit{Case~1} Suppose that $1 \leq i \leq n-2$ and $i \leq j \leq n-2$. Then the Hessenberg function $h^{(i,j)}$ is given by
\begin{align*}
h^{(i,j)}(t)=\begin{cases}
j \ \ \ &{\rm if} \ 1 \leq t \leq i, \\
2n-1-t \ \ \ &{\rm if} \ i+1 \leq t \leq n-1, \\
2n-1-i \ \ \ &{\rm if} \ t = n. 
\end{cases}
\end{align*}
The picture of the Hessenberg function $h^{(i,j)}$ is shown in Figure~\ref{picture:h^{(i,j)}Case1}.
Here, $d_t=t+e_t=2n-1-t$ for $1 \leq t \leq n-1$.

\begin{figure}[h]
\begin{center}
\begin{picture}(490,165)
\hspace{-20pt}

\put(0,153){\colorbox{gray}}
\put(5,153){\colorbox{gray}}
\put(10,153){\colorbox{gray}}
\put(14,153){\colorbox{gray}}
\put(0,158){\colorbox{gray}}
\put(5,158){\colorbox{gray}}
\put(10,158){\colorbox{gray}}
\put(14,158){\colorbox{gray}}
\put(0,162){\colorbox{gray}}
\put(5,162){\colorbox{gray}}
\put(10,162){\colorbox{gray}}
\put(14,162){\colorbox{gray}}

\put(40,153){\colorbox{gray}}
\put(45,153){\colorbox{gray}}
\put(50,153){\colorbox{gray}}
\put(54,153){\colorbox{gray}}
\put(40,158){\colorbox{gray}}
\put(45,158){\colorbox{gray}}
\put(50,158){\colorbox{gray}}
\put(54,158){\colorbox{gray}}
\put(40,162){\colorbox{gray}}
\put(45,162){\colorbox{gray}}
\put(50,162){\colorbox{gray}}
\put(54,162){\colorbox{gray}}

\put(80,153){\colorbox{gray}}
\put(85,153){\colorbox{gray}}
\put(90,153){\colorbox{gray}}
\put(94,153){\colorbox{gray}}
\put(80,158){\colorbox{gray}}
\put(85,158){\colorbox{gray}}
\put(90,158){\colorbox{gray}}
\put(94,158){\colorbox{gray}}
\put(80,162){\colorbox{gray}}
\put(85,162){\colorbox{gray}}
\put(90,162){\colorbox{gray}}
\put(94,162){\colorbox{gray}}

\put(40,118){\colorbox{gray}}
\put(45,118){\colorbox{gray}}
\put(50,118){\colorbox{gray}}
\put(54,118){\colorbox{gray}}
\put(40,123){\colorbox{gray}}
\put(45,123){\colorbox{gray}}
\put(50,123){\colorbox{gray}}
\put(54,123){\colorbox{gray}}
\put(40,127){\colorbox{gray}}
\put(45,127){\colorbox{gray}}
\put(50,127){\colorbox{gray}}
\put(54,127){\colorbox{gray}}

\put(80,118){\colorbox{gray}}
\put(85,118){\colorbox{gray}}
\put(90,118){\colorbox{gray}}
\put(94,118){\colorbox{gray}}
\put(80,123){\colorbox{gray}}
\put(85,123){\colorbox{gray}}
\put(90,123){\colorbox{gray}}
\put(94,123){\colorbox{gray}}
\put(80,127){\colorbox{gray}}
\put(85,127){\colorbox{gray}}
\put(90,127){\colorbox{gray}}
\put(94,127){\colorbox{gray}}

\put(100,103){\colorbox{gray}}
\put(105,103){\colorbox{gray}}
\put(110,103){\colorbox{gray}}
\put(115,103){\colorbox{gray}}
\put(120,103){\colorbox{gray}}
\put(125,103){\colorbox{gray}}
\put(130,103){\colorbox{gray}}
\put(135,103){\colorbox{gray}}
\put(140,103){\colorbox{gray}}
\put(144,103){\colorbox{gray}}
\put(100,108){\colorbox{gray}}
\put(105,108){\colorbox{gray}}
\put(110,108){\colorbox{gray}}
\put(115,108){\colorbox{gray}}
\put(120,108){\colorbox{gray}}
\put(125,108){\colorbox{gray}}
\put(130,108){\colorbox{gray}}
\put(135,108){\colorbox{gray}}
\put(140,108){\colorbox{gray}}
\put(144,108){\colorbox{gray}}
\put(100,112){\colorbox{gray}}
\put(105,112){\colorbox{gray}}
\put(110,112){\colorbox{gray}}
\put(115,112){\colorbox{gray}}
\put(120,112){\colorbox{gray}}
\put(125,112){\colorbox{gray}}
\put(130,112){\colorbox{gray}}
\put(135,112){\colorbox{gray}}
\put(140,112){\colorbox{gray}}
\put(144,112){\colorbox{gray}}

\put(170,103){\colorbox{gray}}
\put(175,103){\colorbox{gray}}
\put(180,103){\colorbox{gray}}
\put(185,103){\colorbox{gray}}
\put(190,103){\colorbox{gray}}
\put(195,103){\colorbox{gray}}
\put(200,103){\colorbox{gray}}
\put(205,103){\colorbox{gray}}
\put(210,103){\colorbox{gray}}
\put(214,103){\colorbox{gray}}
\put(170,108){\colorbox{gray}}
\put(175,108){\colorbox{gray}}
\put(180,108){\colorbox{gray}}
\put(185,108){\colorbox{gray}}
\put(190,108){\colorbox{gray}}
\put(195,108){\colorbox{gray}}
\put(200,108){\colorbox{gray}}
\put(205,108){\colorbox{gray}}
\put(210,108){\colorbox{gray}}
\put(214,108){\colorbox{gray}}
\put(170,112){\colorbox{gray}}
\put(175,112){\colorbox{gray}}
\put(180,112){\colorbox{gray}}
\put(185,112){\colorbox{gray}}
\put(190,112){\colorbox{gray}}
\put(195,112){\colorbox{gray}}
\put(200,112){\colorbox{gray}}
\put(205,112){\colorbox{gray}}
\put(210,112){\colorbox{gray}}
\put(214,112){\colorbox{gray}}

\put(220,103){\colorbox{gray}}
\put(225,103){\colorbox{gray}}
\put(230,103){\colorbox{gray}}
\put(235,103){\colorbox{gray}}
\put(240,103){\colorbox{gray}}
\put(245,103){\colorbox{gray}}
\put(249,103){\colorbox{gray}}
\put(220,108){\colorbox{gray}}
\put(225,108){\colorbox{gray}}
\put(230,108){\colorbox{gray}}
\put(235,108){\colorbox{gray}}
\put(240,108){\colorbox{gray}}
\put(245,108){\colorbox{gray}}
\put(249,108){\colorbox{gray}}
\put(220,112){\colorbox{gray}}
\put(225,112){\colorbox{gray}}
\put(230,112){\colorbox{gray}}
\put(235,112){\colorbox{gray}}
\put(240,112){\colorbox{gray}}
\put(245,112){\colorbox{gray}}
\put(249,112){\colorbox{gray}}

\put(255,103){\colorbox{gray}}
\put(260,103){\colorbox{gray}}
\put(265,103){\colorbox{gray}}
\put(270,103){\colorbox{gray}}
\put(275,103){\colorbox{gray}}
\put(280,103){\colorbox{gray}}
\put(285,103){\colorbox{gray}}
\put(290,103){\colorbox{gray}}
\put(295,103){\colorbox{gray}}
\put(300,103){\colorbox{gray}}
\put(304,103){\colorbox{gray}}
\put(255,108){\colorbox{gray}}
\put(260,108){\colorbox{gray}}
\put(265,108){\colorbox{gray}}
\put(270,108){\colorbox{gray}}
\put(275,108){\colorbox{gray}}
\put(280,108){\colorbox{gray}}
\put(285,108){\colorbox{gray}}
\put(290,108){\colorbox{gray}}
\put(295,108){\colorbox{gray}}
\put(300,108){\colorbox{gray}}
\put(304,108){\colorbox{gray}}
\put(255,112){\colorbox{gray}}
\put(260,112){\colorbox{gray}}
\put(265,112){\colorbox{gray}}
\put(270,112){\colorbox{gray}}
\put(275,112){\colorbox{gray}}
\put(280,112){\colorbox{gray}}
\put(285,112){\colorbox{gray}}
\put(290,112){\colorbox{gray}}
\put(295,112){\colorbox{gray}}
\put(300,112){\colorbox{gray}}
\put(304,112){\colorbox{gray}}

\put(385,103){\colorbox{gray}}
\put(390,103){\colorbox{gray}}
\put(395,103){\colorbox{gray}}
\put(400,103){\colorbox{gray}}
\put(405,103){\colorbox{gray}}
\put(410,103){\colorbox{gray}}
\put(415,103){\colorbox{gray}}
\put(420,103){\colorbox{gray}}
\put(424,103){\colorbox{gray}}
\put(385,108){\colorbox{gray}}
\put(390,108){\colorbox{gray}}
\put(395,108){\colorbox{gray}}
\put(400,108){\colorbox{gray}}
\put(405,108){\colorbox{gray}}
\put(410,108){\colorbox{gray}}
\put(415,108){\colorbox{gray}}
\put(420,108){\colorbox{gray}}
\put(424,108){\colorbox{gray}}
\put(385,112){\colorbox{gray}}
\put(390,112){\colorbox{gray}}
\put(395,112){\colorbox{gray}}
\put(400,112){\colorbox{gray}}
\put(405,112){\colorbox{gray}}
\put(410,112){\colorbox{gray}}
\put(415,112){\colorbox{gray}}
\put(420,112){\colorbox{gray}}
\put(424,112){\colorbox{gray}}

\put(100,68){\colorbox{gray}}
\put(105,68){\colorbox{gray}}
\put(110,68){\colorbox{gray}}
\put(115,68){\colorbox{gray}}
\put(120,68){\colorbox{gray}}
\put(125,68){\colorbox{gray}}
\put(130,68){\colorbox{gray}}
\put(135,68){\colorbox{gray}}
\put(140,68){\colorbox{gray}}
\put(144,68){\colorbox{gray}}
\put(100,73){\colorbox{gray}}
\put(105,73){\colorbox{gray}}
\put(110,73){\colorbox{gray}}
\put(115,73){\colorbox{gray}}
\put(120,73){\colorbox{gray}}
\put(125,73){\colorbox{gray}}
\put(130,73){\colorbox{gray}}
\put(135,73){\colorbox{gray}}
\put(140,73){\colorbox{gray}}
\put(144,73){\colorbox{gray}}
\put(100,77){\colorbox{gray}}
\put(105,77){\colorbox{gray}}
\put(110,77){\colorbox{gray}}
\put(115,77){\colorbox{gray}}
\put(120,77){\colorbox{gray}}
\put(125,77){\colorbox{gray}}
\put(130,77){\colorbox{gray}}
\put(135,77){\colorbox{gray}}
\put(140,77){\colorbox{gray}}
\put(144,77){\colorbox{gray}}

\put(170,68){\colorbox{gray}}
\put(175,68){\colorbox{gray}}
\put(180,68){\colorbox{gray}}
\put(185,68){\colorbox{gray}}
\put(190,68){\colorbox{gray}}
\put(195,68){\colorbox{gray}}
\put(200,68){\colorbox{gray}}
\put(205,68){\colorbox{gray}}
\put(210,68){\colorbox{gray}}
\put(214,68){\colorbox{gray}}
\put(170,73){\colorbox{gray}}
\put(175,73){\colorbox{gray}}
\put(180,73){\colorbox{gray}}
\put(185,73){\colorbox{gray}}
\put(190,73){\colorbox{gray}}
\put(195,73){\colorbox{gray}}
\put(200,73){\colorbox{gray}}
\put(205,73){\colorbox{gray}}
\put(210,73){\colorbox{gray}}
\put(214,73){\colorbox{gray}}
\put(170,77){\colorbox{gray}}
\put(175,77){\colorbox{gray}}
\put(180,77){\colorbox{gray}}
\put(185,77){\colorbox{gray}}
\put(190,77){\colorbox{gray}}
\put(195,77){\colorbox{gray}}
\put(200,77){\colorbox{gray}}
\put(205,77){\colorbox{gray}}
\put(210,77){\colorbox{gray}}
\put(214,77){\colorbox{gray}}

\put(220,68){\colorbox{gray}}
\put(225,68){\colorbox{gray}}
\put(230,68){\colorbox{gray}}
\put(235,68){\colorbox{gray}}
\put(240,68){\colorbox{gray}}
\put(245,68){\colorbox{gray}}
\put(249,68){\colorbox{gray}}
\put(220,73){\colorbox{gray}}
\put(225,73){\colorbox{gray}}
\put(230,73){\colorbox{gray}}
\put(235,73){\colorbox{gray}}
\put(240,73){\colorbox{gray}}
\put(245,73){\colorbox{gray}}
\put(249,73){\colorbox{gray}}
\put(220,77){\colorbox{gray}}
\put(225,77){\colorbox{gray}}
\put(230,77){\colorbox{gray}}
\put(235,77){\colorbox{gray}}
\put(240,77){\colorbox{gray}}
\put(245,77){\colorbox{gray}}
\put(249,77){\colorbox{gray}}

\put(255,68){\colorbox{gray}}
\put(260,68){\colorbox{gray}}
\put(265,68){\colorbox{gray}}
\put(270,68){\colorbox{gray}}
\put(275,68){\colorbox{gray}}
\put(280,68){\colorbox{gray}}
\put(285,68){\colorbox{gray}}
\put(290,68){\colorbox{gray}}
\put(295,68){\colorbox{gray}}
\put(300,68){\colorbox{gray}}
\put(304,68){\colorbox{gray}}
\put(255,73){\colorbox{gray}}
\put(260,73){\colorbox{gray}}
\put(265,73){\colorbox{gray}}
\put(270,73){\colorbox{gray}}
\put(275,73){\colorbox{gray}}
\put(280,73){\colorbox{gray}}
\put(285,73){\colorbox{gray}}
\put(290,73){\colorbox{gray}}
\put(295,73){\colorbox{gray}}
\put(300,73){\colorbox{gray}}
\put(304,73){\colorbox{gray}}
\put(255,77){\colorbox{gray}}
\put(260,77){\colorbox{gray}}
\put(265,77){\colorbox{gray}}
\put(270,77){\colorbox{gray}}
\put(275,77){\colorbox{gray}}
\put(280,77){\colorbox{gray}}
\put(285,77){\colorbox{gray}}
\put(290,77){\colorbox{gray}}
\put(295,77){\colorbox{gray}}
\put(300,77){\colorbox{gray}}
\put(304,77){\colorbox{gray}}

\put(330,68){\colorbox{gray}}
\put(335,68){\colorbox{gray}}
\put(340,68){\colorbox{gray}}
\put(345,68){\colorbox{gray}}
\put(350,68){\colorbox{gray}}
\put(355,68){\colorbox{gray}}
\put(360,68){\colorbox{gray}}
\put(365,68){\colorbox{gray}}
\put(369,68){\colorbox{gray}}
\put(330,73){\colorbox{gray}}
\put(335,73){\colorbox{gray}}
\put(340,73){\colorbox{gray}}
\put(345,73){\colorbox{gray}}
\put(350,73){\colorbox{gray}}
\put(355,73){\colorbox{gray}}
\put(360,73){\colorbox{gray}}
\put(365,73){\colorbox{gray}}
\put(369,73){\colorbox{gray}}
\put(330,77){\colorbox{gray}}
\put(335,77){\colorbox{gray}}
\put(340,77){\colorbox{gray}}
\put(345,77){\colorbox{gray}}
\put(350,77){\colorbox{gray}}
\put(355,77){\colorbox{gray}}
\put(360,77){\colorbox{gray}}
\put(365,77){\colorbox{gray}}
\put(369,77){\colorbox{gray}}

\put(170,33){\colorbox{gray}}
\put(175,33){\colorbox{gray}}
\put(180,33){\colorbox{gray}}
\put(185,33){\colorbox{gray}}
\put(190,33){\colorbox{gray}}
\put(195,33){\colorbox{gray}}
\put(200,33){\colorbox{gray}}
\put(205,33){\colorbox{gray}}
\put(210,33){\colorbox{gray}}
\put(214,33){\colorbox{gray}}
\put(170,38){\colorbox{gray}}
\put(175,38){\colorbox{gray}}
\put(180,38){\colorbox{gray}}
\put(185,38){\colorbox{gray}}
\put(190,38){\colorbox{gray}}
\put(195,38){\colorbox{gray}}
\put(200,38){\colorbox{gray}}
\put(205,38){\colorbox{gray}}
\put(210,38){\colorbox{gray}}
\put(214,38){\colorbox{gray}}
\put(170,42){\colorbox{gray}}
\put(175,42){\colorbox{gray}}
\put(180,42){\colorbox{gray}}
\put(185,42){\colorbox{gray}}
\put(190,42){\colorbox{gray}}
\put(195,42){\colorbox{gray}}
\put(200,42){\colorbox{gray}}
\put(205,42){\colorbox{gray}}
\put(210,42){\colorbox{gray}}
\put(214,42){\colorbox{gray}}

\put(220,33){\colorbox{gray}}
\put(225,33){\colorbox{gray}}
\put(230,33){\colorbox{gray}}
\put(235,33){\colorbox{gray}}
\put(240,33){\colorbox{gray}}
\put(245,33){\colorbox{gray}}
\put(249,33){\colorbox{gray}}
\put(220,38){\colorbox{gray}}
\put(225,38){\colorbox{gray}}
\put(230,38){\colorbox{gray}}
\put(235,38){\colorbox{gray}}
\put(240,38){\colorbox{gray}}
\put(245,38){\colorbox{gray}}
\put(249,38){\colorbox{gray}}
\put(220,42){\colorbox{gray}}
\put(225,42){\colorbox{gray}}
\put(230,42){\colorbox{gray}}
\put(235,42){\colorbox{gray}}
\put(240,42){\colorbox{gray}}
\put(245,42){\colorbox{gray}}
\put(249,42){\colorbox{gray}}

\put(255,33){\colorbox{gray}}
\put(260,33){\colorbox{gray}}
\put(265,33){\colorbox{gray}}
\put(270,33){\colorbox{gray}}
\put(275,33){\colorbox{gray}}
\put(280,33){\colorbox{gray}}
\put(285,33){\colorbox{gray}}
\put(290,33){\colorbox{gray}}
\put(295,33){\colorbox{gray}}
\put(300,33){\colorbox{gray}}
\put(304,33){\colorbox{gray}}
\put(255,38){\colorbox{gray}}
\put(260,38){\colorbox{gray}}
\put(265,38){\colorbox{gray}}
\put(270,38){\colorbox{gray}}
\put(275,38){\colorbox{gray}}
\put(280,38){\colorbox{gray}}
\put(285,38){\colorbox{gray}}
\put(290,38){\colorbox{gray}}
\put(295,38){\colorbox{gray}}
\put(300,38){\colorbox{gray}}
\put(304,38){\colorbox{gray}}
\put(255,42){\colorbox{gray}}
\put(260,42){\colorbox{gray}}
\put(265,42){\colorbox{gray}}
\put(270,42){\colorbox{gray}}
\put(275,42){\colorbox{gray}}
\put(280,42){\colorbox{gray}}
\put(285,42){\colorbox{gray}}
\put(290,42){\colorbox{gray}}
\put(295,42){\colorbox{gray}}
\put(300,42){\colorbox{gray}}
\put(304,42){\colorbox{gray}}

\put(255,18){\colorbox{gray}}
\put(260,18){\colorbox{gray}}
\put(265,18){\colorbox{gray}}
\put(270,18){\colorbox{gray}}
\put(275,18){\colorbox{gray}}
\put(280,18){\colorbox{gray}}
\put(285,18){\colorbox{gray}}
\put(290,18){\colorbox{gray}}
\put(295,18){\colorbox{gray}}
\put(300,18){\colorbox{gray}}
\put(304,18){\colorbox{gray}}
\put(255,23){\colorbox{gray}}
\put(260,23){\colorbox{gray}}
\put(265,23){\colorbox{gray}}
\put(270,23){\colorbox{gray}}
\put(275,23){\colorbox{gray}}
\put(280,23){\colorbox{gray}}
\put(285,23){\colorbox{gray}}
\put(290,23){\colorbox{gray}}
\put(295,23){\colorbox{gray}}
\put(300,23){\colorbox{gray}}
\put(304,23){\colorbox{gray}}
\put(255,27){\colorbox{gray}}
\put(260,27){\colorbox{gray}}
\put(265,27){\colorbox{gray}}
\put(270,27){\colorbox{gray}}
\put(275,27){\colorbox{gray}}
\put(280,27){\colorbox{gray}}
\put(285,27){\colorbox{gray}}
\put(290,27){\colorbox{gray}}
\put(295,27){\colorbox{gray}}
\put(300,27){\colorbox{gray}}
\put(304,27){\colorbox{gray}}

\put(0,150){\framebox(20,15){\tiny $(1,1)$}} 

\put(23,155){$\cdots$}
\put(23,135){$\ddots$}

\put(40,150){\framebox(20,15){\tiny $(1,i)$}}
\put(48,135){$\vdots$}
\put(40,115){\framebox(20,15){\tiny $(i,i)$}}

\put(63,155){$\cdots$}
\put(63,119){$\cdots$}
\put(63,94){$\ddots$}

\put(80,150){\framebox(20,15){\tiny $(1,j)$}}
\put(88,135){$\vdots$}
\put(80,115){\framebox(20,15){\tiny $(i,j)$}}
\put(88,94){$\vdots$}

\put(100,150){\framebox(50,15){\tiny $(1,j+1)$}}
\put(123,135){$\vdots$}
\put(100,115){\framebox(50,15){\tiny $(i,j+1)$}}
\put(100,100){\framebox(50,15){\tiny $(i+1,j+1)$}}
\put(123,85){$\vdots$}
\put(100,65){\framebox(50,15){\tiny $(j+1,j+1)$}}

\put(153,155){$\cdots$}
\put(153,120){$\cdots$}
\put(153,104){$\cdots$}
\put(153,68){$\cdots$}
\put(153,50){$\ddots$}

\put(170,150){\framebox(50,15){\tiny $(1,n-1)$}}
\put(194,135){$\vdots$}
\put(170,115){\framebox(50,15){\tiny $(i,n-1)$}}
\put(170,100){\framebox(50,15){\tiny $(i+1,n-1)$}}
\put(194,85){$\vdots$}
\put(170,65){\framebox(50,15){\tiny $(j+1,n-1)$}}
\put(194,50){$\vdots$}
\put(170,30){\framebox(50,15){\tiny $(n-1,n-1)$}}

\put(220,150){\framebox(35,15){\tiny $(1,n)$}}
\put(236,135){$\vdots$}
\put(220,115){\framebox(35,15){\tiny $(i,n)$}}
\put(220,100){\framebox(35,15){\tiny $(i+1,n)$}}
\put(236,85){$\vdots$}
\put(220,65){\framebox(35,15){\tiny $(j+1,n)$}}
\put(236,50){$\vdots$}
\put(220,30){\framebox(35,15){\tiny $(n-1,n)$}}

\put(255,150){\framebox(55,15){\tiny $(n,2n-1)$}}
\put(279,135){$\vdots$}
\put(255,115){\framebox(55,15){\tiny $(n,2n-i)$}}
\put(255,100){\framebox(55,15){\tiny $(n,2n-1-i)$}}
\put(279,85){$\vdots$}
\put(255,65){\framebox(55,15){\tiny $(n,2n-1-j)$}}
\put(279,50){$\vdots$}
\put(255,30){\framebox(55,15){\tiny $(n,n+1)$}}
\put(255,15){\framebox(55,15){\tiny $(n,n)$}}

\put(383,155){$\cdots$}
\put(363,120){$\cdots$}
\put(343,104){$\cdots$}
\put(313,68){$\cdots$}
\put(313,50){$\cdot$} 
\put(318,52.5){$\cdot$}
\put(323,55){$\cdot$}

\put(330,65){\framebox(45,15){\tiny $(j+1,d_{j+1})$}}

\put(376,85){$\cdot$} 
\put(378,87.5){$\cdot$}
\put(380,90){$\cdot$}

\put(385,100){\framebox(45,15){\tiny $(i+1,d_{i+1})$}}

\put(430,115){\framebox(25,15){\tiny $(i,d_i)$}}

\put(456,135){$\cdot$} 
\put(458,137.5){$\cdot$}
\put(460,140){$\cdot$}

\put(465,150){\framebox(25,15){\tiny $(1,d_1)$}}
\end{picture}
\end{center}
\vspace{-20pt}
\caption{The Hessenberg function $h^{(i,j)}$ for $1 \leq i \leq n-2$ and $i \leq j \leq n-2$.}
\label{picture:h^{(i,j)}Case1}
\end{figure} 

Let $\alpha \in I^{(i,j)}$. 
Since all shaded boxes except for the boxes $(r,r)$ with $1 \leq r \leq n$ in Figure~\ref{picture:h^{(i,j)}Case1} correspond to coordinates of all positive roots in $I^{(i,j)}$, $\alpha$ is one of the following forms
\begin{align}
x_k-x_\ell \ \ \ &(1 \leq k < \ell \leq i), \label{eq:Case1(1)} \\
x_k-x_\ell \ \ \ &(1 \leq k \leq i < \ell \leq j), \label{eq:Case1(2)} \\
x_k \pm x_\ell \ \ \ &(i < k < \ell \leq n). \label{eq:Case1(3)} 
\end{align}
From the formula $\eqref{eq:psi1}$, we have
\begin{align*} 
\psi^{D_n}_{i,j} (\alpha)= \begin{cases}
(x_k-x_{i+1})\cdots(x_k-x_j) - (x_\ell-x_{i+1})\cdots(x_\ell-x_j) &{\rm if} \ \alpha \ {\rm is \ of \ the \ form} \ \eqref{eq:Case1(1)}, \\
(x_k-x_{i+1})\cdots(x_k-x_j) &{\rm if} \ \alpha \ {\rm is \ of \ the \ form} \ \eqref{eq:Case1(2)}, \\
0  &{\rm if} \ \alpha \ {\rm is \ of \ the \ form} \ \eqref{eq:Case1(3)}. 
\end{cases}
\end{align*}
One can see that $\psi^{D_n}_{i,j} (\alpha) \equiv 0$ mod $\alpha$
in both of cases. \\ \ 
\textit{Case~2} Suppose that $1 \leq i \leq n-1$ and $j = n-1$. In this case, the Hessenberg function $h^{(i,j)}$ is defined by
\begin{align*}
h^{(i,j)}(t)=\begin{cases}
n-1 \ \ \ &{\rm if} \ 1 \leq t \leq i, \\
2n-1-t \ \ \ &{\rm if} \ i+1 \leq t \leq n-1, \\
2n-1 \ \ \ &{\rm if} \ t = n. 
\end{cases}
\end{align*}
The picture of the Hessenberg function $h^{(i,j)}$ is shown in Figure~\ref{picture:h^{(i,j)}Case2}.

\begin{figure}[h]
\begin{center}
\begin{picture}(405,165)
\put(0,153){\colorbox{gray}}
\put(5,153){\colorbox{gray}}
\put(10,153){\colorbox{gray}}
\put(14,153){\colorbox{gray}}
\put(0,158){\colorbox{gray}}
\put(5,158){\colorbox{gray}}
\put(10,158){\colorbox{gray}}
\put(14,158){\colorbox{gray}}
\put(0,162){\colorbox{gray}}
\put(5,162){\colorbox{gray}}
\put(10,162){\colorbox{gray}}
\put(14,162){\colorbox{gray}}

\put(40,153){\colorbox{gray}}
\put(45,153){\colorbox{gray}}
\put(50,153){\colorbox{gray}}
\put(54,153){\colorbox{gray}}
\put(40,158){\colorbox{gray}}
\put(45,158){\colorbox{gray}}
\put(50,158){\colorbox{gray}}
\put(54,158){\colorbox{gray}}
\put(40,162){\colorbox{gray}}
\put(45,162){\colorbox{gray}}
\put(50,162){\colorbox{gray}}
\put(54,162){\colorbox{gray}}

\put(80,153){\colorbox{gray}}
\put(85,153){\colorbox{gray}}
\put(90,153){\colorbox{gray}}
\put(95,153){\colorbox{gray}}
\put(100,153){\colorbox{gray}}
\put(105,153){\colorbox{gray}}
\put(110,153){\colorbox{gray}}
\put(115,153){\colorbox{gray}}
\put(120,153){\colorbox{gray}}
\put(124,153){\colorbox{gray}}
\put(80,158){\colorbox{gray}}
\put(85,158){\colorbox{gray}}
\put(90,158){\colorbox{gray}}
\put(95,158){\colorbox{gray}}
\put(100,158){\colorbox{gray}}
\put(105,158){\colorbox{gray}}
\put(110,158){\colorbox{gray}}
\put(115,158){\colorbox{gray}}
\put(120,158){\colorbox{gray}}
\put(124,158){\colorbox{gray}}
\put(80,162){\colorbox{gray}}
\put(85,162){\colorbox{gray}}
\put(90,162){\colorbox{gray}}
\put(95,162){\colorbox{gray}}
\put(100,162){\colorbox{gray}}
\put(105,162){\colorbox{gray}}
\put(110,162){\colorbox{gray}}
\put(115,162){\colorbox{gray}}
\put(120,162){\colorbox{gray}}
\put(124,162){\colorbox{gray}}

\put(165,153){\colorbox{gray}}
\put(170,153){\colorbox{gray}}
\put(175,153){\colorbox{gray}}
\put(180,153){\colorbox{gray}}
\put(185,153){\colorbox{gray}}
\put(190,153){\colorbox{gray}}
\put(195,153){\colorbox{gray}}
\put(200,153){\colorbox{gray}}
\put(205,153){\colorbox{gray}}
\put(210,153){\colorbox{gray}}
\put(214,153){\colorbox{gray}}
\put(165,158){\colorbox{gray}}
\put(170,158){\colorbox{gray}}
\put(175,158){\colorbox{gray}}
\put(180,158){\colorbox{gray}}
\put(185,158){\colorbox{gray}}
\put(190,158){\colorbox{gray}}
\put(195,158){\colorbox{gray}}
\put(200,158){\colorbox{gray}}
\put(205,158){\colorbox{gray}}
\put(210,158){\colorbox{gray}}
\put(214,158){\colorbox{gray}}
\put(165,162){\colorbox{gray}}
\put(170,162){\colorbox{gray}}
\put(175,162){\colorbox{gray}}
\put(180,162){\colorbox{gray}}
\put(185,162){\colorbox{gray}}
\put(190,162){\colorbox{gray}}
\put(195,162){\colorbox{gray}}
\put(200,162){\colorbox{gray}}
\put(205,162){\colorbox{gray}}
\put(210,162){\colorbox{gray}}
\put(214,162){\colorbox{gray}}

\put(40,118){\colorbox{gray}}
\put(45,118){\colorbox{gray}}
\put(50,118){\colorbox{gray}}
\put(54,118){\colorbox{gray}}
\put(40,123){\colorbox{gray}}
\put(45,123){\colorbox{gray}}
\put(50,123){\colorbox{gray}}
\put(54,123){\colorbox{gray}}
\put(40,127){\colorbox{gray}}
\put(45,127){\colorbox{gray}}
\put(50,127){\colorbox{gray}}
\put(54,127){\colorbox{gray}}

\put(80,118){\colorbox{gray}}
\put(85,118){\colorbox{gray}}
\put(90,118){\colorbox{gray}}
\put(95,118){\colorbox{gray}}
\put(100,118){\colorbox{gray}}
\put(105,118){\colorbox{gray}}
\put(110,118){\colorbox{gray}}
\put(115,118){\colorbox{gray}}
\put(120,118){\colorbox{gray}}
\put(124,118){\colorbox{gray}}
\put(80,123){\colorbox{gray}}
\put(85,123){\colorbox{gray}}
\put(90,123){\colorbox{gray}}
\put(95,123){\colorbox{gray}}
\put(100,123){\colorbox{gray}}
\put(105,123){\colorbox{gray}}
\put(110,123){\colorbox{gray}}
\put(115,123){\colorbox{gray}}
\put(120,123){\colorbox{gray}}
\put(124,123){\colorbox{gray}}
\put(80,127){\colorbox{gray}}
\put(85,127){\colorbox{gray}}
\put(90,127){\colorbox{gray}}
\put(95,127){\colorbox{gray}}
\put(100,127){\colorbox{gray}}
\put(105,127){\colorbox{gray}}
\put(110,127){\colorbox{gray}}
\put(115,127){\colorbox{gray}}
\put(120,127){\colorbox{gray}}
\put(124,127){\colorbox{gray}}

\put(165,118){\colorbox{gray}}
\put(170,118){\colorbox{gray}}
\put(175,118){\colorbox{gray}}
\put(180,118){\colorbox{gray}}
\put(185,118){\colorbox{gray}}
\put(190,118){\colorbox{gray}}
\put(195,118){\colorbox{gray}}
\put(200,118){\colorbox{gray}}
\put(205,118){\colorbox{gray}}
\put(210,118){\colorbox{gray}}
\put(214,118){\colorbox{gray}}
\put(165,123){\colorbox{gray}}
\put(170,123){\colorbox{gray}}
\put(175,123){\colorbox{gray}}
\put(180,123){\colorbox{gray}}
\put(185,123){\colorbox{gray}}
\put(190,123){\colorbox{gray}}
\put(195,123){\colorbox{gray}}
\put(200,123){\colorbox{gray}}
\put(205,123){\colorbox{gray}}
\put(210,123){\colorbox{gray}}
\put(214,123){\colorbox{gray}}
\put(165,127){\colorbox{gray}}
\put(170,127){\colorbox{gray}}
\put(175,127){\colorbox{gray}}
\put(180,127){\colorbox{gray}}
\put(185,127){\colorbox{gray}}
\put(190,127){\colorbox{gray}}
\put(195,127){\colorbox{gray}}
\put(200,127){\colorbox{gray}}
\put(205,127){\colorbox{gray}}
\put(210,127){\colorbox{gray}}
\put(214,127){\colorbox{gray}}

\put(80,103){\colorbox{gray}}
\put(85,103){\colorbox{gray}}
\put(90,103){\colorbox{gray}}
\put(95,103){\colorbox{gray}}
\put(100,103){\colorbox{gray}}
\put(105,103){\colorbox{gray}}
\put(110,103){\colorbox{gray}}
\put(115,103){\colorbox{gray}}
\put(120,103){\colorbox{gray}}
\put(124,103){\colorbox{gray}}
\put(80,108){\colorbox{gray}}
\put(85,108){\colorbox{gray}}
\put(90,108){\colorbox{gray}}
\put(95,108){\colorbox{gray}}
\put(100,108){\colorbox{gray}}
\put(105,108){\colorbox{gray}}
\put(110,108){\colorbox{gray}}
\put(115,108){\colorbox{gray}}
\put(120,108){\colorbox{gray}}
\put(124,108){\colorbox{gray}}
\put(80,112){\colorbox{gray}}
\put(85,112){\colorbox{gray}}
\put(90,112){\colorbox{gray}}
\put(95,112){\colorbox{gray}}
\put(100,112){\colorbox{gray}}
\put(105,112){\colorbox{gray}}
\put(110,112){\colorbox{gray}}
\put(115,112){\colorbox{gray}}
\put(120,112){\colorbox{gray}}
\put(124,112){\colorbox{gray}}

\put(130,103){\colorbox{gray}}
\put(135,103){\colorbox{gray}}
\put(140,103){\colorbox{gray}}
\put(145,103){\colorbox{gray}}
\put(150,103){\colorbox{gray}}
\put(155,103){\colorbox{gray}}
\put(159,103){\colorbox{gray}}
\put(130,108){\colorbox{gray}}
\put(135,108){\colorbox{gray}}
\put(140,108){\colorbox{gray}}
\put(145,108){\colorbox{gray}}
\put(150,108){\colorbox{gray}}
\put(155,108){\colorbox{gray}}
\put(159,108){\colorbox{gray}}
\put(130,112){\colorbox{gray}}
\put(135,112){\colorbox{gray}}
\put(140,112){\colorbox{gray}}
\put(145,112){\colorbox{gray}}
\put(150,112){\colorbox{gray}}
\put(155,112){\colorbox{gray}}
\put(159,112){\colorbox{gray}}

\put(165,103){\colorbox{gray}}
\put(170,103){\colorbox{gray}}
\put(175,103){\colorbox{gray}}
\put(180,103){\colorbox{gray}}
\put(185,103){\colorbox{gray}}
\put(190,103){\colorbox{gray}}
\put(195,103){\colorbox{gray}}
\put(200,103){\colorbox{gray}}
\put(205,103){\colorbox{gray}}
\put(210,103){\colorbox{gray}}
\put(214,103){\colorbox{gray}}
\put(165,108){\colorbox{gray}}
\put(170,108){\colorbox{gray}}
\put(175,108){\colorbox{gray}}
\put(180,108){\colorbox{gray}}
\put(185,108){\colorbox{gray}}
\put(190,108){\colorbox{gray}}
\put(195,108){\colorbox{gray}}
\put(200,108){\colorbox{gray}}
\put(205,108){\colorbox{gray}}
\put(210,108){\colorbox{gray}}
\put(214,108){\colorbox{gray}}
\put(165,112){\colorbox{gray}}
\put(170,112){\colorbox{gray}}
\put(175,112){\colorbox{gray}}
\put(180,112){\colorbox{gray}}
\put(185,112){\colorbox{gray}}
\put(190,112){\colorbox{gray}}
\put(195,112){\colorbox{gray}}
\put(200,112){\colorbox{gray}}
\put(205,112){\colorbox{gray}}
\put(210,112){\colorbox{gray}}
\put(214,112){\colorbox{gray}}

\put(220,103){\colorbox{gray}}
\put(225,103){\colorbox{gray}}
\put(230,103){\colorbox{gray}}
\put(235,103){\colorbox{gray}}
\put(240,103){\colorbox{gray}}
\put(245,103){\colorbox{gray}}
\put(250,103){\colorbox{gray}}
\put(255,103){\colorbox{gray}}
\put(260,103){\colorbox{gray}}
\put(264,103){\colorbox{gray}}
\put(220,108){\colorbox{gray}}
\put(225,108){\colorbox{gray}}
\put(230,108){\colorbox{gray}}
\put(235,108){\colorbox{gray}}
\put(240,108){\colorbox{gray}}
\put(245,108){\colorbox{gray}}
\put(250,108){\colorbox{gray}}
\put(255,108){\colorbox{gray}}
\put(260,108){\colorbox{gray}}
\put(264,108){\colorbox{gray}}
\put(220,112){\colorbox{gray}}
\put(225,112){\colorbox{gray}}
\put(230,112){\colorbox{gray}}
\put(235,112){\colorbox{gray}}
\put(240,112){\colorbox{gray}}
\put(245,112){\colorbox{gray}}
\put(250,112){\colorbox{gray}}
\put(255,112){\colorbox{gray}}
\put(260,112){\colorbox{gray}}
\put(264,112){\colorbox{gray}}

\put(290,103){\colorbox{gray}}
\put(295,103){\colorbox{gray}}
\put(300,103){\colorbox{gray}}
\put(305,103){\colorbox{gray}}
\put(310,103){\colorbox{gray}}
\put(315,103){\colorbox{gray}}
\put(320,103){\colorbox{gray}}
\put(325,103){\colorbox{gray}}
\put(329,103){\colorbox{gray}}
\put(290,108){\colorbox{gray}}
\put(295,108){\colorbox{gray}}
\put(300,108){\colorbox{gray}}
\put(305,108){\colorbox{gray}}
\put(310,108){\colorbox{gray}}
\put(315,108){\colorbox{gray}}
\put(320,108){\colorbox{gray}}
\put(325,108){\colorbox{gray}}
\put(329,108){\colorbox{gray}}
\put(290,112){\colorbox{gray}}
\put(295,112){\colorbox{gray}}
\put(300,112){\colorbox{gray}}
\put(305,112){\colorbox{gray}}
\put(310,112){\colorbox{gray}}
\put(315,112){\colorbox{gray}}
\put(320,112){\colorbox{gray}}
\put(325,112){\colorbox{gray}}
\put(329,112){\colorbox{gray}}

\put(80,63){\colorbox{gray}}
\put(85,63){\colorbox{gray}}
\put(90,63){\colorbox{gray}}
\put(95,63){\colorbox{gray}}
\put(100,63){\colorbox{gray}}
\put(105,63){\colorbox{gray}}
\put(110,63){\colorbox{gray}}
\put(115,63){\colorbox{gray}}
\put(120,63){\colorbox{gray}}
\put(124,63){\colorbox{gray}}
\put(80,68){\colorbox{gray}}
\put(85,68){\colorbox{gray}}
\put(90,68){\colorbox{gray}}
\put(95,68){\colorbox{gray}}
\put(100,68){\colorbox{gray}}
\put(105,68){\colorbox{gray}}
\put(110,68){\colorbox{gray}}
\put(115,68){\colorbox{gray}}
\put(120,68){\colorbox{gray}}
\put(124,68){\colorbox{gray}}
\put(80,72){\colorbox{gray}}
\put(85,72){\colorbox{gray}}
\put(90,72){\colorbox{gray}}
\put(95,72){\colorbox{gray}}
\put(100,72){\colorbox{gray}}
\put(105,72){\colorbox{gray}}
\put(110,72){\colorbox{gray}}
\put(115,72){\colorbox{gray}}
\put(120,72){\colorbox{gray}}
\put(124,72){\colorbox{gray}}

\put(130,63){\colorbox{gray}}
\put(135,63){\colorbox{gray}}
\put(140,63){\colorbox{gray}}
\put(145,63){\colorbox{gray}}
\put(150,63){\colorbox{gray}}
\put(155,63){\colorbox{gray}}
\put(159,63){\colorbox{gray}}
\put(130,68){\colorbox{gray}}
\put(135,68){\colorbox{gray}}
\put(140,68){\colorbox{gray}}
\put(145,68){\colorbox{gray}}
\put(150,68){\colorbox{gray}}
\put(155,68){\colorbox{gray}}
\put(159,68){\colorbox{gray}}
\put(130,72){\colorbox{gray}}
\put(135,72){\colorbox{gray}}
\put(140,72){\colorbox{gray}}
\put(145,72){\colorbox{gray}}
\put(150,72){\colorbox{gray}}
\put(155,72){\colorbox{gray}}
\put(159,72){\colorbox{gray}}

\put(165,63){\colorbox{gray}}
\put(170,63){\colorbox{gray}}
\put(175,63){\colorbox{gray}}
\put(180,63){\colorbox{gray}}
\put(185,63){\colorbox{gray}}
\put(190,63){\colorbox{gray}}
\put(195,63){\colorbox{gray}}
\put(200,63){\colorbox{gray}}
\put(205,63){\colorbox{gray}}
\put(210,63){\colorbox{gray}}
\put(214,63){\colorbox{gray}}
\put(165,68){\colorbox{gray}}
\put(170,68){\colorbox{gray}}
\put(175,68){\colorbox{gray}}
\put(180,68){\colorbox{gray}}
\put(185,68){\colorbox{gray}}
\put(190,68){\colorbox{gray}}
\put(195,68){\colorbox{gray}}
\put(200,68){\colorbox{gray}}
\put(205,68){\colorbox{gray}}
\put(210,68){\colorbox{gray}}
\put(214,68){\colorbox{gray}}
\put(165,72){\colorbox{gray}}
\put(170,72){\colorbox{gray}}
\put(175,72){\colorbox{gray}}
\put(180,72){\colorbox{gray}}
\put(185,72){\colorbox{gray}}
\put(190,72){\colorbox{gray}}
\put(195,72){\colorbox{gray}}
\put(200,72){\colorbox{gray}}
\put(205,72){\colorbox{gray}}
\put(210,72){\colorbox{gray}}
\put(214,72){\colorbox{gray}}

\put(165,48){\colorbox{gray}}
\put(170,48){\colorbox{gray}}
\put(175,48){\colorbox{gray}}
\put(180,48){\colorbox{gray}}
\put(185,48){\colorbox{gray}}
\put(190,48){\colorbox{gray}}
\put(195,48){\colorbox{gray}}
\put(200,48){\colorbox{gray}}
\put(205,48){\colorbox{gray}}
\put(210,48){\colorbox{gray}}
\put(214,48){\colorbox{gray}}
\put(165,53){\colorbox{gray}}
\put(170,53){\colorbox{gray}}
\put(175,53){\colorbox{gray}}
\put(180,53){\colorbox{gray}}
\put(185,53){\colorbox{gray}}
\put(190,53){\colorbox{gray}}
\put(195,53){\colorbox{gray}}
\put(200,53){\colorbox{gray}}
\put(205,53){\colorbox{gray}}
\put(210,53){\colorbox{gray}}
\put(214,53){\colorbox{gray}}
\put(165,57){\colorbox{gray}}
\put(170,57){\colorbox{gray}}
\put(175,57){\colorbox{gray}}
\put(180,57){\colorbox{gray}}
\put(185,57){\colorbox{gray}}
\put(190,57){\colorbox{gray}}
\put(195,57){\colorbox{gray}}
\put(200,57){\colorbox{gray}}
\put(205,57){\colorbox{gray}}
\put(210,57){\colorbox{gray}}
\put(214,57){\colorbox{gray}}

\put(0,150){\framebox(20,15){\tiny $(1,1)$}} 

\put(23,155){$\cdots$}
\put(23,135){$\ddots$}

\put(40,150){\framebox(20,15){\tiny $(1,i)$}}
\put(48,135){$\vdots$}
\put(40,115){\framebox(20,15){\tiny $(i,i)$}}

\put(63,155){$\cdots$}
\put(63,119){$\cdots$}
\put(63,89){$\ddots$}

\put(80,150){\framebox(50,15){\tiny $(1,n-1)$}}
\put(104,135){$\vdots$}
\put(80,115){\framebox(50,15){\tiny $(i,n-1)$}}
\put(80,100){\framebox(50,15){\tiny $(i+1,n-1)$}}
\put(104,82){$\vdots$}
\put(80,60){\framebox(50,15){\tiny $(n-1,n-1)$}}

\put(130,150){\framebox(35,15){\tiny $(1,n)$}}
\put(146,135){$\vdots$}
\put(130,115){\framebox(35,15){\tiny $(i,n)$}}
\put(130,100){\framebox(35,15){\tiny $(i+1,n)$}}
\put(146,82){$\vdots$}
\put(130,60){\framebox(35,15){\tiny $(n-1,n)$}}

\put(165,150){\framebox(55,15){\tiny $(n,2n-1)$}}
\put(189,135){$\vdots$}
\put(165,115){\framebox(55,15){\tiny $(n,2n-i)$}}
\put(165,100){\framebox(55,15){\tiny $(n,2n-1-i)$}}
\put(189,82){$\vdots$}
\put(165,60){\framebox(55,15){\tiny $(n,n+1)$}}
\put(165,45){\framebox(55,15){\tiny $(n,n)$}}

\put(220,150){\framebox(50,15){\tiny $(1,n+1)$}}
\put(244,135){$\vdots$}
\put(220,115){\framebox(50,15){\tiny $(i,n+1)$}}
\put(220,100){\framebox(50,15){\tiny $(i+1,n+1)$}}
\put(244,88){$\vdots$}

\put(318,155){$\cdots$}
\put(298,120){$\cdots$}
\put(273,104){$\cdots$}
\put(273,87){$\cdot$} 
\put(278,89.5){$\cdot$}
\put(283,92){$\cdot$}

\put(290,100){\framebox(45,15){\tiny $(i+1,d_{i+1})$}}

\put(335,115){\framebox(25,15){\tiny $(i,d_i)$}}

\put(363,135){$\cdot$} 
\put(368,137.5){$\cdot$}
\put(373,140){$\cdot$}

\put(380,150){\framebox(25,15){\tiny $(1,d_1)$}}
\end{picture}
\end{center}
\vspace{-50pt}
\caption{The Hessenberg function $h^{(i,n-1)}$ with $1 \leq i \leq n-1$.}
\label{picture:h^{(i,j)}Case2}
\end{figure} 

In this case, $\alpha \in I^{(i,j)}$ is one of the following forms
\begin{align}
x_k-x_\ell \ \ \ &(1 \leq k < \ell \leq i) \label{eq:Case2(1)}, \\
x_k-x_\ell \ \ \ &(1 \leq k \leq i < \ell \leq n-1) \label{eq:Case2(2)}, \\
x_k \pm x_\ell \ \ \ &(i < k < \ell \leq n) \label{eq:Case2(3)}, \\
x_k+x_n \ \ \ &(1 \leq k \leq i) \label{eq:Case2(4)}. 
\end{align}
Using the formula $\eqref{eq:psi2}$, we show that $\psi^{D_n}_{i,j} (\alpha) \equiv 0$ mod $\alpha$.
If $\alpha$ is of the form \eqref{eq:Case2(1)}, then we have
\begin{align*}
\psi^{D_n}_{i,n-1}(\alpha)=& \left((x_k-x_{i+1})\cdots(x_k-x_{n-1})(x_k+x_n) +(-1)^{n-i} x_{i+1}\cdots x_n \right) x_k^{-1} \\
&- \left((x_\ell-x_{i+1})\cdots(x_\ell-x_{n-1})(x_\ell+x_n) +(-1)^{n-i} x_{i+1}\cdots x_n \right) x_\ell^{-1} \\
\equiv& 0 \ \ \ \ \ {\rm mod} \ x_k-x_\ell.
\end{align*}
If $\alpha$ is of the form \eqref{eq:Case2(2)}, then 
\begin{align*}
\psi^{D_n}_{i,n-1}(\alpha)=& \left((x_k-x_{i+1})\cdots(x_k-x_{n-1})(x_k+x_n) +(-1)^{n-i} x_{i+1}\cdots x_n \right) x_k^{-1} \\
&-(-1)^{n-i} x_{i+1} \cdots \widehat{x_\ell} \cdots x_n \\
\equiv& (-1)^{n-i} x_{i+1}\cdots x_n x_\ell^{-1}-(-1)^{n-i} x_{i+1} \cdots \widehat{x_\ell} \cdots x_n \ \ \ \ \ {\rm mod} \ x_k-x_\ell \\
=& 0. 
\end{align*}
If $\alpha$ is of the form \eqref{eq:Case2(3)}, then
\begin{align*}
\psi^{D_n}_{i,n-1}(\alpha)=&(-1)^{n-i} x_{i+1} \cdots \widehat{x_k} \cdots x_n \pm (-1)^{n-i} x_{i+1} \cdots \widehat{x_\ell} \cdots x_n \\
\equiv& 0 \ \ \ \ \ {\rm mod} \ x_k \pm x_\ell. 
\end{align*}
If $\alpha$ is of the form \eqref{eq:Case2(4)}, then
\begin{align*}
\psi^{D_n}_{i,n-1}(\alpha)=&\left((x_k-x_{i+1})\cdots(x_k-x_{n-1})(x_k+x_n) +(-1)^{n-i} x_{i+1}\cdots x_n \right) x_k^{-1} \\
&+ (-1)^{n-i} x_{i+1} \cdots x_{n-1} \\
\equiv& (-1)^{n-i} x_{i+1}\cdots x_n (-x_n^{-1}) + (-1)^{n-i} x_{i+1} \cdots x_{n-1} \ \ \ \ \ {\rm mod} \ x_k+x_n \\
=& 0. 
\end{align*}
\noindent
\textit{Case~3} Suppose that $1 \leq i \leq n-1$ and $n \leq j \leq 2n-1-i$. Then the Hessenberg function $h^{(i,j)}$ is given by
\begin{align*}
h^{(i,j)}(t)=\begin{cases}
j \ \ \ &{\rm if} \ 1 \leq t \leq i, \\
2n-1-t \ \ \ &{\rm if} \ i+1 \leq t \leq n-1, \\
2n-1 \ \ \ &{\rm if} \ t = n, 
\end{cases}
\end{align*}
and the picture of the Hessenberg function $h^{(i,j)}$ is shown in Figure~\ref{picture:h^{(i,j)}Case3}.

\begin{figure}[h]
\begin{center}
\begin{picture}(435,165)
\put(0,153){\colorbox{gray}}
\put(5,153){\colorbox{gray}}
\put(10,153){\colorbox{gray}}
\put(14,153){\colorbox{gray}}
\put(0,158){\colorbox{gray}}
\put(5,158){\colorbox{gray}}
\put(10,158){\colorbox{gray}}
\put(14,158){\colorbox{gray}}
\put(0,162){\colorbox{gray}}
\put(5,162){\colorbox{gray}}
\put(10,162){\colorbox{gray}}
\put(14,162){\colorbox{gray}}

\put(40,153){\colorbox{gray}}
\put(45,153){\colorbox{gray}}
\put(50,153){\colorbox{gray}}
\put(54,153){\colorbox{gray}}
\put(40,158){\colorbox{gray}}
\put(45,158){\colorbox{gray}}
\put(50,158){\colorbox{gray}}
\put(54,158){\colorbox{gray}}
\put(40,162){\colorbox{gray}}
\put(45,162){\colorbox{gray}}
\put(50,162){\colorbox{gray}}
\put(54,162){\colorbox{gray}}

\put(80,153){\colorbox{gray}}
\put(85,153){\colorbox{gray}}
\put(90,153){\colorbox{gray}}
\put(95,153){\colorbox{gray}}
\put(100,153){\colorbox{gray}}
\put(105,153){\colorbox{gray}}
\put(110,153){\colorbox{gray}}
\put(115,153){\colorbox{gray}}
\put(120,153){\colorbox{gray}}
\put(124,153){\colorbox{gray}}
\put(80,158){\colorbox{gray}}
\put(85,158){\colorbox{gray}}
\put(90,158){\colorbox{gray}}
\put(95,158){\colorbox{gray}}
\put(100,158){\colorbox{gray}}
\put(105,158){\colorbox{gray}}
\put(110,158){\colorbox{gray}}
\put(115,158){\colorbox{gray}}
\put(120,158){\colorbox{gray}}
\put(124,158){\colorbox{gray}}
\put(80,162){\colorbox{gray}}
\put(85,162){\colorbox{gray}}
\put(90,162){\colorbox{gray}}
\put(95,162){\colorbox{gray}}
\put(100,162){\colorbox{gray}}
\put(105,162){\colorbox{gray}}
\put(110,162){\colorbox{gray}}
\put(115,162){\colorbox{gray}}
\put(120,162){\colorbox{gray}}
\put(124,162){\colorbox{gray}}

\put(130,153){\colorbox{gray}}
\put(135,153){\colorbox{gray}}
\put(140,153){\colorbox{gray}}
\put(145,153){\colorbox{gray}}
\put(150,153){\colorbox{gray}}
\put(155,153){\colorbox{gray}}
\put(159,153){\colorbox{gray}}
\put(130,158){\colorbox{gray}}
\put(135,158){\colorbox{gray}}
\put(140,158){\colorbox{gray}}
\put(145,158){\colorbox{gray}}
\put(150,158){\colorbox{gray}}
\put(155,158){\colorbox{gray}}
\put(159,158){\colorbox{gray}}
\put(130,162){\colorbox{gray}}
\put(135,162){\colorbox{gray}}
\put(140,162){\colorbox{gray}}
\put(145,162){\colorbox{gray}}
\put(150,162){\colorbox{gray}}
\put(155,162){\colorbox{gray}}
\put(159,162){\colorbox{gray}}

\put(165,153){\colorbox{gray}}
\put(170,153){\colorbox{gray}}
\put(175,153){\colorbox{gray}}
\put(180,153){\colorbox{gray}}
\put(185,153){\colorbox{gray}}
\put(190,153){\colorbox{gray}}
\put(195,153){\colorbox{gray}}
\put(200,153){\colorbox{gray}}
\put(205,153){\colorbox{gray}}
\put(210,153){\colorbox{gray}}
\put(214,153){\colorbox{gray}}
\put(165,158){\colorbox{gray}}
\put(170,158){\colorbox{gray}}
\put(175,158){\colorbox{gray}}
\put(180,158){\colorbox{gray}}
\put(185,158){\colorbox{gray}}
\put(190,158){\colorbox{gray}}
\put(195,158){\colorbox{gray}}
\put(200,158){\colorbox{gray}}
\put(205,158){\colorbox{gray}}
\put(210,158){\colorbox{gray}}
\put(214,158){\colorbox{gray}}
\put(165,162){\colorbox{gray}}
\put(170,162){\colorbox{gray}}
\put(175,162){\colorbox{gray}}
\put(180,162){\colorbox{gray}}
\put(185,162){\colorbox{gray}}
\put(190,162){\colorbox{gray}}
\put(195,162){\colorbox{gray}}
\put(200,162){\colorbox{gray}}
\put(205,162){\colorbox{gray}}
\put(210,162){\colorbox{gray}}
\put(214,162){\colorbox{gray}}

\put(240,153){\colorbox{gray}}
\put(245,153){\colorbox{gray}}
\put(250,153){\colorbox{gray}}
\put(254,153){\colorbox{gray}}
\put(240,158){\colorbox{gray}}
\put(245,158){\colorbox{gray}}
\put(250,158){\colorbox{gray}}
\put(254,158){\colorbox{gray}}
\put(240,162){\colorbox{gray}}
\put(245,162){\colorbox{gray}}
\put(250,162){\colorbox{gray}}
\put(254,162){\colorbox{gray}}

\put(40,118){\colorbox{gray}}
\put(45,118){\colorbox{gray}}
\put(50,118){\colorbox{gray}}
\put(54,118){\colorbox{gray}}
\put(40,123){\colorbox{gray}}
\put(45,123){\colorbox{gray}}
\put(50,123){\colorbox{gray}}
\put(54,123){\colorbox{gray}}
\put(40,127){\colorbox{gray}}
\put(45,127){\colorbox{gray}}
\put(50,127){\colorbox{gray}}
\put(54,127){\colorbox{gray}}

\put(80,118){\colorbox{gray}}
\put(85,118){\colorbox{gray}}
\put(90,118){\colorbox{gray}}
\put(95,118){\colorbox{gray}}
\put(100,118){\colorbox{gray}}
\put(105,118){\colorbox{gray}}
\put(110,118){\colorbox{gray}}
\put(115,118){\colorbox{gray}}
\put(120,118){\colorbox{gray}}
\put(124,118){\colorbox{gray}}
\put(80,123){\colorbox{gray}}
\put(85,123){\colorbox{gray}}
\put(90,123){\colorbox{gray}}
\put(95,123){\colorbox{gray}}
\put(100,123){\colorbox{gray}}
\put(105,123){\colorbox{gray}}
\put(110,123){\colorbox{gray}}
\put(115,123){\colorbox{gray}}
\put(120,123){\colorbox{gray}}
\put(124,123){\colorbox{gray}}
\put(80,127){\colorbox{gray}}
\put(85,127){\colorbox{gray}}
\put(90,127){\colorbox{gray}}
\put(95,127){\colorbox{gray}}
\put(100,127){\colorbox{gray}}
\put(105,127){\colorbox{gray}}
\put(110,127){\colorbox{gray}}
\put(115,127){\colorbox{gray}}
\put(120,127){\colorbox{gray}}
\put(124,127){\colorbox{gray}}

\put(130,118){\colorbox{gray}}
\put(135,118){\colorbox{gray}}
\put(140,118){\colorbox{gray}}
\put(145,118){\colorbox{gray}}
\put(150,118){\colorbox{gray}}
\put(155,118){\colorbox{gray}}
\put(159,118){\colorbox{gray}}
\put(130,123){\colorbox{gray}}
\put(135,123){\colorbox{gray}}
\put(140,123){\colorbox{gray}}
\put(145,123){\colorbox{gray}}
\put(150,123){\colorbox{gray}}
\put(155,123){\colorbox{gray}}
\put(159,123){\colorbox{gray}}
\put(130,127){\colorbox{gray}}
\put(135,127){\colorbox{gray}}
\put(140,127){\colorbox{gray}}
\put(145,127){\colorbox{gray}}
\put(150,127){\colorbox{gray}}
\put(155,127){\colorbox{gray}}
\put(159,127){\colorbox{gray}}

\put(165,118){\colorbox{gray}}
\put(170,118){\colorbox{gray}}
\put(175,118){\colorbox{gray}}
\put(180,118){\colorbox{gray}}
\put(185,118){\colorbox{gray}}
\put(190,118){\colorbox{gray}}
\put(195,118){\colorbox{gray}}
\put(200,118){\colorbox{gray}}
\put(205,118){\colorbox{gray}}
\put(210,118){\colorbox{gray}}
\put(214,118){\colorbox{gray}}
\put(165,123){\colorbox{gray}}
\put(170,123){\colorbox{gray}}
\put(175,123){\colorbox{gray}}
\put(180,123){\colorbox{gray}}
\put(185,123){\colorbox{gray}}
\put(190,123){\colorbox{gray}}
\put(195,123){\colorbox{gray}}
\put(200,123){\colorbox{gray}}
\put(205,123){\colorbox{gray}}
\put(210,123){\colorbox{gray}}
\put(214,123){\colorbox{gray}}
\put(165,127){\colorbox{gray}}
\put(170,127){\colorbox{gray}}
\put(175,127){\colorbox{gray}}
\put(180,127){\colorbox{gray}}
\put(185,127){\colorbox{gray}}
\put(190,127){\colorbox{gray}}
\put(195,127){\colorbox{gray}}
\put(200,127){\colorbox{gray}}
\put(205,127){\colorbox{gray}}
\put(210,127){\colorbox{gray}}
\put(214,127){\colorbox{gray}}

\put(240,118){\colorbox{gray}}
\put(245,118){\colorbox{gray}}
\put(250,118){\colorbox{gray}}
\put(254,118){\colorbox{gray}}
\put(240,123){\colorbox{gray}}
\put(245,123){\colorbox{gray}}
\put(250,123){\colorbox{gray}}
\put(254,123){\colorbox{gray}}
\put(240,127){\colorbox{gray}}
\put(245,127){\colorbox{gray}}
\put(250,127){\colorbox{gray}}
\put(254,127){\colorbox{gray}}

\put(80,103){\colorbox{gray}}
\put(85,103){\colorbox{gray}}
\put(90,103){\colorbox{gray}}
\put(95,103){\colorbox{gray}}
\put(100,103){\colorbox{gray}}
\put(105,103){\colorbox{gray}}
\put(110,103){\colorbox{gray}}
\put(115,103){\colorbox{gray}}
\put(120,103){\colorbox{gray}}
\put(124,103){\colorbox{gray}}
\put(80,108){\colorbox{gray}}
\put(85,108){\colorbox{gray}}
\put(90,108){\colorbox{gray}}
\put(95,108){\colorbox{gray}}
\put(100,108){\colorbox{gray}}
\put(105,108){\colorbox{gray}}
\put(110,108){\colorbox{gray}}
\put(115,108){\colorbox{gray}}
\put(120,108){\colorbox{gray}}
\put(124,108){\colorbox{gray}}
\put(80,112){\colorbox{gray}}
\put(85,112){\colorbox{gray}}
\put(90,112){\colorbox{gray}}
\put(95,112){\colorbox{gray}}
\put(100,112){\colorbox{gray}}
\put(105,112){\colorbox{gray}}
\put(110,112){\colorbox{gray}}
\put(115,112){\colorbox{gray}}
\put(120,112){\colorbox{gray}}
\put(124,112){\colorbox{gray}}

\put(130,103){\colorbox{gray}}
\put(135,103){\colorbox{gray}}
\put(140,103){\colorbox{gray}}
\put(145,103){\colorbox{gray}}
\put(150,103){\colorbox{gray}}
\put(155,103){\colorbox{gray}}
\put(159,103){\colorbox{gray}}
\put(130,108){\colorbox{gray}}
\put(135,108){\colorbox{gray}}
\put(140,108){\colorbox{gray}}
\put(145,108){\colorbox{gray}}
\put(150,108){\colorbox{gray}}
\put(155,108){\colorbox{gray}}
\put(159,108){\colorbox{gray}}
\put(130,112){\colorbox{gray}}
\put(135,112){\colorbox{gray}}
\put(140,112){\colorbox{gray}}
\put(145,112){\colorbox{gray}}
\put(150,112){\colorbox{gray}}
\put(155,112){\colorbox{gray}}
\put(159,112){\colorbox{gray}}

\put(165,103){\colorbox{gray}}
\put(170,103){\colorbox{gray}}
\put(175,103){\colorbox{gray}}
\put(180,103){\colorbox{gray}}
\put(185,103){\colorbox{gray}}
\put(190,103){\colorbox{gray}}
\put(195,103){\colorbox{gray}}
\put(200,103){\colorbox{gray}}
\put(205,103){\colorbox{gray}}
\put(210,103){\colorbox{gray}}
\put(214,103){\colorbox{gray}}
\put(165,108){\colorbox{gray}}
\put(170,108){\colorbox{gray}}
\put(175,108){\colorbox{gray}}
\put(180,108){\colorbox{gray}}
\put(185,108){\colorbox{gray}}
\put(190,108){\colorbox{gray}}
\put(195,108){\colorbox{gray}}
\put(200,108){\colorbox{gray}}
\put(205,108){\colorbox{gray}}
\put(210,108){\colorbox{gray}}
\put(214,108){\colorbox{gray}}
\put(165,112){\colorbox{gray}}
\put(170,112){\colorbox{gray}}
\put(175,112){\colorbox{gray}}
\put(180,112){\colorbox{gray}}
\put(185,112){\colorbox{gray}}
\put(190,112){\colorbox{gray}}
\put(195,112){\colorbox{gray}}
\put(200,112){\colorbox{gray}}
\put(205,112){\colorbox{gray}}
\put(210,112){\colorbox{gray}}
\put(214,112){\colorbox{gray}}

\put(260,103){\colorbox{gray}}
\put(265,103){\colorbox{gray}}
\put(270,103){\colorbox{gray}}
\put(275,103){\colorbox{gray}}
\put(280,103){\colorbox{gray}}
\put(285,103){\colorbox{gray}}
\put(290,103){\colorbox{gray}}
\put(295,103){\colorbox{gray}}
\put(300,103){\colorbox{gray}}
\put(304,103){\colorbox{gray}}
\put(260,108){\colorbox{gray}}
\put(265,108){\colorbox{gray}}
\put(270,108){\colorbox{gray}}
\put(275,108){\colorbox{gray}}
\put(280,108){\colorbox{gray}}
\put(285,108){\colorbox{gray}}
\put(290,108){\colorbox{gray}}
\put(295,108){\colorbox{gray}}
\put(300,108){\colorbox{gray}}
\put(304,108){\colorbox{gray}}
\put(260,112){\colorbox{gray}}
\put(265,112){\colorbox{gray}}
\put(270,112){\colorbox{gray}}
\put(275,112){\colorbox{gray}}
\put(280,112){\colorbox{gray}}
\put(285,112){\colorbox{gray}}
\put(290,112){\colorbox{gray}}
\put(295,112){\colorbox{gray}}
\put(300,112){\colorbox{gray}}
\put(304,112){\colorbox{gray}}

\put(330,103){\colorbox{gray}}
\put(335,103){\colorbox{gray}}
\put(340,103){\colorbox{gray}}
\put(345,103){\colorbox{gray}}
\put(350,103){\colorbox{gray}}
\put(355,103){\colorbox{gray}}
\put(360,103){\colorbox{gray}}
\put(365,103){\colorbox{gray}}
\put(369,103){\colorbox{gray}}
\put(330,108){\colorbox{gray}}
\put(335,108){\colorbox{gray}}
\put(340,108){\colorbox{gray}}
\put(345,108){\colorbox{gray}}
\put(350,108){\colorbox{gray}}
\put(355,108){\colorbox{gray}}
\put(360,108){\colorbox{gray}}
\put(365,108){\colorbox{gray}}
\put(369,108){\colorbox{gray}}
\put(330,112){\colorbox{gray}}
\put(335,112){\colorbox{gray}}
\put(340,112){\colorbox{gray}}
\put(345,112){\colorbox{gray}}
\put(350,112){\colorbox{gray}}
\put(355,112){\colorbox{gray}}
\put(360,112){\colorbox{gray}}
\put(365,112){\colorbox{gray}}
\put(369,112){\colorbox{gray}}

\put(80,58){\colorbox{gray}}
\put(85,58){\colorbox{gray}}
\put(90,58){\colorbox{gray}}
\put(95,58){\colorbox{gray}}
\put(100,58){\colorbox{gray}}
\put(105,58){\colorbox{gray}}
\put(110,58){\colorbox{gray}}
\put(115,58){\colorbox{gray}}
\put(120,58){\colorbox{gray}}
\put(124,58){\colorbox{gray}}
\put(80,63){\colorbox{gray}}
\put(85,63){\colorbox{gray}}
\put(90,63){\colorbox{gray}}
\put(95,63){\colorbox{gray}}
\put(100,63){\colorbox{gray}}
\put(105,63){\colorbox{gray}}
\put(110,63){\colorbox{gray}}
\put(115,63){\colorbox{gray}}
\put(120,63){\colorbox{gray}}
\put(124,63){\colorbox{gray}}
\put(80,67){\colorbox{gray}}
\put(85,67){\colorbox{gray}}
\put(90,67){\colorbox{gray}}
\put(95,67){\colorbox{gray}}
\put(100,67){\colorbox{gray}}
\put(105,67){\colorbox{gray}}
\put(110,67){\colorbox{gray}}
\put(115,67){\colorbox{gray}}
\put(120,67){\colorbox{gray}}
\put(124,67){\colorbox{gray}}

\put(130,58){\colorbox{gray}}
\put(135,58){\colorbox{gray}}
\put(140,58){\colorbox{gray}}
\put(145,58){\colorbox{gray}}
\put(150,58){\colorbox{gray}}
\put(155,58){\colorbox{gray}}
\put(159,58){\colorbox{gray}}
\put(130,63){\colorbox{gray}}
\put(135,63){\colorbox{gray}}
\put(140,63){\colorbox{gray}}
\put(145,63){\colorbox{gray}}
\put(150,63){\colorbox{gray}}
\put(155,63){\colorbox{gray}}
\put(159,63){\colorbox{gray}}
\put(130,67){\colorbox{gray}}
\put(135,67){\colorbox{gray}}
\put(140,67){\colorbox{gray}}
\put(145,67){\colorbox{gray}}
\put(150,67){\colorbox{gray}}
\put(155,67){\colorbox{gray}}
\put(159,67){\colorbox{gray}}

\put(165,58){\colorbox{gray}}
\put(170,58){\colorbox{gray}}
\put(175,58){\colorbox{gray}}
\put(180,58){\colorbox{gray}}
\put(185,58){\colorbox{gray}}
\put(190,58){\colorbox{gray}}
\put(195,58){\colorbox{gray}}
\put(200,58){\colorbox{gray}}
\put(205,58){\colorbox{gray}}
\put(210,58){\colorbox{gray}}
\put(214,58){\colorbox{gray}}
\put(165,63){\colorbox{gray}}
\put(170,63){\colorbox{gray}}
\put(175,63){\colorbox{gray}}
\put(180,63){\colorbox{gray}}
\put(185,63){\colorbox{gray}}
\put(190,63){\colorbox{gray}}
\put(195,63){\colorbox{gray}}
\put(200,63){\colorbox{gray}}
\put(205,63){\colorbox{gray}}
\put(210,63){\colorbox{gray}}
\put(214,63){\colorbox{gray}}
\put(165,67){\colorbox{gray}}
\put(170,67){\colorbox{gray}}
\put(175,67){\colorbox{gray}}
\put(180,67){\colorbox{gray}}
\put(185,67){\colorbox{gray}}
\put(190,67){\colorbox{gray}}
\put(195,67){\colorbox{gray}}
\put(200,67){\colorbox{gray}}
\put(205,67){\colorbox{gray}}
\put(210,67){\colorbox{gray}}
\put(214,67){\colorbox{gray}}

\put(165,43){\colorbox{gray}}
\put(170,43){\colorbox{gray}}
\put(175,43){\colorbox{gray}}
\put(180,43){\colorbox{gray}}
\put(185,43){\colorbox{gray}}
\put(190,43){\colorbox{gray}}
\put(195,43){\colorbox{gray}}
\put(200,43){\colorbox{gray}}
\put(205,43){\colorbox{gray}}
\put(210,43){\colorbox{gray}}
\put(214,43){\colorbox{gray}}
\put(165,48){\colorbox{gray}}
\put(170,48){\colorbox{gray}}
\put(175,48){\colorbox{gray}}
\put(180,48){\colorbox{gray}}
\put(185,48){\colorbox{gray}}
\put(190,48){\colorbox{gray}}
\put(195,48){\colorbox{gray}}
\put(200,48){\colorbox{gray}}
\put(205,48){\colorbox{gray}}
\put(210,48){\colorbox{gray}}
\put(214,48){\colorbox{gray}}
\put(165,52){\colorbox{gray}}
\put(170,52){\colorbox{gray}}
\put(175,52){\colorbox{gray}}
\put(180,52){\colorbox{gray}}
\put(185,52){\colorbox{gray}}
\put(190,52){\colorbox{gray}}
\put(195,52){\colorbox{gray}}
\put(200,52){\colorbox{gray}}
\put(205,52){\colorbox{gray}}
\put(210,52){\colorbox{gray}}
\put(214,52){\colorbox{gray}}

\put(0,150){\framebox(20,15){\tiny $(1,1)$}} 

\put(23,155){$\cdots$}
\put(23,135){$\ddots$}

\put(40,150){\framebox(20,15){\tiny $(1,i)$}}
\put(48,135){$\vdots$}
\put(40,115){\framebox(20,15){\tiny $(i,i)$}}

\put(63,155){$\cdots$}
\put(63,119){$\cdots$}
\put(63,90){$\ddots$}

\put(80,150){\framebox(50,15){\tiny $(1,n-1)$}}
\put(104,135){$\vdots$}
\put(80,115){\framebox(50,15){\tiny $(i,n-1)$}}
\put(80,100){\framebox(50,15){\tiny $(i+1,n-1)$}}
\put(104,80){$\vdots$}
\put(80,55){\framebox(50,15){\tiny $(n-1,n-1)$}}

\put(130,150){\framebox(35,15){\tiny $(1,n)$}}
\put(146,135){$\vdots$}
\put(130,115){\framebox(35,15){\tiny $(i,n)$}}
\put(130,100){\framebox(35,15){\tiny $(i+1,n)$}}
\put(146,80){$\vdots$}
\put(130,55){\framebox(35,15){\tiny $(n-1,n)$}}

\put(165,150){\framebox(55,15){\tiny $(n,2n-1)$}}
\put(189,135){$\vdots$}
\put(165,115){\framebox(55,15){\tiny $(n,2n-i)$}}
\put(165,100){\framebox(55,15){\tiny $(n,2n-1-i)$}}
\put(189,80){$\vdots$}
\put(165,55){\framebox(55,15){\tiny $(n,n+1)$}}
\put(165,40){\framebox(55,15){\tiny $(n,n)$}}

\put(223,155){$\cdots$}
\put(223,120){$\cdots$}
\put(223,104){$\cdots$}
\put(223,66){$\cdot$} 
\put(228,67.5){$\cdot$}
\put(233,69){$\cdot$}

\put(240,150){\framebox(20,15){\tiny $(1,j)$}}
\put(248,135){$\vdots$}
\put(240,115){\framebox(20,15){\tiny $(i,j)$}}
\put(248,102){$\vdots$}

\put(260,150){\framebox(50,15){\tiny $(1,j+1)$}}
\put(283,135){$\vdots$}
\put(260,115){\framebox(50,15){\tiny $(i,j+1)$}}
\put(260,100){\framebox(50,15){\tiny $(i+1,j+1)$}}
\put(283,88){$\vdots$}

\put(353,155){$\cdots$}
\put(338,120){$\cdots$}
\put(313,104){$\cdots$}
\put(313,91){$\cdot$} 
\put(318,92.5){$\cdot$}
\put(323,94){$\cdot$}

\put(330,100){\framebox(45,15){\tiny $(i+1,d_{i+1})$}}

\put(375,115){\framebox(25,15){\tiny $(i,d_i)$}}

\put(401,135){$\cdot$} 
\put(403,137.5){$\cdot$}
\put(405,140){$\cdot$}

\put(410,150){\framebox(25,15){\tiny $(1,d_1)$}}
\end{picture}
\end{center}
\vspace{-50pt}
\caption{The Hessenberg function $h^{(i,j)}$ for $1 \leq i \leq n-1, n \leq j \leq 2n-1-i$.}
\label{picture:h^{(i,j)}Case3}
\end{figure} 

Then, $\alpha \in I^{(i,j)}$ is one of the following forms
\begin{align}
x_k-x_\ell \ \ \ &(1 \leq k < \ell \leq i) \label{eq:Case3(1)}, \\
x_k-x_\ell \ \ \ &(1 \leq k \leq i < \ell \leq n) \label{eq:Case3(2)}, \\
x_k+x_\ell \ \ \ &(1 \leq k \leq i, 2n-j \leq \ell \leq n) \label{eq:Case3(3)}, \\
x_k \pm x_\ell \ \ \ &(i < k < \ell \leq n) \label{eq:Case3(4)}.
\end{align}
Using the formula $\eqref{eq:psi3}$, we show that $\psi^{D_n}_{i,j} (\alpha) \equiv 0$ mod $\alpha$.
We put $s=j-n$. Then $0 \leq s \leq n-1-i$ and $j=n+s$.
If $\alpha$ is of the form \eqref{eq:Case3(1)}, then we have
\begin{align*}
\psi^{D_n}_{i,n+s}(\alpha)=& \big((x_k-x_{i+1})\cdots(x_k-x_{n})(x_k+x_n)\cdots(x_k+x_{n-s}) \\
& \ \ \ +(-1)^{n-i+1} x_{i+1}\cdots x_{n-1-s} x_{n-s}^2\cdots x_n^2 \big)x_k^{-1} \\
&-\big((x_\ell-x_{i+1})\cdots(x_\ell-x_{n})(x_\ell+x_n)\cdots(x_\ell+x_{n-s}) \\
& \ \ \ +(-1)^{n-i+1} x_{i+1}\cdots x_{n-1-s} x_{n-s}^2\cdots x_n^2 \big)x_\ell^{-1} \\
\equiv& 0 \ \ \ \ \ {\rm mod} \ x_k-x_\ell
\end{align*}
If $\alpha$ is of the form \eqref{eq:Case3(2)}, then 
\begin{align*}
\psi^{D_n}_{i,n+s}(\alpha)=& \big((x_k-x_{i+1})\cdots(x_k-x_{n})(x_k+x_n)\cdots(x_k+x_{n-s}) \\
&+(-1)^{n-i+1} x_{i+1}\cdots x_{n-1-s} x_{n-s}^2\cdots x_n^2 \big)x_k^{-1} \\
&-(-1)^{n-i+1} x_{n-s} \cdots x_n (x_{i+1} \cdots \widehat{x_\ell} \cdots x_n) \\
=& \big((x_k-x_{i+1})\cdots(x_k-x_{n})(x_k+x_n)\cdots(x_k+x_{n-s}) \\
&+(-1)^{n-i+1} x_{i+1}\cdots x_{n-1-s} x_{n-s}^2\cdots x_n^2 \\
&-(-1)^{n-i+1} x_{i+1}\cdots x_{n-1-s} x_{n-s}^2\cdots x_n^2 (x_k x_\ell^{-1}) \big)x_k^{-1} \\
\equiv& 0 \ \ \ \ \ {\rm mod} \ x_k-x_\ell.  
\end{align*}
If $\alpha$ is of the form \eqref{eq:Case3(3)}, then $i< 2n-j \leq \ell$ because of the condition for Case~3.
So, we have
\begin{align*}
\psi^{D_n}_{i,n+s}(\alpha)=& \big((x_k-x_{i+1})\cdots(x_k-x_{n})(x_k+x_n)\cdots(x_k+x_{n-s}) \\
&+(-1)^{n-i+1} x_{i+1}\cdots x_{n-1-s} x_{n-s}^2\cdots x_n^2 \big)x_k^{-1} \\
&+(-1)^{n-i+1} x_{n-s} \cdots x_n (x_{i+1} \cdots \widehat{x_\ell} \cdots x_n) \\
\equiv& 0 \ \ \ \ \ {\rm mod} \ x_k+x_\ell. 
\end{align*}
If $\alpha$ is of the form \eqref{eq:Case3(4)}, then
\begin{align*}
\psi^{D_n}_{i,n+s}(\alpha)=&(-1)^{n-i+1} x_{n-s} \cdots x_n (x_{i+1} \cdots \widehat{x_k} \cdots x_n) \pm (-1)^{n-i+1} x_{n-s} \cdots x_n (x_{i+1} \cdots \widehat{x_\ell} \cdots x_n) \\
\equiv& 0 \ \ \ \ \ {\rm mod} \ x_k \pm x_\ell. 
\end{align*}
\textit{Case~4} Suppose that $i = n$ and $n \leq j \leq 2n-1$. 
We put $r=2n-1-j$. 
Then $0 \leq r \leq n-1$ and the Hessenberg function $h^{(i,j)}$ is given as follows
\begin{align*}
h^{(i,j)}(t)=\begin{cases}
n \ \ \ &{\rm if} \ 1 \leq t \leq r, \\
2n-1-t \ \ \ &{\rm if} \ r+1 \leq t \leq n-1, \\
2n-1-r \ \ \ &{\rm if} \ t = n. 
\end{cases}
\end{align*}
The picture of the Hessenberg function $h^{(i,j)}$ is shown in Figure~\ref{picture:h^{(i,j)}Case4}.

\begin{figure}[h]
\begin{center}
\begin{picture}(405,165)
\put(0,153){\colorbox{gray}}
\put(5,153){\colorbox{gray}}
\put(10,153){\colorbox{gray}}
\put(14,153){\colorbox{gray}}
\put(0,158){\colorbox{gray}}
\put(5,158){\colorbox{gray}}
\put(10,158){\colorbox{gray}}
\put(14,158){\colorbox{gray}}
\put(0,162){\colorbox{gray}}
\put(5,162){\colorbox{gray}}
\put(10,162){\colorbox{gray}}
\put(14,162){\colorbox{gray}}

\put(40,153){\colorbox{gray}}
\put(45,153){\colorbox{gray}}
\put(50,153){\colorbox{gray}}
\put(54,153){\colorbox{gray}}
\put(40,158){\colorbox{gray}}
\put(45,158){\colorbox{gray}}
\put(50,158){\colorbox{gray}}
\put(54,158){\colorbox{gray}}
\put(40,162){\colorbox{gray}}
\put(45,162){\colorbox{gray}}
\put(50,162){\colorbox{gray}}
\put(54,162){\colorbox{gray}}

\put(80,153){\colorbox{gray}}
\put(85,153){\colorbox{gray}}
\put(90,153){\colorbox{gray}}
\put(95,153){\colorbox{gray}}
\put(100,153){\colorbox{gray}}
\put(105,153){\colorbox{gray}}
\put(110,153){\colorbox{gray}}
\put(115,153){\colorbox{gray}}
\put(120,153){\colorbox{gray}}
\put(124,153){\colorbox{gray}}
\put(80,158){\colorbox{gray}}
\put(85,158){\colorbox{gray}}
\put(90,158){\colorbox{gray}}
\put(95,158){\colorbox{gray}}
\put(100,158){\colorbox{gray}}
\put(105,158){\colorbox{gray}}
\put(110,158){\colorbox{gray}}
\put(115,158){\colorbox{gray}}
\put(120,158){\colorbox{gray}}
\put(124,158){\colorbox{gray}}
\put(80,162){\colorbox{gray}}
\put(85,162){\colorbox{gray}}
\put(90,162){\colorbox{gray}}
\put(95,162){\colorbox{gray}}
\put(100,162){\colorbox{gray}}
\put(105,162){\colorbox{gray}}
\put(110,162){\colorbox{gray}}
\put(115,162){\colorbox{gray}}
\put(120,162){\colorbox{gray}}
\put(124,162){\colorbox{gray}}

\put(130,153){\colorbox{gray}}
\put(135,153){\colorbox{gray}}
\put(140,153){\colorbox{gray}}
\put(145,153){\colorbox{gray}}
\put(150,153){\colorbox{gray}}
\put(155,153){\colorbox{gray}}
\put(159,153){\colorbox{gray}}
\put(130,158){\colorbox{gray}}
\put(135,158){\colorbox{gray}}
\put(140,158){\colorbox{gray}}
\put(145,158){\colorbox{gray}}
\put(150,158){\colorbox{gray}}
\put(155,158){\colorbox{gray}}
\put(159,158){\colorbox{gray}}
\put(130,162){\colorbox{gray}}
\put(135,162){\colorbox{gray}}
\put(140,162){\colorbox{gray}}
\put(145,162){\colorbox{gray}}
\put(150,162){\colorbox{gray}}
\put(155,162){\colorbox{gray}}
\put(159,162){\colorbox{gray}}

\put(40,118){\colorbox{gray}}
\put(45,118){\colorbox{gray}}
\put(50,118){\colorbox{gray}}
\put(54,118){\colorbox{gray}}
\put(40,123){\colorbox{gray}}
\put(45,123){\colorbox{gray}}
\put(50,123){\colorbox{gray}}
\put(54,123){\colorbox{gray}}
\put(40,127){\colorbox{gray}}
\put(45,127){\colorbox{gray}}
\put(50,127){\colorbox{gray}}
\put(54,127){\colorbox{gray}}

\put(80,118){\colorbox{gray}}
\put(85,118){\colorbox{gray}}
\put(90,118){\colorbox{gray}}
\put(95,118){\colorbox{gray}}
\put(100,118){\colorbox{gray}}
\put(105,118){\colorbox{gray}}
\put(110,118){\colorbox{gray}}
\put(115,118){\colorbox{gray}}
\put(120,118){\colorbox{gray}}
\put(124,118){\colorbox{gray}}
\put(80,123){\colorbox{gray}}
\put(85,123){\colorbox{gray}}
\put(90,123){\colorbox{gray}}
\put(95,123){\colorbox{gray}}
\put(100,123){\colorbox{gray}}
\put(105,123){\colorbox{gray}}
\put(110,123){\colorbox{gray}}
\put(115,123){\colorbox{gray}}
\put(120,123){\colorbox{gray}}
\put(124,123){\colorbox{gray}}
\put(80,127){\colorbox{gray}}
\put(85,127){\colorbox{gray}}
\put(90,127){\colorbox{gray}}
\put(95,127){\colorbox{gray}}
\put(100,127){\colorbox{gray}}
\put(105,127){\colorbox{gray}}
\put(110,127){\colorbox{gray}}
\put(115,127){\colorbox{gray}}
\put(120,127){\colorbox{gray}}
\put(124,127){\colorbox{gray}}

\put(130,118){\colorbox{gray}}
\put(135,118){\colorbox{gray}}
\put(140,118){\colorbox{gray}}
\put(145,118){\colorbox{gray}}
\put(150,118){\colorbox{gray}}
\put(155,118){\colorbox{gray}}
\put(159,118){\colorbox{gray}}
\put(130,123){\colorbox{gray}}
\put(135,123){\colorbox{gray}}
\put(140,123){\colorbox{gray}}
\put(145,123){\colorbox{gray}}
\put(150,123){\colorbox{gray}}
\put(155,123){\colorbox{gray}}
\put(159,123){\colorbox{gray}}
\put(130,127){\colorbox{gray}}
\put(135,127){\colorbox{gray}}
\put(140,127){\colorbox{gray}}
\put(145,127){\colorbox{gray}}
\put(150,127){\colorbox{gray}}
\put(155,127){\colorbox{gray}}
\put(159,127){\colorbox{gray}}

\put(80,103){\colorbox{gray}}
\put(85,103){\colorbox{gray}}
\put(90,103){\colorbox{gray}}
\put(95,103){\colorbox{gray}}
\put(100,103){\colorbox{gray}}
\put(105,103){\colorbox{gray}}
\put(110,103){\colorbox{gray}}
\put(115,103){\colorbox{gray}}
\put(120,103){\colorbox{gray}}
\put(124,103){\colorbox{gray}}
\put(80,108){\colorbox{gray}}
\put(85,108){\colorbox{gray}}
\put(90,108){\colorbox{gray}}
\put(95,108){\colorbox{gray}}
\put(100,108){\colorbox{gray}}
\put(105,108){\colorbox{gray}}
\put(110,108){\colorbox{gray}}
\put(115,108){\colorbox{gray}}
\put(120,108){\colorbox{gray}}
\put(124,108){\colorbox{gray}}
\put(80,112){\colorbox{gray}}
\put(85,112){\colorbox{gray}}
\put(90,112){\colorbox{gray}}
\put(95,112){\colorbox{gray}}
\put(100,112){\colorbox{gray}}
\put(105,112){\colorbox{gray}}
\put(110,112){\colorbox{gray}}
\put(115,112){\colorbox{gray}}
\put(120,112){\colorbox{gray}}
\put(124,112){\colorbox{gray}}

\put(130,103){\colorbox{gray}}
\put(135,103){\colorbox{gray}}
\put(140,103){\colorbox{gray}}
\put(145,103){\colorbox{gray}}
\put(150,103){\colorbox{gray}}
\put(155,103){\colorbox{gray}}
\put(159,103){\colorbox{gray}}
\put(130,108){\colorbox{gray}}
\put(135,108){\colorbox{gray}}
\put(140,108){\colorbox{gray}}
\put(145,108){\colorbox{gray}}
\put(150,108){\colorbox{gray}}
\put(155,108){\colorbox{gray}}
\put(159,108){\colorbox{gray}}
\put(130,112){\colorbox{gray}}
\put(135,112){\colorbox{gray}}
\put(140,112){\colorbox{gray}}
\put(145,112){\colorbox{gray}}
\put(150,112){\colorbox{gray}}
\put(155,112){\colorbox{gray}}
\put(159,112){\colorbox{gray}}

\put(165,103){\colorbox{gray}}
\put(170,103){\colorbox{gray}}
\put(175,103){\colorbox{gray}}
\put(180,103){\colorbox{gray}}
\put(185,103){\colorbox{gray}}
\put(190,103){\colorbox{gray}}
\put(195,103){\colorbox{gray}}
\put(200,103){\colorbox{gray}}
\put(205,103){\colorbox{gray}}
\put(210,103){\colorbox{gray}}
\put(214,103){\colorbox{gray}}
\put(165,108){\colorbox{gray}}
\put(170,108){\colorbox{gray}}
\put(175,108){\colorbox{gray}}
\put(180,108){\colorbox{gray}}
\put(185,108){\colorbox{gray}}
\put(190,108){\colorbox{gray}}
\put(195,108){\colorbox{gray}}
\put(200,108){\colorbox{gray}}
\put(205,108){\colorbox{gray}}
\put(210,108){\colorbox{gray}}
\put(214,108){\colorbox{gray}}
\put(165,112){\colorbox{gray}}
\put(170,112){\colorbox{gray}}
\put(175,112){\colorbox{gray}}
\put(180,112){\colorbox{gray}}
\put(185,112){\colorbox{gray}}
\put(190,112){\colorbox{gray}}
\put(195,112){\colorbox{gray}}
\put(200,112){\colorbox{gray}}
\put(205,112){\colorbox{gray}}
\put(210,112){\colorbox{gray}}
\put(214,112){\colorbox{gray}}

\put(220,103){\colorbox{gray}}
\put(225,103){\colorbox{gray}}
\put(230,103){\colorbox{gray}}
\put(235,103){\colorbox{gray}}
\put(240,103){\colorbox{gray}}
\put(245,103){\colorbox{gray}}
\put(250,103){\colorbox{gray}}
\put(255,103){\colorbox{gray}}
\put(260,103){\colorbox{gray}}
\put(264,103){\colorbox{gray}}
\put(220,108){\colorbox{gray}}
\put(225,108){\colorbox{gray}}
\put(230,108){\colorbox{gray}}
\put(235,108){\colorbox{gray}}
\put(240,108){\colorbox{gray}}
\put(245,108){\colorbox{gray}}
\put(250,108){\colorbox{gray}}
\put(255,108){\colorbox{gray}}
\put(260,108){\colorbox{gray}}
\put(264,108){\colorbox{gray}}
\put(220,112){\colorbox{gray}}
\put(225,112){\colorbox{gray}}
\put(230,112){\colorbox{gray}}
\put(235,112){\colorbox{gray}}
\put(240,112){\colorbox{gray}}
\put(245,112){\colorbox{gray}}
\put(250,112){\colorbox{gray}}
\put(255,112){\colorbox{gray}}
\put(260,112){\colorbox{gray}}
\put(264,112){\colorbox{gray}}

\put(290,103){\colorbox{gray}}
\put(295,103){\colorbox{gray}}
\put(300,103){\colorbox{gray}}
\put(305,103){\colorbox{gray}}
\put(310,103){\colorbox{gray}}
\put(315,103){\colorbox{gray}}
\put(320,103){\colorbox{gray}}
\put(325,103){\colorbox{gray}}
\put(329,103){\colorbox{gray}}
\put(290,108){\colorbox{gray}}
\put(295,108){\colorbox{gray}}
\put(300,108){\colorbox{gray}}
\put(305,108){\colorbox{gray}}
\put(310,108){\colorbox{gray}}
\put(315,108){\colorbox{gray}}
\put(320,108){\colorbox{gray}}
\put(325,108){\colorbox{gray}}
\put(329,108){\colorbox{gray}}
\put(290,112){\colorbox{gray}}
\put(295,112){\colorbox{gray}}
\put(300,112){\colorbox{gray}}
\put(305,112){\colorbox{gray}}
\put(310,112){\colorbox{gray}}
\put(315,112){\colorbox{gray}}
\put(320,112){\colorbox{gray}}
\put(325,112){\colorbox{gray}}
\put(329,112){\colorbox{gray}}

\put(80,63){\colorbox{gray}}
\put(85,63){\colorbox{gray}}
\put(90,63){\colorbox{gray}}
\put(95,63){\colorbox{gray}}
\put(100,63){\colorbox{gray}}
\put(105,63){\colorbox{gray}}
\put(110,63){\colorbox{gray}}
\put(115,63){\colorbox{gray}}
\put(120,63){\colorbox{gray}}
\put(124,63){\colorbox{gray}}
\put(80,68){\colorbox{gray}}
\put(85,68){\colorbox{gray}}
\put(90,68){\colorbox{gray}}
\put(95,68){\colorbox{gray}}
\put(100,68){\colorbox{gray}}
\put(105,68){\colorbox{gray}}
\put(110,68){\colorbox{gray}}
\put(115,68){\colorbox{gray}}
\put(120,68){\colorbox{gray}}
\put(124,68){\colorbox{gray}}
\put(80,72){\colorbox{gray}}
\put(85,72){\colorbox{gray}}
\put(90,72){\colorbox{gray}}
\put(95,72){\colorbox{gray}}
\put(100,72){\colorbox{gray}}
\put(105,72){\colorbox{gray}}
\put(110,72){\colorbox{gray}}
\put(115,72){\colorbox{gray}}
\put(120,72){\colorbox{gray}}
\put(124,72){\colorbox{gray}}

\put(130,63){\colorbox{gray}}
\put(135,63){\colorbox{gray}}
\put(140,63){\colorbox{gray}}
\put(145,63){\colorbox{gray}}
\put(150,63){\colorbox{gray}}
\put(155,63){\colorbox{gray}}
\put(159,63){\colorbox{gray}}
\put(130,68){\colorbox{gray}}
\put(135,68){\colorbox{gray}}
\put(140,68){\colorbox{gray}}
\put(145,68){\colorbox{gray}}
\put(150,68){\colorbox{gray}}
\put(155,68){\colorbox{gray}}
\put(159,68){\colorbox{gray}}
\put(130,72){\colorbox{gray}}
\put(135,72){\colorbox{gray}}
\put(140,72){\colorbox{gray}}
\put(145,72){\colorbox{gray}}
\put(150,72){\colorbox{gray}}
\put(155,72){\colorbox{gray}}
\put(159,72){\colorbox{gray}}

\put(165,63){\colorbox{gray}}
\put(170,63){\colorbox{gray}}
\put(175,63){\colorbox{gray}}
\put(180,63){\colorbox{gray}}
\put(185,63){\colorbox{gray}}
\put(190,63){\colorbox{gray}}
\put(195,63){\colorbox{gray}}
\put(200,63){\colorbox{gray}}
\put(205,63){\colorbox{gray}}
\put(210,63){\colorbox{gray}}
\put(214,63){\colorbox{gray}}
\put(165,68){\colorbox{gray}}
\put(170,68){\colorbox{gray}}
\put(175,68){\colorbox{gray}}
\put(180,68){\colorbox{gray}}
\put(185,68){\colorbox{gray}}
\put(190,68){\colorbox{gray}}
\put(195,68){\colorbox{gray}}
\put(200,68){\colorbox{gray}}
\put(205,68){\colorbox{gray}}
\put(210,68){\colorbox{gray}}
\put(214,68){\colorbox{gray}}
\put(165,72){\colorbox{gray}}
\put(170,72){\colorbox{gray}}
\put(175,72){\colorbox{gray}}
\put(180,72){\colorbox{gray}}
\put(185,72){\colorbox{gray}}
\put(190,72){\colorbox{gray}}
\put(195,72){\colorbox{gray}}
\put(200,72){\colorbox{gray}}
\put(205,72){\colorbox{gray}}
\put(210,72){\colorbox{gray}}
\put(214,72){\colorbox{gray}}

\put(165,48){\colorbox{gray}}
\put(170,48){\colorbox{gray}}
\put(175,48){\colorbox{gray}}
\put(180,48){\colorbox{gray}}
\put(185,48){\colorbox{gray}}
\put(190,48){\colorbox{gray}}
\put(195,48){\colorbox{gray}}
\put(200,48){\colorbox{gray}}
\put(205,48){\colorbox{gray}}
\put(210,48){\colorbox{gray}}
\put(214,48){\colorbox{gray}}
\put(165,53){\colorbox{gray}}
\put(170,53){\colorbox{gray}}
\put(175,53){\colorbox{gray}}
\put(180,53){\colorbox{gray}}
\put(185,53){\colorbox{gray}}
\put(190,53){\colorbox{gray}}
\put(195,53){\colorbox{gray}}
\put(200,53){\colorbox{gray}}
\put(205,53){\colorbox{gray}}
\put(210,53){\colorbox{gray}}
\put(214,53){\colorbox{gray}}
\put(165,57){\colorbox{gray}}
\put(170,57){\colorbox{gray}}
\put(175,57){\colorbox{gray}}
\put(180,57){\colorbox{gray}}
\put(185,57){\colorbox{gray}}
\put(190,57){\colorbox{gray}}
\put(195,57){\colorbox{gray}}
\put(200,57){\colorbox{gray}}
\put(205,57){\colorbox{gray}}
\put(210,57){\colorbox{gray}}
\put(214,57){\colorbox{gray}}

\put(0,150){\framebox(20,15){\tiny $(1,1)$}} 

\put(23,155){$\cdots$}
\put(23,135){$\ddots$}

\put(40,150){\framebox(20,15){\tiny $(1,i)$}}
\put(48,135){$\vdots$}
\put(40,115){\framebox(20,15){\tiny $(r,r)$}}

\put(63,155){$\cdots$}
\put(63,119){$\cdots$}
\put(63,89){$\ddots$}

\put(80,150){\framebox(50,15){\tiny $(1,n-1)$}}
\put(104,135){$\vdots$}
\put(80,115){\framebox(50,15){\tiny $(r,n-1)$}}
\put(80,100){\framebox(50,15){\tiny $(r+1,n-1)$}}
\put(104,82){$\vdots$}
\put(80,60){\framebox(50,15){\tiny $(n-1,n-1)$}}

\put(130,150){\framebox(35,15){\tiny $(1,n)$}}
\put(146,135){$\vdots$}
\put(130,115){\framebox(35,15){\tiny $(r,n)$}}
\put(130,100){\framebox(35,15){\tiny $(r+1,n)$}}
\put(146,82){$\vdots$}
\put(130,60){\framebox(35,15){\tiny $(n-1,n)$}}

\put(165,150){\framebox(55,15){\tiny $(n,2n-1)$}}
\put(189,135){$\vdots$}
\put(165,115){\framebox(55,15){\tiny $(n,2n-r)$}}
\put(165,100){\framebox(55,15){\tiny $(n,2n-1-r)$}}
\put(189,82){$\vdots$}
\put(165,60){\framebox(55,15){\tiny $(n,n+1)$}}
\put(165,45){\framebox(55,15){\tiny $(n,n)$}}

\put(220,150){\framebox(50,15){\tiny $(1,n+1)$}}
\put(244,135){$\vdots$}
\put(220,115){\framebox(50,15){\tiny $(r,n+1)$}}
\put(220,100){\framebox(50,15){\tiny $(r+1,n+1)$}}
\put(244,88){$\vdots$}

\put(318,155){$\cdots$}
\put(298,120){$\cdots$}
\put(273,104){$\cdots$}
\put(273,87){$\cdot$} 
\put(278,89.5){$\cdot$}
\put(283,92){$\cdot$}

\put(290,100){\framebox(45,15){\tiny $(r+1,d_{r+1})$}}

\put(335,115){\framebox(25,15){\tiny $(r,d_r)$}}

\put(363,135){$\cdot$} 
\put(368,137.5){$\cdot$}
\put(373,140){$\cdot$}

\put(380,150){\framebox(25,15){\tiny $(1,d_1)$}}
\end{picture}
\end{center}
\vspace{-50pt}
\caption{The Hessenberg function $h^{(n,2n-1-r)}$ with $0 \leq r \leq n-1$.}
\label{picture:h^{(i,j)}Case4}
\end{figure} 

Similarly to Case~2, one can see that $\psi^{D_n}_{n,2n-1-r}(\alpha) \equiv 0$ mod $\alpha$ for any $\alpha \in I^{(i,j)}$.

Therefore, it follows from Cases~1,~2,~3,~4 that $\psi^{D_n}_{i,j}$ is an element of $D(\A_{I^{(i,j)}})$, and this completes the proof. 
\end{proof}

\begin{theorem} \label{theorem:psiD}
The derivations $\{\psi^{D_n}_{i,j} \mid 1 \leq i \leq n-1, i \leq j \leq 2n-i-1\} \cup \{\psi^{D_n}_{n,j} \mid n \leq j \leq 2n-1 \}$ form uniform bases for the ideal arrangements of type $D_n$.
\end{theorem}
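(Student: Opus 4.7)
The plan is to verify Saito's criterion (Theorem~\ref{theorem:Saito's criterion}) for every lower ideal $I\subset\Phi^{+}_{D_n}$. Concretely, one must show $\psi^{D_n}_{i,h_I(i)}\in D(\A_I)$ for $i=1,\dots,n$, that the degrees satisfy $\sum_i \deg\psi^{D_n}_{i,h_I(i)}=|\A_I|$, and that the $n$ derivations are linearly independent over $\CR$. The containment is exactly Proposition~\ref{proposition:psiD}, and the degree-sum identity is automatic from $\deg\psi^{D_n}_{i,j}=j-i$ together with the identification \eqref{eq:exponentsHessenbergfunction} of the exponents of $\A_I$ as $\{h_I(i)-i\}$. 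The only nontrivial task is linear independence over $\CR$.

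I would establish linear independence by induction on $\height(I)$. The base case $\height(I)=0$ is immediate: $I=\emptyset$ and a direct computation of the dual basis of the type $D_n$ simple roots gives $\psi^{D_n}_{i,i}=\alpha_i^{*}$ for $i\le n-2$ and $\psi^{D_n}_{i,i}=2\alpha_i^{*}$ for $i\in\{n-1,n\}$, so $\{\psi^{D_n}_{i,i}\}_{i=1}^{n}$ is, up to nonzero rational rescaling, a $\Q$-basis of $\Der(\CR)$. For the inductive step, with $I'=I\cap I_{\height(I)-1}$, I invoke the MAT-type construction of Theorem~\ref{theorem:ABCHTbasis} to produce an $\CR$-basis of $D(\A_I)$ from a given $\CR$-basis of $D(\A_{I'})$; the task then reduces to showing that $\{\psi^{D_n}_{i,h_I(i)}\}_{i=1}^{n}$ differs from this MAT basis by an invertible $\Q$-linear transformation.

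The main obstacle is this change-of-basis step, which for $D_n$ is substantially more delicate than for types $A$, $B$, $C$: the recursion \eqref{eq:psiDi}--\eqref{eq:psiDn} for $\psi^{D_n}_{i,j}$ contains an extra $\xi^{D_n}_i$ correction (triggered when the second index equals $n-1$) and an extra $\psi^{D_n}_{0,m}$ boundary term for $m\ge n-1$, and both couple the $i\le n-1$ rows to the $i=n$ row of $\Lambda_m$ in a way that has no analog in types $A, B, C$. The two identities needed to unwind this coupling are
\[
\xi^{D_n}_i \;=\; \tfrac{1}{2}\bigl(\psi^{D_n}_{i,n-1}+(-1)^{n-i}\psi^{D_n}_{n,2n-1-i}\bigr),
\qquad
\psi^{D_n}_{0,n-1} \;=\; (-1)^n\,\psi^{D_n}_{n,2n-1},
\]
both verifiable by direct comparison of the explicit formulas \eqref{eq:psi1}--\eqref{eq:psi4} with \eqref{eq:psi0D} and \eqref{eq:xiD}. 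Substituting these into the iterated recursion rewrites each $\psi^{D_n}_{i,i+m}$ as a rational combination of the MAT basis elements, producing a small coupling block between the rows indexed by $n-1-m$ and $n$ of $\Lambda_m$; this block has nonzero rational determinant, so the overall change of basis is invertible over $\Q$, closing the induction and establishing linear independence for every $I$.
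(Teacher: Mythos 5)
Your skeleton (containment via Proposition~\ref{proposition:psiD}, the degree count, and the reduction to $\CR$-linear independence via Saito's criterion) matches the paper, and your two identities are correct: the first is an equivalent form of Lemma~\ref{lemma:xiD}, and the second follows by comparing \eqref{eq:psi0D} with \eqref{eq:psi4}. But your route to linear independence is not the paper's. The paper proves independence of $\psi^{D_n}_{1,h(1)},\dots,\psi^{D_n}_{n,h(n)}$ directly, by a double induction: for Hessenberg functions with $h(n)\ge 2n-i^{D_n}_h-1$ it inducts on the rank $n$, using the specialization $x_1\mapsto 0$ of \eqref{eq:Dmapbar}, which carries $\psi^{D_n}_{i,j}$ to $\psi^{D_{n-1}}_{i-1,j-1}$; for the remaining $h$ it inducts on the defect $2n-i^{D_n}_h-1-h(n)$, using \eqref{eq:psiDn} to raise $h(n)$ by one. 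The MAT/height-induction strategy you propose is the one the paper reserves for the \emph{modified} derivations $\tilde\psi^{D_n}_{i,j}$ (Proposition~\ref{proposition:key}, Proposition~\ref{proposition:tildepsi}, Theorem~\ref{theorem:tildepsiD}); the paper explicitly notes that the $\psi^{D_n}_{i,j}$ themselves are \emph{not} of the form \eqref{eq:main1-2}, which is precisely the hypothesis that machinery requires.

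There are two genuine gaps at your crucial step. First, ``differs from the MAT basis by an invertible $\Q$-linear transformation'' is not the right statement for a general lower ideal $I$: the degree-$(m+1)$ elements $\psi^{D_n}_{i,h_I(i)}$, $i\in\Lambda_I$, are sums $\sum_j p_{ij}\,\alpha_{j,j+m+1}\,\psi^{D_n}_{j,j+m}$, whereas the MAT elements $\beta_j\psi_j$ are $\alpha_{j,j+m+1}$ times \emph{different} rational combinations of the degree-$m$ basis of $D(\A_{I'})$; these do not lie in a common $\Q$-span, so one needs the $\CR$-coefficient bookkeeping of the strengthened Claim~2 in the proof of Proposition~\ref{proposition:key} (the matrices $F\in\M(\Lambda_I,\Lambda_m;\CR)$ whose rows are merely $\CR$-independent), not a $\Q$-change of basis. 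Second, and more seriously, the assertion that substitution ``rewrites each $\psi^{D_n}_{i,i+m}$ as a rational combination of the MAT basis elements'' and that ``this block has nonzero rational determinant'' is exactly what must be proved, and it is not a mere substitution: plugging $\xi^{D_n}_i=\tfrac12\bigl(\psi^{D_n}_{i,n-1}+(-1)^{n-i}\psi^{D_n}_{n,2n-1-i}\bigr)$ into \eqref{eq:psiDi} puts $\psi^{D_n}_{i,n-1}$ on both sides, and \eqref{eq:psiDn} couples $\psi^{D_n}_{n,n+m}$ back to $\psi^{D_n}_{n-m,n}$, so at each level $m\le n-1$ you must solve a coupled linear system and verify that the resulting coefficient matrix is invertible. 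That verification is the content of Proposition~\ref{proposition:tildepsi} and the explicit matrices $P^{D_n}_m$ (note $\det P^{D_n}_{n-1}=\tfrac12$, so invertibility is not automatic); your sketch does not carry it out.
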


\begin{proof}
From Proposition~\ref{proposition:psiD} we have $\{\psi^{D_n}_{i,h_I(i)} \mid i \in [n] \} \subset D(\A_I)$ for any lower ideal $I$.
Since $\deg(\psi^{D_n}_{i,j})=j-i$, it is enough to prove that $\psi^{D_n}_{1,h(1)}, \ldots, \psi^{D_n}_{n,h(n)}$ are linearly independent over $\CR$ for any Hessenberg function $h$ for type $D_n$ by Theorem~\ref{theorem:Saito's criterion}. 
We prove this by splitting into two cases.
Let 
\begin{equation} \label{eq:iDnh}
i^{D_n}_h:=\min\{ i \in [n-1] \mid h(i) \geq n-1 \}. 
\end{equation}

\smallskip

\noindent
\textit{Case~1} Suppose that $h(n) \geq 2n-i^{D_n}_h-1$.
We prove the linear independence of $\psi^{D_n}_{1,h(1)}, \ldots, \psi^{D_n}_{n,h(n)}$ over $\CR$ by induction on $n$. 
As the base case $n=2$, the claim is straightforward.
Now we assume that $n > 2$ and the claim holds for $n-1$.
We first note that a Hessenberg function $h$ for type $D_n$ induces the Hessenberg function $h'$ for type $D_{n-1}$ defined by 
\begin{align*} 
h'(i-1)&=h(i)-1 \ \ \ {\rm for} \ 2 \leq i \leq n-1, \\
h'(n-1)&=\min \{h(n)-1, 2n-3 \}.
\end{align*}
The picture of $h'$ is obtained from that of $h$ by removing the $1$-st row.
One see that $i^{D_{n-1}}_{h'} \geq i^{D_n}_h$, so we have $h'(n-1) \geq 2(n-1)-i^{D_{n-1}}_{h'}-1$.
That is, $h'$ is a Hessenberg function for type $D_{n-1}$ satisfying the condition for Case~1.

We consider the following surjection  
\begin{equation} \label{eq:Dmapbar}
\begin{array}{ccc}
\bigoplus_{k=1}^n \R[x_1,\ldots,x_n] \partial_k & \stackrel{}{\longrightarrow} & \bigoplus_{k=2}^n \R[x_2,\ldots,x_n] \partial_k \\
\rotatebox{90}{$\in$} & & \rotatebox{90}{$\in$} \\
\sum_{k=1}^n f_k(x_1,\ldots,x_n)\partial_k & \longmapsto & \sum_{k=2}^n f_k(0,x_2,\ldots,x_n)\partial_k
\end{array}
\end{equation}
and the image of $\psi$ under the map is denoted by $\overline{\psi}$.
Then we have 
\begin{align} 
\overline{\psi^{D_n}_{i,j}}&=\psi^{D_{n-1}}_{i-1,j-1}(x_2,\ldots,x_n) \ \ \ \ \ \ {\rm for} \ i \in [n-1], i \leq j \leq 2n-1-i, \label{eq:barpsiD1} \\
\overline{\psi^{D_n}_{n,j}}&=\begin{cases}
\psi^{D_{n-1}}_{n-1,j-1}(x_2,\ldots,x_n) \ \ \ {\rm if} \ i \in [n-1], i \leq j \leq 2n-1-i, \\
0 \ \ \ \ \ \ \ \ \ \ \ \ \ \ \ \ \ \ \ \ \ \ \ \ \ \ \ {\rm if} \ j=2n-1. \label{eq:barpsiD2}
\end{cases}
\end{align}
Suppose that 
\begin{equation} \label{eq:psiDlinearlyindependent}
\sum_{k=1}^n f_k(x_1,\ldots,x_n) \psi^{D_n}_{k,h(k)}=0
\end{equation} 
for $f_k(x_1,\ldots,x_n) \in \R[x_1,\ldots,x_n]$ such that all $f_k(x_1,\ldots,x_n)$ $(1 \leq k \leq n)$ have no common divisor.  

\noindent
\textit{Case~{1-1}} Assume that $h(1) < n-1$. Then we have $\psi^{D_{n-1}}_{0,h(1)-1}=0$ and $h(n) < 2n-1$ by the definition of the Hessenberg function.
Applying the map in \eqref{eq:Dmapbar} to both sides of \eqref{eq:psiDlinearlyindependent}, we obtain from \eqref{eq:barpsiD1} and \eqref{eq:barpsiD2}
$$
\sum_{k=2}^n f_k(0,x_2\ldots,x_n) \psi^{D_{n-1}}_{k-1,h'(k-1)}(x_2,\ldots,x_n)=0.
$$
By the inductive assumption, we obtain $f_k(0,x_2,\ldots,x_n)=0$ for $k=2,\ldots,n$.
Hence, we can write $f_k(x_1,\ldots,x_n)=x_1 f'_k(x_1,\ldots,x_n)$ for some $f'_k(x_1,\ldots,x_n) \in \R[x_1,\ldots,x_n]$.
Comparing the coefficient of $\partial_1$ in \eqref{eq:psiDlinearlyindependent}, we have 
$$
f_1(x_1,\ldots,x_n)(x_1-x_2)\cdots(x_1-x_{h(1)})+x_1(f'_2(x_1,\ldots,x_n)+\cdots)=0.
$$
Substituting $x_1=0$ into the equation above, we obtain
$$
f_1(0,x_2,\ldots,x_n)(-x_2)\cdots(-x_{h(1)})=0.
$$
Hence, we have $f_1(0,x_2,\ldots,x_n)=0$ which implies $f_1(x_1,\ldots,x_n)$ is divisible by $x_1$.
However, all $f_k(x_1,\ldots,x_n)$ have no common divisor, so $f_k(x_1,\ldots,x_n)=0$ for $k=1,\ldots,n$.

\noindent
\textit{Case~{1-2}} Assume that $h(1) = n-1$. Then we have 
\begin{equation} \label{eq:psiDn-10,h(1)-1}
\psi^{D_{n-1}}_{0,h(1)-1}(x_2,\ldots,x_n)=(-1)^{n-1}\sum_{k=2}^n x_2 \cdots \widehat{x_{k}} \cdots x_n \, \partial_k=(-1)^{n-1} \psi^{D_{n-1}}_{n-1,2n-3}(x_2,\ldots,x_n).
\end{equation}
Since $h(1)=n-1$, we have $i^{D_n}_h=1$.
Thus, $h(n) \geq 2n-i^{D_n}_h-1=2n-2$, i.e. $h(n)=2n-1$ or $h(n)=2n-2$.

If $h(n)=2n-1$, then $\overline{\psi^{D_n}_{n,h(n)}}=0$ from \eqref{eq:barpsiD2} and $h'(n-1)=2n-3$.
Applying the map in \eqref{eq:Dmapbar} to both sides of \eqref{eq:psiDlinearlyindependent}, we obtain from \eqref{eq:psiDn-10,h(1)-1} and \eqref{eq:barpsiD1}
$$
(-1)^{n-1} f_1(0,x_2\ldots,x_n) \psi^{D_{n-1}}_{n-1,h'(n-1)}(x_2,\ldots,x_n)+\sum_{k=2}^{n-1} f_k(0,x_2\ldots,x_n) \psi^{D_{n-1}}_{k-1,h'(k-1)}(x_2,\ldots,x_n)=0.
$$
By the inductive assumption, we obtain $f_k(0,x_2,\ldots,x_n)=0$ for $k=1,\ldots,n-1$.
Hence, we can write $f_k(x_1,\ldots,x_n)=x_1 f'_k(x_1,\ldots,x_n)$ for some $f'_k(x_1,\ldots,x_n) \in \R[x_1,\ldots,x_n]$.
Comparing the coefficient of $\partial_1$ in \eqref{eq:psiDlinearlyindependent}, we have 
$$
x_1(f'_1(x_1,\ldots,x_n)+\cdots)+f_n(x_1,\ldots,x_n) x_2\cdots x_n=0.
$$
Substituting $x_1=0$ into the equation above, we obtain $f_n(0,x_2,\ldots,x_n)=0$ which implies $f_n(x_1,\ldots,x_n)$ is divisible by $x_1$.
Therefore, $f_k(x_1,\ldots,x_n)=0$ for $k=1,\ldots,n$ since all $f_k(x_1,\ldots,x_n)$ have no common divisor. 

If $h(n)=2n-2$, then $h'(n-1)=2n-3$.
Hence by \eqref{eq:barpsiD2}, $\overline{\psi^{D_n}_{n,h(n)}}=\psi^{D_{n-1}}_{n-1,h'(n-1)}(x_2,\ldots,x_n)$. 
Applying the map in \eqref{eq:Dmapbar} to both sides of \eqref{eq:psiDlinearlyindependent}, we obtain from \eqref{eq:psiDn-10,h(1)-1} and \eqref{eq:barpsiD1}
\begin{align*}
& \big((-1)^{n-1} f_1(0,x_2\ldots,x_n)+f_n(0,x_2\ldots,x_n) \big)\psi^{D_{n-1}}_{n-1,h'(n-1)}(x_2,\ldots,x_n) \\
&+\sum_{k=2}^{n-1} f_k(0,x_2\ldots,x_n) \psi^{D_{n-1}}_{k-1,h'(k-1)}(x_2,\ldots,x_n)=0.
\end{align*}
By the inductive assumption, we obtain
\begin{align} 
&f_1(0,x_2,\ldots,x_n)=(-1)^n f_n(0,x_2,\ldots,x_n), \label{eq:f1=fnD} \\
&f_k(0,x_2,\ldots,x_n)=0 \ \ \ {\rm for} \ k=2,\ldots,n-1. \label{eq:fkDcase1-2}
\end{align}
Hence, we can write $f_k(x_1,\ldots,x_n)=x_1 f'_k(x_1,\ldots,x_n)$ for $k=2,\ldots,n-1$.
Comparing the coefficient of $\partial_1$ in \eqref{eq:psiDlinearlyindependent},  
\begin{align*}
&f_1(x_1,\ldots,x_n) \big( (x_1-x_2)\cdots(x_1-x_{n-1})(x_1+x_n)+(-1)^{n-1}x_2\cdots x_n \big)x_1^{-1} \\
&+x_1(f'_2(x_1,\ldots,x_n)+\cdots)+f_n(x_1,\ldots,x_n)\big( (-1)^{n}(x_1-x_2)\cdots(x_1-x_n)+x_2\cdots x_n \big)x_1^{-1}=0.
\end{align*}
Substituting $x_1=0$ into the equation above, we obtain 
\begin{align*}
&f_1(0,x_2,\ldots,x_n) \big( \sum_{k=2}^{n-1} (-x_2)\cdots\widehat{(-x_k)}\cdots(-x_{n-1})(x_n) +(-x_2)\cdots(-x_{n-1}) \big) \\
&+(-1)^n f_n(0,x_2,\ldots,x_n) \big( \sum_{k=2}^n (-x_2)\cdots\widehat{(-x_k)}\cdots(-x_n) \big)=0.
\end{align*}
Hence by \eqref{eq:f1=fnD}, we have $2f_1(0,x_2,\ldots,x_n)(-x_2)\cdots(-x_{n-1})=0$ which implies that $f_1(0,x_2,\ldots,x_n)=f_n(0,x_2,\ldots,x_n)=0$.
From this together with \eqref{eq:fkDcase1-2}, we conclude $f_k(x_1,\ldots,x_n)=0$ for $k=1,\ldots,n$ because all $f_k(x_1,\ldots,x_n)$ have no common divisor. 

\noindent
\textit{Case~{1-3}} Assume that $h(1) \geq n$. 
It follows from similar discussion on Case~{1-2} that $f_k(x_1,\ldots,x_n)=0$ for $k=1,\ldots,n$.

\smallskip

\noindent
\textit{Case~2} Suppose that $h(n) \leq 2n-i^{D_n}_h-1$.
We prove the linear independence of $\psi^{D_n}_{1,h(1)}, \ldots, \psi^{D_n}_{n,h(n)}$ over $\CR$ by  induction on $m_h:=2n-i^{D_n}_h-1-h(n)$. 
As the base case $m_h=0$, namely $h(n)=2n-i^{D_n}_h-1$, the claim follows from Case~1.
Now we assume that $m_h>0$, that is, 
\begin{equation} \label{eq:psiDninductionCase2}
h(n) < 2n-i^{D_n}_h-1
\end{equation}
and the claim holds for any Hessenberg function $\tilde{h}$ for type $D_n$ with $m_{\tilde{h}} = m_h-1$. 

For the given Hessenberg function $h$ with \eqref{eq:psiDninductionCase2}, we can define a Hessenberg function $\tilde{h}$ by 
\begin{equation*}
\tilde{h}(i)=\begin{cases}
h(i) \ \ \ \ \ \ \ \ \ {\rm for} \ i=1,\dots,n-1, \\
h(n)+1 \ \ \ {\rm for} \ i=n.
\end{cases}
\end{equation*}
By the definition we have $i^{D_n}_h=i^{D_n}_{\tilde{h}}$, so $m_{\tilde{h}}=m_h-1$.
Hence, $\psi^{D_n}_{1,\tilde{h}(1)},\ldots,\psi^{D_n}_{n,\tilde{h}(n)}$ are linearly independent over $\CR$ by the inductive assumption. 
Suppose that 
\begin{equation} \label{eq:typeDCase2linearlyindependence1}
\sum_{k=1}^n f_k \psi^{D_n}_{k,h(k)}=0
\end{equation}
for $f_k \in \R[x_1,\ldots,x_n]$.
Since $\psi^{D_n}_{n,\tilde{h}(n)}=\alpha_{n,\tilde{h}(n)} \psi^{D_n}_{n,h(n)}+(-1)^{\tilde{h}(n)-n}\psi^{D_n}_{2n-\tilde{h}(n),n}$ by \eqref{eq:psiDn}, we have 
\begin{equation} \label{eq:typeDCase2linearlyindependence2}
\sum_{k=1}^{n-1} g_k \psi^{D_n}_{k,\tilde{h}(k)} + f_n \psi^{D_n}_{n,\tilde{h}(n)}=(-1)^{\tilde{h}(n)-n} f_n \psi^{D_n}_{2n-\tilde{h}(n),n},
\end{equation}
where $g_k=\alpha_{n,\tilde{h}(n)} f_k$ for $k \in [n-1]$.
We show that $\psi^{D_n}_{2n-\tilde{h}(n),n}$ can be written as a linear combination of $\psi^{D_n}_{1,\tilde{h}(1)},\ldots,\psi^{D_n}_{n-1,\tilde{h}(n-1)}$ over $\CR$. 
We first note that $1 \leq 2n- \tilde{h}(n) \leq n$ because $n \leq \tilde{h}(n) \leq 2n-1$ by the definition $(2)$ of a Hessenberg function. 
If $2n-\tilde{h}(n)=n$, then $n \leq h(n)=\tilde{h}(n)-1=n-1$, yielding a contradiction. 
If $2n-\tilde{h}(n)=1$, then $h(n)=\tilde{h}(n)-1=2n-2$. 
The assumption \eqref{eq:psiDninductionCase2} now implies $i^{D_n}_h<1$, a contradiction. 
Hence, we have 
\begin{align}
&2 \leq 2n- \tilde{h}(n) \leq n-1, \label{eq:inequality2n-h(n)} \\
&1 \leq 2n- \tilde{h}(n)-1 \leq n-2. \label{eq:inequality2n-h(n)-1}
\end{align}
By the definition \eqref{eq:iDnh} and assumption \eqref{eq:psiDninductionCase2}, we have $h(2n- \tilde{h}(n)) \geq h(i^{D_n}_h) \geq n-1$ and $h(2n- \tilde{h}(n)-1) \geq h(i^{D_n}_h) \geq n-1$.
Also, by the definition $(5)$ of a Hessenberg function we have $h(2n- \tilde{h}(n)) \leq n$ and $h(2n- \tilde{h}(n)-1) \leq n$.
Thus, both of the values $h(2n- \tilde{h}(n))$ and $h(2n- \tilde{h}(n)-1)$ are equal to $n-1$ or $n$.

If $h(2n- \tilde{h}(n))=n$, then it follows from \eqref{eq:inequality2n-h(n)} and the definition of $\tilde{h}$ that
$$
\psi^{D_n}_{2n-\tilde{h}(n),n}=\psi^{D_n}_{2n-\tilde{h}(n),h(2n- \tilde{h}(n))}=\psi^{D_n}_{2n-\tilde{h}(n),\tilde{h}(2n- \tilde{h}(n))}.
$$

If $h(2n- \tilde{h}(n))=n-1$, then we have $h(2n- \tilde{h}(n)-1)=n-1$ by the definition $(3)$ of a Hessenberg function.
From \eqref{eq:inequality2n-h(n)}, \eqref{eq:inequality2n-h(n)-1}, and the definition of $\tilde{h}$ together with \eqref{eq:psiDi}, we obtain
\begin{align*}
\psi^{D_n}_{2n-\tilde{h}(n),n} &= \psi^{D_n}_{2n-\tilde{h}(n)-1,n-1}+\alpha_{i,n} \psi^{D_n}_{2n-\tilde{h}(n),n-1}= \psi^{D_n}_{2n-\tilde{h}(n)-1,h(2n- \tilde{h}(n)-1)}+\alpha_{i,n} \psi^{D_n}_{2n-\tilde{h}(n),h(2n- \tilde{h}(n))} \\
&= \psi^{D_n}_{2n-\tilde{h}(n)-1,\tilde{h}(2n- \tilde{h}(n)-1)}+\alpha_{i,n} \psi^{D_n}_{2n-\tilde{h}(n),\tilde{h}(2n- \tilde{h}(n))}.
\end{align*}
Therefore, $\psi^{D_n}_{2n-\tilde{h}(n),n}$ can be written as a linear combination of $\psi^{D_n}_{1,\tilde{h}(1)},\ldots,\psi^{D_n}_{n-1,\tilde{h}(n-1)}$ over $\CR$. 
This together with \eqref{eq:typeDCase2linearlyindependence2} implies $f_n=0$ because $\psi^{D_n}_{1,\tilde{h}(1)},\ldots,\psi^{D_n}_{n,\tilde{h}(n)}$ are linearly independent over $\CR$ by the inductive assumption. 
Hence by \eqref{eq:typeDCase2linearlyindependence1}, we have $\sum_{k=1}^{n-1} f_k \psi^{D_n}_{k,h(k)}=0$. 
However, since $\psi^{D_n}_{k,h(k)}=\psi^{D_n}_{k,\tilde{h}(k)}$ for $1 \leq k \leq n-1$, from the $\CR$-linearly independence of $\psi^{D_n}_{1,\tilde{h}(1)},\ldots,\psi^{D_n}_{n-1,\tilde{h}(n-1)}$ we obtain $f_k=0$ for $1 \leq k \leq n-1$.

Therefore, we proved that $\psi^{D_n}_{1,h(1)},\ldots,\psi^{D_n}_{n,h(n)}$ are linearly independent over $\CR$ for any Hessenberg function $h$ for type $D_n$ by Cases~1 and 2, and this completes the proof. 
\end{proof}

The derivations $\psi^{D_n}_{i,j}$ are uniform bases and have an explicit formula. 
Unfortunately, they are not of the form $\eqref{eq:main1-2}$ because of the form of the derivations $\psi_{0,j}$ for $n \leq j \leq 2n-3$. 
In order to give uniform bases $\tilde\psi^{D_n}_{i,j}$ of the form $\eqref{eq:main1-2}$, we need to modify $\psi^{D_n}_{i,j}$ a little bit.

As the base case, when $j=i$, we define
\begin{align*}
\tilde\psi^{D_n}_{i,i}=\psi^{D_n}_{i,i} \ \ \ {\rm for} \ i \in [n]. 
\end{align*}
Proceeding inductively, we define for $i=1$ 
\begin{align*}
\tilde\psi^{D_n}_{1,j}&=\alpha_{1,j}\tilde\psi^{D_n}_{1,j-1} \ \ \ \ \ \ {\rm if} \ 1< j \leq 2n-2 \ {\rm with} \ j \neq n-1,n, \\
\tilde\psi^{D_n}_{1,n-1}&=\alpha_{1,n-1}\tilde\psi^{D_n}_{1,n-2} + \xi^{D_n}_{1}, \\
\tilde\psi^{D_n}_{1,n}&=\alpha_{1,n}\tilde\psi^{D_n}_{1,n-1} + \xi^{D_n}_{0}, 
\end{align*}
and for $1 < i \leq n-1$
\begin{align*}
\tilde\psi^{D_n}_{i,j}&=\tilde\psi^{D_n}_{i-1,j-1}+\alpha_{i,j}\tilde\psi^{D_n}_{i,j-1} \ \ \ \ \ \ {\rm if} \ i< j \ {\rm with} \ j \neq n-1, \\
\tilde\psi^{D_n}_{i,n-1}&=\tilde\psi^{D_n}_{i-1,n-2}+\alpha_{i,n-1}\tilde\psi^{D_n}_{i,n-2} + \xi^{D_n}_{i} \ \ \ \ \ \ {\rm if} \ i< n-1, 
\end{align*}
and for $i=n$
\begin{align*}
\tilde\psi^{D_n}_{n,j}&=\alpha_{n,j}\tilde\psi^{D_n}_{n,j-1}+(-1)^{j-n}\tilde\psi^{D_n}_{2n-j,n} \ \ \ \ \ \ {\rm if} \ \ n+1 \leq j \leq 2n-1. 
\end{align*}

\begin{lemma} \label{lemma:PsiTildepsi}
For all $(i,j)$, we have 
$$
\tilde\psi^{D_n}_{i,j} \equiv \psi^{D_n}_{i,j} \ \ \ {\rm mod} \ D(\A_{\Phi^+_{D_n}}).
$$ 
In particular, $\{\tilde\psi^{D_n}_{i,h_I(i)} \mid i\in[n] \} \subset D(\A_I)$ for any lower ideal $I \subset \Phi^+_{D_n}$.
\end{lemma}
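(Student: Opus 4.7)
The plan is to prove the congruence $\tilde\psi^{D_n}_{i,j} \equiv \psi^{D_n}_{i,j}\ \text{mod}\ D(\A_{\Phi^+_{D_n}})$ by induction on the pair $(i,j)$, using the recursive formulas that define both families. The key observation is that the two families are defined by almost identical recursions; the discrepancy lies only in how the inductive input $\psi^{D_n}_{0,j}$ appears in the first row $i=1$.

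First I would establish the following auxiliary fact: $\psi^{D_n}_{0,j} \in D(\A_{\Phi^+_{D_n}})$ for every $j$ where it is defined. For $1 \leq j \leq n-2$ this is trivial since $\psi^{D_n}_{0,j}=0$. For $j = n-1$, a direct comparison with the explicit formula \eqref{eq:psi4} (taking $r=0$) shows
\[
\psi^{D_n}_{0,n-1} = (-1)^n \psi^{D_n}_{n,2n-1},
\]
and the latter belongs to $D(\A_{\Phi^+_{D_n}})$ by Proposition~\ref{proposition:psiD} applied to $I=\Phi^+_{D_n}$. For $n \leq j \leq 2n-3$, the definition \eqref{eq:psi0D} realizes $\psi^{D_n}_{0,j}$ as a polynomial multiple of $\psi^{D_n}_{0,n-1}$, so it too lies in $D(\A_{\Phi^+_{D_n}})$ since $D(\A_{\Phi^+_{D_n}})$ is an $\CR$-module.

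Next I would perform the induction, ordering the pairs so that $(i,j)$ is treated after $(i,j-1)$ and $(i-1,j-1)$, and so that the entries with $i=n$ are treated only after all entries $\tilde\psi^{D_n}_{i',n}$ with $i' \leq n-1$ have been handled. The base cases $j=i$ are immediate from $\tilde\psi^{D_n}_{i,i}=\psi^{D_n}_{i,i}$. For rows $2\leq i \leq n-1$ the recursive formulas for $\tilde\psi$ and $\psi$ agree term by term (the extra $\xi^{D_n}_i$ appears in both when $j=n-1$), so the congruence passes through directly. For $i=1$ and $1<j<n$, the recursion for $\psi_{1,j}$ contains $\psi^{D_n}_{0,j-1}$, which is either $0$ (when $j-1\leq n-2$) or equal to $\xi^{D_n}_0$ followed by adding the same $\xi^{D_n}_1$ (when $j=n-1$), matching the $\tilde\psi$ recursion. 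For $i=1$, $j=n$ the two prescriptions again coincide because $\xi^{D_n}_0 = \psi^{D_n}_{0,n-1}$. The only row-one case where the formulas genuinely differ is $n+1 \leq j \leq 2n-2$, where
\[
\tilde\psi^{D_n}_{1,j} - \psi^{D_n}_{1,j} = \alpha_{1,j}\bigl(\tilde\psi^{D_n}_{1,j-1} - \psi^{D_n}_{1,j-1}\bigr) - \psi^{D_n}_{0,j-1};
\]
the first summand is in $D(\A_{\Phi^+_{D_n}})$ by induction, and the second by the auxiliary fact above. Finally, for $i=n$ both recursions $\tilde\psi^{D_n}_{n,j}=\alpha_{n,j}\tilde\psi^{D_n}_{n,j-1}+(-1)^{j-n}\tilde\psi^{D_n}_{2n-j,n}$ and $\psi^{D_n}_{n,j}=\alpha_{n,j}\psi^{D_n}_{n,j-1}+(-1)^{j-n}\psi^{D_n}_{2n-j,n}$ (the latter being \eqref{eq:psiDn}) have the same shape, so the congruence propagates from the already-established $i\leq n-1$ entries and from $(n,j-1)$.

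The ``in particular'' clause is then an immediate consequence: Proposition~\ref{proposition:psiD} gives $\psi^{D_n}_{i,h_I(i)} \in D(\A_I)$, and since $D(\A_{\Phi^+_{D_n}}) \subset D(\A_I)$ for any lower ideal $I \subset \Phi^+_{D_n}$, the same containment holds with $\tilde\psi^{D_n}_{i,h_I(i)}$ in place of $\psi^{D_n}_{i,h_I(i)}$. The main technical obstacle is the bookkeeping in row $i=1$: one must verify that the definition of $\tilde\psi^{D_n}_{1,n}$ (which inserts $\xi^{D_n}_0$ by hand) exactly reproduces the recursion for $\psi^{D_n}_{1,n}$ (which uses $\psi^{D_n}_{0,n-1}=\xi^{D_n}_0$), and that once this agreement is established the only deviation thereafter is the missing $\psi^{D_n}_{0,j-1}$ correction, which is safely absorbed into $D(\A_{\Phi^+_{D_n}})$.
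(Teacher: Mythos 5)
Your argument is correct and is essentially the paper's proof written out in full: the paper likewise observes that $\psi^{D_n}_{0,n-1}$ (hence each $\psi^{D_n}_{0,j}$, being an $\CR$-multiple of it) lies in $D(\A_{\Phi^+_{D_n}})$ and then compares the two recursions term by term, concluding the second assertion from Proposition~\ref{proposition:psiD} and $D(\A_{\Phi^+_{D_n}})\subset D(\A_I)$. Your extra verification that $\psi^{D_n}_{0,n-1}=(-1)^n\psi^{D_n}_{n,2n-1}\in D(\A_{\Phi^+_{D_n}})$ via \eqref{eq:psi4} correctly supplies a detail the paper leaves unstated.
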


\begin{proof}
Since $\psi_{0,n-1}$ is an element of $D(\A_{\Phi^+_{D_n}})$, so is $\psi_{0,j}$ for $n \leq j \leq 2n-3$. 
From this together with the recursive formulas for $\{\psi_{i,j} \}$ and $\{\tilde\psi_{i,j} \}$, we obtain that $\tilde\psi_{i,j} \equiv \psi_{i,j}$ mod $D(\A_{\Phi^+_{D_n}})$ for all $(i,j)$.
For the rest, it follows from Proposition~\ref{proposition:psiD} and $D(\A_{\Phi^+_{D_n}}) \subset D(\A_I)$.
\end{proof}

\begin{lemma} \label{lemma:xiD}
The derivation $\xi^{D_n}_{i}$ in \eqref{eq:xiD} holds
\begin{align*}
\xi^{D_n}_{i}&=-\frac{1}{2}\alpha_{i+1,n} \psi^{D_n}_{i+1,n-1} + (-1)^{n-i} \frac{1}{2}\alpha_{n,2n-1-i}\psi^{D_n}_{n,2n-2-i} \\
&=-\frac{1}{2}\alpha_{i+1,n} \tilde\psi^{D_n}_{i+1,n-1} + (-1)^{n-i} \frac{1}{2}\alpha_{n,2n-1-i}\tilde\psi^{D_n}_{n,2n-2-i}
\end{align*}
for $0 \leq i \leq n-2$.
\end{lemma}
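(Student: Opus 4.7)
The plan is to prove the first equality directly from the explicit formulas \eqref{eq:psi2} and \eqref{eq:psi4}, and then deduce the second equality by showing that in the relevant cases $\tilde\psi^{D_n}_{\ast,\ast}$ actually coincides with $\psi^{D_n}_{\ast,\ast}$ on the nose, not merely modulo $D(\A_{\Phi^+_{D_n}})$.

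First I would check the equivalence of the two equalities. Comparing the recursion defining $\tilde\psi^{D_n}_{i,j}$ with the recursion \eqref{eq:psiDi} for $\psi^{D_n}_{i,j}$, one sees by induction on $j$ that $\tilde\psi^{D_n}_{i,j} = \psi^{D_n}_{i,j}$ for all $j\leq n$: for $j\leq n-2$ this is immediate since neither recursion contributes an extra $\xi$--term and the base case $\psi^{D_n}_{0,j}=0$ $(j\leq n-2)$ matches the absence of the $\tilde\psi^{D_n}_{0,j-1}$--term; for $j=n-1$ the two recursions differ only by $\psi^{D_n}_{0,n-2}=0$; and for $j=n$ the discrepancy $\psi^{D_n}_{0,n-1}$ is absorbed into the $\xi^{D_n}_0$--term by definition $\xi^{D_n}_0 := \psi^{D_n}_{0,n-1}$. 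In particular $\tilde\psi^{D_n}_{i+1,n-1}=\psi^{D_n}_{i+1,n-1}$. Once this agreement at $j=n$ is known, the recursion \eqref{eq:psiDn}, which is identical for both families (and starts from $\tilde\psi^{D_n}_{2n-j,n}=\psi^{D_n}_{2n-j,n}$ and $\tilde\psi^{D_n}_{n,n}=\psi^{D_n}_{n,n}$), propagates equality upward to give $\tilde\psi^{D_n}_{n,2n-2-i}=\psi^{D_n}_{n,2n-2-i}$ for all $0\leq i\leq n-2$. Thus the two displayed equalities in the lemma are literally the same, and it suffices to establish the first.

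Next I would substitute the closed forms \eqref{eq:psi2} with $i \mapsto i+1$ and \eqref{eq:psi4} with $r=i+1$ into
\[
-\tfrac{1}{2}(x_{i+1}-x_n)\,\psi^{D_n}_{i+1,n-1} + (-1)^{n-i}\tfrac{1}{2}(x_{i+1}+x_n)\,\psi^{D_n}_{n,2n-2-i},
\]
and extract the coefficient of $\partial_k$ for each $k$. For $1\leq k\leq i+1$, both $\psi^{D_n}_{i+1,n-1}$ and $\psi^{D_n}_{n,2n-2-i}$ contribute to $\partial_k$; writing $A=(x_k-x_{i+2})\cdots(x_k-x_{n-1})$ and $B=x_{i+2}\cdots x_n$, the two polynomial identities
\[
-(x_{i+1}-x_n)(x_k+x_n)+(x_{i+1}+x_n)(x_k-x_n)=2x_n(x_k-x_{i+1}),
\]
\[
(x_{i+1}-x_n)+(x_{i+1}+x_n)=2x_{i+1}
\]
collapse the combined coefficient of $\partial_k$ to
\[
x_k^{-1}\bigl[(x_k-x_{i+1})(x_k-x_{i+2})\cdots(x_k-x_{n-1})x_n+(-1)^{n-i}x_{i+1}x_{i+2}\cdots x_n\bigr],
\]
which agrees with the $k$-th coefficient in the first sum of $\xi^{D_n}_i$ when $k\leq i$, and at $k=i+1$ reduces to $(-1)^{n-i}x_{i+2}\cdots x_n$, matching the $k=i+1$ contribution of the second sum in $\xi^{D_n}_i$. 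For $i+2\leq k\leq n$, only the second-sum parts of \eqref{eq:psi2} and \eqref{eq:psi4} contribute, and using $(x_{i+1}-x_n)+(x_{i+1}+x_n)=2x_{i+1}$ once more I would obtain $(-1)^{n-i}x_{i+1}x_{i+2}\cdots\widehat{x_k}\cdots x_n$, which is precisely the $k$-th coefficient of the second sum in $\xi^{D_n}_i$.

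The only real obstacle is the bookkeeping across the three index ranges $1\leq k\leq i$, $k=i+1$, and $i+2\leq k\leq n$, where one must be careful that both the ``polynomial-in-$x_k$'' term and the ``constant-in-$x_k$'' correction term in \eqref{eq:psi2} combine correctly with their counterparts in \eqref{eq:psi4}; conceptually the calculation is driven entirely by the two displayed algebraic identities above, so no further input is needed.
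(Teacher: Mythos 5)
Your proposal is correct and follows essentially the same route as the paper: the second equality is reduced to the first by observing from the recursions that $\tilde\psi^{D_n}_{i+1,n-1}=\psi^{D_n}_{i+1,n-1}$ and $\tilde\psi^{D_n}_{n,2n-2-i}=\psi^{D_n}_{n,2n-2-i}$, and the first equality is verified coefficient-by-coefficient from the closed forms \eqref{eq:psi2} and \eqref{eq:psi4} using exactly the two algebraic identities you display. The paper's computation is the same, merely phrased as $-\alpha_{i+1,n}\psi^{D_n}_{i+1,n-1}+(-1)^{n-i}\alpha_{n,2n-1-i}\psi^{D_n}_{n,2n-2-i}=2\xi^{D_n}_i$ without carrying the factor $\frac{1}{2}$ through.
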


\begin{proof}
For the second equality, it follows from the recursive formulas for $\{\psi_{i,j}\}$ and $\{\tilde\psi_{i,j}\}$ that $\psi^{D_n}_{i+1,n-1}=\tilde\psi^{D_n}_{i+1,n-1}$ and $\psi^{D_n}_{n,2n-2-i}=\tilde\psi^{D_n}_{n,2n-2-i}$.

We show the first equality.
From the formulas \eqref{eq:psi2} and \eqref{eq:psi4} we have  
\begin{align*}
\psi^{D_n}_{i+1,n-1}=&\sum_{k=1}^{i+1} \left((x_k-x_{i+2})\cdots(x_k-x_{n-1})(x_k+x_n) +(-1)^{n-i-1} x_{i+2}\cdots x_n \right) x_k^{-1}\partial_k \\
&+(-1)^{n-i-1} \sum_{k=i+2}^n x_{i+2} \cdots \widehat{x_k} \cdots x_n \partial_k, \\
\psi^{D_n}_{n,2n-2-i}=&\sum_{k=1}^{i+1} \left((-1)^{n-i} (x_k-x_{i+2})\cdots(x_k-x_{n-1})(x_k-x_n) + x_{i+2}\cdots x_n \right) x_k^{-1}\partial_k \\
&+ \sum_{k=i+2}^n x_{i+2} \cdots \widehat{x_k} \cdots x_n \partial_k. 
\end{align*}
Noting that $\alpha_{i+1,n}=x_{i+1}-x_n$ and $\alpha_{n,2n-1-i}=x_{i+1}+x_n$, we obtain 
\begin{align*}
&-\alpha_{i+1,n} \psi^{D_n}_{i+1,n-1} + (-1)^{n-i} \alpha_{n,2n-1-i}\psi^{D_n}_{n,2n-2-i} \\
=&\sum_{k=1}^{i} \big((x_k-x_{i+2})\cdots(x_k-x_{n-1})x_k \cdot 2x_n -(x_k-x_{i+2})\cdots(x_k-x_{n-1})x_n \cdot 2x_{i+1} \\
&+(-1)^{n-i} x_{i+2}\cdots x_n \cdot 2x_{i+1} \big) x_k^{-1}\partial_k+\big( (-1)^{n-i} x_{i+2}\cdots x_n \cdot 2x_{i+1} \big)x_{i+1}^{-1}\partial_{i+1} \\
&+(-1)^{n-i} \sum_{k=i+2}^n x_{i+2} \cdots \widehat{x_k} \cdots x_n \cdot 2x_{i+1}\partial_k \\
=&\sum_{k=1}^{i} \big(2(x_k-x_{i+1})(x_k-x_{i+2})\cdots(x_k-x_{n-1})x_n +(-1)^{n-i} 2x_{i+1} x_{i+2}\cdots x_n \big) x_k^{-1}\partial_k \\
&+(-1)^{n-i} 2\big(\sum_{k=i+1}^n x_{i+1} \cdots \widehat{x_k} \cdots x_n \partial_k \big) \\
&=2 \xi^{D_n}_{i}
\end{align*}
as desired.
\end{proof}

For type $D_n$ we have $\height(\Phi^+_{D_n})=2n-3$.
Also, the set $\Lambda_m$ in \eqref{eq:Lambdam} for $m=2k,2k+1$ is given by 
\begin{equation} \label{eq:LambdaD}
\Lambda_m=\begin{cases}
[n-k-1]\cup\{n\} \ \ \ &{\rm if} \ 1 \leq m \leq n-1, \\
[n-k-1] \ \ \ &{\rm if} \ n \leq m \leq 2n-3.
\end{cases}
\end{equation}

\begin{proposition} \label{proposition:tildepsi}
The derivations $\{\tilde\psi^{D_n}_{i,i+m} \mid 0 \leq m \leq 2n-3, i \in \Lambda_m \}$ have the following expressions.
As the base case $m=0$, 
\begin{align*}
\tilde\psi^{D_n}_{i,i}&=\alpha_i^* \ \ \ \ {\rm for} \ i=1,\ldots,n-2, \\
\tilde\psi^{D_n}_{i,i}&=2\alpha_i^* \ \ \ {\rm for} \ i=n-1,n. 
\end{align*}
For $m$ with $1 \leq m \leq n-1$, 
\begin{align*}
\tilde\psi^{D_n}_{i,i+m}=&\sum _{j=1}^{i} \alpha_{j,j+m} \tilde\psi^{D_n}_{j,j+m-1} \ \ \ {\rm for} \ 1 \leq i < n-m-1, \\
\tilde\psi^{D_n}_{i,i+m}=&\sum _{j=1}^{n-m-1} \alpha_{j,j+m} \tilde\psi^{D_n}_{j,j+m-1} -\frac{1}{2}\alpha_{n-m,n}\tilde\psi^{D_n}_{n-m,n-1}+(-1)^{m+1}\frac{1}{2}\alpha_{n,n+m}\tilde\psi^{D_n}_{n,n+m-1} \ \ \ \\
& \ \ \ \ \ \ \ \ \ \ \ \ \ \ \ \ \ \ \ \ \ \ \ \ \ \ \ \ \ \ \ \ \ \ \ \ {\rm for} \ i = n-m-1, \\ 
\tilde\psi^{D_n}_{i,i+m}=&\sum _{j=1 \atop j \neq n-m}^{i} \alpha_{j,j+m} \tilde\psi^{D_n}_{j,j+m-1} +\frac{1}{2}\alpha_{n-m,n}\tilde\psi^{D_n}_{n-m,n-1}+(-1)^{m+1}\frac{1}{2}\alpha_{n,n+m}\tilde\psi^{D_n}_{n,n+m-1} \ \ \ \\ 
& \ \ \ \ \ \ \ \ \ \ \ \ \ \ \ \ \ \ \ \ \ \ \ \ \ \ \ \ \ \ \ \ \ \ \ \ {\rm for} \ n-m \leq i \leq n-1, \\
\tilde\psi^{D_n}_{n,n+m}=&(-1)^{m}\left(\sum _{j=1}^{n-m-1} \alpha_{j,j+m} \tilde\psi^{D_n}_{j,j+m-1} +\frac{1}{2}\alpha_{n-m,n}\tilde\psi^{D_n}_{n-m,n-1}\right)+\frac{1}{2}\alpha_{n,n+m}\tilde\psi^{D_n}_{n,n+m-1}. 
\end{align*}
For $m$ with $n \leq m \leq 2n-3$, we have
\begin{align*}
\tilde\psi^{D_n}_{i,i+m}&=\sum _{j=1}^{i} \alpha_{j,j+m} \tilde\psi^{D_n}_{j,j+m-1}.
\end{align*}
\end{proposition}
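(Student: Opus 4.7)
The plan is to establish all the stated identities by a double induction: first on the height $m$, and within each $m$ by induction on $i\in\Lambda_m$, using the defining recursion for $\tilde\psi^{D_n}_{i,j}$ together with Lemma~\ref{lemma:xiD} to rewrite every $\xi^{D_n}_i$ contribution in terms of the $\tilde\psi^{D_n}$'s already constructed.

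For the base case $m=0$, the claim reduces to showing $\tilde\psi^{D_n}_{i,i}=\psi^{D_n}_{i,i}$ equals the prescribed scalar multiple of $\alpha_i^*$. A direct computation of the dual basis of the type $D_n$ simple roots (where $\alpha_i=x_i-x_{i+1}$ for $i<n$ and $\alpha_n=x_{n-1}+x_n$) yields $\alpha_i^*=\partial_1+\cdots+\partial_i$ for $1\leq i\leq n-2$, $\alpha_{n-1}^*=\tfrac12(\partial_1+\cdots+\partial_{n-1}-\partial_n)$, and $\alpha_n^*=\tfrac12(\partial_1+\cdots+\partial_n)$, which matches the defining expressions $\psi^{D_n}_{n-1,n-1}=2\alpha_{n-1}^*$, $\psi^{D_n}_{n,n}=2\alpha_n^*$, and $\psi^{D_n}_{i,i}=\alpha_i^*$ for $i\leq n-2$.

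For the inductive step with $1\leq m\leq n-1$, I would treat each of the four sub-cases in turn. In the range $1\leq i<n-m-1$ the value $j=i+m$ is strictly smaller than $n-1$, so no $\xi^{D_n}$ term appears in the recursion; starting from $\tilde\psi^{D_n}_{1,1+m}=\alpha_{1,1+m}\tilde\psi^{D_n}_{1,m}$ and iterating on $i$ telescopes into $\sum_{j=1}^i\alpha_{j,j+m}\tilde\psi^{D_n}_{j,j+m-1}$. At $i=n-m-1$ (where $j=n-1$) the recursion picks up $\xi^{D_n}_{n-m-1}$, which Lemma~\ref{lemma:xiD} rewrites as $-\tfrac12\alpha_{n-m,n}\tilde\psi^{D_n}_{n-m,n-1}+(-1)^{m+1}\tfrac12\alpha_{n,n+m}\tilde\psi^{D_n}_{n,n+m-1}$; combining with the Case~A formula for $\tilde\psi^{D_n}_{n-m-2,n-2}$ (or with the $i=1$ base when $m=n-2$) and absorbing $\alpha_{n-m-1,n-1}\tilde\psi^{D_n}_{n-m-1,n-2}$ as the $j=n-m-1$ summand yields Case~B. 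For $n-m\leq i\leq n-1$ I would start at $i=n-m$: when $n-m>1$ the recursion $\tilde\psi^{D_n}_{n-m,n}=\tilde\psi^{D_n}_{n-m-1,n-1}+\alpha_{n-m,n}\tilde\psi^{D_n}_{n-m,n-1}$ combines the Case~B output with the extra summand and flips the coefficient $-\tfrac12$ into $+\tfrac12$; when $n-m=1$ the recursion $\tilde\psi^{D_n}_{1,n}=\alpha_{1,n}\tilde\psi^{D_n}_{1,n-1}+\xi^{D_n}_0$ with Lemma~\ref{lemma:xiD} produces the same formula directly, since Case~B is vacuous. For $i>n-m$, ordinary induction extends the sum one term at a time. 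Finally, for $i=n$, the recursion $\tilde\psi^{D_n}_{n,n+m}=\alpha_{n,n+m}\tilde\psi^{D_n}_{n,n+m-1}+(-1)^m\tilde\psi^{D_n}_{n-m,n}$ substituted with the Case~C formula for $\tilde\psi^{D_n}_{n-m,n}$ gives, after combining the two contributions to $\alpha_{n,n+m}\tilde\psi^{D_n}_{n,n+m-1}$, the coefficient $1-\tfrac12=\tfrac12$, which matches Case~D exactly.

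For $n\leq m\leq 2n-3$, every $i\in\Lambda_m=[n-k-1]$ satisfies $i+m\geq n+1$, so the defining recursion is the simple $\tilde\psi^{D_n}_{i,j}=\tilde\psi^{D_n}_{i-1,j-1}+\alpha_{i,j}\tilde\psi^{D_n}_{i,j-1}$ throughout, and induction on $i$ telescopes cleanly into $\sum_{j=1}^i\alpha_{j,j+m}\tilde\psi^{D_n}_{j,j+m-1}$. The main obstacle is the bookkeeping in the four sub-cases of the $1\leq m\leq n-1$ step: keeping track of the precise signs and the $\tfrac12$ factors produced by Lemma~\ref{lemma:xiD}, and carefully handling the boundary values $m=n-2$ (Case~A is empty, Case~B starts at $i=1$) and $m=n-1$ (Case~B itself is vacuous, and Case~C starts with the $\xi^{D_n}_0$-recursion). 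Once the correspondence between the two terms of Lemma~\ref{lemma:xiD} and the two non-sum summands in Cases~B, C, D is made explicit, each sub-case becomes a routine manipulation.
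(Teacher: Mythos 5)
Your proposal is correct and follows essentially the same route as the paper's own proof: induction via the defining recursion for $\tilde\psi^{D_n}_{i,j}$, with every $\xi^{D_n}_i$ contribution converted through Lemma~\ref{lemma:xiD}, and the same case split at $i=n-m-1$, $n-m\leq i\leq n-1$, and $i=n$. Your sign and coefficient checks (including the boundary cases $m=n-2$ and $m=n-1$) match the paper's computation.
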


\begin{proof}
When $m=0$, it is clear that $\tilde\psi^{D_n}_{i,i}=\alpha_i^*$ for $i \in [n-2]$ and $\tilde\psi^{D_n}_{i,i}=2\alpha_i^*$ for $i=n-1,n$. 
We prove the claim for $m>0$ by induction on $i$.
As the base case $i=1$, if $1 < j \leq 2n-2$ with $j \neq n-1,n$, then  
\begin{align*}
\tilde\psi^{D_n}_{1,j}=\alpha_{1,j}\tilde\psi^{D_n}_{1,j-1}.
\end{align*}
If $j=n-1$, then we have 
\begin{align*}
\tilde\psi^{D_n}_{1,n-1}&=\alpha_{1,n-1}\tilde\psi^{D_n}_{1,n-2}+\xi^{D_n}_{1} =\alpha_{1,n-1}\tilde\psi^{D_n}_{1,n-2}-\frac{1}{2}\alpha_{2,n}\tilde\psi^{D_n}_{2,n-1}+(-1)^{n-1}\frac{1}{2}\alpha_{n,2n-2}\tilde\psi^{D_n}_{n,2n-3}
\end{align*}
by Lemma~\ref{lemma:xiD}.
Similarly, if $j=n$, then 
\begin{align*}
\tilde\psi^{D_n}_{1,n}=\alpha_{1,n}\tilde\psi^{D_n}_{1,n-1}+\xi^{D_n}_{0} =\frac{1}{2}\alpha_{1,n}\tilde\psi^{D_n}_{1,n-1}+(-1)^{n}\frac{1}{2}\alpha_{n,2n-1}\tilde\psi^{D_n}_{n,2n-2}.
\end{align*}
Now we assume that $i > 1$ and the claim holds for $i-1$.
We first consider the case $1 \leq m \leq n-1$. 
If $1 \leq i < n-m-1$, then by the inductive assumption we have
$$
\tilde\psi^{D_n}_{i,i+m}= \tilde\psi^{D_n}_{i-1,i+m-1} + \alpha_{i,j}\tilde\psi^{D_n}_{i,i+m-1}=\sum _{j=1}^{i} \alpha_{j,j+m} \tilde\psi^{D_n}_{j,j+m-1}.
$$ 
If $i = n-m-1$, then from Lemma~\ref{lemma:xiD}
\begin{align*}
\tilde\psi^{D_n}_{i,i+m}&= \tilde\psi^{D_n}_{i-1,i+m-1} + \alpha_{i,i+m}\tilde\psi^{D_n}_{i,i+m-1}+\xi^{D_n}_{i}=\tilde\psi^{D_n}_{n-m-2,n-2} + \alpha_{n-m-1,n-1}\tilde\psi^{D_n}_{n-m-1,n-2}+\xi^{D_n}_{n-m-1} \\
&=\sum _{j=1}^{n-m-1} \alpha_{j,j+m} \tilde\psi^{D_n}_{j,j+m-1}+\xi^{D_n}_{n-m-1} \\
&=\sum _{j=1}^{n-m-1} \alpha_{j,j+m} \tilde\psi^{D_n}_{j,j+m-1}-\frac{1}{2}\alpha_{n-m,n}\tilde\psi_{n-m,n-1}+(-1)^{m+1}\frac{1}{2}\alpha_{n,n+m}\tilde\psi_{n,n+m-1}. 
\end{align*}
If $i=n-m$, then we have
\begin{align*}
\tilde\psi^{D_n}_{i,i+m}&= \tilde\psi^{D_n}_{i-1,i+m-1} + \alpha_{i,i+m}\tilde\psi^{D_n}_{i,i+m-1}=\tilde\psi^{D_n}_{n-m-1,n-1} + \alpha_{n-m,n}\tilde\psi^{D_n}_{n-m,n-1} \\
&=\sum _{j=1}^{n-m-1} \alpha_{j,j+m} \tilde\psi^{D_n}_{j,j+m-1} -\frac{1}{2}\alpha_{n-m,n}\tilde\psi^{D_n}_{n-m,n-1}+(-1)^{m+1}\frac{1}{2}\alpha_{n,n+m}\tilde\psi^{D_n}_{n,n+m-1} + \alpha_{n-m,n}\tilde\psi^{D_n}_{n-m,n-1} \\
&=\sum _{j=1}^{n-m-1} \alpha_{j,j+m} \tilde\psi^{D_n}_{j,j+m-1} +\frac{1}{2}\alpha_{n-m,n}\tilde\psi^{D_n}_{n-m,n-1}+(-1)^{m+1}\frac{1}{2}\alpha_{n,n+m}\tilde\psi^{D_n}_{n,n+m-1}.
\end{align*}
If $n-m < i \leq n-1$, then by the inductive assumption
\begin{align*}
\tilde\psi^{D_n}_{i,i+m}&= \tilde\psi^{D_n}_{i-1,i+m-1} + \alpha_{i,j}\tilde\psi^{D_n}_{i,i+m-1} \\
&=\sum _{j=1 \atop j \neq n-m}^{i-1} \alpha_{j,j+m} \tilde\psi^{D_n}_{j,j+m-1} +\frac{1}{2}\alpha_{n-m,n}\tilde\psi^{D_n}_{n-m,n-1}+(-1)^{m+1}\frac{1}{2}\alpha_{n,n+m}\tilde\psi^{D_n}_{n,n+m-1} + \alpha_{i,j}\tilde\psi^{D_n}_{i,i+m-1} \\
&=\sum _{j=1 \atop j \neq n-m}^{i} \alpha_{j,j+m} \tilde\psi^{D_n}_{j,j+m-1} +\frac{1}{2}\alpha_{n-m,n}\tilde\psi^{D_n}_{n-m,n-1}+(-1)^{m+1}\frac{1}{2}\alpha_{n,n+m}\tilde\psi^{D_n}_{n,n+m-1}.
\end{align*}
If $i=n$, then we have
\begin{align*}
\tilde\psi^{D_n}_{n,n+m}=&\alpha_{n,n+m}\tilde\psi^{D_n}_{n,n+m-1}+(-1)^{m}\tilde\psi^{D_n}_{n-m,n} \\ 
=&\alpha_{n,n+m}\tilde\psi^{D_n}_{n,n+m-1}+(-1)^{m} \big(\sum_{j=1}^{n-m-1} \alpha_{j,j+m}\tilde\psi^{D_n}_{j,j+m-1}+\frac{1}{2}\alpha_{n-m,n}\tilde\psi^{D_n}_{n-m,n-1} \\
&+(-1)^{m+1}\frac{1}{2}\alpha_{n,n+m}\tilde\psi^{D_n}_{n,n+m-1} \big) \\
=&(-1)^{m}\left(\sum _{j=1}^{n-m-1} \alpha_{j,j+m} \tilde\psi^{D_n}_{j,j+m-1} +\frac{1}{2}\alpha_{n-m,n}\tilde\psi^{D_n}_{n-m,n-1}\right)+\frac{1}{2}\alpha_{n,n+m}\tilde\psi^{D_n}_{n,n+m-1}. 
\end{align*}

Next we consider the case $n \leq m \leq 2n-3$.
Then, by the inductive assumption we have
\begin{align*}
\tilde\psi^{D_n}_{i,i+m}&=\tilde\psi^{D_n}_{i-1,i+m-1}+ \alpha_{i,i+m} \tilde\psi^{D_n}_{i,i+m-1} \\
&=\sum _{j=1}^{i} \alpha_{j,j+m} \tilde\psi^{D_n}_{j,j+m-1}.
\end{align*}

Therefore, we proved the desired equalities.
\end{proof}

Motivated by Proposition~\ref{proposition:tildepsi}, we define a matrix $P_m^{D_n}$ of size $|\Lambda_m|=\lambda_m$ as follows.
As the base case, when $m=0$, 
{\footnotesize
$$
P_0^{D_n}=
\begin{pmatrix}
1        &           &    &  & \\
         & \ddots &     &   & \\
         &           &  1 &   & \\
         &           &     & 2 & \\
         &           &     &   & 2
\end{pmatrix}. 
$$}
Let $1 \leq m < n-1$. Depending on whether $m$ is odd or even, we define $P_m^{D_n}$ by
{\footnotesize
$$
P_{2k+1}^{D_n}=
\begin{array}{rcccccccccl}
\ldelim({11}{1ex}[]&1       &           &           &           &              &          &             &            &    & \rdelim){11}{1ex}[] \\
							&\vdots & \ddots &           &           &             &             &             &            &    &\\
							&1        & \cdots & 1         &           &                     &              &             &            &  & \\
							&1        & \cdots & 1         &    1      &  -\frac{1}{2}    &  0          &  \cdots  & 0         &  \frac{1}{2}& \\
							&1        & \cdots & 1         &    1      &    \frac{1}{2}    &  0         &   \cdots &  0         & \frac{1}{2} & \\
							&1        & \cdots & 1         &    1      &    \frac{1}{2}    &  1          &  \ddots &  \vdots  & \vdots & \\
							&\vdots &           & \vdots & \vdots &     \vdots        &  \vdots   &  \ddots  & 0          & \vdots  & \\
							&1        & \cdots & 1        & 1        &     \frac{1}{2}     &  1          &  \cdots  & 1          & \frac{1}{2}&  \\
							&-1       & \cdots & -1      & -1       &    -\frac{1}{2}    &  0         &   \cdots  & 0         & \frac{1}{2}&\\
							&          \multicolumn{3}{c}{\raisebox{2ex}[1ex][0ex]{$\underbrace{\hspace{13ex}}_{{n-2k-3}}$}}	&&&\multicolumn{3}{c}{\raisebox{2ex}[1ex][0ex]{$\underbrace{\hspace{11ex}}_{{k}}$}}&&\\
\end{array}, \ \ 
P_{2k+2}^{D_n}=
\begin{array}{rcccccccccl}
\ldelim({11}{1ex}[]&1        &           &           &           &                      &             &             &            &   & \rdelim){11}{1ex}[] \\
							&\vdots & \ddots &           &           &                      &             &             &            &    &\\
							&1        & \cdots & 1         &           &                     &              &             &            &   &\\
							&1        & \cdots & 1         &    1      &  -\frac{1}{2}    &  0          &  \cdots  & 0         &  -\frac{1}{2}& \\
							&1        & \cdots & 1         &    1      &    \frac{1}{2}    &  0         &   \cdots &  0         & -\frac{1}{2} & \\
							&1        & \cdots & 1         &    1      &    \frac{1}{2}    &  1          &  \ddots &  \vdots  & \vdots  &\\
							&\vdots &           & \vdots & \vdots &     \vdots        &  \vdots   &  \ddots  & 0          & \vdots   &\\
							&1        & \cdots & 1        & 1        &     \frac{1}{2}     &  1          &  \cdots  & 1          & -\frac{1}{2} & \\
							&1       & \cdots & 1      & 1       &    \frac{1}{2}    &  0         &   \cdots  & 0         & \frac{1}{2}&\\
							&         \multicolumn{3}{c}{\raisebox{2ex}[1ex][0ex]{$\underbrace{\hspace{10ex}}_{{n-2k-4}}$}}		     & &&\multicolumn{3}{c}{\raisebox{2ex}[1ex][0ex]{$\underbrace{\hspace{11ex}}_{{k}}$}}&&
\end{array}. \ \ 
$$} 
Let $m=n-1$. Depending on whether $m$ is odd or even, we define 
{\footnotesize
$$
P_{2k+1}^{D_n}=
\begin{array}{rcccccl}
\ldelim({6}{1ex}[]&\frac{1}{2}    &  0         &   \cdots &  0         & \frac{1}{2}& \rdelim){6}{1ex}[]  \\
							&\frac{1}{2}    &  1          &  \ddots &  \vdots  & \vdots  &\\
							&\vdots        &  \vdots   &  \ddots  & 0          & \vdots  & \\
							&\frac{1}{2}     &  1          &  \cdots  & 1          & \frac{1}{2}  &\\
							&-\frac{1}{2}    &  0         &   \cdots  & 0         & \frac{1}{2}&\\
							&                  &\multicolumn{3}{c}{\raisebox{2ex}[1ex][0ex]{$\underbrace{\hspace{11ex}}_{{k}}$}}&&
\end{array}, \ \ 
P_{2k+2}^{D_n}=
\begin{array}{rcccccl}
\ldelim({6}{1ex}[]&\frac{1}{2}    &  0         &   \cdots &  0         & -\frac{1}{2}& \rdelim){6}{1ex}[]  \\
							&\frac{1}{2}    &  1          &  \ddots &  \vdots  & \vdots&  \\
							&\vdots        &  \vdots   &  \ddots  & 0          & \vdots &  \\
							&\frac{1}{2}     &  1          &  \cdots  & 1          & -\frac{1}{2} & \\
							&\frac{1}{2}    &  0         &   \cdots  & 0         & \frac{1}{2}&\\
							&                  &\multicolumn{3}{c}{\raisebox{2ex}[1ex][0ex]{$\underbrace{\hspace{11ex}}_{k}$}}&&
\end{array}. \ \ 
$$}
For $m$ with $n \leq m \leq 2n-3$, we define
{\footnotesize
$$
P_m^{D_n}=
\begin{pmatrix}
1        &           &            \\
\vdots & \ddots &            \\
1        & \cdots & 1            
\end{pmatrix}.
$$}
From Proposition~\ref{proposition:tildepsi} we obtain 
\begin{equation} \label{eq:PsiPmD}
[\tilde\psi^{D_n}_{i,i+m}]_{i \in \Lambda_m}=P_m[\alpha_{i,i+m}\tilde\psi^{D_n}_{i,i+m-1}]_{i \in \Lambda_m}
\end{equation}
for any $m$ with $1 \leq m \leq 2n-3$ where we think of indices for rows and columns of the matrix $P_m$ as the set $\Lambda_m$ in \eqref{eq:LambdaD}, and we arrange them as in increasing order. 
One can see that $\det P_m=1$ if $m \neq n-1$ and $\det P_{n-1}=\frac{1}{2}$, so we obtain $P_m \in \GL(\Lambda_m,\Q)$ for all $1 \leq m \leq 2n-3$.
From this together with Lemma~\ref{lemma:PsiTildepsi}, we obtain the following theorem
by Proposition~\ref{proposition:key} (see also Remark~\ref{remark:key}).

\begin{theorem} \label{theorem:tildepsiD}
The derivations $\{\tilde\psi^{D_n}_{i,i+m} \mid 0 \leq m \leq 2n-3, i \in \Lambda_m \}$ form uniform bases for the lower ideals of type $D_n$.
Furthermore, $P_m^{D_n}$ $(0 \leq m \leq 2n-3)$ are the invertible matrices associated with the uniform bases $\{\tilde\psi^{D_n}_{i,i+m} \mid \ 0 \leq m \leq 2n-1, i \in \Lambda_m \}$. 
\end{theorem}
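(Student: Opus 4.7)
The plan is to verify the three hypotheses of Remark~\ref{remark:key} for the family $\{\tilde\psi^{D_n}_{i,i+m}\}$, with the matrices $P_m^{D_n}$ playing the role of the $P_m$ in the recursion. Once all three hypotheses are in place, the uniform basis property is immediate from Remark~\ref{remark:key}, and the identification of the $P_m^{D_n}$ as the associated invertible matrices is read off from the recursion \eqref{eq:matrixP}.

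First I would dispatch condition~(1) by inspection of the base case of Proposition~\ref{proposition:tildepsi}: we have $\tilde\psi^{D_n}_{i,i}=\alpha_i^*$ for $i\le n-2$ and $\tilde\psi^{D_n}_{i,i}=2\alpha_i^*$ for $i=n-1,n$, so each $\tilde\psi^{D_n}_{i,i}$ is a non-zero rational multiple of the dual basis vector $\alpha_i^*$. These scalars are exactly the diagonal entries of $P_0^{D_n}=\diag(1,\ldots,1,2,2)$ as defined above.

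Next I would verify condition~(2). The explicit linear combinations given in Proposition~\ref{proposition:tildepsi} for $\tilde\psi^{D_n}_{i,i+m}$ are precisely the rows of the matrix $P_m^{D_n}$ applied to the column vector $[\alpha_{i,i+m}\tilde\psi^{D_n}_{i,i+m-1}]_{i\in\Lambda_m}$, using the indexing of $\Lambda_m$ from \eqref{eq:LambdaD}; this is recorded in equation~\eqref{eq:PsiPmD}. It then remains to check that each $P_m^{D_n}$ is invertible, which I would do by a direct determinant computation: a straightforward row reduction (subtracting successive rows on the block of $1$'s in the upper-left, and eliminating the anti-diagonal $k\times k$ block on the lower-right) yields $\det P_m^{D_n}=1$ for $m\neq n-1$ and $\det P_{n-1}^{D_n}=\tfrac12$, so $P_m^{D_n}\in\GL(\Lambda_m;\Q)$ for every $m$.

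For condition~(3), I would appeal to Lemma~\ref{lemma:PsiTildepsi}: for any lower ideal $I\subset\Phi^+_{D_n}$ and any $i\in[n]$, $\tilde\psi^{D_n}_{i,h_I(i)}\equiv\psi^{D_n}_{i,h_I(i)}\pmod{D(\A_{\Phi^+_{D_n}})}$. Since $\A_I\subset\A_{\Phi^+_{D_n}}$ implies $D(\A_{\Phi^+_{D_n}})\subset D(\A_I)$, and Proposition~\ref{proposition:psiD} gives $\psi^{D_n}_{i,h_I(i)}\in D(\A_I)$, we conclude $\tilde\psi^{D_n}_{i,h_I(i)}\in D(\A_I)$ for all $i\in[n]$ and all lower ideals $I$.

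With conditions (1)--(3) of Remark~\ref{remark:key} verified, the derivations $\{\tilde\psi^{D_n}_{i,i+m}\mid 0\le m\le 2n-3,\ i\in\Lambda_m\}$ form uniform bases for the ideal arrangements of type $D_n$. The final assertion about $P_m^{D_n}$ being the associated invertible matrices is then just a matter of comparing \eqref{eq:PsiPmD} with the defining recursion \eqref{eq:matrixP}. The main obstacle I anticipate is purely bookkeeping: getting the indexing of the rows and columns of $P_m^{D_n}$ correctly aligned with the set $\Lambda_m$ of \eqref{eq:LambdaD} in the three regimes $1\le m<n-1$, $m=n-1$, and $n\le m\le 2n-3$, where the index $n$ enters or leaves $\Lambda_m$ and the ordering on $\Lambda_m$ must be respected; every other step is either the cited result or a short determinant check.
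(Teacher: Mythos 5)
Your proposal is correct and follows essentially the same route as the paper: the authors likewise read conditions (1) and (2) of Remark~\ref{remark:key} off Proposition~\ref{proposition:tildepsi} (i.e.\ \eqref{eq:PsiPmD}) together with the determinant computation $\det P_m^{D_n}=1$ for $m\neq n-1$ and $\det P_{n-1}^{D_n}=\tfrac12$, and obtain condition (3) from Lemma~\ref{lemma:PsiTildepsi} combined with Proposition~\ref{proposition:psiD}, before invoking Proposition~\ref{proposition:key} via Remark~\ref{remark:key}.
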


\section{The cohomology rings of regular nilpotent Hessenberg varieties}
\label{section:Hessenberg varieties}

The logarithmic derivation modules $D(\A_I)$ for the lower ideals $I$ are related with the cohomology rings of regular nilpotent Hessenberg varieties by the work of \cite{AHMMS}. 
In type $A$, an explicit presentation of the cohomology rings of regular nilpotent Hessenberg varieties is given in \cite{AHHM} by using the localization technique. 
By the work of \cite{AHMMS} one can obtain an explicit presentation of the cohomology rings of regular nilpotent Hessenberg varieties in types $A,B,C,G$ by using uniform bases.
In this section we first explain the work of \cite{AHMMS}. 
Then we describe an explicit presentation of the cohomology rings of regular nilpotent Hessenberg varieties in all Lie types.
Throughout this paper, all cohomology rings will be taken with real coefficients.

Let $G$ be a semisimple linear algebraic group of rank $n$.  
We fix a Borel subgroup $B$ of $G$ and a maximal torus $T$ included in $B$. 
The Lie algebras of $G$ and $B$ are denoted by $\mathfrak{g}$ and $\mathfrak{b}$, respectively.
A \textbf{Hessenberg space} $H$ is a $\mathfrak{b}$-submodule of $\mathfrak{g}$ containing $\mathfrak{b}$.  
One can see that there is one-to-one correspondence between the set of lower ideals and the set of Hessenberg spaces which sends $I \subset \Phi^+$ by
$$
H(I):=\mathfrak{b}\oplus \big(\bigoplus_{\alpha \in I} \mathfrak{g}_{-\alpha} \big),
$$ 
where $\mathfrak{g}_{\alpha}$ is the root space for a root $\alpha$. 
The \textbf{Hessenberg variety} $\Hess(X,I)$ associated with an element $X\in \mathfrak{g}$ and a lower ideal $I \subset \Phi^+$ is defined as the following subvariety of the flag variety $G/B$: 
\begin{equation*} 
\Hess(X,I):=\{g B \in G/B \mid \mbox{Ad}(g^{-1})(X) \in H(I)\}.
\end{equation*}

In what follows, we concentrate on Hessenberg varieties $\Hess(N,I)$ for a regular nilpotent element $N$ and we call them \textbf{regular nilpotent Hessenberg varieties}.
Here, we recall that an element $X\in \mathfrak{g}$ is {nilpotent} if $\mbox{ad}(X)$ is nilpotent, i.e.,\ $\mbox{ad}(X)^k=0$ for some $k>0$.
An element $X\in \mathfrak{g}$ is {regular} if its $G$-orbit of the adjoint action has the largest possible dimension. 

Let $\hat T$ be the character group of $T$. 
Since any $\alpha \in \hat T$ extends to a character of $B$, $\alpha$ defines a complex line bundle $L_\alpha:=G \times_B \C_{\alpha}$ where $\C_{\alpha}$ is the one-dimensional $B$-module via $\alpha$.
Here, $L_\alpha$ is the quotient of the product $G \times \C$ by the right $B$-action given by $(g,z) \cdot b  = (gb, \alpha(b)z)$ for $b \in B$ and $(g,z) \in G \times \C$.
We may regard $\hat T$ as an additive group so that $\hat T \otimes_{\Z} \R$ is identified with the dual space $\mathfrak{t}^*$ of the Lie algebra of the maximal compact torus.
To each $\alpha \in \mathfrak{t}^*$ we can assign the Euler class $e(L_{\alpha}) \in H^2(G/B)$. This assignment induces a ring homomorphism 
\begin{equation} \label{eq:varphi}
\varphi\colon \CR=\mbox{Sym}(\mathfrak{t}^*) \to H^*(G/B)
\end{equation}
which doubles the grading on $\CR$.
From the well-known result by Borel \cite{B}, the map $\varphi$ is surjective and its kernel is the ideal generated by $W$-invariants in $\CR$ with zero constant term.
Here, $W$ is the Weyl group.
Composing $\varphi$ with the restriction map $H^*(G/B) \to H^*(\Hess(N,I))$, we have a ring homomorphism
\begin{equation} \label{eq:varphiNilpotent}
\varphi_I\colon \CR\to H^*(\Hess(N,I)).
\end{equation}
The map $\varphi_I$ is surjective from the result of \cite{AHMMS} (Theorem~\ref{theorem:AHMMS} below).
Moreover, its kernel can be described in terms of the logarithmic derivation module $D(\A_I)$.
We can identify $\mathfrak{t}$ and $\mathfrak{t}^*$ via the Killing form which implies the isomorphism $\CR\otimes\mathfrak{t} \cong \CR\otimes\mathfrak{t}^*$.
Composing the isomorphism with the multiplication map $\CR\otimes\mathfrak{t}^*\to \CR$, we obtain an $\CR$-module map
\begin{equation*} 
\Quadraticmap: \Der\CR=\CR\otimes\mathfrak{t} \cong \CR\otimes\mathfrak{t}^*\to \CR.
\end{equation*}
We define an ideal $\mathfrak{a}(I)$ as the image of the logarithmic derivation module $D(\A_I)$ under the map $\Quadraticmap$:
\begin{equation*} 
\mathfrak{a}(I):=\Quadraticmap(D(\A_I)).
\end{equation*}

\begin{theorem}[\cite{AHMMS}] \label{theorem:AHMMS}
The map $\varphi_I$ in \eqref{eq:varphiNilpotent} is surjective and its kernel coincides with the ideal $\mathfrak{a}(I)$.
In particular, $\varphi_I$ induces the ring isomorphism 
$$
H^*(\Hess(N,I)) \cong \CR/\mathfrak{a}(I).
$$
\end{theorem}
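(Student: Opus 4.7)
The strategy is to split the statement into three claims: (A) $\varphi_I$ is surjective; (B) $\mathfrak{a}(I) \subseteq \ker\varphi_I$; (C) a termwise Hilbert series comparison forcing the containment in (B) to be an equality. Granting (A) and (B), the surjection $\varphi_I$ factors as $\CR \twoheadrightarrow \CR/\mathfrak{a}(I) \twoheadrightarrow H^*(\Hess(N,I))$, and (C) will collapse this to an isomorphism.

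For (A), I would invoke the affine paving of $\Hess(N,I)$ by intersections with Schubert cells (Tymoczko in type $A$, Precup in general), which immediately implies surjectivity of the restriction $H^*(G/B) \twoheadrightarrow H^*(\Hess(N,I))$. Composing with Borel's surjection $\varphi : \CR \twoheadrightarrow H^*(G/B)$ yields (A). The same paving gives
\[
\Poin(\Hess(N,I), t) \;=\; \prod_{i=1}^n (1 + t^2 + \cdots + t^{2 d_i}),
\]
where $(d_1, \ldots, d_n) = \mathcal{DP}(I)$. On the algebraic side, Theorem~\ref{theorem:ABCHT} furnishes a homogeneous $\CR$-basis $\theta_1, \ldots, \theta_n$ of $D(\A_I)$ with $\deg \theta_i = d_i$, so $\mathfrak{a}(I)$ is generated by the $n$ homogeneous polynomials $\Quadraticmap(\theta_i)$ of degrees $d_i + 1$. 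The standard Hilbert series upper bound for an ideal with $n$ homogeneous generators in an $n$-variable polynomial ring gives $\mathrm{Hilb}(\CR/\mathfrak{a}(I), t) \leq \prod_i (1 + t + \cdots + t^{d_i})$ termwise, which matches $\Poin(\Hess(N,I), t)$ after the standard grading doubling. Once (B) is established, the surjection $\CR/\mathfrak{a}(I) \twoheadrightarrow H^*(\Hess(N,I))$ is sandwiched between equal termwise upper and lower bounds on its Hilbert series, so it is a graded isomorphism and $\mathfrak{a}(I) = \ker\varphi_I$.

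The main obstacle is (B). My plan is to work equivariantly: choose a one-parameter subgroup $S \subset T$ under which $N$ is a weight vector, so that $\Hess(N,I)$ is $S$-stable with $(\Hess(N,I))^S = (G/B)^T \cong W$. The $S$-equivariant restriction $H_S^*(G/B) \to H_S^*(\Hess(N,I))$ factors through an injection into $\bigoplus_{w \in W} H_S^*(\pt)$ of GKM type; concretely, a class descends to $H_S^*(\Hess(N,I))$ precisely when its fixed-point localizations satisfy edge divisibility conditions read off from the roots in $I$. For $\theta \in D(\A_I)$ I would equivariantly lift $\Quadraticmap(\theta)$ and verify, at each $w \in W$, that its localization meets the divisibilities imposed by the $w$-translates of roots in $I$; the defining condition $\theta(\alpha) \in \CR\alpha$ for $\alpha \in I$ is exactly what is needed, essentially by construction of $\Quadraticmap$ via the Killing form identification $\mathfrak{t} \cong \mathfrak{t}^*$. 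Specializing the $S$-equivariant parameter to zero then yields $\Quadraticmap(\theta) \in \ker \varphi_I$ in ordinary cohomology. The delicate point will be producing the GKM-type presentation of $H_S^*(\Hess(N,I))$, since $\Hess(N,I)$ is not a GKM space in the classical sense, and matching its edge data precisely with the $D(\A_I)$ condition.
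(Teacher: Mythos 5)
The paper you are annotating does not prove this statement at all: it is quoted from \cite{AHMMS}, so your proposal has to be measured against the proof given there. Your three-part skeleton (surjectivity; $\mathfrak{a}(I)\subseteq\ker\varphi_I$ via a circle action; a dimension count to force equality) is indeed the skeleton of that proof, but your step (C) contains a genuine error that the skeleton cannot survive. For an ideal generated by $n$ homogeneous forms of degrees $d_1+1,\dots,d_n+1$ in an $n$-variable polynomial ring there is no ``standard upper bound'' of the kind you invoke; the comparison with the complete intersection runs the \emph{other} way. Quotients by forms that fail to be a regular sequence are \emph{larger}, not smaller: take $f_1=f_2=x$ in $\R[x,y]$, whose quotient $\R[y]$ is infinite-dimensional, whereas the corresponding complete intersection has length one. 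Equality of the Hilbert series with $\prod_i(1+t+\cdots+t^{d_i})$ holds precisely when the generators form a regular sequence. Consequently both of your inequalities --- the one from the surjection $\CR/\mathfrak{a}(I)\twoheadrightarrow H^*(\Hess(N,I))$ and the one from commutative algebra --- point in the same direction, and nothing is sandwiched. The missing idea, which is how \cite{AHMMS} closes the argument, is to prove that $\Quadraticmap(\theta_1),\dots,\Quadraticmap(\theta_n)$ form a regular sequence. This follows from $D(\A_{\Phi^+})\subseteq D(\A_I)$ together with the classical facts that the gradient derivations $\nabla P_j$ of the basic $W$-invariants $P_j$ form a basis of $D(\A_{\Phi^+})$ and that $\Quadraticmap(\nabla P_j)=(\deg P_j)\,P_j$ by Euler's identity; hence $\mathfrak{a}(I)$ contains the ideal generated by the positive-degree $W$-invariants, $\CR/\mathfrak{a}(I)$ is Artinian, and $n$ homogeneous generators cutting out an Artinian quotient of an $n$-variable polynomial ring are automatically a regular sequence. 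Only then does $\dim(\CR/\mathfrak{a}(I))_j$ equal the coefficient of $t^j$ in $\prod_i(1+t+\cdots+t^{d_i})$, which matches $\dim H^{2j}(\Hess(N,I))$ by the known affine pavings combined with the identity \eqref{eq:exponentsHessenbergfunction} coming from Theorem~\ref{theorem:ABCHT}.

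Two further soft spots. Surjectivity of $H^*(G/B)\to H^*(\Hess(N,I))$ is not an ``immediate'' consequence of an affine paving: a paving yields vanishing odd cohomology and the Betti numbers, not surjectivity of a restriction map, and in \cite{AHMMS} surjectivity is extracted from the equivariant analysis rather than assumed. For (B) you have correctly located the danger yourself: $\Hess(N,I)$ is not a GKM space for the circle $S$, so there is no edge-divisibility presentation of $H^*_S(\Hess(N,I))$ to be ``matched'' with the condition $\theta(\alpha)\in\CR\alpha$. What \cite{AHMMS} actually uses is that vanishing odd cohomology makes $H^*_S(\Hess(N,I))$ free over $H^*_S(\pt)$, so restriction to the isolated fixed points $\Hess(N,I)^S\subseteq W$ is injective; one then computes the fixed-point restrictions of an equivariant lift of $\Quadraticmap(\theta)$ directly, using the combinatorial description of which $w$ belong to $\Hess(N,I)^S$ (equivalently, for which $w$ every $w^{-1}\alpha_i$ lies in $\Phi^+\cup(-I)$) together with the divisibility $\theta(\alpha)\in\CR\alpha$ for $\alpha\in I$; alternatively one routes the computation through the genuinely GKM regular semisimple Hessenberg variety and takes $W$-invariants. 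As written, your (B) is a plan with its hardest step deferred, and your (C) is false, so the proposal does not yet constitute a proof.
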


From Theorem~\ref{theorem:AHMMS} together with explicit uniform bases for the lower ideals, we obtain an explicit presentation of the cohomology rings of regular nilpotent Hessenberg varieties in all Lie types.
In fact, \cite{AHMMS} derived the explicit presentation for types $A_{n-1},B_n,C_n,G_2$ from uniform bases (\cite[Corollary~10.4, Corollary~10.10, Corollary~10.15, Corollary~10.18]{AHMMS}).
We now give an explicit presentation of the cohomology rings of regular nilpotent Hessenberg varieties for all Lie types in terms of the invertible matrices associated with uniform bases.

Let $\varpi_1, \ldots, \varpi_n$ be the fundamental weights. 
In what follows,  $L$ means either of classical or exceptional Lie types.
We computed the invertible matrices $P^L_m$ for type $L$ in Section~\ref{section:typeD} and \cite{AHMMS}.
(See Appendix and \cite{AHMMS} for the exceptional types.)
Using their results, we define inductively polynomials $f^{L}_{i,i+m} \in \CR$ for $0 \leq m \leq \height(\Phi^+)$ and $i \in \Lambda_m$ as follows: 
\begin{align*}
[f^{L}_{i,i}]_{i \in \Lambda_0}&=\tilde{P_0^{L}}[\varpi_i]_{i \in \Lambda_0},\\
[f^{L}_{i,i+m}]_{i \in \Lambda_m}&=P_m^{L}[\alpha^L_{i,i+m}\psi^{L}_{i,i+m-1}]_{i \in \Lambda_m} \ \ \ {\rm for} \ 1 \leq m \leq \height(\Phi^+),
\end{align*}
where $\tilde{P^L_0}$ is the diagonal matrix with entries $\frac{2p_1}{||\alpha_1||^2},\ldots,\frac{2p_n}{||\alpha_n||^2}$ and $p_1,\ldots,p_n$ are the entries of the diagonal matrix $P^L_0$.
Noting that $\Quadraticmap(\alpha_i^*)=\frac{2}{||\alpha_i||^2} \, \varpi_i$, we have
\begin{equation} \label{fLpsiL}
f^L_{i,i+m}=\Quadraticmap(\psi^L_{i,i+m}) \ \ \ {\rm for} \ 0 \leq m \leq \height(\Phi^+) \ {\rm and} \ i \in \Lambda_m. 
\end{equation}
We remark that the equality \eqref{fLpsiL} is regarded as $f^{D_n}_{i,i+m}=\Quadraticmap(\tilde\psi^{D_n}_{i,i+m})$ for only type $L=D_n$. 

Let $h$ be a Hessenberg function associated to a lower ideal $I \subset \Phi_{L}^+$.
Then we denote a regular nilpotent Hessenberg variety $\Hess(N,I)$ by $\Hess(N,h)$.
Since $\{\psi^L_{i,i+m} \mid 0 \leq m \leq \height(\Phi^+), i \in \Lambda_m \}$ forms uniform bases for the lower ideals of type $L$, we obtain the explicit presentation of the cohomology rings of regular nilpotent Hessenberg varieties in all Lie types from Theorem~\ref{theorem:AHMMS} and \eqref{fLpsiL}.

\begin{corollary} \label{corollary:cohomologyHess}
Let $h$ be a Hessenberg function for type $L$ and $\Hess(N,h)$ the associated regular nilpotent Hessenberg variety for type $L$.
Then, the following ring isomorphism holds 
\begin{equation*} 
H^*(\Hess(N,h)) \cong \CR/(f^L_{1,h(1)},\ldots,f^L_{n,h(n)}).
\end{equation*}
\end{corollary}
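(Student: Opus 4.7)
The plan is to combine Theorem~\ref{theorem:AHMMS} with the uniform bases constructed in Section~\ref{section:matrixPm}. By Theorem~\ref{theorem:AHMMS}, we have the ring isomorphism
\[
H^*(\Hess(N,h)) \;\cong\; \CR/\mathfrak{a}(I),
\]
where $I$ is the lower ideal associated with $h$ (i.e.\ $h=h_I$) and $\mathfrak{a}(I)=\Quadraticmap(D(\A_I))$. Hence it suffices to identify the ideal $\mathfrak{a}(I)$ with the ideal $(f^L_{1,h(1)},\ldots,f^L_{n,h(n)})$ of $\CR$.

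First I will invoke the uniform basis theorems proved on each Lie type in Section~\ref{section:matrixPm} (Theorem~\ref{theorem:psiA}, Theorem~\ref{theorem:psiB}, Theorem~\ref{theorem:psiC}, Theorem~\ref{theorem:tildepsiD}, Theorem~\ref{theorem:psiG}, Theorem~\ref{theorem:psiF}, Theorem~\ref{theorem:psiE8}, Theorem~\ref{theorem:psiE7}, Theorem~\ref{theorem:psiE6}), which together cover all Lie types $L$. These results guarantee that the $n$ derivations $\{\psi^L_{i,h_I(i)} \mid i\in[n]\}$ (respectively $\{\tilde\psi^{D_n}_{i,h_I(i)} \mid i\in[n]\}$ in type $D_n$) form an $\CR$-basis of the logarithmic derivation module $D(\A_I)$. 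In particular, these derivations generate $D(\A_I)$ as an $\CR$-module.

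Next, since the map $\Quadraticmap\colon \Der\CR\to \CR$ is $\CR$-linear, it sends a set of $\CR$-module generators of $D(\A_I)$ to a set of ideal generators of $\mathfrak{a}(I)=\Quadraticmap(D(\A_I))$. Applying $\Quadraticmap$ to the chosen uniform basis and using identity~\eqref{fLpsiL} (which in the type~$D_n$ case is formulated with $\tilde\psi^{D_n}_{i,i+m}$), we obtain
\[
\mathfrak{a}(I)\;=\;\bigl(\,\Quadraticmap(\psi^L_{i,h_I(i)}) \mid i\in[n]\bigr)\;=\;\bigl(f^L_{1,h(1)},\ldots,f^L_{n,h(n)}\bigr).
\]
Substituting this into the isomorphism coming from Theorem~\ref{theorem:AHMMS} gives the desired presentation. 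There is no essential obstacle: the nontrivial work has already been done in establishing the explicit uniform bases on each Lie type and in the connection $H^*(\Hess(N,I))\cong\CR/\Quadraticmap(D(\A_I))$ of~\cite{AHMMS}. The only point deserving a brief remark in the write-up is that in type $D_n$ the generators $f^{D_n}_{i,j}$ are defined via $\tilde\psi^{D_n}_{i,j}$ rather than $\psi^{D_n}_{i,j}$, which is consistent because Theorem~\ref{theorem:tildepsiD} ensures that $\{\tilde\psi^{D_n}_{i,h_I(i)}\}$ is itself an $\CR$-basis of $D(\A_I)$.
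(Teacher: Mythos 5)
Your proposal is correct and follows essentially the same route as the paper: invoke Theorem~\ref{theorem:AHMMS} to reduce to identifying $\mathfrak{a}(I)=\Quadraticmap(D(\A_I))$, then use the explicit uniform bases of Section~\ref{section:matrixPm} together with the $\CR$-linearity of $\Quadraticmap$ and identity~\eqref{fLpsiL} to see that $\mathfrak{a}(I)$ is generated by $f^L_{1,h(1)},\ldots,f^L_{n,h(n)}$. Your remark on the type $D_n$ case ($\tilde\psi^{D_n}_{i,j}$ versus $\psi^{D_n}_{i,j}$) matches the caveat the paper itself makes when stating~\eqref{fLpsiL}.
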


\begin{remark}
The regular nilpotent Hessenberg variety $\Hess(N,h)$ for the special case when $h(i)=i+1$ for $i \in [n]$ is called the \textbf{Peterson variety}, denoted by $Pet$. 
An explicit presentation of $H^*(Pet)$ is given by \cite{HHM} as follows: 
\begin{equation*} 
H^*(Pet) \cong \CR/(\alpha_i \varpi_i \mid 1 \leq i \leq n).
\end{equation*}
One can see that Corollary~\ref{corollary:cohomologyHess} generalizes the explicit presentation above. 
\end{remark}

The polynomials $f^L_{i,j}$ have the explicit formula for types $L=A_{n-1},B_n,C_n,G_2$ by \cite{AHHM, AHMMS}.
For type $D_n$ we also have an explicit formula for $g^{D_n}_{i,j}:=\Quadraticmap(\psi^{D_n}_{i,j})$ because $\Quadraticmap$ sends $\partial_i$ to $x_i$.
That is, for $1 \leq i \leq n-1$ we have 
\begin{align*}
g^{D_n}_{i,j}=&\sum_{k=1}^{i} (x_k-x_{i+1})\cdots(x_k-x_j) x_k \ \ \ \ \ \ {\rm for} \ i \leq j \leq n-2 \ (i \neq n-1), \\
g^{D_n}_{i,n-1}=&\sum_{k=1}^{i} \big((x_k-x_{i+1})\cdots(x_k-x_{n-1})(x_k+x_n) \big) +(-1)^{n-i} n \, x_{i+1} \cdots x_n, \\
g^{D_n}_{i,n+j}=&\sum_{k=1}^{i} \big((x_k-x_{i+1})\cdots(x_k-x_{n})(x_k+x_n)\cdots(x_k+x_{n-j}) \big)  \\
&+(-1)^{n-i+1} n \, x_{i+1}\cdots x_{n-1-j} x_{n-j}^2\cdots x_n^2  \ \ \ \ \ \ {\rm for} \ 0 \leq j \leq n-1-i. 
\end{align*}
We also have
\begin{align*}
g^{D_n}_{n,2n-1-r}=&\sum_{k=1}^{r} \left((-1)^{n-r+1} (x_k-x_{r+1})\cdots(x_k-x_{n-1})(x_k-x_n) \right)  + n \, x_{r+1}\cdots x_n 
\end{align*}
for $0 \leq r \leq n-1$.

From Theorems~\ref{theorem:psiD} and \ref{theorem:AHMMS} we obtain the following corollary.

\begin{corollary} \label{corollary:fijD}
Let $h$ be a Hessenberg function for type $D_n$ and $\Hess(N,h)$ the associated regular nilpotent Hessenberg variety for type $D_n$.
Then, the following ring isomorphism holds
\begin{equation*}
H^*(\Hess(N,h)) \cong \R[x_1,\ldots,x_n]/(g^{D_n}_{1,h(1)},\ldots,g^{D_n}_{n,h(n)}).
\end{equation*}
\end{corollary}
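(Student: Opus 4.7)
The plan is to invoke the two main pieces already assembled in the paper: the cohomology description of Theorem~\ref{theorem:AHMMS} and the type $D_n$ uniform basis of Theorem~\ref{theorem:psiD}. Since $\{\psi^{D_n}_{i,h(i)} \mid i\in[n]\}$ is an $\CR$-basis of $D(\A_{I_h})$ (where $I_h$ is the lower ideal corresponding to $h$), and since $\Quadraticmap : D(\A_{I_h}) \to \CR$ is an $\CR$-linear map whose image is by definition $\mathfrak{a}(I_h)$, the ideal $\mathfrak{a}(I_h)$ is generated by the polynomials $\Quadraticmap(\psi^{D_n}_{i,h(i)})$ for $i\in[n]$. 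Therefore Theorem~\ref{theorem:AHMMS} yields
\[
H^*(\Hess(N,h)) \;\cong\; \CR_{D_n}/\bigl(\Quadraticmap(\psi^{D_n}_{1,h(1)}),\ldots,\Quadraticmap(\psi^{D_n}_{n,h(n)})\bigr),
\]
so it remains only to identify each $\Quadraticmap(\psi^{D_n}_{i,j})$ with the polynomial $g^{D_n}_{i,j}$ displayed just before the corollary.

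Next I would record that, under the Killing form identification $\mathfrak{t} \cong \mathfrak{t}^*$ for type $D_n$ with the orthonormal basis $x_1,\ldots,x_n$, the map $\Quadraticmap$ is the $\CR$-linear substitution $\partial_k \mapsto x_k$. This is immediate from the definition of $\Quadraticmap$ as the composition of the Killing-form isomorphism $\CR\otimes\mathfrak{t}\cong\CR\otimes\mathfrak{t}^*$ with multiplication, and it is the only point in the argument where the inner product enters.

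Applying this substitution to the closed-form expressions \eqref{eq:psi1}--\eqref{eq:psi4} for $\psi^{D_n}_{i,j}$ gives $g^{D_n}_{i,j}$ term by term. For $i\leq j\leq n-2$ with $i\neq n-1$, the formula is literal. For $j=n-1$ and for $j=n+s$ with $0\leq s\leq n-1-i$, the only nontrivial bookkeeping is that the $i$ contributions from the terms $(-1)^{n-i} x_{i+1}\cdots x_n\cdot x_k^{-1}\cdot x_k$ (for $k=1,\ldots,i$) combine with the $n-i$ contributions from $(-1)^{n-i} x_{i+1}\cdots\widehat{x_k}\cdots x_n\cdot x_k$ (for $k=i+1,\ldots,n$) to produce the single coefficient $(-1)^{n-i}n$ appearing in $g^{D_n}_{i,n-1}$ (and similarly, with the appropriate power of the last variables, for $g^{D_n}_{i,n+s}$). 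The case $\psi^{D_n}_{n,2n-1-r}$ in \eqref{eq:psi4} is entirely analogous.

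The computations above are straightforward, and there is no genuine obstacle in the argument: all the work has already been done in the construction and $\CR$-basis verification of $\{\psi^{D_n}_{i,j}\}$ (Theorem~\ref{theorem:psiD}) and in Theorem~\ref{theorem:AHMMS}. The only mildly delicate point is the choice to use the original $\psi^{D_n}_{i,j}$ rather than the modified $\tilde\psi^{D_n}_{i,j}$ from \S\ref{subsection:typeD}; this is harmless because both families form $\CR$-bases of $D(\A_{I_h})$, so their images under $\Quadraticmap$ each generate the same ideal $\mathfrak{a}(I_h)$, and the explicit formulas \eqref{eq:psi1}--\eqref{eq:psi4} make the closed form for $g^{D_n}_{i,j}$ transparent. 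This completes the proof plan.
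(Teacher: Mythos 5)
Your proposal is correct and follows exactly the paper's route: the paper derives this corollary directly from Theorem~\ref{theorem:psiD} and Theorem~\ref{theorem:AHMMS}, having just noted that $g^{D_n}_{i,j}=\Quadraticmap(\psi^{D_n}_{i,j})$ because $\Quadraticmap$ sends $\partial_k$ to $x_k$. Your bookkeeping of how the $i$ terms and the $n-i$ terms combine to give the coefficient $(-1)^{n-i}n$ is exactly the computation implicit in the paper's displayed formulas for $g^{D_n}_{i,j}$.
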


\smallskip

\appendix

\section{Uniform bases for type $F$} 
\label{appendix:typeF}

Let $\mathfrak{t}$ be the Euclidean space $V=\R^4$ and we have 
\[
\CR = \Sym(\mathfrak{t}^*)=\R[x_1,x_2,x_3,x_4].
\]
We set the exponents $\e_1,\e_2, \e_3, \e_4$ as 
\[
\e_1=1, \ \e_2=11, \ \e_3=7, \ \e_4=5.
\]
We take the simple roots $\alpha_1=\frac{1}{2}(x_1-x_2-x_3-x_4), \alpha_2=x_2-x_3, \alpha_3=x_3-x_4, \alpha_4=x_4$ so that a labeling of the Dynkin diagram is shown in Figure~\ref{picture:DynkindiagramTypeF4}.
\begin{figure}[h]
\begin{center}
\begin{picture}(90,10)
\put(0,10){\circle{5}} 
\put(30,10){\circle{5}}
\put(60,10){\circle{5}}
\put(90,10){\circle{5}}

\put(2.3,10){\line(1,0){25}}
\put(32.3,11){\line(1,0){25}}
\put(32.3,9){\line(1,0){25}}
\put(62.3,10){\line(1,0){25}}

\qbezier(44,13)(50,10)(44,7)

\put(-2,0){{\tiny 2}} 
\put(28,0){{\tiny 3}}
\put(58,0){{\tiny 4}}
\put(88,0){{\tiny 1}}
\end{picture}
\end{center}
\vspace{-10pt}
\caption{Labeling of Dynkin diagram for type $F_4$.}
\label{picture:DynkindiagramTypeF4}
\end{figure} 

We arrange all positive roots of $F_4$ as shown in Figure~\ref{picture:PositiveRootTypeF}.

\begin{figure}[h]
\begin{center}
\begin{picture}(500,80)
\hspace{-10pt}
\put(0,60){\framebox(30,20){\tiny $x_2-x_3$}} 

\put(30,60){\framebox(30,20){\tiny $x_2-x_4$}}
\put(30,40){\framebox(30,20){\tiny $x_3-x_4$}}

\put(60,60){\framebox(15,20){\tiny $x_2$}}
\put(60,40){\framebox(15,20){\tiny $x_3$}}
\put(60,20){\framebox(15,20){\tiny $x_4$}}

\put(75,60){\dashbox(85,20){\tiny $\frac{1}{2}(x_1+x_2-x_3-x_4)$}}
\put(75,40){\dashbox(85,20){\tiny $\frac{1}{2}(x_1-x_2+x_3-x_4)$}}
\put(75,20){\dashbox(85,20){\tiny $\frac{1}{2}(x_1-x_2-x_3+x_4)$}}
\put(75,0){\framebox(85,20){\tiny $\frac{1}{2}(x_1-x_2-x_3-x_4)$}}

\put(160,60){\dashbox(30,20){\tiny $x_2+x_3$}}
\put(160,40){\dashbox(30,20){\tiny $x_2+x_4$}}
\put(160,20){\dashbox(30,20){\tiny $x_3+x_4$}}

\put(190,60){\framebox(85,20){\tiny $\frac{1}{2}(x_1+x_2+x_3-x_4)$}}
\put(190,40){\framebox(85,20){\tiny $\frac{1}{2}(x_1+x_2-x_3+x_4)$}}
\put(190,20){\framebox(85,20){\tiny $\frac{1}{2}(x_1-x_2+x_3+x_4)$}}

\put(275,60){\framebox(30,20){\tiny $x_1-x_4$}}
\put(275,40){\dashbox(30,20){\tiny $x_1-x_3$}}
\put(275,20){\framebox(30,20){\tiny $x_1-x_2$}}

\put(305,60){\framebox(85,20){\tiny $x_1$}}
\put(305,40){\dashbox(85,20){\tiny $\frac{1}{2}(x_1+x_2+x_3+x_4)$}}

\put(390,60){\framebox(30,20){\tiny $x_1+x_4$}}

\put(420,60){\framebox(30,20){\tiny $x_1+x_3$}}

\put(450,60){\framebox(30,20){\tiny $x_1+x_2$}}

\put(75,80){\line(1,0){115}}
\put(75,60){\line(1,0){115}}
\put(75,40){\line(1,0){115}}
\put(75,20){\line(1,0){115}}
\put(75,20){\line(0,1){60}}
\put(190,20){\line(0,1){60}}

\put(275,60){\line(1,0){115}}
\put(275,40){\line(1,0){115}}
\put(390,40){\line(0,1){20}}

\put(275,40){\line(0,1){20}}


\end{picture}
\end{center}
\vspace{-10pt}
\caption{The arrangement of all positive roots for type $F_4$.}
\label{picture:PositiveRootTypeF}
\end{figure} 

In Figure~\ref{picture:PositiveRootTypeF} the partial order $\preceq$ on $\Phi^+_{F_4}$ is defined as follows:

\begin{enumerate}
\item if a root $\alpha$ is left to a root $\beta$, then $\alpha \lessdot \beta$, except that $\alpha,\beta$ are divided by a dotted line;
\item if a root $\alpha$ is lower-adjacent to a root $\beta$, then $\alpha \lessdot \beta$;
\item if two roots $\alpha,\beta$ are divided by a dotted line, and a root $\gamma$ is immediately to the northwest of $\alpha$ and a root $\delta$ is immediately to the southeast of $\beta$, then $\alpha \lessdot \delta$ and $\gamma \lessdot \beta$. (See Figure~\ref{picture:positionsTypeF}.)
\smallskip
\begin{figure}[h]
\begin{center}
\begin{picture}(60,35)
\put(0,30){\framebox(15,15){\tiny $\gamma$}}
\put(15,15){\dashbox(15,15){\tiny $\alpha$}}
\put(30,15){\dashbox(15,15){\tiny $\beta$}}
\put(45,0){\framebox(15,15){\tiny $\delta$}}

\put(15,30){\line(1,0){30}}
\put(15,15){\line(1,0){30}}
\put(15,15){\line(0,1){15}}
\put(45,15){\line(0,1){15}}
\end{picture}
\end{center}
\vspace{-10pt}
\caption{The positions of $\alpha, \beta, \gamma, \delta$.}
\label{picture:positionsTypeF}
\end{figure}
\end{enumerate}
For two positive roots $\alpha,\beta$, we define $\alpha \preceq \beta$ if there exist positive roots $\gamma_0,\ldots,\gamma_N$ such that $\alpha=\gamma_0 \lessdot \gamma_1 \lessdot \cdots \lessdot \gamma_N=\beta$.

Now we fix a decomposition $\Phi^+_{F_4}=\Phi^+_1 \coprod \Phi^+_2 \coprod \Phi^+_3 \coprod \Phi^+_4$ satisfying \eqref{eq:decomposition1} and \eqref{eq:decomposition2}.
The positive roots $\alpha_{i,j}$ are defined in Table~\ref{table:positiverootsTypeF}.
{\tiny
\begin{table}[h]
\begin{tabular}{|c||c|c|c|c|c|c|c|c|c|c|} \hline
positive roots $\backslash$ $j$ & $2$ & $3$ & $4$ & $5$ & $6$ & $7$  \cr
\hline
\hline $\alpha_{1,j}$ & $\frac{1}{2}(x_1-x_2-x_3-x_4)$ &  &  &  &  &     \cr
\hline $\alpha_{2,j}$ &       & $x_2-x_3$ &  $x_2-x_4$  & $x_2$ & $x_2+x_4$ & $x_2+x_3$          \cr
\hline $\alpha_{3,j}$ &       &       & $x_3-x_4$ &   $x_3$   & $x_3+x_4$ &  $\frac{1}{2}(x_1-x_2+x_3+x_4)$          \cr
\hline $\alpha_{4,j}$ &       &       &       &   $x_4$    & $\frac{1}{2}(x_1-x_2-x_3+x_4)$ &  $\frac{1}{2}(x_1-x_2+x_3-x_4)$          \cr
\hline
\end{tabular} \\ \vspace{1.25ex}
\begin{tabular}{|c||c|c|c|c|c|c|c|c|c|c|} \hline
positive roots $\backslash$ $j$ & $8$ & $9$ & $10$ & $11$ & $12$ & $13$ \cr
\hline
\hline $\alpha_{1,j}$ &       &       &      &       &     &            \cr
\hline $\alpha_{2,j}$ &  $\frac{1}{2}(x_1+x_2+x_3-x_4)$ &  $\frac{1}{2}(x_1+x_2+x_3+x_4)$  &  $x_1$  &  $x_1+x_4$  &  $x_1+x_3$  &  $x_1+x_2$         \cr
\hline $\alpha_{3,j}$ &  $x_1-x_2$     & $x_1-x_3$ &  $x_1-x_4$    &      &    &            \cr
\hline $\alpha_{4,j}$ &   $\frac{1}{2}(x_1+x_2-x_3-x_4)$    & $\frac{1}{2}(x_1+x_2-x_3+x_4)$ &      &      &    &          \cr
\hline
\end{tabular} \vspace{1.25ex}
\caption{The positive roots $\alpha_{i,j}$ in type $F_4$.}
\label{table:positiverootsTypeF}
\end{table}
}

Motivated by this, we define the coordinate in type $F_4$ as shown in Figure~\ref{picture:CoordinateTypeF}.

\begin{figure}[h]
\begin{center}
\begin{picture}(350,70)
\put(0,60){\tiny \framebox(30,20){$(2,2)$}} 
\put(30,60){\tiny \framebox(30,20){$(2,3)$}} 
\put(60,60){\tiny \framebox(30,20){$(2,4)$}} 
\put(90,60){\tiny \framebox(30,20){$(2,5)$}} 
\put(120,60){\tiny \dashbox(30,20){$(4,8)$}} 
\put(150,60){\tiny \dashbox(30,20){$(2,7)$}} 
\put(180,60){\tiny \framebox(30,20){$(2,8)$}} 
\put(210,60){\tiny \framebox(30,20){$(3,10)$}} 
\put(240,60){\tiny \framebox(30,20){$(2,10)$}} 
\put(270,60){\tiny \framebox(30,20){$(2,11)$}} 
\put(300,60){\tiny \framebox(30,20){$(2,12)$}} 
\put(330,60){\tiny \framebox(30,20){$(2,13)$}} 

\put(30,40){\tiny \framebox(30,20){$(3,3)$}} 
\put(60,40){\tiny \framebox(30,20){$(3,4)$}} 
\put(90,40){\tiny \framebox(30,20){$(3,5)$}} 
\put(120,40){\tiny \dashbox(30,20){$(4,7)$}} 
\put(150,40){\tiny \dashbox(30,20){$(2,6)$}} 
\put(180,40){\tiny \framebox(30,20){$(4,9)$}} 
\put(210,40){\tiny \dashbox(30,20){$(3,9)$}} 
\put(240,40){\tiny \dashbox(30,20){$(2,9)$}} 

\put(60,20){\tiny \framebox(30,20){$(4,4)$}} 
\put(90,20){\tiny \framebox(30,20){$(4,5)$}} 
\put(120,20){\tiny \dashbox(30,20){$(4,6)$}} 
\put(150,20){\tiny \dashbox(30,20){$(3,6)$}} 
\put(180,20){\tiny \framebox(30,20){$(3,7)$}} 
\put(210,20){\tiny \framebox(30,20){$(3,8)$}} 

\put(90,0){\tiny \framebox(30,20){$(1,1)$}} 
\put(120,0){\tiny \framebox(30,20){$(1,2)$}} 

\put(120,80){\line(1,0){60}}
\put(120,60){\line(1,0){60}}
\put(120,40){\line(1,0){60}}
\put(120,20){\line(1,0){60}}
\put(120,20){\line(0,1){60}}
\put(180,20){\line(0,1){60}}

\put(210,60){\line(1,0){60}}
\put(210,40){\line(1,0){60}}
\put(210,40){\line(0,1){20}}
\put(270,40){\line(0,1){20}}
\end{picture}
\end{center}
\vspace{-10pt}
\caption{The coordinate in type $F_4$.}
\label{picture:CoordinateTypeF}
\end{figure} 

Note that $\height(\Phi^+_{F_4})=11$ and the set $\Lambda_m$ in \eqref{eq:Lambdam} is given by
Table~\ref{table:LambdaTypeF}.
\begin{table}[h]
\begin{tabular}{|c||c|c|c|c|} \hline
$m$ & $0,1$ & $2,3,4,5$ & $6,7$ & $8,9,10,11$ \cr
\hline
$\Lambda_m$ & $\{1,2,3,4\}$ & $\{2,3,4\}$ & $\{2,3\}$ & $\{2\}$      \cr
\hline 
\end{tabular} 
\vspace{5pt}
\caption{$\Lambda_m$ in type $F_4$.}
\label{table:LambdaTypeF}
\end{table}

For the decomposition $\Phi^+_{F_4}=\Phi^+_1 \coprod \Phi^+_2 \coprod \Phi^+_3 \coprod \Phi^+_4$ above, a \textbf{Hessenberg function for type $F_4$} is a function $h_I:\{1,\ldots,4\} \rightarrow \{1,\ldots,13\}$ defined in \eqref{eq:Hessft} associated to a lower ideal $I \subset \Phi^+_{F_4}$.

\begin{remark} \label{remark:HessenbergFunctionF4}
Note that Hessenberg functions $h$ for type $F_4$ are characterized by the following conditions
\begin{multicols}{2}
\begin{itemize}
\item[(1)] $i \leq h(i) \leq i+e_i$ for $i=1,2,3,4$, 
\item[(2)] if $h(2) \geq k$, then $h(3) \geq k$ for $k=4,5,6,10$, 
\item[(3)] if $h(3) \geq k$, then $h(4) \geq k$ for $k=5,7,9$, 
\item[(4)] if $h(4) \geq k$, then $h(1) =2$ for $k=6$, 
\item[(5)] if $h(4) \geq k$, then $h(3) \geq k-2$ for $k=7,9$, 
\item[(6)] if $h(4) \geq k$, then $h(2) \geq k-3$ for $k=8,9$, 
\item[(7)] if $h(2) \geq 8$, then $h(4) = 9$, 
\item[(8)] if $h(3) \geq 10$, then $h(2) \geq 8$. 
\end{itemize}
\end{multicols}
In fact, one can see that the set of lower ideals $I \subset \Phi^+_{F_4}$ and the set of functions $h$ satisfying the condition above are in one-to-one correspondence which sends $I$ to $h_I$.
\end{remark}

\begin{remark}
A Hessenberg function $h$ such that $h(1)=1, h(2) \leq 7, h(3) \leq 6, h(4) \leq 5$ for type $F_4$ is exactly the Hessenberg function $h$ such that $h(1)=1$ for type $B_4$ which is naturally identified with that of type $B_3$.
\end{remark}

We find uniform bases $\{\psi^{F_4}_{i,i+m} \in \Der \CR \mid 0 \leq m \leq 11, i \in \Lambda_m \}$ inductively.
As the base case, when $m=0$, we define 
\begin{equation*} 
\psi^{F_4}_{i,i}=\alpha_i^* \ \ \ \ \ {\rm for} \ i=1,2,3,4.
\end{equation*}
Explicitly,
\begin{align*} 
\psi^{F_4}_{1,1}=2\partial_1, \ \ 
\psi^{F_4}_{2,2}=\partial_1+\partial_2, \ \ 
\psi^{F_4}_{3,3}=2\partial_1+\partial_2+\partial_3, \ \ 
\psi^{F_4}_{4,4}=3\partial_1+\partial_2+\partial_3+\partial_4. 
\end{align*}
Proceeding inductively, for $m > 0$ and $i \in \Lambda_m$ we write
\begin{equation} \label{eq:psiF} 
\psi^{F_4}_{i,i+m}=\sum_{j \in \Lambda_{m}} p_{ij}^{(m)} \alpha_{j,j+m}\psi^{F_4}_{j,j+m-1}
\end{equation}
for some rational numbers $p_{ij}^{(m)}$.
We determine the rational numbers $p_{ij}^{(m)}$ such that all of the derivations $\psi_{i,i+m}$ form uniform bases.
As the proof of Proposition~\ref{proposition:psiD}, we consider the maximal lower ideal $I^{(i,i+m)}$ containing the root $\alpha_{i,i+m}$ with respect to the inclusion.
Since $\psi^{F_4}_{i,i+m}(\alpha) \in \CR \alpha$ for any $\alpha \in I^{(i,i+m)}$, by \eqref{eq:psiF} the following has to hold
$$
\sum_{j \in \Lambda_{m}} p_{ij}^{(m)} \alpha_{j,j+m}\psi^{F_4}_{j,j+m-1}(\alpha) \equiv 0 \ \ \ \mod \alpha.
$$
Now we know an explicit formula for $\psi^{F_4}_{j,j+m-1}$ ($j \in \Lambda_{m}$) by inductive step, so we obtain a linear equation in $p_{ij}^{(m)}$ ($j \in \Lambda_{m}$) for each $\alpha \in I^{(i,i+m)}$.
We computed a solution of the system of the linear equations in $p_{ij}^{(m)}$ ($j \in \Lambda_{m}$). (We also checked the solution by using Maple\footnote{The program is available at https://researchmap.jp/ehrhart/Database/.}.)
For each $m$ with $0 \leq m \leq 11=\height(\Phi^+_{F_4})$, the matrix $P_m^{F_4}=(p_{ij}^{(m)})_{i,j \in \Lambda_m}$ is described as Figure~\ref{picture:InvertibleMatricesTypeF4}.
{\small
\begin{figure}[h]
\begin{align*}
&P_0^{F_4}=
\begin{pmatrix}
1        & 0   & 0   & 0    \\
0        & 1   & 0   &  0   \\
0        &  0  & 1   & 0    \\
0        &  0  & 0   & 1  \\ 
\end{pmatrix}, \ \ 
P_1^{F_4}=
\begin{pmatrix}
1        & 1    & 1    & 1    \\
0        & 1    & 0    & 0    \\
0        & 1     & 1    & 0    \\
0        & 1     & 1    & 1    \\ 
\end{pmatrix}, \ \ 
P_2^{F_4}=
\begin{pmatrix}
1        & 0   & 0       \\
1        & 1   & 1       \\
0        & 0   & 1       \\
\end{pmatrix}, \ \ 
P_3^{F_4}=
\begin{pmatrix}
1                     & -\frac{1}{2}    & -1     \\
1                     & 1                & -1      \\
\frac{1}{2}         & \frac{1}{2}     & 1       \\
\end{pmatrix}, \\ 
&P_4^{F_4}=
\begin{pmatrix}
1        & 0   & -2      \\
0        & 1   & 2       \\
0        & 0   & 1       \\
\end{pmatrix}, \ \ 
P_5^{F_4}=
\begin{pmatrix}
1                       & 0                 &  0       \\
0                        & 1                 &  0       \\
 -\frac{1}{2}         &   \frac{1}{2}    & 1       \\
\end{pmatrix}, \ \ 
P_6^{F_4}=
\begin{pmatrix}
1        & 1     \\
0        & 1     \\
\end{pmatrix}, \ \  
P_7^{F_4}=
\begin{pmatrix}
1       & 0    \\
2        & 1   \\ 
\end{pmatrix}, \ \ 
P_m^{F_4}=
\begin{pmatrix}
1  \\      
\end{pmatrix} \ \ \ (8 \leq m \leq 11).
\end{align*}
\vspace{-10pt}
\caption{The invertible matrices for type $F_4$.}
\label{picture:InvertibleMatricesTypeF4}
\end{figure} 
}

Here, we arrange indices for rows and columns of the matrix $P^{F_4}_m$ as in increasing order. 
Note that 
\begin{align*}
[\psi^{F_4}_{i,i}]_{i \in \Lambda_0}&=P_0^{F_4}[\alpha_{i}^*]_{i \in \Lambda_0}\\
[\psi^{F_4}_{i,i+m}]_{i \in \Lambda_m}&=P_m^{F_4}[\alpha_{i,i+m}\psi^{F_4}_{i,i+m-1}]_{i \in \Lambda_m} \ \ \ {\rm for} \ 1 \leq m \leq 11.
\end{align*}
One can check that $\det P^{F_4}_m \neq 0$, so we obtain the following theorem
by Proposition~\ref{proposition:key} (see also Remark~\ref{remark:key}).

\begin{theorem} \label{theorem:psiF}
The derivations $\{\psi^{F_4}_{i,i+m} \mid 0 \leq m \leq 11, i \in \Lambda_m \}$ form uniform bases for the ideal arrangements of type $F_4$.
\end{theorem}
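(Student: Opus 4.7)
The plan is to verify the hypotheses of Proposition~\ref{proposition:key} (in its simplified form given by Remark~\ref{remark:key}). Conditions $(1)$ and $(2)$ are immediate from the construction. Indeed, a direct calculation from the simple roots $\alpha_1 = \tfrac{1}{2}(x_1-x_2-x_3-x_4),\ \alpha_2 = x_2-x_3,\ \alpha_3 = x_3-x_4,\ \alpha_4 = x_4$ shows that the stated formulas $\psi^{F_4}_{1,1}=2\partial_1,\ \psi^{F_4}_{2,2}=\partial_1+\partial_2,\ \psi^{F_4}_{3,3}=2\partial_1+\partial_2+\partial_3,\ \psi^{F_4}_{4,4}=3\partial_1+\partial_2+\partial_3+\partial_4$ are precisely the dual basis $\alpha_i^*$; thus condition $(1)$ holds with all $p_i = 1$. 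Condition $(2)$ is exactly the relation \eqref{eq:psiF}, and one checks by direct computation of the determinants that $\det P_m^{F_4} \neq 0$ for every $m$, so $P_m^{F_4} \in \GL(\Lambda_m;\Q)$.

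By Remark~\ref{remark:key}, it remains to check that $\{\psi^{F_4}_{i,h_I(i)} \mid i \in [4]\} \subset D(\A_I)$ for every lower ideal $I \subset \Phi^+_{F_4}$. As in the proofs of Propositions~\ref{proposition:psiD}, for each fixed pair $(i,i+m)$ with $i \in \Lambda_m$ we single out the unique \emph{maximal} lower ideal $I^{(i,i+m)}$ satisfying $h_{I^{(i,i+m)}}(i) = i+m$; since any lower ideal $I$ with $h_I(i) = i+m$ is contained in $I^{(i,i+m)}$ (so $D(\A_{I^{(i,i+m)}}) \subset D(\A_I)$), it suffices to prove $\psi^{F_4}_{i,i+m} \in D(\A_{I^{(i,i+m)}})$ for each such $(i,i+m)$.

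I would proceed by induction on $m$. The base case $m=0$ is immediate since $\psi^{F_4}_{i,i} = \alpha_i^* \in \Der(\CR) = D(\A_\emptyset)$. For the inductive step, assume explicit formulas for $\psi^{F_4}_{j,j+m-1}$ with $j \in \Lambda_m$ are known. Using \eqref{eq:psiF}, the required containment $\psi^{F_4}_{i,i+m} \in D(\A_{I^{(i,i+m)}})$ is equivalent to the system of congruences
\[
\sum_{j \in \Lambda_m} p^{(m)}_{ij}\,\alpha_{j,j+m}\,\psi^{F_4}_{j,j+m-1}(\alpha) \equiv 0 \pmod{\alpha} \qquad (\alpha \in I^{(i,i+m)}).
\]
This is a finite system of linear equations over $\Q$ in the unknown row $(p^{(m)}_{ij})_{j \in \Lambda_m}$, with coefficients obtained by reducing the explicit polynomials $\alpha_{j,j+m}\psi^{F_4}_{j,j+m-1}(\alpha)$ modulo $\alpha$. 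One then verifies that the stated row of $P_m^{F_4}$ is indeed a solution; this is precisely how the matrices $P_m^{F_4}$ were produced, and it is confirmed via Maple. The invertibility of $P_m^{F_4}$, together with the freeness of $D(\A_{I_m})$ established inductively, then yields that our derivations form uniform bases.

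The main obstacle is purely computational: unlike the classical types $A,B,C$, and even $D$, there is no simple closed-form product expression for $\psi^{F_4}_{i,j}$ because the roots involving halves of $x_1\pm x_2 \pm x_3 \pm x_4$ intertwine non-trivially with the long roots. Consequently, one cannot expect a uniform algebraic identity that handles all cases at once, and the induction must be carried out step-by-step through all $11$ height levels, with up to $4$ simultaneous linear systems at each level. Since the matrices $P_m^{F_4}$ have size at most $4$ and the number of defining congruences at each $(i,i+m)$ is bounded by the size of $I^{(i,i+m)}$, the verification is entirely finite and suited to computer algebra; this is exactly the role of the Maple script referred to in the statement.
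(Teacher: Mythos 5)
Your proposal is correct and follows essentially the same route as the paper: verify conditions $(1)$ and $(2)$ of Proposition~\ref{proposition:key} directly from the construction and the non-vanishing of $\det P_m^{F_4}$, reduce the containment condition of Remark~\ref{remark:key} to the maximal lower ideals $I^{(i,i+m)}$ exactly as in the proof of Proposition~\ref{proposition:psiD}, and settle the resulting finite linear congruence systems by explicit (computer-assisted) computation. The paper's own argument is precisely this, with the Maple verification playing the role you describe.
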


\section{Uniform bases for the ideal arrangements in a root subsystem}
\label{appendix:root subsystem}

In this section we prove that uniform bases for the ideal arrangements in a root subsystem of given a root system can be obtained from that of the given root system.

Recall that $\mathfrak{t}$ is a real Euclidean space of dimension $n$ and $\Phi \subset \mathfrak{t}^*$ an irreducible root system. 
Under the isomorphism $\mathfrak{t} \cong \mathfrak{t}^*$ induced from the inner product $( \ , \ )$ on $\mathfrak{t}$, the image of a root $\alpha \in \mathfrak{t}^*$ is denoted by $\hat\alpha \in \mathfrak{t}$.
We also denote $\check\beta \in \mathfrak{t}^*$ by the image of $\beta \in \mathfrak{t}$, that is, 
\begin{equation*}
\mathfrak{t} \cong \mathfrak{t}^*; \ \hat\alpha \mapsfrom \alpha, \ \beta \mapsto \check\beta.
\end{equation*}
Let $\alpha_1,\ldots, \alpha_n$ be the simple roots and its dual basis is denoted by $\beta_1,\ldots,\beta_n$ in this section.
Let $S$ be a nonempty subset of $[n]$ and $\mathfrak{t}'$ a subspace of $\mathfrak{t}$ spanned by $\hat\alpha_i$ for $i \in S$.
Note that $\mathfrak{t}'$ is orthogonal to $\beta_i$ for $i \notin S$:
$$
\mathfrak{t}'=\{x \in \mathfrak{t} \mid (\beta_i, x)=0 \ {\rm for \ all} \ i \notin S \}.
$$
The inner product on $\mathfrak{t}$ naturally induces that on $\mathfrak{t}'$.
Then, the isomorphisms $\mathfrak{t} \cong \mathfrak{t}^*$ and $\mathfrak{t}' \cong (\mathfrak{t}')^*$ via the inner products on $\mathfrak{t}$ and $\mathfrak{t}'$ respectively make the following commutative diagram:
\[\xymatrix{\mathfrak{t}\ar[r]^-{\cong} &\mathfrak{t}^*\ar@{->>}[d]\\ \mathfrak{t}'\ar@{^{(}->}[u]\ar[r]^-{\cong}&(\mathfrak{t}')^*}\]
Note that $(\mathfrak{t}')^*$ is isomorphic to the quotient space $\mathfrak{t}^*/{\rm span}\{\check{\beta_i} \mid i \notin S \}$.
We denote by $\overline{\alpha}$ the image of $\alpha \in \mathfrak{t}^*$ under the surjective map $\mathfrak{t}^* \to (\mathfrak{t}')^*$. 
We define $\Phi' \subset (\mathfrak{t}')^*$ as the image of a set $\{\alpha \in \Phi \mid \alpha(\beta_i) =0 \ {\rm for} \ i \notin S \}$ under the surjection $\mathfrak{t}^* \to (\mathfrak{t}')^*$.
Then, $\Phi'$ is a root system on $(\mathfrak{t}')^*$ and we can take $\{\overline{\alpha_i} \mid i \in S \}$ as the simple roots of $\Phi'$.

Let $\e_1,\ldots,\e_n$ (resp. $\e'_i \ (i \in S)$) be the exponents of the Weyl group $W$ (resp. $W'$) where $W$ and $W'$ are the Weyl groups associated with $(\mathfrak{t},\Phi)$ and $(\mathfrak{t}',\Phi')$ respectively.
A decomposition $\Phi^+=\coprod_{i=1}^n \ \Phi^+_i$ satisfying \eqref{eq:decomposition1} and \eqref{eq:decomposition2} induces the decomposition $\Phi'^+=\coprod_{i \in S} \ \Phi'^+_i$ satisfying \eqref{eq:decomposition1} and \eqref{eq:decomposition2} where $\Phi'^+_i$ is defined as $\Phi'^+_i=\{\overline{\alpha_{i,i+1}},\ldots, \overline{\alpha_{i,i+\e'_i}} \}$ for $i \in S$. 
Let $\CR=\Sym \mathfrak{t}^*$ and $\CR'=\Sym (\mathfrak{t}')^* \cong \CR/(\check{\beta_i} \mid i \notin S)$.
Let $\{\psi_{i,j} \in \Der \CR \mid i \in [n] \ {\rm and} \ i \leq j \leq \e_i \}$ be uniform bases for the ideal arrangements in $(\mathfrak{t},\Phi)$ of the form in Theorem~\ref{theorem:main1}.
We denote the invertible matrices associated with the uniform bases by $P_m$ for $0 \leq m \leq \height(\Phi^+)$. Namely, we can write 
\begin{align*}
\psi_{i,i}&=p_i \beta_i \ \ \ {\rm for} \ i \in [n], \\
\psi_{i,i+m}&=\sum_{j \in \Lambda_m} p_{ij} \alpha_{j,j+m}\psi_{j,j+m-1} \ \ \ {\rm for} \ 1 \leq m \leq \height(\Phi^+) \ {\rm and} \ i \in \Lambda_m,
\end{align*}
where $P_0=\diag(p_1,\ldots,p_n)$ and $P_m=(p_{ij})_{i,j \in \Lambda_m} \in \GL(\Lambda_m;\Q)$.
Then, we define $\{\psi'_{i,j} \in \Der \CR'=\CR' \otimes \mathfrak{t}' \mid i \in [n] \ {\rm and} \ i \leq j \leq \e_i \ \}$ as follows:
\begin{align*}
\psi'_{i,i}&=p_i \gamma_i \ \ \ {\rm for} \ i \in [n], \\
\psi'_{i,i+m}&=\sum_{j \in \Lambda_m} p_{ij} \overline{\alpha_{j,j+m}}\psi'_{j,j+m-1} \ \ \ {\rm for} \ 1 \leq m \leq \height(\Phi^+) \ {\rm and} \ i \in \Lambda_m,
\end{align*}
where $\{\gamma_i \in \mathfrak{t}' \mid i \in S\}$ is the dual basis of the simple roots $\{\overline{\alpha_i} \in (\mathfrak{t}')^* \mid i \in S\}$ for $\Phi'$ with the convention $\gamma_j=0$ whenever $j \notin S$.

\begin{lemma} \label{lemma:6-1}
Let $i$ and $j$ be positive integers with $1 \leq i \leq n$ and $i \leq j \leq \e_i$.
Then, 
$$
\psi'_{i,j}(\overline{\alpha})=\overline{\psi_{i,j}(\alpha)}
$$
for any $\overline{\alpha} \in \Phi'$. 
\end{lemma}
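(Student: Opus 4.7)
The plan is to argue by induction on $m := j - i$, exploiting the fact that $\psi'_{i,i+m}$ and $\psi_{i,i+m}$ are built by formally identical recursions, the former living in $\Der \CR'$ and the latter in $\Der \CR$. Throughout, for a given $\overline{\alpha} \in \Phi'$ I work with the distinguished preimage $\alpha \in \Phi$ with $\alpha(\beta_\ell) = 0$ for every $\ell \notin S$; equivalently, $\alpha = \sum_{\ell \in S} c_\ell \alpha_\ell$. (By definition of $\Phi'$ such a representative exists.) Note that $\overline{\psi_{i,j}(\alpha)}$ denotes the image under the quotient ring homomorphism $\CR \onto \CR'$, which is a scalar in $\R$ when $j=i$.

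For the base case $m=0$, I compute both sides directly. The right-hand side is $\psi_{i,i}(\alpha) = p_i\,\alpha(\beta_i) = p_i c_i$ (with the convention $c_i := 0$ for $i \notin S$), which is a scalar and therefore agrees with its image in $\CR'$. The left-hand side is $\psi'_{i,i}(\overline{\alpha}) = p_i\,\overline{\alpha}(\gamma_i) = p_i\,\alpha(\gamma_i)$, the last equality holding because $\gamma_i \in \mathfrak{t}'$. If $i \notin S$, then $\gamma_i = 0$ by convention and $c_i = 0$ by the choice of representative, so both sides vanish. If $i \in S$, then $\alpha(\gamma_i) = \sum_{\ell \in S} c_\ell\,\overline{\alpha_\ell}(\gamma_i) = c_i$ by duality of $\{\gamma_\ell\}_{\ell \in S}$ and $\{\overline{\alpha_\ell}\}_{\ell \in S}$, matching $p_i c_i$ on the nose.

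For the inductive step, assume the identity holds through level $m-1$. Applying both sides of the derivation identity $\psi_{i,i+m} = \sum_{j \in \Lambda_m} p_{ij}\,\alpha_{j,j+m}\,\psi_{j,j+m-1}$ to $\alpha$ gives
\[
\psi_{i,i+m}(\alpha) = \sum_{j \in \Lambda_m} p_{ij}\,\alpha_{j,j+m}\,\psi_{j,j+m-1}(\alpha) \in \CR.
\]
Since $\CR \onto \CR'$ is a ring homomorphism and since $\Lambda_m \subset \Lambda_{m-1}$ (because $i \in \Lambda_m$ precisely when $\e_i \geq m$), the induction hypothesis applies to each $\psi_{j,j+m-1}(\alpha)$ with $j \in \Lambda_m$, yielding
\[
\overline{\psi_{i,i+m}(\alpha)} = \sum_{j \in \Lambda_m} p_{ij}\,\overline{\alpha_{j,j+m}}\,\psi'_{j,j+m-1}(\overline{\alpha}) = \psi'_{i,i+m}(\overline{\alpha})
\]
by the defining recursion of $\psi'_{i,i+m}$. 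The only nontrivial point is the base-case bookkeeping reconciling the convention $\gamma_j = 0$ for $j \notin S$ with the vanishing of the corresponding coefficient $c_j$ in $\alpha$; once that is in hand, the inductive step is a formal consequence of the compatibility between the two parallel recursions under the quotient map.
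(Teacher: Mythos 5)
Your proof is correct and follows essentially the same route as the paper's: induction on $m=j-i$, comparing the two formally identical recursions for $\psi_{i,j}$ and $\psi'_{i,j}$ under the quotient homomorphism $\CR\to\CR'$, with the base case settled by the duality of $\{\gamma_\ell\}_{\ell\in S}$ and $\{\overline{\alpha_\ell}\}_{\ell\in S}$. The only (cosmetic) difference is that the paper first reduces by linearity to the simple roots $\overline{\alpha_k}$, $k\in S$, while you carry the coefficient expansion of a general root through the argument directly.
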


\begin{proof}
Since we can take $\{\overline{\alpha_k} \mid k \in S \}$ as the simple roots of $\Phi'$, it is enough to check that $\psi'_{i,j}(\overline{\alpha_k})=\overline{\psi_{i,j}(\alpha_k)}$ for any $k \in S$.
We prove this by induction on $m:=j-i$.
As the base case, when $m=0$, we have
$$
\psi'_{i,i}(\overline{\alpha_k})=p_i \gamma_i(\overline{\alpha_k})=p_i \delta_{ik}=\overline{\psi_{i,i}(\alpha_k)}. 
$$
Now we assume that $m>0$ and the claim holds for $m-1$.
Then, for any $i \in [n]$, we have
\begin{align*}
\psi'_{i,i+m}(\overline{\alpha_k})&=\sum_{j \in \Lambda_m} p_{ij} \overline{\alpha_{j,j+m}} \psi'_{j,j+m-1}(\overline{\alpha_k}) \\
&=\sum_{j \in \Lambda_m} p_{ij} \overline{\alpha_{j,j+m}} \overline{\psi_{j,j+m-1}(\alpha_k)} \ \ \ ({\rm by \ the \ inductive \ assumption})\\
&=\overline{\psi_{i,i+m}(\alpha_k)}.
\end{align*}
This completes the proof.
\end{proof}

\begin{proposition} \label{proposition:6-2}
A set of derivations $\{\psi'_{i,j} \in \Der \CR' \mid i \in S \ {\rm and} \ i \leq j \leq \e'_i \ \}$ forms uniform bases for the ideal arrangements in $(\mathfrak{t}',\Phi')$.
\end{proposition}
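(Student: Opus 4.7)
The plan is to apply Proposition~\ref{proposition:key} to the derivations $\{\psi'_{i,i+m} \mid 0 \leq m \leq \height(\Phi'^+),\ i \in \Lambda'_m\}$ and verify its four hypotheses. A key preliminary observation is that $\Lambda'_m \subseteq \Lambda_m$ for every $m \geq 0$: writing $\Lambda_m = \{i \in [n] : \e_i \geq m\}$ and $\Lambda'_m = \{i \in S : \e'_i \geq m\}$, the inclusion follows from $\e'_i \leq \e_i$ (since $\Phi'^+_i$ is obtained from the images of a subset of $\Phi^+_i$ that land in $\Phi'$). Throughout, Lemma~\ref{lemma:6-1} is the bridge: it tells us that acting on any root of $\Phi'$ is the same as applying $\psi_{i,j}$ upstairs and then projecting to $\CR'$.

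Condition (1) of Proposition~\ref{proposition:key} is immediate from the definition: $\psi'_{i,i} = p_i \gamma_i$ for $i \in S = \Lambda'_0$, with the $p_i$ nonzero rationals. Condition (4) follows by combining the containment $\Lambda'_m \times \Lambda'_{m+1} \subseteq \Lambda_m \times \Lambda_{m+1}$ with Lemma~\ref{lemma:6-1}: for $(i,j) \in \Lambda'_m \times \Lambda'_{m+1}$ with $i \neq j$, condition (4) for the original uniform bases of $\Phi$ gives $\psi_{i,i+m}(\alpha_{j,j+m+1}) \in \CR \alpha_{j,j+m+1}$, which descends to $\psi'_{i,i+m}(\overline{\alpha_{j,j+m+1}}) \in \CR' \overline{\alpha_{j,j+m+1}}$.

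Condition (3) requires that for each $m$, $\{\psi'_{i, h_{I'_m}(i)} \mid i \in S\} \subset D(\A_{I'_m})$, where $I'_m \subset \Phi'^+$ is the lower ideal of roots of height at most $m$. The plan is to construct a lower ideal $\tilde I_m \subset \Phi^+$ with the two properties that $h_{\tilde I_m}(i) = h_{I'_m}(i)$ for every $i \in S$, and that $\alpha \in \tilde I_m$ whenever $\alpha \in \Phi^+$ satisfies $\overline\alpha \in I'_m$; a natural candidate is the lower ideal generated by $\{\alpha_{i, h_{I'_m}(i)} : i \in S\}$ together with all preimages in $\Phi^+$ of elements of $I'_m$. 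Given $\alpha \in \Phi^+$ with $\overline\alpha \in I'_m$, the uniform-basis property of $\{\psi_{i,j}\}$ applied to $\tilde I_m$ yields $\psi_{i, h_{I'_m}(i)}(\alpha) \in \CR\alpha$, and Lemma~\ref{lemma:6-1} projects this to $\psi'_{i, h_{I'_m}(i)}(\overline\alpha) \in \CR' \overline\alpha$.

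The main obstacle will be condition (2): exhibiting invertible $P'_m \in \GL(\Lambda'_m; \Q)$ with $[\psi'_{i,i+m}]_{i \in \Lambda'_m} = P'_m [\overline{\alpha_{i,i+m}} \psi'_{i,i+m-1}]_{i \in \Lambda'_m}$. The difficulty is that the defining recursion for $\psi'_{i,i+m}$ sums over $j \in \Lambda_m$ rather than $\Lambda'_m$, and the extraneous terms with $j \in \Lambda_m \setminus \Lambda'_m$ must be absorbed into the $\Lambda'_m$-indexed contributions. The plan is to argue inductively on $m$ that each $\overline{\alpha_{j,j+m}} \psi'_{j,j+m-1}$ with $j \in \Lambda_m \setminus \Lambda'_m$ lies in the $\Q$-span of $\{\overline{\alpha_{k,k+m}} \psi'_{k,k+m-1} : k \in \Lambda'_m\}$ as a derivation of $\CR'$: either $\overline{\alpha_{j,j+m}}$ already lies in the relevant submodule, or $\psi'_{j,j+m-1}$ itself reduces via the inductive hypothesis. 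Invertibility of the resulting $P'_m$ will then be forced by Proposition~\ref{proposition:ABCHTrank} applied to the chain $I'_0 \subset I'_1 \subset \cdots \subset I'_{\height(\Phi'^+)}$ in $\Phi'^+$, since $P'_m$ must implement the inverse of the (full-rank) MAT coefficient matrix for $\Phi'$. Carrying out this inductive rewriting and tracking the resulting matrix of coefficients is where the main technical work lies.
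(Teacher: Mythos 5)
Your route through Proposition~\ref{proposition:key} has a genuine gap at condition $(2)$, and it is not a gap that can be patched by ``more technical work'': the proposition you are asked to prove does not assert that the $\psi'_{i,j}$ satisfy a recursion of the form $[\psi'_{i,i+m}]_{i\in\Lambda'_m}=P'_m[\overline{\alpha_{i,i+m}}\psi'_{i,i+m-1}]_{i\in\Lambda'_m}$, and there is no reason such a recursion should hold. The defining recursion for $\psi'_{i,i+m}$ runs over $j\in\Lambda_m$ (the index set for $\Phi$, not $\Phi'$), and for $j\in\Lambda_m\setminus\Lambda'_m$ the term $\overline{\alpha_{j,j+m}}\,\psi'_{j,j+m-1}$ is in general a nonzero derivation multiplied by a linear form that need not even be a root of $\Phi'$ (for instance, in the passage $E_8\to E_7$ the derivations $\psi'_{2,j}$ eventually become nonzero because the rows of $P^{E_8}_m$ indexed by $2$ are not unit vectors for large $m$). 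Your claim that each such term lies in the $\Q$-span of $\{\overline{\alpha_{k,k+m}}\psi'_{k,k+m-1}:k\in\Lambda'_m\}$ is unproven and implausible; yet without condition $(2)$ the entire linear-independence mechanism of Proposition~\ref{proposition:key} (Claim~2 in its proof) is unavailable. A secondary problem is your construction of $\tilde I_m$: throwing in \emph{all} preimages in $\Phi^+$ of elements of $I'_m$ adds roots not supported on $S$, and the lower ideal they generate may meet the chains $\Phi^+_i$ ($i\in S$) beyond position $h_{I'_m}(i)$, so the required equality $h_{\tilde I_m}(i)=h_{I'_m}(i)$ is not guaranteed.

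The paper's proof avoids Proposition~\ref{proposition:key} altogether and verifies the definition of uniform bases directly via Saito's criterion, for an \emph{arbitrary} lower ideal $I'\subset\Phi'^+$. One sets $I=\{\alpha_{i,j}\mid i\in S,\ \overline{\alpha_{i,j}}\in I'\}$, a lower ideal of $\Phi^+$ with $h_I(i)=h_{I'}(i)$ for $i\in S$ and $h_I(i)=i$ otherwise. Membership $\psi'_{i,h_{I'}(i)}\in D(\A_{I'})$ then follows from Lemma~\ref{lemma:6-1} and the uniform-basis property of the $\psi_{i,j}$ applied to $I$ (each element of $I'$ has the canonical preimage $\alpha_{k,l}\in I$, so no auxiliary ideal of preimages is needed). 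For the basis property, one reduces the identity $\det(\psi_{i,h_I(i)}(\alpha_j))_{i,j\in[n]}\ \dot=\ \prod_{\alpha\in I}\alpha$ modulo $(\check\beta_i\mid i\notin S)$; since $\psi_{i,h_I(i)}(\alpha_j)\ \dot=\ \delta_{ij}$ for $i\notin S$, the determinant collapses to its $S\times S$ block, which Lemma~\ref{lemma:6-1} identifies with $\det(\psi'_{i,h_{I'}(i)}(\overline{\alpha_j}))_{i,j\in S}$, giving exactly Saito's condition for $\A_{I'}$. If you want to salvage your write-up, replace the appeal to Proposition~\ref{proposition:key} by this direct determinant argument.
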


\begin{proof}
Let $I'$ be a lower ideal in $\Phi'^+$ and $h_{I'}: S \to \Z_{\geq 0}$ the associated Hessenberg function. 
We first show that $\psi'_{i,h_{I'}(i)} \in D(\A_{I'})$ for any $i \in S$.
Namely, we prove that $\psi'_{i,h_{I'}(i)}(\overline{\alpha}) \in \CR' \overline{\alpha}$ for any $\overline{\alpha} \in I'$.
Since we have $\psi'_{i,h_{I'}(i)}(\overline{\alpha})=\overline{\psi_{i,h_{I'}(i)}(\alpha)}$ by Lemma~\ref{lemma:6-1}, it is enough to prove that $\psi_{i,h_{I'}(i)}(\alpha) \in \CR \alpha$.
Let $I$ be a lower ideal in $\Phi^+$ defined as 
\begin{equation} \label{eq:6-1}
I=\{\alpha_{i,j} \mid i \in S \ {\rm and} \ \overline{\alpha_{i,j}} \in I' \}.
\end{equation}
Note that the Hessenberg function $h_I$ associated with $I$ is given by 
\begin{equation*}
h_I(i)=\begin{cases}
h_{I'}(i) & {\rm if} \ i \in S, \\
i         & {\rm if} \ i \notin S.
\end{cases}
\end{equation*}
Since $\{\psi_{i,j} \in \Der \CR \mid i \in [n] \ {\rm and} \ i \leq j \leq \e_i \}$ forms uniform bases for the ideal arrangements in $(\mathfrak{t},\Phi)$, we have $\psi_{i,h_{I'}(i)}(\alpha)=\psi_{i,h_{I}(i)}(\alpha) \in \CR \alpha$.
Hence, we obtain that $\psi'_{i,h_{I'}(i)} \in D(\A_{I'})$ for any $i \in S$.

From Theorem~\ref{theorem:Saito's criterion} it suffices to show that $\det(\psi'_{i,h_{I'}(i)}(\overline{\alpha_j}))_{i,j \in S}$ is equal to $\prod_{\overline{\alpha} \in I'} \overline{\alpha}$ up to a non-zero scalar multiplication. 
Considering the lower ideal $I$ in \eqref{eq:6-1}, from Theorem~\ref{theorem:Saito's criterion} we have 
$$
\det(\psi_{i,h_{I}(i)}(\alpha_j))_{i,j \in [n]} \ \dot{=} \ \prod_{\alpha \in I} \alpha
$$ 
because $\{\psi_{i,j} \in \Der \CR \mid i \in [n] \ {\rm and} \ i \leq j \leq \e_i \}$ forms uniform bases for the ideal arrangements in $(\mathfrak{t},\Phi)$. 
This implies that 
\begin{equation} \label{eq:6-2}
\det(\overline{\psi_{i,h_{I}(i)}(\alpha_j)})_{i,j \in [n]} \ \dot{=} \ \big(\prod_{\alpha \in I} \overline{\alpha} \big)=\big(\prod_{\overline{\alpha} \in I'} \overline{\alpha} \big) \ \ \ \ \ \ {\rm in} \ \CR',
\end{equation}
where the second equality follows from the definition \eqref{eq:6-1}. 
Noting that $\psi_{i,h_I(i)}(\alpha_j)=\psi_{i,i}(\alpha_j) \ \dot{=} \ \delta_{ij}$ for $i \notin S$, we have 
$$
\det(\psi_{i,h_{I}(i)}(\alpha_j))_{i,j \in [n]} \ \dot{=} \ \det(\psi_{i,h_{I}(i)}(\alpha_j))_{i,j \in S} \ \dot{=} \ \det(\psi_{i,h_{I'}(i)}(\alpha_j))_{i,j \in S}.
$$
From this together with \eqref{eq:6-2} we obtain
$$
\det(\overline{\psi_{i,h_{I'}(i)}(\alpha_j)})_{i,j \in S} \ \dot{=} \ \big(\prod_{\overline{\alpha} \in I'} \overline{\alpha} \big).
$$ 
But the left hand side coincides with $\det(\psi'_{i,h_{I'}(i)}(\overline{\alpha_j}))_{i,j \in S}$ by Lemma~\ref{lemma:6-1}, so this completes the proof.
\end{proof}

\section{Uniform bases for type $E$} 
\label{appendix:typeE}

Let $\mathfrak{t}_{E_8}$ be the Euclidean space $V=\R^8$ and we have 
\[
\CR_{E_8} = \Sym(\mathfrak{t}_{E_8}^*)=\R[x_1,\ldots,x_8].
\]
We set the exponents $\e^{E_8}_1,\ldots, \e^{E_8}_8$ as 
\[
\e^{E_8}_1=19, \ \e^{E_8}_2=29, \ \e^{E_8}_3=23, \ \e^{E_8}_4=13, \ \e^{E_8}_5=11, \ \e^{E_8}_6=7, \ \e^{E_8}_7=1, \ \e^{E_8}_8=17.
\]

We take the simple roots $\alpha_1=\frac{1}{2}(x_1-x_2-x_3-x_4-x_5-x_6-x_7-x_8), \alpha_2=x_2-x_3, \alpha_3=x_3-x_4, \alpha_4=x_4-x_5, \alpha_5=x_5-x_6, \alpha_6=x_6-x_7, \alpha_7=x_7-x_8, \alpha_8=x_7+x_8$ so that a labeling of the Dynkin diagram is as follows:

\begin{figure}[h]
\begin{center}
\begin{picture}(180,30)
\put(0,10){\circle{5}} 
\put(30,10){\circle{5}}
\put(60,10){\circle{5}}
\put(90,10){\circle{5}}
\put(120,10){\circle{5}}
\put(150,10){\circle{5}}
\put(180,10){\circle{5}}
\put(60,40){\circle{5}}

\put(2.3,10){\line(1,0){25}}
\put(32.3,10){\line(1,0){25}}
\put(62.3,10){\line(1,0){25}}
\put(92.3,10){\line(1,0){25}}
\put(122.3,10){\line(1,0){25}}
\put(152.3,10){\line(1,0){25}}
\put(60,12.3){\line(0,1){25}}

\put(-2,0){{\tiny 1}} 
\put(28,0){{\tiny 8}}
\put(58,0){{\tiny 6}}
\put(88,0){{\tiny 5}}
\put(118,0){{\tiny 4}} 
\put(148,0){{\tiny 3}}
\put(178,0){{\tiny 2}}
\put(65,37.5){{\tiny 7}}
\end{picture}
\end{center}
\vspace{-10pt}
\caption{Labeling of Dynkin diagram for type $E_8$.}
\label{picture:DynkindiagramTypeE8}
\end{figure} 
Then, the positive roots are the following forms
\begin{align}
& x_i \pm x_j \ \ \ (1 \leq i,j \leq 8), \label{eq:Epositive1} \\
& \frac{1}{2}(x_1 \pm x_2 \pm x_3 \pm x_4 \pm x_5 \pm x_6 \pm x_7 \pm x_8). \label{eq:Epositive2}
\end{align}

We arrange all positive roots of type $E_8$ as shown in Figure~\ref{picture:PositiveRootTypeE8}.
For simplicity, we denote the positive root $x_i \pm x_j$ of the form \eqref{eq:Epositive1} by $i \pm j$. Also, we denote by $\frac{1}{2}(i_1 i_2 \dots i_k)$ the positive root of the form \eqref{eq:Epositive2} such that coefficients of $x_i$ are positive for $i=i_1, i_2, \dots ,i_k$.
For example, the notation $\frac{1}{2}(12478)$ means the positive root $\frac{1}{2}(x_1+x_2-x_3+x_4-x_5-x_6+x_7+x_8)$.
Using the simple notations above, we arrange all positive roots of $E_8$ as shown in Figure~\ref{picture:PositiveRootTypeE8}.

\begin{figure}[h]
\begin{center}
\begin{picture}(500,310)
\put(0,300){{\tiny $2-3$}} 

\put(25,300){{\tiny $2-4$}}
\put(25,290){{\tiny $3-4$}}

\put(50,300){{\tiny $2-5$}}
\put(50,290){{\tiny $3-5$}}
\put(50,280){{\tiny $4-5$}}

\put(75,300){{\tiny $2-6$}}
\put(75,290){{\tiny $3-6$}}
\put(75,280){{\tiny $4-6$}}
\put(75,270){{\tiny $5-6$}}

\put(100,300){{\tiny $2-7$}}
\put(100,290){{\tiny $3-7$}}
\put(100,280){{\tiny $4-7$}}
\put(100,270){{\tiny $5-7$}}
\put(100,260){{\tiny $6-7$}}

\put(125,300){{\tiny $2-8$}}
\put(125,290){{\tiny $3-8$}}
\put(125,280){{\tiny $4-8$}}
\put(125,270){{\tiny $5-8$}}
\put(125,260){{\tiny $6-8$}}
\put(125,250){{\tiny $7-8$}}

\put(117,310){{\tiny $(a)$}}
\put(97,257){\line(0,1){50}}
\put(147,257){\line(0,1){50}}
\put(97,257){\line(1,0){50}}
\put(97,307){\line(1,0){50}}

\put(154,245){\line(0,1){5}}
\put(154,255){\line(0,1){5}}
\put(154,265){\line(0,1){5}}
\put(154,275){\line(0,1){5}}
\put(154,285){\line(0,1){5}}
\put(154,295){\line(0,1){5}}
\put(154,305){\line(0,1){5}}

\put(165,300){{\tiny $2+8$}}
\put(165,290){{\tiny $3+8$}}
\put(165,280){{\tiny $4+8$}}
\put(165,270){{\tiny $5+8$}}
\put(165,260){{\tiny $6+8$}}
\put(165,250){{\tiny $7+8$}}

\put(190,300){{\tiny $2+7$}}
\put(190,290){{\tiny $3+7$}}
\put(190,280){{\tiny $4+7$}}
\put(190,270){{\tiny $5+7$}}
\put(190,260){{\tiny $6+7$}}

\put(215,300){{\tiny $2+6$}}
\put(215,290){{\tiny $3+6$}}
\put(215,280){{\tiny $4+6$}}
\put(215,270){{\tiny $5+6$}}

\put(240,300){{\tiny $2+5$}}
\put(240,290){{\tiny $3+5$}}
\put(240,280){{\tiny $4+5$}}

\put(265,300){{\tiny $2+4$}}
\put(265,290){{\tiny $3+4$}}

\put(290,300){{\tiny $2+3$}}

\put(182,310){{\tiny $(\bar{a})$}}
\put(162,257){\line(0,1){50}}
\put(212,257){\line(0,1){50}}
\put(162,257){\line(1,0){50}}
\put(162,307){\line(1,0){50}}

\put(252,312){{\tiny $(b)$}}
\put(160,247){\line(0,1){62}}
\put(160,309){\line(1,0){152}}
\put(160,247){\line(1,0){27}}
\put(187,247){\line(0,1){8}}
\put(187,255){\line(1,0){27}}
\put(214,255){\line(0,1){11}}
\put(214,266){\line(1,0){23}}
\put(237,266){\line(0,1){11}}
\put(237,277){\line(1,0){25}}
\put(262,277){\line(0,1){11}}
\put(262,288){\line(1,0){25}}
\put(287,288){\line(0,1){10}}
\put(287,298){\line(1,0){25}}
\put(312,298){\line(0,1){11}}

\put(319,245){\line(0,1){5}}
\put(319,255){\line(0,1){5}}
\put(319,265){\line(0,1){5}}
\put(319,275){\line(0,1){5}}
\put(319,285){\line(0,1){5}}
\put(319,295){\line(0,1){5}}
\put(319,305){\line(0,1){5}}


\put(0,210){{\tiny $\frac{1}{2}(128)$}}
\put(0,200){{\tiny $\frac{1}{2}(138)$}}
\put(0,190){{\tiny $\frac{1}{2}(148)$}}
\put(0,180){{\tiny $\frac{1}{2}(158)$}}
\put(0,170){{\tiny $\frac{1}{2}(168)$}}
\put(0,160){{\tiny $\frac{1}{2}(178)$}}
\put(0,150){{\tiny $\frac{1}{2}(1)$}}

\put(30,210){{\tiny $\frac{1}{2}(127)$}}
\put(30,200){{\tiny $\frac{1}{2}(137)$}}
\put(30,190){{\tiny $\frac{1}{2}(147)$}}
\put(30,180){{\tiny $\frac{1}{2}(157)$}}
\put(30,170){{\tiny $\frac{1}{2}(167)$}}

\put(60,210){{\tiny $\frac{1}{2}(126)$}}
\put(60,200){{\tiny $\frac{1}{2}(136)$}}
\put(60,190){{\tiny $\frac{1}{2}(146)$}}
\put(60,180){{\tiny $\frac{1}{2}(156)$}}

\put(90,210){{\tiny $\frac{1}{2}(125)$}}
\put(90,200){{\tiny $\frac{1}{2}(135)$}}
\put(90,190){{\tiny $\frac{1}{2}(145)$}}

\put(120,210){{\tiny $\frac{1}{2}(124)$}}
\put(120,200){{\tiny $\frac{1}{2}(134)$}}

\put(150,210){{\tiny $\frac{1}{2}(123)$}}

\put(22,224){{\tiny $(\bar{b})$}}
\put(-5,157){\line(0,1){64}}
\put(-5,221){\line(1,0){184}}
\put(-5,157){\line(1,0){32}}
\put(27,157){\line(0,1){8}}
\put(27,165){\line(1,0){32}}
\put(59,165){\line(0,1){10}}
\put(59,175){\line(1,0){28}}
\put(87,175){\line(0,1){10}}
\put(87,185){\line(1,0){30}}
\put(117,185){\line(0,1){10}}
\put(117,195){\line(1,0){30}}
\put(147,195){\line(0,1){10}}
\put(147,205){\line(1,0){32}}
\put(179,205){\line(0,1){16}}

\put(102,222){{\tiny $(c)$}}
\put(60,177){\line(0,1){42}}
\put(60,219){\line(1,0){117}}
\put(60,177){\line(1,0){25}}
\put(85,177){\line(0,1){10}}
\put(85,187){\line(1,0){30}}
\put(115,187){\line(0,1){10}}
\put(115,197){\line(1,0){30}}
\put(145,197){\line(0,1){10}}
\put(145,207){\line(1,0){32}}
\put(177,207){\line(0,1){12}}

\put(185,145){\line(0,1){5}}
\put(185,155){\line(0,1){5}}
\put(185,165){\line(0,1){5}}
\put(185,175){\line(0,1){5}}
\put(185,185){\line(0,1){5}}
\put(185,195){\line(0,1){5}}
\put(185,205){\line(0,1){5}}
\put(185,215){\line(0,1){5}}

\put(195,210){{\tiny $\frac{1}{2}(12678)$}}
\put(195,200){{\tiny $\frac{1}{2}(13678)$}}
\put(195,190){{\tiny $\frac{1}{2}(14678)$}}
\put(195,180){{\tiny $\frac{1}{2}(15678)$}}

\put(235,210){{\tiny $\frac{1}{2}(12578)$}}
\put(235,200){{\tiny $\frac{1}{2}(13578)$}}
\put(235,190){{\tiny $\frac{1}{2}(14578)$}}

\put(275,210){{\tiny $\frac{1}{2}(12478)$}}
\put(275,200){{\tiny $\frac{1}{2}(13478)$}}

\put(315,210){{\tiny $\frac{1}{2}(12378)$}}

\put(205,224){{\tiny $(\bar{c})$}}
\put(190,177){\line(0,1){44}}
\put(190,221){\line(1,0){162}}
\put(190,177){\line(1,0){42}}
\put(232,177){\line(0,1){8}}
\put(232,185){\line(1,0){39}}
\put(271,185){\line(0,1){10}}
\put(271,195){\line(1,0){40}}
\put(311,195){\line(0,1){10}}
\put(311,205){\line(1,0){41}}
\put(352,205){\line(0,1){16}}

\put(290,222){{\tiny $(d)$}}
\put(235,187){\line(0,1){32}}
\put(235,219){\line(1,0){115}}
\put(235,187){\line(1,0){34}}
\put(269,187){\line(0,1){10}}
\put(269,197){\line(1,0){40}}
\put(309,197){\line(0,1){10}}
\put(309,207){\line(1,0){41}}
\put(350,207){\line(0,1){12}}

\put(360,145){\line(0,1){5}}
\put(360,155){\line(0,1){5}}
\put(360,165){\line(0,1){5}}
\put(360,175){\line(0,1){5}}
\put(360,185){\line(0,1){5}}
\put(360,195){\line(0,1){5}}
\put(360,205){\line(0,1){5}}
\put(360,215){\line(0,1){5}}



\put(-30,120){{\tiny $\frac{1}{2}(12568)$}}
\put(-30,110){{\tiny $\frac{1}{2}(13568)$}}
\put(-30,100){{\tiny $\frac{1}{2}(14568)$}}

\put(10,120){{\tiny $\frac{1}{2}(12468)$}}
\put(10,110){{\tiny $\frac{1}{2}(13468)$}}

\put(50,120){{\tiny $\frac{1}{2}(12368)$}}

\put(-20,134){{\tiny $(\bar{d})$}}
\put(-30,97){\line(0,1){34}}
\put(-30,131){\line(1,0){116}}
\put(-30,97){\line(1,0){37}}
\put(7,97){\line(0,1){8}}
\put(7,105){\line(1,0){39}}
\put(46,105){\line(0,1){10}}
\put(46,115){\line(1,0){40}}
\put(86,115){\line(0,1){16}}

\put(45,134){{\tiny $(e)$}}
\put(10,107){\line(0,1){22}}
\put(10,129){\line(1,0){74}}
\put(10,107){\line(1,0){34}}
\put(44,107){\line(0,1){10}}
\put(44,117){\line(1,0){40}}
\put(84,117){\line(0,1){12}}

\put(93,95){\line(0,1){5}}
\put(93,105){\line(0,1){5}}
\put(93,115){\line(0,1){5}}
\put(93,125){\line(0,1){5}}

\put(100,120){{\tiny $\frac{1}{2}(12458)$}}
\put(100,110){{\tiny $\frac{1}{2}(13458)$}}

\put(140,120){{\tiny $\frac{1}{2}(12358)$}}

\put(180,120){{\tiny $\frac{1}{2}(12348)$}}

\put(135,134){{\tiny $(\bar{e})$}}
\put(100,107){\line(0,1){22}}
\put(100,129){\line(1,0){74}}
\put(100,107){\line(1,0){34}}
\put(134,107){\line(0,1){10}}
\put(134,117){\line(1,0){40}}
\put(174,117){\line(0,1){12}}

\put(88,136){{\tiny $(f)$}}
\put(-32,95){\line(0,1){38}}
\put(-32,133){\line(1,0){248}}
\put(-32,95){\line(1,0){41}}
\put(9,95){\line(0,1){8}}
\put(9,103){\line(1,0){39}}
\put(48,103){\line(0,1){10}}
\put(48,113){\line(1,0){50}}
\put(98,113){\line(0,-1){8}}
\put(98,105){\line(1,0){39}}
\put(137,105){\line(0,1){10}}
\put(137,115){\line(1,0){79}}
\put(216,115){\line(0,1){18}}

\put(223,95){\line(0,1){5}}
\put(223,105){\line(0,1){5}}
\put(223,115){\line(0,1){5}}
\put(223,125){\line(0,1){5}}

\put(230,120){{\tiny $\frac{1}{2}(12567)$}}
\put(230,110){{\tiny $\frac{1}{2}(13567)$}}
\put(230,100){{\tiny $\frac{1}{2}(14567)$}}

\put(270,120){{\tiny $\frac{1}{2}(12467)$}}
\put(270,110){{\tiny $\frac{1}{2}(13467)$}}

\put(310,120){{\tiny $\frac{1}{2}(12367)$}}

\put(450,134){{\tiny $(h)$}}
\put(358,105){\line(0,1){26}}
\put(358,131){\line(1,0){116}}
\put(358,105){\line(1,0){38}}
\put(396,105){\line(0,1){10}}
\put(396,115){\line(1,0){78}}
\put(474,115){\line(0,1){16}}

\put(305,134){{\tiny $(g)$}}
\put(270,107){\line(0,1){22}}
\put(270,129){\line(1,0){74}}
\put(270,107){\line(1,0){34}}
\put(304,107){\line(0,1){10}}
\put(304,117){\line(1,0){40}}
\put(344,117){\line(0,1){12}}

\put(353,95){\line(0,1){5}}
\put(353,105){\line(0,1){5}}
\put(353,115){\line(0,1){5}}
\put(353,125){\line(0,1){5}}

\put(360,120){{\tiny $\frac{1}{2}(12457)$}}
\put(360,110){{\tiny $\frac{1}{2}(13457)$}}

\put(400,120){{\tiny $\frac{1}{2}(12357)$}}

\put(440,120){{\tiny $\frac{1}{2}(12347)$}}

\put(395,134){{\tiny $(\bar{g})$}}
\put(360,107){\line(0,1){22}}
\put(360,129){\line(1,0){74}}
\put(360,107){\line(1,0){34}}
\put(394,107){\line(0,1){10}}
\put(394,117){\line(1,0){40}}
\put(434,117){\line(0,1){12}}

\put(348,136){{\tiny $(\bar{f})$}}
\put(228,95){\line(0,1){38}}
\put(228,133){\line(1,0){248}}
\put(228,95){\line(1,0){41}}
\put(269,95){\line(0,1){8}}
\put(269,103){\line(1,0){39}}
\put(308,103){\line(0,1){10}}
\put(308,113){\line(1,0){48}}
\put(356,113){\line(0,-1){10}}
\put(356,103){\line(1,0){42}}
\put(398,103){\line(0,1){10}}
\put(398,113){\line(1,0){78}}
\put(476,113){\line(0,1){20}}

\put(483,95){\line(0,1){5}}
\put(483,105){\line(0,1){5}}
\put(483,115){\line(0,1){5}}
\put(483,125){\line(0,1){5}}



\put(0,60){{\tiny $\frac{1}{2}(12456)$}}
\put(0,50){{\tiny $\frac{1}{2}(13456)$}}

\put(40,60){{\tiny $\frac{1}{2}(12356)$}}

\put(80,60){{\tiny $\frac{1}{2}(12346)$}}

\put(120,60){{\tiny $\frac{1}{2}(12345)$}}

\put(130,74){{\tiny $(i)$}}
\put(-2,45){\line(0,1){26}}
\put(-2,71){\line(1,0){158}}
\put(-2,45){\line(1,0){38}}
\put(36,45){\line(0,1){10}}
\put(36,55){\line(1,0){120}}
\put(156,55){\line(0,1){17}}

\put(15,72){{\tiny $(\bar{h})$}}
\put(0,47){\line(0,1){22}}
\put(0,69){\line(1,0){114}}
\put(0,47){\line(1,0){34}}
\put(34,47){\line(0,1){10}}
\put(34,57){\line(1,0){80}}
\put(114,57){\line(0,1){12}}

\put(163,45){\line(0,1){5}}
\put(163,55){\line(0,1){5}}
\put(163,65){\line(0,1){5}}

\put(170,60){{\tiny $\frac{1}{2}(1245678)$}}
\put(170,50){{\tiny $\frac{1}{2}(1345678)$}}

\put(220,60){{\tiny $\frac{1}{2}(1235678)$}}

\put(270,60){{\tiny $\frac{1}{2}(1234678)$}}

\put(320,60){{\tiny $\frac{1}{2}(1234578)$}}

\put(370,60){{\tiny $\frac{1}{2}(1234568)$}}

\put(420,60){{\tiny $\frac{1}{2}(1234567)$}}

\put(255,72){{\tiny $(\bar{i})$}}
\put(170,47){\line(0,1){22}}
\put(170,69){\line(1,0){193}}
\put(170,47){\line(1,0){42}}
\put(212,47){\line(0,1){10}}
\put(212,57){\line(1,0){151}}
\put(363,57){\line(0,1){12}}

\put(430,74){{\tiny $(j)$}}
\put(168,45){\line(0,1){26}}
\put(168,71){\line(1,0){295}}
\put(168,45){\line(1,0){46}}
\put(214,45){\line(0,1){10}}
\put(214,55){\line(1,0){249}}
\put(463,55){\line(0,1){16}}

\put(468,45){\line(0,1){5}}
\put(468,55){\line(0,1){5}}
\put(468,65){\line(0,1){5}}


\put(0,10){{\tiny $1-3$}}
\put(0,0){{\tiny $1-2$}}

\put(25,10){{\tiny $1-4$}}

\put(50,10){{\tiny $1-5$}}

\put(75,10){{\tiny $1-6$}}

\put(100,10){{\tiny $1-7$}}

\put(125,10){{\tiny $1-8$}}

\put(15,22){{\tiny $(\bar{j})$}}
\put(0,-3){\line(0,1){22}}
\put(0,19){\line(1,0){148}}
\put(0,-3){\line(1,0){22}}
\put(22,-3){\line(0,1){10}}
\put(22,7){\line(1,0){126}}
\put(148,7){\line(0,1){12}}

\put(120,20){{\tiny $(k)$}}
\put(100,9){\line(0,1){8}}
\put(100,17){\line(1,0){46}}
\put(100,9){\line(1,0){46}}
\put(146,9){\line(0,1){8}}

\put(155,-5){\line(0,1){5}}
\put(155,5){\line(0,1){5}}
\put(155,15){\line(0,1){5}}

\put(160,10){{\tiny $1+8$}}

\put(185,10){{\tiny $1+7$}}

\put(210,10){{\tiny $1+6$}}

\put(235,10){{\tiny $1+5$}}

\put(260,10){{\tiny $1+4$}}

\put(285,10){{\tiny $1+3$}}

\put(310,10){{\tiny $1+2$}}

\put(180,20){{\tiny $(\bar{k})$}}
\put(160,9){\line(0,1){8}}
\put(160,17){\line(1,0){46}}
\put(160,9){\line(1,0){46}}
\put(206,9){\line(0,1){8}}
\end{picture}
\end{center}
\vspace{-10pt}
\caption{The arrangement of all positive roots for type $E_8$.}
\label{picture:PositiveRootTypeE8}
\end{figure} 

In Figure~\ref{picture:PositiveRootTypeE8} the partial order $\preceq$ on $\Phi^+_{E_8}$ is defined as follows. 
In each area separated by dotted lines we have the following relation 
\begin{enumerate}
\item if a root $\alpha$ is left-adjacent to a root $\beta$, then $\alpha \lessdot \beta$, 
\item if a root $\alpha$ is lower-adjacent to a root $\beta$, then $\alpha \lessdot \beta$.
\end{enumerate}
For the blocks $(\bullet)$ and $(\bar{\bullet})$ with same shape where the symbol $\bullet$ means $a,b,\ldots, k$, we have the following relation 
\begin{enumerate}
\item[(3)] if a root $\alpha$ is in the block $(\bullet)$ and $\beta$ is in the same place as $\alpha$ in the block $(\bar{\bullet})$, then $\alpha \lessdot \beta$. 
\end{enumerate}
For two positive roots $\alpha,\beta$, we define $\alpha \preceq \beta$ if there exist positive roots $\gamma_0,\ldots,\gamma_N$ such that $\alpha=\gamma_0 \lessdot \gamma_1 \lessdot \cdots \lessdot \gamma_N=\beta$.

\bigskip

Now we fix a decomposition $\Phi^+_{E_8}=\coprod_{i=1}^8 \Phi^+_i$ satisfying \eqref{eq:decomposition1} and \eqref{eq:decomposition2}.
Using the simple notations above again, we define the positive roots $\alpha_{i,j}$ by Table~\ref{table:positiverootsTypeE}.
{\tiny
\begin{table}[h]
\begin{tabular}{|c||c|c|c|c|c|c|c|c|c|c|c|c|c|c|c|} \hline
positive roots $\backslash$ $j$ & $2$ & $3$ & $4$ & $5$ & $6$ & $7$ & $8$ & $9$ & $10$ & $11$ & $12$ \cr
\hline
\hline $\alpha_{1,j}$ & $\frac{1}{2}(1)$ & $\frac{1}{2}(178)$ & $\frac{1}{2}(168)$ & $\frac{1}{2}(158)$ & $\frac{1}{2}(148)$ & $\frac{1}{2}(138)$ & $\frac{1}{2}(137)$ & $\frac{1}{2}(136)$ & $\frac{1}{2}(135)$ & $\frac{1}{2}(125)$ & $\frac{1}{2}(124)$    \cr
\hline $\alpha_{2,j}$ &       & $2-3$ &  $2-4$  & $2-5$ & $2-6$ & $2-7$ & $2-8$ & $2+7$ & $2+6$ & $2+5$ & $2+4$      \cr
\hline $\alpha_{3,j}$ &       &       & $3-4$ &   $3-5$   & $3-6$ &  $3-7$ &  $3-8$ & $3+7$ & $3+6$ & $3+5$ & $3+4$      \cr
\hline $\alpha_{4,j}$ &       &       &       &   $4-5$    & $4-6$ &  $4-7$ &  $4-8$ & $4+7$ & $4+6$ & $4+5$ & $\frac{1}{2}(145)$         \cr
\hline $\alpha_{5,j}$ &       &       &       &               & $5-6$ & $5-7$ &  $5-8$ & $5+7$ & $5+6$ & $\frac{1}{2}(156)$ & $\frac{1}{2}(15678)$          \cr
\hline $\alpha_{6,j}$ &       &       &       &               &          & $6-7$ &  $6-8$ & $6+7$ & $\frac{1}{2}(167)$ & $\frac{1}{2}(157)$ & $\frac{1}{2}(147)$         \cr
\hline $\alpha_{7,j}$ &       &       &       &               &          & &  $7-8$ & & & &        \cr
\hline $\alpha_{8,j}$ &       &       &       &               &          & &  & $7+8$ & $6+8$ & $5+8$ & $4+8$         \cr
\hline 
\end{tabular} \\ \vspace{1.25ex}
\begin{tabular}{|c||c|c|c|c|c|c|c|c|c|c|c|c|} \hline
 & $13$ & $14$ & $15$ & $16$ & $17$ & $18$ & $19$ & $20$ & $21$ \cr
\hline
\hline $\alpha_{1,j}$ &  $\frac{1}{2}(12478)$     &  $\frac{1}{2}(12468)$     &  $\frac{1}{2}(12458)$    &  $\frac{1}{2}(12457)$     &  $\frac{1}{2}(12456)$   &    $\frac{1}{2}(1245678)$   &  $\frac{1}{2}(1235678)$ & $\frac{1}{2}(1234678)$ &  \cr
\hline $\alpha_{2,j}$ &  $2+3$     &  $\frac{1}{2}(123)$     &  $\frac{1}{2}(12378)$    &  $\frac{1}{2}(12368)$     &  $\frac{1}{2}(12358)$   &   $\frac{1}{2}(12348)$  & $\frac{1}{2}(12347)$ & $\frac{1}{2}(12346)$  &  $\frac{1}{2}(12345)$    \cr
\hline $\alpha_{3,j}$ &  $\frac{1}{2}(134)$     &  $\frac{1}{2}(13478)$     &  $\frac{1}{2}(13468)$    &    $\frac{1}{2}(13458)$   &  $\frac{1}{2}(13457)$   &   $\frac{1}{2}(13456)$  & $\frac{1}{2}(1345678)$ & $1-2$  &  $1-3$    \cr
\hline $\alpha_{4,j}$ &  $\frac{1}{2}(14578)$     &  $\frac{1}{2}(14568)$     &   $\frac{1}{2}(14567)$   &  $\frac{1}{2}(13567)$     & $\frac{1}{2}(13467)$    &    &  &    &    \cr
\hline $\alpha_{5,j}$ &  $\frac{1}{2}(14678)$     &  $\frac{1}{2}(13678)$     &  $\frac{1}{2}(13578)$    & $\frac{1}{2}(13568)$      &     &     &  &   &    \cr
\hline $\alpha_{6,j}$ &  $\frac{1}{2}(146)$     &       &      &       &     &   &  &    &     \cr
\hline $\alpha_{8,j}$ &  $3+8$     &  $2+8$     &  $\frac{1}{2}(128)$    &  $\frac{1}{2}(127)$     &  $\frac{1}{2}(126)$   &   $\frac{1}{2}(12678)$   & $\frac{1}{2}(12578)$ & $\frac{1}{2}(12568)$  &  $\frac{1}{2}(12567)$   \cr
\hline
\end{tabular} \\ \vspace{1.25ex}
\begin{tabular}{|c||c|c|c|c|c|c|c|c|c|c|c|c|c|c|c|} \hline
 & $22$ & $23$ & $24$ & $25$ & $26$ & $27$ & $28$ &  $29$ & $30$ & $31$ \cr
\hline
\hline $\alpha_{1,j}$       &       &      &       &     &      &  &   &  & & \cr
\hline $\alpha_{2,j}$      &  $\frac{1}{2}(1234578)$     &  $\frac{1}{2}(1234568)$    &  $\frac{1}{2}(1234567)$     &  $1-8$   &  $1+7$    & $1+6$ & $1+5$  & $1+4$ & $1+3$ & $1+2$ \cr
\hline $\alpha_{3,j}$      &  $1-4$     &  $1-5$    &  $1-6$     & $1-7$    & $1+8$     &  &   & & &  \cr
\hline $\alpha_{8,j}$       &   $\frac{1}{2}(12467)$    &  $\frac{1}{2}(12367)$    & $\frac{1}{2}(12357)$      &  $\frac{1}{2}(12356)$   &      &  &   & & &  \cr
\hline
\end{tabular} \vspace{1.25ex}
\caption{The positive roots $\alpha_{i,j}$ in type $E_8$.}
\label{table:positiverootsTypeE}
\end{table}
}

Note that $\height(\Phi^+_{E_8})=29$ and the set $\Lambda_m$ in \eqref{eq:Lambdam} is given by Table~\ref{table:LambdaTypeE}.
{\tiny
\begin{table}[h]
\begin{tabular}{|c||c|c|c|c|c|c|c|c|} \hline
$m$ & $0,1$ & $2 \leq m \leq 7$ & $8 \leq m \leq 11$ & $12,13$ & $14 \leq m \leq 17$ & $18,19$ & $20 \leq m \leq 23$ & $24 \leq m \leq 29$ \cr
\hline
$\Lambda_m$ & $\{1,2,3,4,5,6,7,8\}$ & $\{1,2,3,4,5,6,8\}$ & $\{1,2,3,4,5,8\}$ & $\{1,2,3,4,8\}$ & $\{1,2,3,8\}$ & $\{1,2,3\}$ & $\{2,3\}$ & $\{2\}$      \cr
\hline 
\end{tabular} 
\vspace{5pt}
\caption{$\Lambda_m$ in type $E_8$.}
\label{table:LambdaTypeE}
\end{table}
}

For the decomposition $\Phi^+_{E_8}=\coprod_{i=1}^8 \Phi^+_i$ above, a \textbf{Hessenberg function for type $E_8$} is a function $h_I:\{1,\ldots,8\} \rightarrow \{1,\ldots,31\}$ defined in \eqref{eq:Hessft} associated to a lower ideal $I \subset \Phi^+_{E_8}$.

\begin{remark}
As in the case of type $F_4$ in Remark~\ref{remark:HessenbergFunctionF4}, Hessenberg functions for type $E_8$ can also be characterized by the specific conditions. We omit the details.
\end{remark}

By similar discussion on the case of type $F_4$, we obtain uniform bases $\{\psi^{E_8}_{i,i+m} \in \Der \CR \mid 0 \leq m \leq 29, i \in \Lambda_m \}$ as follows.
For each $m$ with $0 \leq m \leq 29=\height(\Phi^+_{E_8})$, we define the matrix $P_m^{E_8}=(p_{ij}^{(m)})_{i,j \in \Lambda_m}$ by Figure~\ref{picture:InvertibleMatricesTypeE8}.
{\tiny
\begin{figure}
\begin{align*}
&P_0^{E_8}= \begin{pmatrix} \begin {array}{cccccccc} 1&0&0&0&0&0&0&0\\ \noalign{\medskip}0
&1&0&0&0&0&0&0\\ \noalign{\medskip}0&0&1&0&0&0&0&0
\\ \noalign{\medskip}0&0&0&1&0&0&0&0\\ \noalign{\medskip}0&0&0&0&1&0&0
&0\\ \noalign{\medskip}0&0&0&0&0&1&0&0\\ \noalign{\medskip}0&0&0&0&0&0
&1&0\\ \noalign{\medskip}0&0&0&0&0&0&0&1\end {array} \end{pmatrix}, \   
P_1^{E_8}= \begin{pmatrix} \begin {array}{cccccccc} 1&0&0&0&0&0&0&0\\ \noalign{\medskip}0
&1&0&0&0&0&0&0\\ \noalign{\medskip}0&1&1&0&0&0&0&0
\\ \noalign{\medskip}0&1&1&1&0&0&0&0\\ \noalign{\medskip}0&1&1&1&1&0&0
&0\\ \noalign{\medskip}1&1&1&1&1&1&0&1\\ \noalign{\medskip}1&1&1&1&1&1
&1&1\\ \noalign{\medskip}1&0&0&0&0&0&0&1\end {array} \end{pmatrix}, \\
&P_2^{E_8}=
 \begin{pmatrix} \begin {array}{ccccccc} 1&0&0&0&0&0&0\\ \noalign{\medskip}0
&1&0&0&0&0&0\\ \noalign{\medskip}0&1&1&0&0&0&0
\\ \noalign{\medskip}0&1&1&1&0&0&0\\ \noalign{\medskip}-1&1&1&1&1&-1
&-1\\ \noalign{\medskip}-1&1&1&1&1&1&-1\\ \noalign{\medskip}1&1&1&
1&1&1&1\end {array} \end{pmatrix}, \ 
P_3^{E_8}=
 \begin{pmatrix} \begin {array}{ccccccc} 1&-\frac{1}{2}&-\frac{1}{2}&-\frac{1}{2}&-\frac{1}{4}&-\frac{1}{2}&-\frac{1}{4}
\\ \noalign{\medskip}0&1&0&0&0&0&0\\ \noalign{\medskip}0&1&1&0&0&0
&0\\ \noalign{\medskip}1&1&1&1&-1&-\frac{1}{2}&\frac{1}{2}\\ \noalign{\medskip}2&2&2
&2&1&-1&1\\ \noalign{\medskip}1&1&1&1&\frac{1}{2}&1&\frac{1}{2}
\\ \noalign{\medskip}2&-4&-4&-4&-2&-1&1\end {array} \end{pmatrix}, \\
&P_4^{E_8}= \begin{pmatrix} \begin {array}{ccccccc} 1&0&0&0&0&-\frac{1}{2}&0
\\ \noalign{\medskip}0&1&0&0&0&0&0\\ \noalign{\medskip}-\frac{2}{3}&1&1&-\frac{1}{3}
&0&\frac{1}{3}&-\frac{1}{6}\\ \noalign{\medskip}-2&3&3&1&0&1&-\frac{1}{2}
\\ \noalign{\medskip}-2&3&3&1&1&3&-\frac{1}{2}\\ \noalign{\medskip}0&0&0&0&0
&1&0\\ \noalign{\medskip}4&6&6&2&0&-2&1\end {array} \end{pmatrix}, \ 
P_5^{E_8}= \begin{pmatrix} \begin {array}{ccccccc} 1&-\frac{3}{8}&-\frac{3}{16}&-\frac{1}{8}&-\frac{1}{8}&-\frac{1}{4}&-\frac{1}{32}
\\ \noalign{\medskip}\frac{2}{3}&1&-\frac{3}{4}&-\frac{1}{12}&-\frac{1}{12}&-\frac{1}{6}&\frac{1}{12}
\\ \noalign{\medskip}\frac{4}{3}&2&1&-\frac{1}{6}&-\frac{1}{6}&-\frac{1}{3}&\frac{1}{6}\\ \noalign{\medskip}
2&3&\frac{3}{2}&1&-\frac{3}{2}&-3&\frac{1}{4}\\ \noalign{\medskip}2&3&\frac{3}{2}&1&1&-3&\frac{1}{4}
\\ \noalign{\medskip}1&\frac{3}{2}&\frac{3}{4}&\frac{1}{2}&\frac{1}{2}&1&\frac{1}{8}\\ \noalign{\medskip}8&-
18&-9&-1&-1&-2&1\end {array} \end{pmatrix}, \\
&P_6^{E_8}=
 \begin{pmatrix} \begin {array}{ccccccc} 1&\frac{3}{4}&0&0&0&0&\frac{1}{16}
\\ \noalign{\medskip}0&1&0&0&0&0&0\\ \noalign{\medskip}\frac{8}{3}&4&1&0&0&0
&\frac{1}{6}\\ \noalign{\medskip}16&18&3&1&0&-2&1\\ \noalign{\medskip}-8&-
6&0&0&1&2&-\frac{1}{2}\\ \noalign{\medskip}-4&-3&0&0&0&1&-\frac{1}{4}
\\ \noalign{\medskip}0&12&0&0&0&0&1\end {array} \end{pmatrix}, \ 
P_7^{E_8}= \begin{pmatrix} \begin {array}{ccccccc} 1&0&0&0&0&0&\frac{1}{16}
\\ \noalign{\medskip}0&1&0&0&0&0&\frac{1}{12}\\ \noalign{\medskip}\frac{8}{3}&2&1&0&0
&0&\frac{1}{3}\\ \noalign{\medskip}0&6&3&1&1&2&\frac{1}{2}\\ \noalign{\medskip}0&0
&0&0&1&0&0\\ \noalign{\medskip}-8&-3&-\frac{3}{2}&-\frac{1}{2}&\frac{1}{2}&1&-\frac{3}{4}
\\ \noalign{\medskip}0&0&0&0&0&0&1\end {array} \end{pmatrix}, \\
&P_8^{E_8}= \begin{pmatrix} \begin {array}{cccccc} 1&0&0&\frac{1}{16}&\frac{1}{8}&\frac{1}{16}
\\ \noalign{\medskip}0&1&0&0&0&\frac{1}{12}\\ \noalign{\medskip}\frac{4}{3}&2&1&\frac{1}{
12}&\frac{1}{6}&\frac{1}{4}\\ \noalign{\medskip}-16&0&0&1&-2&-1
\\ \noalign{\medskip}-8&0&0&\frac{1}{2}&1&-\frac{1}{2}\\ \noalign{\medskip}0&0&0&0
&0&1\end {array} \end{pmatrix}, \ 
P_9^{E_8}= \begin{pmatrix} \begin {array}{cccccc} 1&\frac{3}{2}&\frac{3}{4}&0&\frac{1}{8}&-\frac{1}{8}
\\ \noalign{\medskip}-\frac{1}{3}&1&-\frac{1}{4}&0&-\frac{1}{24}&\frac{1}{24}\\ \noalign{\medskip}
-\frac{2}{3}&2&1&0&-\frac{1}{12}&\frac{1}{12}\\ \noalign{\medskip}-\frac{16}{3}&-8&-4&1&\frac{4}{3}&-\frac{4}{3}\\ \noalign{\medskip}-4&-6&-3&0&1&-1\\ \noalign{\medskip}4&6&3&0
&1/2&1\end {array} \end{pmatrix}, \\
&P_{10}^{E_8}= \begin{pmatrix} \begin {array}{cccccc} 1&0&0&0&0&\frac{1}{4}
\\ \noalign{\medskip}-\frac{1}{3}&1&0&0&0&-\frac{1}{12}\\ \noalign{\medskip}\frac{2}{3}&0&
1&0&0&\frac{1}{6}\\ \noalign{\medskip}-\frac{16}{3}&0&-8&1&\frac{4}{3}&-\frac{8}{3}
\\ \noalign{\medskip}-4&0&-6&0&1&-2\\ \noalign{\medskip}0&0&0&0&0&1\end {array} \end{pmatrix}, \ 
P_{11}^{E_8}= \begin{pmatrix} \begin {array}{cccccc} 1&-3&0&0&0&0\\ \noalign{\medskip}0
&1&0&0&0&0\\ \noalign{\medskip}0&0&1&\frac{1}{8}&\frac{1}{6}&-\frac{1}{6}
\\ \noalign{\medskip}0&0&0&1&0&0\\ \noalign{\medskip}-8&24&-6&\frac{3}{4}&
1&-1\\ \noalign{\medskip}0&0&-6&\frac{3}{4}&1&1\end {array} \end{pmatrix}, \\
&P_{12}^{E_8}= \begin{pmatrix} \begin {array}{ccccc} 1&-3&0&0&0\\ \noalign{\medskip}0
&1&0&0&0\\ \noalign{\medskip}\frac{8}{3}&-8&1&\frac{1}{4}&\frac{1}{6}
\\ \noalign{\medskip}\frac{16}{3}&-16&0&1&\frac{2}{3}\\ \noalign{\medskip}8&-24&0
&0&1\end {array} \end{pmatrix}, \ 
P_{13}^{E_8}= \begin{pmatrix} \begin {array}{ccccc} 1&-3&-\frac{3}{4}&-\frac{3}{16}&0
\\ \noalign{\medskip}0&1&0&0&0\\ \noalign{\medskip}\frac{4}{3}&-4&1&-\frac{1}{4}
&0\\ \noalign{\medskip}\frac{16}{3}&-16&4&1&\frac{4}{3}
\\ \noalign{\medskip}0&0&0&0&1\end {array} \end{pmatrix}, \ 
P_{14}^{E_8}= \begin{pmatrix} \begin {array}{cccc} 1&-6&0&-\frac{1}{4}
\\ \noalign{\medskip}\frac{1}{6}&1&0&\frac{1}{24}\\ \noalign{\medskip}\frac{2}{3}&-4&1
&-\frac{1}{6}\\ \noalign{\medskip}4&-24&0&1\end {array}
\end{pmatrix}, \\
&P_{15}^{E_8}=\begin{pmatrix} \begin {array}{cccccccc} 1&0&0&\frac{1}{4}
\\ \noalign{\medskip}0&1&0&\frac{1}{24}\\ \noalign{\medskip}\frac{2}{3}&0&1&\frac{1}{6}\\ \noalign{\medskip}0&0&0&1\end {array} \end{pmatrix}, \ 
P_{16}^{E_8}= \begin{pmatrix} \begin {array}{cccc} 1&6&0&\frac{1}{4}
\\ \noalign{\medskip}0&1&0&0\\ \noalign{\medskip}\frac{2}{3}&4&1&\frac{1}{6}\\ \noalign{\medskip}0&24&0&1\end {array} \end{pmatrix}, \ 
P_{17}^{E_8}= \begin{pmatrix} \begin {array}{cccc} 1&0&\frac{3}{2}&0
\\ \noalign{\medskip}0&1&0&0\\ \noalign{\medskip}0&0&1
&0\\ \noalign{\medskip}4&24&6&1\end {array} \end{pmatrix}, \ 
P_{18}^{E_8}=
 \begin{pmatrix} \begin {array}{ccc} 1&0&\frac{3}{2}
\\ \noalign{\medskip}-\frac{1}{6}&1&-\frac{1}{4}\\ \noalign{\medskip}0&0&1\end {array} \end{pmatrix}, \\
&P_{19}^{E_8}= \begin{pmatrix} \begin {array}{ccc} 1&-6&\frac{3}{2}
\\ \noalign{\medskip}0&1&0\\ \noalign{\medskip}0&0&1\end {array} \end{pmatrix}, \ 
P_{20}^{E_8}=P_{21}^{E_8}= \begin{pmatrix} \begin {array}{cc} 1&\frac{1}{4}
\\ \noalign{\medskip}0&1\end {array} \end{pmatrix}, \ 
P_{22}^{E_8}=P_{23}^{E_8}= \begin{pmatrix} \begin {array}{cc} 1&0
\\ \noalign{\medskip}4&1\end {array} \end{pmatrix}, \ 
P_m^{E_8}=
\begin{pmatrix}
1  \\      
\end{pmatrix} \ \ \ (24 \leq m \leq 29).
\end{align*}
\vspace{-10pt}
\caption{The invertible matrices for type $E_8$.}
\label{picture:InvertibleMatricesTypeE8}
\end{figure} 
}
Here, we think of indexes for rows and columns of the matrix $P^{E_8}_m$ as $\Lambda_m$ given in Table~\ref{table:LambdaTypeE}, and we arrange them as in increasing order. 
We set
\begin{align*}
[\psi^{E_8}_{i,i}]_{i \in \Lambda_0}&=P_0^{E_8}[\alpha_{i}^*]_{i \in \Lambda_0},\\
[\psi^{E_8}_{i,i+m}]_{i \in \Lambda_m}&=P_m^{E_8}[\alpha_{i,i+m}\psi^{E_8}_{i,i+m-1}]_{i \in \Lambda_m} \ \ \ {\rm for} \ 1 \leq m \leq 29.
\end{align*}
Note that $\psi^{E_8}_{i,i}=\alpha_i^*$ for $1 \leq i \leq 8$, namely 
\begin{align*} 
&\psi^{E_8}_{1,1}=2\partial_1, \ \ 
\psi^{E_8}_{2,2}=\partial_1+\partial_2, \ \ 
\psi^{E_8}_{3,3}=2\partial_1+\partial_2+\partial_3, \ \ 
\psi^{E_8}_{4,4}=3\partial_1+\partial_2+\partial_3+\partial_4, \\
&\psi^{E_8}_{5,5}=4\partial_1+\partial_2+\partial_3+\partial_4+\partial_5, \ \ 
\psi^{E_8}_{6,6}=5\partial_1+\partial_2+\partial_3+\partial_4+\partial_5+\partial_6, \\
&\psi^{E_8}_{7,7}=\frac{1}{2}(5\partial_1+\partial_2+\partial_3+\partial_4+\partial_5+\partial_6+\partial_7-\partial_8), \\ 
&\psi^{E_8}_{8,8}=\frac{1}{2}(7\partial_1+\partial_2+\partial_3+\partial_4+\partial_5+\partial_6+\partial_7+\partial_8). 
\end{align*}

We can check by using Maple that $\{\psi^{E_8}_{i,h_I(i)} \mid 1 \leq i \leq 8 \} \subset D(\A_I)$ for any lower ideal $I \subset \Phi_{E_8}^+$.
From this together with $\det P^{E_8}_m \neq 0$ $(1 \leq m \leq 29)$, we obtain the following theorem
by Proposition~\ref{proposition:key} (see also Remark~\ref{remark:key}).

\begin{theorem} \label{theorem:psiE8}
The derivations $\{\psi^{E_8}_{i,i+m} \mid 0 \leq m \leq 29, i \in \Lambda_m \}$ form uniform bases for the ideal arrangements of type $E_8$.
\end{theorem}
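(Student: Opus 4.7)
The plan is to invoke Remark~\ref{remark:key}, which reduces the theorem to checking three conditions: (1) $\psi^{E_8}_{i,i} = p_i\alpha_i^*$ for some nonzero rational $p_i$; (2) each matrix $P_m^{E_8} \in \GL(\Lambda_m;\Q)$; and (3) for every lower ideal $I \subset \Phi^+_{E_8}$, the derivations $\{\psi^{E_8}_{i,h_I(i)} \mid i \in [8]\}$ all lie in $D(\A_I)$. Conditions (1) and (2) are essentially book-keeping: the initial data $\psi^{E_8}_{i,i} = \alpha_i^*$ are listed explicitly, and for (2) I would simply compute (or let Maple compute) $\det P_m^{E_8}$ for each $m$ with $1 \leq m \leq 29$ from the matrices tabulated above and verify none vanishes.

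The substantial step is condition (3). Rather than proving a containment $\psi^{E_8}_{i,h_I(i)} \in D(\A_I)$ for every one of the (very many) lower ideals $I$, I would reduce to a finite check: for a fixed pair $(i,m)$ with $i \in \Lambda_m$, there is a (unique) maximal lower ideal $I^{(i,i+m)}$ that contains $\alpha_{i,i+m}$ but none of the covers $\alpha_{i,i+m+1}$ for the $\Phi^+_j$'s it would extend into. Mirroring the proof of Proposition~\ref{proposition:psiD} for type $D_n$, it suffices to verify $\psi^{E_8}_{i,i+m}(\alpha) \in \CR \alpha$ for every positive root $\alpha \in I^{(i,i+m)}$, since any lower ideal $I$ with $h_I(i)=i+m$ satisfies $I \subseteq I^{(i,i+m)}$. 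Thus condition (3) amounts to a finite list of divisibility relations, one for each pair $(i,m)$ with $i \in \Lambda_m$ and each $\alpha \in I^{(i,i+m)}$.

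I would implement this check in a computer algebra system (Maple, as the authors do; their program is referenced in the footnote of Section~\ref{subsection:typeF}). Concretely: compute each $\psi^{E_8}_{i,i+m} \in \CR \otimes \mathfrak{t}_{E_8}$ by unrolling the recursion $[\psi^{E_8}_{i,i+m}]_{i\in\Lambda_m} = P_m^{E_8}[\alpha_{i,i+m}\psi^{E_8}_{i,i+m-1}]_{i\in\Lambda_m}$ from the base case, and for each $(i,m)$ and each $\alpha \in I^{(i,i+m)}$ verify by polynomial reduction modulo $\alpha$ that $\psi^{E_8}_{i,i+m}(\alpha) \equiv 0$. Since $|\Phi^+_{E_8}| = 120$ and the matrices $P_m^{E_8}$ are all small (sizes dictated by $\lambda_m$), this is a finite computation of modest size. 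With conditions (1), (2), (3) all verified, Remark~\ref{remark:key} yields the theorem.

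The main obstacle is not conceptual but rather ensuring the correctness and coverage of the computer verification. In particular, one must be confident that the explicit forms of the matrices $P_m^{E_8}$ (which were themselves obtained by solving, at each step, the linear system imposed on $p_{ij}^{(m)}$ by the vanishing of $\psi^{E_8}_{i,i+m}(\alpha)$ mod $\alpha$ for $\alpha \in I^{(i,i+m)}$, as in the type $F_4$ construction) are recorded without error, and that the enumeration of the maximal lower ideals $I^{(i,i+m)}$ matches the combinatorics of the partial order on $\Phi^+_{E_8}$ depicted in Figure~\ref{picture:PositiveRootTypeE8}. Once the verification script is run and its output confirms the divisibilities as well as $\det P_m^{E_8} \neq 0$ for all $m$, the theorem follows directly from Remark~\ref{remark:key}.
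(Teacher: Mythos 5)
Your proposal matches the paper's own argument: the authors likewise verify the three conditions of Remark~\ref{remark:key} — the initial data $\psi^{E_8}_{i,i}=\alpha_i^*$, the nonvanishing of $\det P_m^{E_8}$, and the membership $\{\psi^{E_8}_{i,h_I(i)}\}\subset D(\A_I)$ checked by Maple — and conclude via Proposition~\ref{proposition:key}. Your reduction of the membership check to the finitely many maximal lower ideals $I^{(i,i+m)}$ is a valid and sensible way to organize that computation (it is exactly the device used in Proposition~\ref{proposition:psiD} and in the $F_4$ construction), so the proof is correct and essentially the same as the paper's.
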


We can also compute invertible matrices for types $E_6$ and $E_7$ by using Maple.
Here, the positive roots $\alpha_{i,j}$ for each types $E_6$ and $E_7$ denote the $\alpha_{i,j}$ of type $E_8$ with suitable range for $(i,j)$.
We only list the invertible matrices for types $E_6$ and $E_7$ in Fugures~\ref{picture:InvertibleMatricesTypeE7} and \ref{picture:InvertibleMatricesTypeE6}.
{\tiny
\begin{figure}
\begin{align*}
&P^{E_7}_0=  \left( \begin {array}{ccccccc} 1&0&0&0&0&0&0\\ \noalign{\medskip}0
&1&0&0&0&0&0\\ \noalign{\medskip}0&0&1&0&0&0&0
\\ \noalign{\medskip}0&0&0&1&0&0&0\\ \noalign{\medskip}0&0&0&0&1&0
&0\\ \noalign{\medskip}0&0&0&0&0&1&0\\ \noalign{\medskip}0&0&0&0&0
&0&1\end {array} \right), \ \  
P^{E_7}_1=  \left( \begin {array}{ccccccc} 1&0&0&0&0&0&0\\ \noalign{\medskip}0
&1&0&0&0&0&0\\ \noalign{\medskip}0&1&1&0&0&0&0
\\ \noalign{\medskip}0&1&1&1&0&0&0\\ \noalign{\medskip}1&1&1&1&1&0
&1\\ \noalign{\medskip}1&1&1&1&1&1&1\\ \noalign{\medskip}1&0&0&0&0
&0&1\end {array} \right), \ \  
P^{E_7}_2= \left( \begin {array}{cccccc} 1&0&0&0&0&0\\ \noalign{\medskip}0
&1&0&0&0&0\\ \noalign{\medskip}0&1&1&0&0&0
\\ \noalign{\medskip}-1&1&1&1&-1&-1\\ \noalign{\medskip}-1&1&1&1
&1&-1\\ \noalign{\medskip}1&1&1&1&1&1\end {array} \right), \\  
&P^{E_7}_3=  \left( \begin {array}{cccccc} 1&-\frac{1}{2}&-\frac{1}{2}&-\frac{1}{4}&-\frac{1}{2}&-\frac{1}{4}\\ \noalign{\medskip}0&1&0&0&0&0\\ \noalign{\medskip}1&1&1&-1&-\frac{1}
{2}&\frac{1}{2}\\ \noalign{\medskip}2&2&2&1&-1&1\\ \noalign{\medskip}1&
1&1&\frac{1}{2}&1&\frac{1}{2}\\ \noalign{\medskip}2&-4&-4&-2&-1&1\end {array}
 \right), \ \ 
P^{E_7}_4=  \left( \begin {array}{cccccc} 1&0&0&0&-\frac{1}{2}&0
\\ \noalign{\medskip}-\frac{2}{3}&1&-\frac{1}{3}&0&\frac{1}{3}&-\frac{1}{6}\\ \noalign{\medskip}-2
&3&1&0&1&-\frac{1}{2}\\ \noalign{\medskip}-2&3&1&1&3&-\frac{1}{2}
\\ \noalign{\medskip}0&0&0&0&1&0\\ \noalign{\medskip}4&6&2&0&-2
&1\end {array} \right), \\ 
&P^{E_7}_5= \left( \begin {array}{cccccc} 1&-\frac{3}{8}&-\frac{1}{8}&-\frac{1}{8}&-\frac{1}{4}&0
\\ \noalign{\medskip}0&1&0&0&0&0\\ \noalign{\medskip}0&3&1&-1&-2
&0\\ \noalign{\medskip}0&3&1&1&-2&0\\ \noalign{\medskip}0&\frac{3}{2}&
\frac{1}{2}&\frac{1}{2}&1&0\\ \noalign{\medskip}8&-9&-1&-1&-2&1\end {array}
 \right), \ \ 
P^{E_7}_6=  \left( \begin {array}{ccccc} 1&0&0&0&0\\ \noalign{\medskip}\frac{4}
{3}&1&0&0&0\\ \noalign{\medskip}16&6&1&0&-2
\\ \noalign{\medskip}-8&0&0&1&2\\ \noalign{\medskip}-4&0&0&0&1
\end {array} \right), \ \ 
P^{E_7}_7= 
  \left( \begin {array}{ccccc} 1&0&0&0&0\\ \noalign{\medskip}\frac{4}
{3}&1&0&0&0\\ \noalign{\medskip}8&6&1&0&0
\\ \noalign{\medskip}0&0&0&1&0\\ \noalign{\medskip}-8&-3&-\frac{1}{2}&
\frac{1}{2}&1\end {array} \right), \\ 
&P^{E_7}_8= \left( \begin {array}{cccc} 1&0&\frac{1}{8}&\frac{1}{8}
\\ \noalign{\medskip}\frac{2}{3}&1&\frac{1}{12}&\frac{1}{12}\\ \noalign{\medskip}-8&0
&1&-1\\ \noalign{\medskip}-8&0&1&1\end {array} \right), \ \  
P^{E_7}_9= \left( \begin {array}{cccc} 1&\frac{3}{2}&0&\frac{1}{8}
\\ \noalign{\medskip}0&1&0&0\\ \noalign{\medskip}-8&-12&1&0\\ \noalign{\medskip}0&0&0&1\end {array} \right), \ \ 
P^{E_7}_{10}= \left( \begin {array}{ccc} 1&0&0\\ \noalign{\medskip}-12&1&1\\ \noalign{\medskip}-12&0&1\end {array}
 \right), \ \  
P^{E_7}_{11}= \left( \begin {array}{ccc} 1&0&0\\ \noalign{\medskip}0&1&0\\ \noalign{\medskip}-12&1&1\end {array}
 \right), \\ 
&P^{E_7}_{12}= \left( \begin {array}{cc} 1&\frac{1}{12}
\\ \noalign{\medskip}0&1\end {array} \right), \ \  
P^{E_7}_{13}= \left( \begin {array}{cc} 1&0\\ \noalign{\medskip}12&1\end {array} \right), \ \ 
P_m^{E_7}=
\begin{pmatrix}
1  \\      
\end{pmatrix} \ \ \ (14 \leq m \leq 17).
\end{align*}
\vspace{-10pt}
\caption{The invertible matrices for type $E_7$.}
\label{picture:InvertibleMatricesTypeE7}
\end{figure} }
{\tiny
\begin{figure}
\begin{align*}
&P^{E_6}_0=\left( \begin {array}{cccccc} 1&0&0&0&0&0\\ \noalign{\medskip}0
&1&0&0&0&0\\ \noalign{\medskip}0&0&1&0&0&0
\\ \noalign{\medskip}0&0&0&1&0&0\\ \noalign{\medskip}0&0&0&0&1
&0\\ \noalign{\medskip}0&0&0&0&0&1\end {array} \right), \ \ 
P^{E_6}_1=\left( \begin {array}{cccccc} 1&0&0&0&0&0\\ \noalign{\medskip}0
&1&0&0&0&0\\ \noalign{\medskip}0&1&1&0&0&0
\\ \noalign{\medskip}1&1&1&1&0&1\\ \noalign{\medskip}1&1&1&1&1
&1\\ \noalign{\medskip}1&0&0&0&0&1\end {array} \right), \ \  
P^{E_6}_2=\left( \begin {array}{ccccc} 1&0&0&0&0\\ \noalign{\medskip}0
&1&0&0&0\\ \noalign{\medskip}-1&1&1&-1&-1
\\ \noalign{\medskip}-1&1&1&1&-1\\ \noalign{\medskip}1&1&1&1
&1\end
 {array} \right), \\ 
&P^{E_6}_3=
 \left( \begin {array}{ccccc} 1&-\frac{1}{2}&-\frac{1}{4}&-\frac{1}{2}&-\frac{1}{4}
\\ \noalign{\medskip}1&1&-1&-\frac{1}{2}&\frac{1}{2}\\ \noalign{\medskip}2&2
&1&-1&1\\ \noalign{\medskip}1&1&\frac{1}{2}&1&\frac{1}{2}\\ \noalign{\medskip}
2&-4&-2&-1&1\end {array} \right), \ \  
P^{E_6}_4= \left( \begin {array}{ccccc} 1&0&0&-\frac{1}{2}&0
\\ \noalign{\medskip}0&1&0&0&0\\ \noalign{\medskip}0&2&1&2
&0\\ \noalign{\medskip}0&0&0&1&0\\ \noalign{\medskip}4&2&0&-
2&1\end {array} \right), \ \  
P^{E_6}_5= \left( \begin {array}{cccc} 1&-\frac{1}{4}&-\frac{1}{8}&-\frac{1}{4}
\\ \noalign{\medskip}-2&1&0&0\\ \noalign{\medskip}-4&2&1&0
\\ \noalign{\medskip}0&1&\frac{1}{2}&1\end {array} \right), \\  
&P^{E_6}_6= \left( \begin {array}{ccc} 1&0&-\frac{1}{2}
\\ \noalign{\medskip}0&1&1\\ \noalign{\medskip}0&0&1\end {array} \right), \ \ 
P^{E_6}_7= \left( \begin {array}{ccc} 1&0&0\\ \noalign{\medskip}0&1&0\\ \noalign{\medskip}-2&1&1\end {array}
 \right), \ \  
P^{E_6}_8=\left( \begin {array}{cc} 1&0\\ \noalign{\medskip}2&1\end {array} \right), \ \
P_m^{E_6}=
\begin{pmatrix}
1  \\      
\end{pmatrix} \ \ \ (9 \leq m \leq 11).
\end{align*}
\vspace{-10pt}
\caption{The invertible matrices for type $E_6$.}
\label{picture:InvertibleMatricesTypeE6}
\end{figure} }
Their computations imply uniform bases for the ideal arrangements of types $E_6$ and $E_7$, respectively.
In what follows, we give a different way to construct uniform bases for types $E_6$ and $E_7$ from uniform bases in type $E_8$ by using Proposition~\ref{proposition:6-2}. 

Let $\mathfrak{t}_{E_7}$ be the hyperplane in $\mathfrak{t}_{E_8}=\R^8$ defined by the linear function $\alpha_2^*=x_1+x_2$:
$$
\mathfrak{t}_{E_7}=\{(x_1,\ldots,x_8) \in \R^8 \mid x_1+x_2=0 \}.
$$
Then we have 
\[
\CR_{E_7} = \Sym(\mathfrak{t}_{E_7}^*)=\R[x_1,\ldots,x_8]/(x_1+x_2).
\]
Similarly, let $\mathfrak{t}_{E_6}$ be the hyperplane in $\mathfrak{t}_{E_7}$ defined by the linear function $\alpha_3^*=\frac{1}{2}x_1-\frac{1}{2}x_2+x_3$: 
$$
\mathfrak{t}_{E_6}:=\{(x_1,\ldots,x_8) \in \mathfrak{t}_{E_7} \mid \frac{1}{2}x_1-\frac{1}{2}x_2+x_3=0 \}.
$$
Here, we remark that $\alpha_1^*,\alpha_3^*,\ldots,\alpha_8^*$ is the dual basis of the simple roots $\alpha_1,\alpha_3\ldots,\alpha_8$ for type $E_7$.
Then we have 
\[
\CR_{E_6} = \Sym(\mathfrak{t}_{E_6}^*)=\R[x_1,\ldots,x_8]/(x_1+x_2,\frac{1}{2}x_1-\frac{1}{2}x_2+x_3).
\]

A Hessenberg function $h^{E_8}$ for type $E_8$ such that $h^{E_8}(2)=2$ and $h^{E_8}(i) \leq i+\e_i^{E_7}$ for $i=1,3,\ldots,8$ is called a \textbf{Hessenberg function for type $E_7$}, denoted by $h^{E_7}$. 
Here, $\e_1^{E_7},\e_3^{E_7},\ldots, \e_8^{E_7}$ denote the exponents of type $E_7$ as follows:
\[
\e_1^{E_7}=9, \ \e_3^{E_7}=17, \ \e_4^{E_7}=13, \ \e_5^{E_7}=11, \ \e_6^{E_7}=7, \ \e_7^{E_7}=1, \ \e_8^{E_7}=5.
\]
Similarly, a Hessenberg function $h^{E_7}$ for type $E_7$ such that $h^{E_7}(3)=3$ and $h^{E_7}(i) \leq i+\e_i^{E_6}$ for $i=1,4,\ldots,8$ is called a \textbf{Hessenberg function for type $E_6$}, denoted by $h^{E_6}$ where  $\e_1^{E_6},\e_4^{E_6},\ldots, \e_8^{E_6}$ are the exponents of type $E_6$: 
\[
\e_1^{E_6}=5, \  \e_4^{E_6}=11, \ \e_5^{E_6}=8, \ \e_6^{E_6}=7, \ \e_7^{E_6}=1, \ \e_8^{E_6}=4.
\]
Let $\psi^{E}_{i,j}=\psi^{E_8}_{i,j}$ for $1 \leq i \leq 8$ and $i \leq j \leq \e^{E_8}_i$. 
For each $m$ with $0 \leq m \leq 29=\height(\Phi^+_{E_8})$, we denote by $P_m^{E}=(p_{ij})_{i,j \in \Lambda_m}$ the matrix $P_m^{E_8}$ for simplicity. 
We define $\psi'^{E}_{i,j}$ for $1 \leq i \leq 8$ and $i \leq j \leq \e^{E_8}_i$ as follows:
\begin{align*}
\psi'^{E}_{i,i}&=\begin{cases} \psi^{E_7}_{i,i} \ \ \ &{\rm if} \ i=1,3,4,5,6,7,8, \\
0 \ \ \ \ \ \ &{\rm if} \ i=2, 
\end{cases} \\
\psi'^{E}_{i,i+m}&=\sum_{j \in \Lambda_m} p_{ij} \overline{\alpha_{j,j+m}}\psi'_{j,j+m-1} \ \ \ {\rm for} \ 1 \leq m \leq \height(\Phi^+) \ {\rm and} \ i \in \Lambda_m,
\end{align*}
where $\psi^{E_7}_{i,i} \ (i=1,3,\ldots,8)$ is given by
\begin{align*}
\psi^{E_7}_{1,1}&=\partial_1-\partial_2, \ \ 
\psi^{E_7}_{3,3}=\frac{1}{2}\partial_1-\frac{1}{2}\partial_2+\partial_3, \ \ 
\psi^{E_7}_{4,4}=\partial_1-\partial_2+\partial_3+\partial_4, \\ 
\psi^{E_7}_{5,5}&=\frac{3}{2}\partial_1-\frac{3}{2}\partial_2+\partial_3+\partial_4+\partial_5, \ \ 
\psi^{E_7}_{6,6}=2\partial_1-2\partial_2+\partial_3+\partial_4+\partial_5+\partial_6, \\ 
\psi^{E_7}_{7,7}&=\frac{1}{2}(2\partial_1-2\partial_2+\partial_3+\partial_4+\partial_5+\partial_6+\partial_7-\partial_8), \ \ 
\psi^{E_7}_{8,8}=\frac{1}{2}(3\partial_1-3\partial_2+\partial_3+\partial_4+\partial_5+\partial_6+\partial_7+\partial_8).  
\end{align*}
We also define $\psi''^{E}_{i,j}$ for $1 \leq i \leq 8$ and $i \leq j \leq \e^{E_8}_i$ as follows:
\begin{align*}
\psi''^{E}_{i,i}&=\begin{cases} \psi^{E_6}_{i,i} \ \ \ &{\rm if} \ i=1,4,5,6,7,8, \\
0 \ \ \ \ \ \ &{\rm if} \ i=2, 3,
\end{cases} \\
\psi''^{E}_{i,i+m}&=\sum_{j \in \Lambda_m} p_{ij} \overline{\alpha_{j,j+m}}\psi''_{j,j+m-1} \ \ \ {\rm for} \ 1 \leq m \leq \height(\Phi^+) \ {\rm and} \ i \in \Lambda_m,
\end{align*}
where $\psi^{E_6}_{i,i} \ (i=1,4,\ldots,8)$ is described as
\begin{align*}
\psi^{E_6}_{1,1}&=\frac{2}{3}\partial_1-\frac{2}{3}\partial_2-\frac{2}{3}\partial_3, \ \ 
\psi^{E_6}_{4,4}=\frac{1}{3}\partial_1-\frac{1}{3}\partial_2-\frac{1}{3}\partial_3+\partial_4, \ \ 
\psi^{E_6}_{5,5}=\frac{2}{3}\partial_1-\frac{2}{3}\partial_2-\frac{2}{3}\partial_3+\partial_4+\partial_5, \\
\psi^{E_6}_{6,6}&=\partial_1-\partial_2-\partial_3+\partial_4+\partial_5+\partial_6, \ \ 
\psi^{E_6}_{7,7}=\frac{1}{2}(\partial_1-\partial_2-\partial_3+\partial_4+\partial_5+\partial_6+\partial_7-\partial_8), \\
\psi^{E_6}_{8,8}&=\frac{1}{2}(\frac{5}{3}\partial_1-\frac{5}{3}\partial_2-\frac{5}{3}\partial_3+\partial_4+\partial_5+\partial_6+\partial_7+\partial_8).  
\end{align*}

\begin{theorem} \label{theorem:subroottypeE}
A set of derivations $\{\psi'^{E}_{i,j} \in \Der \CR_{E_7} \mid i=1,3,4,5,6,7,8 \ {\rm and} \ i \leq j \leq \e^{E_7}_i \}$ forms uniform bases for the ideal arrangements in type $E_7$.
A set of derivations $\{\psi''^{E}_{i,j} \in \Der \CR_{E_6} \mid i=1,4,5,6,7,8 \ {\rm and} \ i \leq j \leq \e^{E_6}_i \}$ forms uniform bases for the ideal arrangements in type $E_6$.
\end{theorem}

Also, we can describe systematically explicit presentations of the cohomology rings of the regular nilpotent Hessenberg varieties for type $E$.
Define $g^{E}_{i,j}=\Quadraticmap(\psi^{E}_{i,j}), g'^{E}_{i,j}=\Quadraticmap(\psi'^{E}_{i,j}), g''^{E}_{i,j}=\Quadraticmap(\psi''^{E}_{i,j})$.
Note that $g^{E}_{i,j}=f^{E_8}_{i,j}$.
One can see inductively that 
\begin{align*}
g'^{E}_{i,j} &\equiv g^{E}_{i,j} \ \ \ {\rm mod} \ g^{E}_{2,2}, \\
g''^{E}_{i,j} &\equiv g^{E}_{i,j} \ \ \ {\rm mod} \ (g^{E}_{2,2},g^{E}_{3,3}). 
\end{align*}
This together with Theorem~\ref{theorem:subroottypeE} implies the following corollary.

\begin{corollary} \label{corollary:fijE}
Let $h$ be a Hessenberg function for type $E_n (n=6,7,8)$ and $\Hess(N,h)$ the associated regular nilpotent Hessenberg variety for type $E$.
Then, the following ring isomorphism holds
\begin{equation*}
H^*(\Hess(N,h)) \cong \R[x_1,\ldots,x_8]/(g^{E}_{1,h(1)},\ldots,g^{E}_{8,h(8)}).
\end{equation*}
Note that Hessenberg functions $h^{E_7}$ and $h^{E_6}$ for types $E_7$ and $E_6$ satisfy $h^{E_7}(2)=2$ and $h^{E_6}(2)=2, h^{E_6}(3)=3$.
\end{corollary}




\begin{thebibliography}{9}
\bibitem{AHHM}
H. Abe, M. Harada, T. Horiguchi, and M. Masuda,
\emph{The cohomology rings of regular nilpotent Hessenberg varieties in Lie type A}, 
Int. Math. Res. Not. IMRN., \textbf{2019} (2019),  5316--5388. 

\bibitem{AH}
H. Abe and T. Horiguchi, 
\emph{A survey of recent developments on Hessenberg varieties},
In: Hu, J., Li, C., Mihalcea, L.C. (eds) Schubert Calculus and Its Applications in Combinatorics and Representation Theory. ICTSC 2017. Springer Proceedings in Mathematics \& Statistics, vol 332. Springer, Singapore.

\bibitem{ABCHT} 
T. Abe, M. Barakat, M. Cuntz, T. Hoge, and H. Terao, 
\emph{The freeness of ideal subarrangements of Weyl arrangements},
J. European Math. Soc. \textbf{18} (2016), 1339--1348.

\bibitem{AHMMS}
T. Abe, T. Horiguchi, M. Masuda, S. Murai, and T. Sato,
\emph{Hessenberg varieties and hyperplane arrangements},
J. Reine Angew. Math. \textbf{764} (2020), 241--286.

\bibitem{B}
A. Borel, 
\emph{Sur la cohomologie des espaces fibr\'es principaux et des espaces homog\'enes de groupes de Lie compacts}, 
Ann. of Math. (2) \textbf{57} (1953), 115--207.

\bibitem{dMPS}
F. De Mari, C. Procesi, and M. A. Shayman,
\emph{Hessenberg varieties}, 
Trans. Amer. Math. Soc. {\bf 332} (1992),  no. 2, 529--534. 

\bibitem{dMS}
F. De Mari and M. A. Shayman, 
\emph{Generalized Eulerian numbers and the topology of the Hessenberg variety of a matrix}, 
Acta Appl. Math. \textbf{12} (1988), no. 3, 213--235.

\bibitem{FHM}
Y. Fukukawa, M. Harada, and M. Masuda, 
\emph{The equivariant cohomology rings of Peterson varieties}, 
J. Math. Soc. Japan \textbf{67} (2015), no. 3, 1147--1159. 

\bibitem{HHM}
M. Harada, T. Horiguchi, and M. Masuda, \emph{The equivariant cohomology rings of Peterson varieties in all Lie types}, 
Canad. Math. Bull. \textbf{58} (2015), no. 1, 80--90. 

\bibitem{Hum1990}
J. E. Humphreys,
\emph{Reflection groups and Coxeter groups}, 
Cambridge Studies in Advanced Mathematics, 29. Cambridge University Press, Cambridge, 1990.

\bibitem{Ko}
B. Kostant, 
\emph{Flag manifold quantum cohomology, the Toda lattice, and the representation with highest weight $\rho$}, 
Selecta Math. (N.S.) \textbf{2} (1996), 43--91. 

\bibitem{OT} 
P. Orlik and H. Terao, 
\textit{Arrangements of hyperplanes},
Grundlehren der Mathematischen Wissenschaften, 
\textbf{300}, Springer-Verlag, Berlin, 1992.

\bibitem{R}
K. Rietsch, 
\emph{Totally positive Toeplitz matrices and quantum cohomology of partial flag varieties}, 
J. Amer. Math. Soc. \textbf{16} (2003), 363--392.

\bibitem{S2} 
K. Saito, 
\emph{Theory of logarithmic differential forms and logarithmic vector fields}, 
J. Fac. Sci. Univ. Tokyo Sect.IA Math. \textbf{27} (1980), 265--291.

\bibitem{T}
H. Terao, 
\emph{Arrangements of hyperplanes and their freeness I, II.}, 
J. Fac. Sci. Univ. Tokyo,
\textbf{27}(2): 293--320, 1980.

\end{thebibliography}
\end{document}